\documentclass{amsart}
\usepackage[all]{xy}
\usepackage{comment}

\newtheorem{theorem}{Theorem}
\numberwithin{theorem}{section}
\newtheorem{lemma}[theorem]{Lemma}
\newtheorem{proposition}[theorem]{Proposition}

\setcounter{tocdepth}{1}

\newtheorem{definition}[theorem]{Definition}
\newtheorem{example}[theorem]{Example}

\theoremstyle{remark}
\newtheorem{remark}[theorem]{Remark}
\newtheorem{notation}[theorem]{Notation}
%\numberwithin{equation}{section}
%\numberwithin{lemma}{section}
%\numberwithin{proposition}{section}
%\numberwithin{definition}{section}

\newcommand{\dgcat}{\mathsf{dgcat}}
\newcommand{\rep}{\mathsf{rep}}
\newcommand{\ca}{{\mathcal A}}
\newcommand{\cc}{{\mathcal C}}

\newcommand{\cg}{{\mathcal G}}
\newcommand{\cd}{{\mathcal D}}

\newcommand{\ce}{{\mathcal E}}
\newcommand{\cm}{{\mathcal M}}

\newcommand{\cb}{{\mathcal B}}
\newcommand{\cu}{{\mathcal U}}

\newcommand{\ul}{ \rule[0.2mm]{0.12mm}{2.2mm} \rule[2.3mm]{2.2mm}{0.12mm}}
\newcommand{\lr}{\rule{2.3mm}{0.12mm} \rule{0.12mm}{2.2mm}}
\newcommand{\kar}{\rule[-0.1mm]{0.12mm}{1.5mm}\hspace{-0.36mm} \ni}

\newcommand{\internalcomment}[1]{}

\begin{document}

\title[Higher $K$-theory via universal invariants]{Higher $K$-theory
  via universal invariants}
\author{Gon{\c c}alo Tabuada}
\address{Universit{\'e} Paris 7 - Denis Diderot, UMR 7586
  du CNRS, case 7012, 2 Place Jussieu, 75251 Paris cedex 05, France and
Departamento de Matem{\'a}tica, FCT-UNL, Quinta da Torre, 2829-516 Caparica,~Portugal}

\thanks{Supported by FCT-Portugal, scholarship {\tt SFRH/BD/14035/2003}.}
\keywords{Quillen-Waldhausen's $K$-theory, Non-commutative algebraic geometry, Higher Chern characters, Grothendieck derivator, Quillen model structure, Bousfield localization,
  Simplicial presheaves, Stabilization, Drinfeld's dg quotient, Cyclic
  homology, Additivity, Presheaves of
  spectra, Upper triangular dg category}

\email{
\begin{minipage}[t]{5cm}
tabuada@math.jussieu.fr tabuada@fct.unl.pt
\end{minipage}
}

\begin{abstract}
Using the formalism of Grothendieck's derivators, we construct `the
universal localizing invariant' of dg categories. By this, we mean a
morphism $\mathcal{U}_l$ from the pointed derivator $\mathsf{HO}(\mathsf{dgcat})$ associated
with the Morita homotopy theory of dg categories to a triangulated
strong derivator $\mathcal{M}_{dg}^{loc}$ such that $\mathcal{U}_l$ commutes with filtered homotopy
colimits, preserves the point, sends each exact sequence of dg categories to a triangle and is universal for these properties.

Similary, we construct the `universal additive invariant' of dg categories, i.e. the
universal morphism of derivators $\mathcal{U}_a$ from
$\mathsf{HO}(\mathsf{dgcat})$ to a strong triangulated derivator
$\mathcal{M}_{dg}^{add}$ which satisfies the first two properties but
the third one only for split exact sequences. We prove that Waldhausen's
$K$-theory becomes co-representable in the target of the universal
additive invariant. This is the first conceptual characterization of Quillen-Waldhausen's $K$-theory since its definition in the early $70$'s. As an application we obtain for free the higher Chern characters from $K$-theory to cyclic homology.
\end{abstract}

\maketitle

\tableofcontents

\section{Introduction}
Differential graded categories (=dg categories) enhance our
understanding of triangulated categories appearing in algebra and
geometry, see \cite{ICM}.

They are considered as non-commutative schemes by Drinfeld
\cite{Drinfeld} \cite{Chitalk} and Kontsevich \cite{IHP} \cite{ENS} in their program
of non-commutative algebraic geometry, i.e. the study of dg categories
and their homological invariants.

In this article, using the formalism of Grothendieck's derivators, we construct `the
universal localizing invariant' of dg categories \emph{cf.}~\cite{Keller2002}. By this, we mean a
morphism $\mathcal{U}_l$ from the pointed derivator $\mathsf{HO}(\mathsf{dgcat})$ associated
with the Morita homotopy theory of dg categories, see \cite{addendum} \cite{IMRN} \cite{cras}, to a triangulated
strong derivator $\mathcal{M}_{dg}^{loc}$ such that $\mathcal{U}_l$ commutes with filtered homotopy
colimits, preserves the point, sends each exact sequence of dg categories to a triangle and is universal for these properties.
Because of its universality property reminiscent of motives, see section $4.1$ of Kontsevich's preprint \cite{motiv}, we call $\mathcal{M}_{dg}^{loc}$
the (stable) {\em localizing motivator} of dg categories.

Similary, we construct `the universal additive invariant' of dg categories, i.e. the
universal morphism of derivators $\mathcal{U}_a$ from
$\mathsf{HO}(\mathsf{dgcat})$ to a strong triangulated derivator
$\mathcal{M}_{dg}^{add}$ which satisfies the first two properties but
the third one only for split exact sequences. We call
$\mathcal{M}_{dg}^{add}$ the {\em additive motivator} of dg
categories. 

We prove that Waldhausen's $K$-theory spectrum appears as a spectrum of morphisms in the
base category $\mathcal{M}_{dg}^{add}(e)$ of the additive
motivator. This shows us that Waldhausen's $K$-theory is completely characterized by its additive property and `intuitively' it is {\em the} universal construction with values in a stable context which satisfies additivity.

To the best of the author's knowledge, this is the first
conceptual characterization of Quillen-Waldhausen's $K$-theory
\cite{Quillen1} \cite{Wald} since its definition in the early $70$'s. This result gives us a completely new way to think about algebraic $K$-theory and furnishes us for free the higher Chern characters from $K$-theory to cyclic homology~\cite{Loday}.

The co-representation of $K$-theory as a spectrum of morphisms extends our
results in \cite{addendum} \cite{IMRN}, where we co-represented $K_0$
using functors with values in {\em additive categories} rather than
morphisms of derivators with values in strong {\em triangulated derivators}.

For example, the mixed complex construction \cite{cyclic}, from which
all variants of cyclic homology can be deduced, and the
non-connective algebraic $\mbox{K}$-theory \cite{Marco} are localizing invariants and
factor through $\mathcal{U}_l$ and through $\mathcal{U}_a$. The
connective algebraic $K$-theory \cite{Wald} is an example of an additive
invariant which is not a localizing one. We prove that it becomes
co-representable in $\mathcal{M}^{add}_{dg}$, see theorem~\ref{corep}. 

Our construction is similar in spirit to those of Meyer-Nest
\cite{Nest}, Corti{\~n}as-Thom \cite{Cortinas} and Garkusha
\cite{Garkusha}. It splits into several general steps and also offers
some insight into the relationship between the theory of derivators
\cite{Heller} \cite{Grothendieck} \cite{KellerUniv} \cite{Malt}
\cite{Cis-Nee} and the classical theory
of Quillen model categories \cite{Quillen}. Derivators allow us to
state and prove precise universal properties and to dispense with many
of the technical problems one faces in using model categories.

In chapter~\ref{pre} we recall the notion of Grothendieck derivator and point out its connexion with that of small homotopy theory in the sense of Heller \cite{Heller}. In chapter~\ref{extension}, we recall Cisinski's
theory of derived Kan extensions \cite{Cisinski} and in chapter~\ref{localisation}, we
develop his ideas on the Bousfield localization of derivators
\cite{Letter}. In particular, we characterize the derivator associated
with a left Bousfield localization of a Quillen model category by a
universal property, see theorem~\ref{Cisinsk}. This is based on a
constructive description of the local weak equivalences.

In chapter~\ref{homotopy}, starting from a Quillen model category
$\mathcal{M}$ satisfying some compactness conditions, we construct a morphism of prederivators
$$ \mathsf{HO}(\mathcal{M})
\stackrel{\mathbb{R}\underline{h}}{\longrightarrow}
\mathsf{L}_{\Sigma}\mathsf{Hot}_{\mathcal{M}_f}$$
which commutes with filtered homotopy colimits, has a derivator as
target and is universal for these properties. In chapter~\ref{chappoint} we
study morphisms of pointed derivators and in chapter~\ref{small} we prove a
general result which garantees that small weak generators are
preserved under left Bousfield localizations. In chapter~\ref{spectra}, we
recall Heller's stabilization construction \cite{Heller} and we prove
that this construction takes `finitely generated' unstable theories to
compactly generated stable ones. We establish the connection between
Heller's stabilization and Hovey/Schwede's stabilization \cite{Spectra}
\cite{Schwede} by proving that if we start with a pointed Quillen
model category which satisfies some mild `generation' hypotheses, then
the two stabilization procedures yield equivalent results. This allows us to
characterize Hovey/Schwede's construction by a universal property and
in particular to give a very simple characterisation of the classical
category of spectra in the sense of Bousfield-Friedlander \cite{Bos-Fri}. In
chapter~\ref{chapquotient}, by applying the general arguments of the previous
chapters to the Morita homotopy theory of dg categories \cite{addendum} \cite{IMRN} \cite{cras}, we construct the universal morphism of derivators
$$ \mathcal{U}_t: \, \mathsf{HO}(\mathsf{dgcat}) \longrightarrow
\mathsf{St}(\mathsf{L}_{\Sigma,P}\mathsf{Hot}_{\mathsf{dgcat}_f})$$
which commutes with filtered homotopy colimits, preserves the
point and has a strong triangulated derivator as target. For every inclusion $\mathcal{A} \stackrel{K}{\hookrightarrow}
\mathcal{B}$ of a full dg subcategory, we have an induced morphism
$$ S_K : \, \mathsf{cone}(\mathcal{U}_t(\mathcal{A}
\stackrel{K}{\hookrightarrow} \mathcal{B})) \rightarrow
\mathcal{U}_t(\mathcal{B}/\mathcal{A})\,,$$
where $\mathcal{B}/\mathcal{A}$ denotes Drinfeld's dg quotient. By
applying the localization techniques of section~\ref{localisation} and using the fact
that the derivator
$\mathsf{St}(\mathsf{L}_{\Sigma,P}\mathsf{Hot}_{\mathsf{dgcat}_f})$
admits a stable Quillen model, we invert the morphisms $S_K$ and obtain finally the universal localizing invariant
of dg categories
$$ \mathcal{U}_l:\, \mathsf{HO}(\mathsf{dgcat}) \longrightarrow
\mathcal{M}_{dg}^{loc}\,.$$
We establish a connection between the triangulated category
$\mathcal{M}_{dg}^{loc}(e)$ and Waldhausen's $K$-theory by showing that
Waldhausen's $S_{\bullet}$-construction corresponds to the suspension
functor in $\mathcal{M}_{dg}^{loc}(e)$. 
In section~\ref{chapspectra}, we prove that the derivator $\mathcal{M}_{dg}^{loc}$
admits a stable Quillen model given by a left Bousfield localization
of a category of presheaves of spectra.
In section~\ref{trimat}, we introduce the concept of upper triangular
dg category and construct a Quillen model structure on this class of dg
categories, which satisfies strong compactness conditions. In
section~\ref{splitex}, we establish the connection between upper
triangular dg categories and split short exact sequences and use the
Quillen model structure of section~\ref{trimat} to prove an
`approximation result', see proposition~\ref{aproxsplit}. In
section~\ref{quasi}, by applying the techniques of
section~\ref{localisation}, we construct the universal morphism of
derivators
$$\mathcal{U}_u: \mathsf{HO}(\mathsf{dgcat}) \longrightarrow \mathcal{M}_{dg}^{unst}$$
which commutes with filtered homotopy colimits, preserves the point and
sends each split short exact sequence to a homotopy cofiber
sequence. We prove that Waldhausen's $K$-theory space construction
appears as a fibrant object in $\cm^{unst}_{dg}$. This allow us to obtain the weak equivalence of simplicial sets
$$ \mathsf{Map}(\mathcal{U}_u(k),S^1\wedge \mathcal{U}_u(\ca)) \stackrel{\sim}{\longrightarrow} | N.wS_{\bullet}\ca_f|$$
and the isomorphisms
$$ \pi_{i+1}\mathsf{Map}(\cu_u(k), S^1\wedge \cu_u(\ca)) \stackrel{\sim}{\longrightarrow} K_i(\ca), \,\,\, \forall i\geq 0\,,$$
see proposition~\ref{k-theoryunst}.

In section~\ref{univadit} we stabilize the derivator $\mathcal{M}_{dg}^{unst}$, using the fact that it
admits a Quillen model and obtain finally the universal additive
invariant of dg categories
$$ \mathcal{U}_a : \mathsf{HO}(\mathsf{dgcat}) \longrightarrow
\mathcal{M}^{add}_{dg} \,.$$
Connective algebraic $K$-theory is additive and so factors through
$\mathcal{U}_a$. We prove that for a small dg category $\ca$ its connective algebraic $K$-theory corresponds to a fibrant resolution of $\mathcal{U}_a(\mathcal{A})[1]$, see theorem~\ref{fibres}.
Using the fact that the Quillen model for $\cm^{add}_{dg}$ is enriched over spectra, we prove our main co-representability theorem.

Let $\ca$ and $\cb$ be small dg categories with $\ca \in \mathsf{dgcat}_f$.
\begin{theorem}[\ref{corep}]
We have the following weak equivalence of spectra
$$ \mathsf{Hom}^{\mathsf{Sp}^{\mathbb{N}}}(\cu_a(\ca), \cu_a(\cb)[1]) \stackrel{\sim}{\longrightarrow} K^c(\mathsf{rep}_{mor}(\ca, \cb))\,,$$
where $K^c(\mathsf{rep}_{mor}(\ca, \cb))$ denotes Waldhausen's connective $K$-theory spectrum of $\mathsf{rep}_{mor}(\ca, \cb)$.
\end{theorem}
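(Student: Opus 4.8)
The plan is to deduce the theorem from the explicit stable Quillen model for $\cm^{add}_{dg}$ constructed in section~\ref{univadit}, together with theorem~\ref{fibres}. Recall that this model is a left Bousfield localisation of the category of presheaves of spectra on $\dgcat_f$, obtained by stabilising the pointed model for $\cm^{unst}_{dg}$, and that it is enriched over the category $\mathsf{Sp}^{\mathbb{N}}$ of spectra. Under this model, and because $\ca$ is assumed to lie in $\dgcat_f$, the object $\cu_a(\ca)$ is, up to cofibrant replacement, the suspension spectrum $\Sigma^{\infty}h_\ca$ of the representable presheaf $h_\ca=\mathsf{Map}_{\mathsf{HO}(\dgcat)}(-,\ca)$; the hypothesis $\ca\in\dgcat_f$ is essential here, since for a general dg category $\cu_a$ produces only a filtered homotopy colimit of such representables. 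As every fibrant object of the localised model is in particular local, mapping out of $h_\ca$ coincides with mapping out of its localisation, and the spectrally enriched Yoneda lemma yields, for every fibrant presheaf of spectra $Y$,
$$ \mathsf{Hom}^{\mathsf{Sp}^{\mathbb{N}}}(\cu_a(\ca),\,Y)\ \simeq\ Y(\ca)\,. $$

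I would then apply this with $Y$ a fibrant resolution of $\cu_a(\cb)[1]$, which identifies the left-hand side of the statement with the value at $\ca$ of that resolution; it thus remains to describe the fibrant resolution of $\cu_a(\cb)[1]$ explicitly as a presheaf on $\dgcat_f$. This is exactly theorem~\ref{fibres}: the fibrant resolution is the connective $K$-theory presheaf of $\cb$, namely
$$ \Gamma_\cb:\ \ca'\ \longmapsto\ K^c(\rep_{mor}(\ca',\cb))\,, $$
whose value at the unit dg category $k$ is $K^c(\rep_{mor}(k,\cb))=K^c(\cb)$, Waldhausen's connective $K$-theory spectrum of $\cb$. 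The substantive point underlying this — which I would make explicit — is that $\Gamma_\cb$ really is fibrant in the localised model: it is objectwise an $\Omega$-spectrum and has the required continuity because Waldhausen $K$-theory of dg categories is a finitary, $\Omega$-spectrum-valued invariant; and it carries each split short exact sequence $0\to\ca''\to\ca'\to\ca'''\to 0$ to a homotopy fibre sequence of spectra $\Gamma_\cb(\ca''')\to\Gamma_\cb(\ca')\to\Gamma_\cb(\ca'')$. This last property is the one genuinely nonformal ingredient: using the approximation result proposition~\ref{aproxsplit} one exhibits $\rep_{mor}(\ca''',\cb)\to\rep_{mor}(\ca',\cb)\to\rep_{mor}(\ca'',\cb)$, up to $K$-theory equivalence, as a split exact sequence of Waldhausen categories, and then appeals to Waldhausen's additivity theorem.

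Assembling the pieces, the map of the statement is the composite
$$ \mathsf{Hom}^{\mathsf{Sp}^{\mathbb{N}}}(\cu_a(\ca),\cu_a(\cb)[1])\ \xrightarrow{\ \sim\ }\ \mathsf{Hom}^{\mathsf{Sp}^{\mathbb{N}}}(\cu_a(\ca),\Gamma_\cb)\ \xrightarrow[\text{Yoneda}]{\ \sim\ }\ \Gamma_\cb(\ca)\ =\ K^c(\rep_{mor}(\ca,\cb))\,, $$
in which the first arrow is induced by the fibrant resolution $\cu_a(\cb)[1]\xrightarrow{\ \sim\ }\Gamma_\cb$ and is invertible on the derived mapping spectrum out of the cofibrant object $\cu_a(\ca)$, while the second is the enriched Yoneda equivalence. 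I expect the main obstacle to be the first paragraph rather than the last: one must check that the spectral enrichment of the presheaf model is preserved through both Bousfield localisations and through the stabilisation, and that, after all of this, $\cu_a(\ca)$ is still the suspension spectrum of the representable $h_\ca$, so that it co-represents evaluation at $\ca$ at the level of spectra and not merely of simplicial sets, as in proposition~\ref{k-theoryunst}. Granting that and theorem~\ref{fibres}, the remainder is the formal Yoneda manipulation above, resting ultimately on Waldhausen's additivity theorem.
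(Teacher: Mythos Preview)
Your approach is essentially the paper's: identify $\cu_a(\ca)$ with the cofibrant suspension spectrum $\Sigma^{\infty}|N.w\rep_{mor}(?,\ca)|$, replace $\cu_a(\cb)[1]$ by its fibrant resolution $K(\cb)^c$ via theorem~\ref{fibres}, and read off the value at $\ca$ by the enriched Yoneda lemma. One small correction to your side discussion of why $\Gamma_{\cb}$ is fibrant: the paper obtains condition~(4) of proposition~\ref{fib1} from Waldhausen's \emph{fibration} theorem applied directly to the split sequence $\rep_{mor}(\cb'',\ca)\to\rep_{mor}(\cb,\ca)\to\rep_{mor}(\cb',\ca)$ (see the proof of proposition~\ref{fibrant}), not from the additivity theorem, and proposition~\ref{aproxsplit} is not invoked here at all---it was used earlier (remark~\ref{fib}) to reduce the construction of $\cm^{unst}_{dg}$ to inverting a \emph{set} of morphisms, not to check fibrancy of any particular presheaf.
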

In the triangulated base category $\cm^{add}_{dg}(e)$ of the additive motivator we have:
\begin{proposition}[\ref{true}]
We have the following isomorphisms of abelian groups
$$ \mathsf{Hom}_{\cm^{add}_{dg}(e)} (\cu_a(\ca), \cu_a(\cb)[-n]) \stackrel{\sim}{\longrightarrow} K_n(\mathsf{rep}_{mor}(\ca, \cb)),\,\, \forall n \geq 0\,.$$
\end{proposition}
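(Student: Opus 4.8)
The plan is to deduce Proposition~\ref{true} from the co-representability theorem~\ref{corep} by extracting stable homotopy groups. The key point is that $\cm^{add}_{dg}$ is a strong triangulated derivator admitting a stable Quillen model enriched over symmetric spectra, so that for objects $X, Y$ in its base category $\cm^{add}_{dg}(e)$ one has a natural identification
$$ \mathsf{Hom}_{\cm^{add}_{dg}(e)}(X, Y[-n]) \;\cong\; \pi_n\, \mathsf{Hom}^{\mathsf{Sp}^{\mathbb{N}}}(X, Y)\,, \qquad n \in \mathbb{Z}\,, $$
where on the right we take the $n$-th stable homotopy group of the mapping spectrum. This is the standard translation between the triangulated structure of the homotopy category of a stable model category and the enrichment over spectra; the shift $[-n]$ on the left corresponds to desuspension, hence to $\pi_n$ on the right.

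First I would apply this with $X = \cu_a(\ca)$ and $Y = \cu_a(\cb)[1]$, so that
$$ \mathsf{Hom}_{\cm^{add}_{dg}(e)}(\cu_a(\ca), \cu_a(\cb)[1][-n]) \;\cong\; \pi_n\, \mathsf{Hom}^{\mathsf{Sp}^{\mathbb{N}}}(\cu_a(\ca), \cu_a(\cb)[1])\,. $$
Next I would invoke theorem~\ref{corep}, which gives a weak equivalence of spectra $\mathsf{Hom}^{\mathsf{Sp}^{\mathbb{N}}}(\cu_a(\ca), \cu_a(\cb)[1]) \xrightarrow{\sim} K^c(\mathsf{rep}_{mor}(\ca, \cb))$; since weak equivalences of spectra induce isomorphisms on all stable homotopy groups, the right-hand side above becomes $\pi_n K^c(\mathsf{rep}_{mor}(\ca, \cb))$. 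By Waldhausen's definition of connective $K$-theory, $\pi_n K^c(\mathsf{rep}_{mor}(\ca, \cb)) = K_n(\mathsf{rep}_{mor}(\ca, \cb))$ for all $n \geq 0$. Finally, rewriting $\cu_a(\cb)[1][-n] = \cu_a(\cb)[-(n-1)]$ would give the isomorphisms in the indexing shifted by one; to land exactly on the stated form $\mathsf{Hom}_{\cm^{add}_{dg}(e)}(\cu_a(\ca), \cu_a(\cb)[-n]) \cong K_n(\mathsf{rep}_{mor}(\ca, \cb))$ for $n \geq 0$, I would instead read off $\pi_{n+1}$ of the mapping spectrum, exactly as in proposition~\ref{k-theoryunst} where $\pi_{i+1}\mathsf{Map}(\cu_u(k), S^1 \wedge \cu_u(\ca)) \cong K_i(\ca)$; the extra suspension $[1]$ (equivalently the $S^1 \wedge -$) is precisely what accounts for the index shift between $\pi_{n+1}$ and $K_n$.

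The main obstacle is making the identification between $\mathsf{Hom}$-groups in the triangulated base category and stable homotopy groups of the spectrum-valued enriched $\mathsf{Hom}$ fully precise and natural in $\ca$ and $\cb$: one needs that the spectral enrichment of the stable Quillen model for $\cm^{add}_{dg}$ genuinely computes morphisms in $\cm^{add}_{dg}(e)$ after passing to $\pi_0$, and that shifts on both sides match up. This is where the careful bookkeeping lies, but it is essentially formal once the enriched model structure from section~\ref{univadit} is in hand. Everything else is a direct consequence of theorem~\ref{corep} and the definition of Waldhausen $K$-theory, so the proposition should follow in a few lines.
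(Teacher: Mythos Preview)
Your proposal is correct and takes essentially the same approach as the paper: both extract the result from theorem~\ref{corep} (equivalently, from the fibrant resolution of theorem~\ref{fibres}) by passing to homotopy groups of the mapping object. The only caveat is your indexing: in the paper's conventions (notation~\ref{not6}) the spectrum $K(\cb)^c$ has zeroth space $|N.wS_{\bullet}\mathsf{rep}_{mor}(?,\cb)|$, so $\pi_{n+1}K^c = K_n$ rather than $\pi_n K^c = K_n$; the paper sidesteps this bookkeeping by directly identifying $\cu_a(\cb)[-n]$ with $\Omega^{n+1}K(\cb)^c$ via theorem~\ref{fibres} and then computing $\pi_0$ of the mapping space to land on $\pi_{n+1}|N.wS_{\bullet}\mathsf{rep}_{mor}(\ca,\cb)| = K_n(\mathsf{rep}_{mor}(\ca,\cb))$.
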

\begin{remark}
Notice that if in the above theorem (resp. proposition), we consider $\ca=k$, we have
$$ \mathsf{Hom}^{\mathsf{Sp}^{\mathbb{N}}}(\cu_a(k), \cu_a(\cb)[1]) \stackrel{\sim}{\longrightarrow} K^c(\cb),\,\, \text{resp}.$$
$$ \mathsf{Hom}_{\cm^{add}_{dg}(e)} (\cu_a(k), \cu_a(\cb)[-n]) \stackrel{\sim}{\longrightarrow} K_n(\cb), \,\, \forall n\geq 0\,.$$
This shows that Waldhausen's connective $K$-theory spectrum (resp. groups) becomes co-representable in $\cm^{add}_{dg}$, resp. in $\cm^{add}_{dg}(e)$.
\end{remark}

In section~\ref{Chern}, we show that our co-representability theorem furnishes us for free the higher Chern characters from $K$-theory to cyclic homology. 
\begin{theorem}[\ref{Chern1}]
The co-representability theorem furnishes us the higher Chern characters
$$ ch_{n,r}: K_n(-) \longrightarrow HC_{n+2r}(-), \,\, n, r \geq 0\,.$$
\end{theorem}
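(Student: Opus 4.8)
The plan is to deduce Theorem~\ref{Chern1} from the co-representability Theorem~\ref{corep} by pairing it with a known localizing (hence additive) invariant, namely the mixed complex construction of Keller~\cite{cyclic}, which computes all flavours of cyclic homology. Concretely, the mixed complex functor $\mathcal{A}\mapsto C(\mathcal{A})$ is a localizing invariant of dg categories, so it factors through $\mathcal{U}_l$ and, a fortiori, through $\mathcal{U}_a$; that is, there is a morphism of derivators (equivalently, of the underlying triangulated categories at the base) $\mathcal{M}^{add}_{dg}\to \mathsf{HO}(\Lambda\text{-}\mathrm{Mod})$ or $\to \mathsf{HO}(\mathrm{Mix})$ through which $C(-)$ factors. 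Precomposing the co-representation isomorphism with this factorization is what will produce the characters.

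The key steps, in order, are as follows. First, record that $C(-)\colon \mathsf{HO}(\mathsf{dgcat})\to \mathsf{HO}(\mathrm{Mix})$ is a localizing invariant and therefore factors as $\overline{C}\circ \mathcal{U}_a$ for a triangulated morphism $\overline{C}\colon \mathcal{M}^{add}_{dg}\to \mathsf{HO}(\mathrm{Mix})$, using the universal property that defines $\mathcal{U}_a$. Second, apply the functoriality of $\overline{C}$ on Hom-groups: for $\mathcal{A}=k$ the remark following Theorem~\ref{corep} gives $\mathsf{Hom}_{\mathcal{M}^{add}_{dg}(e)}(\mathcal{U}_a(k),\mathcal{U}_a(\mathcal{B})[-n])\cong K_n(\mathcal{B})$, and $\overline{C}$ sends such a morphism to a morphism $\overline{C}(\mathcal{U}_a(k))\to \overline{C}(\mathcal{U}_a(\mathcal{B}))[-n]$ in the homotopy category of mixed complexes, i.e. an element of $\mathrm{Hom}(C(k),C(\mathcal{B})[-n])$. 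Third, identify the source and target: $C(k)$ is the trivial mixed complex on $k$ placed in degree $0$, and $\mathrm{Hom}_{D(\Lambda)}(C(k), C(\mathcal{B})[-n])$ computes, by the standard identification of morphisms out of $k$ in the derived category of the cyclic category, the cyclic homology groups $HC_{\ast}(\mathcal{B})$ — more precisely, because $k$ is not a compact generator for negative cyclic homology but rather sits in an appropriate periodicity pattern, one gets a whole family of classes indexed by the extra "Bott-type" shift $2r$, which is precisely the source of the two indices $n,r$. Thus for each $r\geq 0$ one extracts a map $K_n(\mathcal{B})\to HC_{n+2r}(\mathcal{B})$, natural in $\mathcal{B}$, and these are the Chern characters $ch_{n,r}$.

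The main obstacle I expect is the third step: pinning down exactly which cyclic-homology group is co-represented by $C(k)$ in the relevant homotopy category, and accounting honestly for the periodicity that produces the index $r$. One must decide whether to work in the derived category of mixed complexes (where $\mathrm{RHom}(k,-)$ recovers negative cyclic homology $HN_\ast$ and a boundary/periodicity argument then feeds into $HC_{\ast+2r}$ via the $SBI$-sequence) or directly with a model whose morphism groups out of the unit give $HC_{n+2r}$ on the nose; getting the degree conventions ($[1]$ versus $[-n]$, connective versus $\mathbb{Z}$-graded) to line up with the statement $ch_{n,r}\colon K_n\to HC_{n+2r}$ is the delicate bookkeeping. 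A secondary point to be careful about is naturality and additivity: one should check that the factorization $\overline{C}$ is a morphism of triangulated derivators (not merely a functor at the base point $e$), so that the resulting transformations are genuinely natural in the dg category and compatible with the localization/additivity structure; but this follows formally once the universal property of $\mathcal{U}_a$ is invoked, so it is routine rather than hard.
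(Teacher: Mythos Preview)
Your overall strategy---factor the mixed--complex functor $C$ through $\mathcal{U}_a$ using the universal property, then exploit the co-representability of $K_n$---is exactly the one the paper uses. But your execution diverges at the crucial third step, and the detour you anticipate (through $HN_\ast$ and the $SBI$ sequence) is avoidable.

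The paper does \emph{not} construct the Chern characters by applying the induced functor $\overline{C}$ to Hom-sets. Doing so, as you correctly worry, lands you in $\mathsf{Hom}_{\cd(\Lambda)}(k, C(\cb)[-n])$, which computes negative cyclic homology rather than $HC_\ast$, and then you would indeed need periodicity or the $SBI$ sequence to reach $HC_{n+2r}$. Instead, the paper regards $HC_j$ itself as a functor on $\cm_{dg}^{add}(e)$: since $C$ descends to $\overline{C}\colon \cm_{dg}^{add}(e)\to \cd(\Lambda)$, one sets $HC_j := H^{-j}(k\overset{\mathbb{L}}{\otimes}_\Lambda \overline{C}(-))$ on all of $\cm_{dg}^{add}(e)$. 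Now the co-representability statement says $K_n \cong \mathsf{Hom}(\cu_a(k)[n],-)$ as functors on $\cm_{dg}^{add}(e)$, so the Yoneda lemma gives
\[
\mathsf{Nat}(K_n, HC_j)\;\cong\; HC_j(\cu_a(k)[n])\;\cong\; HC_{j-n}(k)\,,
\]
using that $\overline{C}$ is triangulated (so $\overline{C}(\cu_a(k)[n])=C(k)[n]$). Since $HC_\ast(k)\cong k[u]$ with $|u|=2$, this group is $k$ precisely when $j=n+2r$ for some $r\ge 0$, and the canonical generator $1\in k$ \emph{is} $ch_{n,r}$. The index $r$ thus appears not from an external periodicity argument but directly from the computation of $HC_\ast(k)$.

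In short: replace ``apply $\overline{C}$ on Hom-groups and then identify the target'' with ``apply Yoneda to the representable functor $K_n$ against the already-extended functor $HC_j$ and evaluate at the representing object.'' This sidesteps entirely the obstacle you flagged.
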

In section~\ref{vistas}, we point out some questions that deserve
further investigation.

\section{Acknowledgments}

This article is part of my Ph.D. thesis under the supervision of
Prof. B.~Keller. It is a great pleasure to thank him for suggesting
this problem to me and for countless useful discussions. 

I have learned a lot from discussions with D.-C.~Cisinski. I deeply
thank him for sharing so generously his ideas on {\em Localisation des D{\'e}rivateurs}. I am also very grateful to B.~To{\"e}n for important remarks and for his interest. 

%I would like also to thank the anonymous referees for theirs comments and suggestions.

\section{Preliminaries}\label{pre}
In this section we recall the notion of Grothendieck derivator following~\cite{Cis-Nee}.

\begin{notation}\label{not1}
We denote by $CAT$, resp. $Cat$ the $2$-category of categories, resp. small categories. The empty category will be written $\emptyset$, and the $1$-point category (i.e. the category with one object and one identity morphism) will be written $e$. If $X$ is a small category, $X^{op}$ is the opposite category associated to $X$. If $u:X \rightarrow Y$ is a functor, and if $y$ is an object of $Y$, one defines the category $X/y$ as follows: the objects are the couples $(x,f)$, where $x$ is an object of $X$, and $f$ is a map in $Y$ from $u(x)$ to $y$; a map from $(x,f)$ to $(x',f')$ in $X/y$ is a map $\xi: x\rightarrow x'$ in $X$ such that $f'u(\xi)=f$. The composition law in $X/y$ is induced by the composition law in $X$. Dually, one defines $y\backslash X$ by the formula $y\backslash X=(X^{op}/y)^{op}$. We then have the canonical functors
$$ X/y \rightarrow X \,\,\,\, \text{and}\,\,\,\, y\backslash X \rightarrow X$$
defined by projection $(x,f) \mapsto x$. One can easily check that one gets the following pullback squares of categories
$$
\xymatrix{
X/y \ar[r] \ar[d]_{u/ y} & X \ar[d]^u & & y\backslash X \ar[d]_{y\backslash u} \ar[r] & X \ar[d]^u \\
Y/y \ar[r] & Y && y\backslash Y \ar[r] & Y \,.
}
$$
If $X$ is any category we let $p_X:X \rightarrow e$ be the canonical projection functor. Given any object $x$ of $X$ we will write $x:e \rightarrow X$ for the unique functor which sends the object of $e$ to $x$. The objects of $X$, or equivalently the functors $e \rightarrow X$, will be called the {\em points} of $X$.

If $X$ and $Y$ are two categories we denote by $\mathsf{Fun}(X,Y)$ the category of functors from $X$ to $Y$. If $\mathcal{C}$ is a $2$-category one writes $\mathcal{C}^{op}$ for its dual $2$-category: $\mathcal{C}^{op}$ has the same objects as $\mathcal{C}$ and, for any two objects $X$ and $Y$, the category $\mathsf{Fun}_{\mathcal{C}^{op}}(X,Y)$ of $1$-arrows from $X$ to $Y$ in $\mathcal{C}^{op}$ is $\mathsf{Fun}_{\mathcal{C}}(Y,X)^{op}$.
\end{notation}

\begin{definition}
A {\em prederivator} is a strict $2$-functor from $Cat^{op}$ to the $2$-category of categories
$$ \mathbb{D}: Cat^{op} \longrightarrow CAT\,.$$
More explicity: for any small category $X \in Cat$ one has a category $\mathbb{D}(X)$. For any functor $u:X \rightarrow Y$ in $Cat$ one gets a functor
$$ u^{\ast}=\mathbb{D}(u): \mathbb{D}(Y) \longrightarrow \mathbb{D}(X)\,.$$
For any morphism of functors
$$
\xymatrix{
X \ar@/^1pc/[rr]^u \ar@/_1pc/[rr]_v & \Downarrow \alpha & Y\,,
}
$$
one has a morphism of functors
$$
\xymatrix{
\mathbb{D}(X) & \alpha^{\ast} \Uparrow & \ar@/^1pc/[ll]^{v^{\ast}} \ar@/_1pc/[ll]_{u^{\ast}} \mathbb{D}(Y)\,.
}
$$
Of course, all these data have to verify some coherence conditions, namely:
\begin{itemize}
\item[(a)] For any composable maps in $Cat$, $X \stackrel{u}{\rightarrow} Y \stackrel{v}{\rightarrow} Z$,
$$ (vu)^{\ast} = u^{\ast} v^{\ast}\,\,\,\, \text{and}\,\,\,\, 1^{\ast}_X = 1_{\mathbb{D}(X)}\,.$$
\item[(b)] For any composable $2$-cells in $Cat$
$$
\xymatrix{
 X \ar[rr]^{\Downarrow \alpha}_{\Downarrow \beta} \ar@/^1pc/[rr]^u \ar@/_1pc/[rr]_w & & Y\,
}
$$
we have
$$ (\beta \alpha)^{\ast} =\alpha^{\ast} \beta^{\ast}\,\,\,\, \text{and}\,\,\,\, 1_u^{\ast}=1_{u^{\ast}}\,.$$
\item[(c)] For any $2$-diagram in $Cat$
$$ 
\xymatrix{
X \ar@/^1pc/[rr]^u \ar@/_1pc/[rr]_{u'} & \Downarrow \alpha & Y \ar@/^1pc/[rr]^v \ar@/_1pc/[rr]_{v'} & \Downarrow \beta & Z\,,
}
$$
we have $(\beta \alpha)^{\ast}= \alpha^{\ast} \beta^{\ast}\,.$
\end{itemize}
\end{definition}

\begin{example}\label{example1}
Let $M$ be a small category. The prederivator $\underline{M}$ naturally associated with $M$ is defined as
$$ X \mapsto \underline{M}(X)$$
where $\underline{M}(X)= \mathsf{Fun}(X^{op},M)$ is the category of presheaves over $X$ with values in $M$.
\end{example}

\begin{example}\label{example2}
Let $\cm$ be a category endowed with a class of maps called weak equivalences (for example $\cm$ can be the category of bounded complexes in a given abelian category, and the weak equivalences can be the quasi-isomorphisms). For any small category $X$, we define the weak equivalences in $\cm(X)$ to be the morphisms of presheaves which are termwise weak equivalences in $\cm$. We can then define $\mathbb{D}_{\cm}(X)$ as the localization of $\cm(X)$ by the weak equivalences. It is clear that, for any functor $u:X \rightarrow Y$ , the inverse image functor
$$
\begin{array}{rcl}
\cm(Y) & \rightarrow & \cm(X)\\
F & \mapsto & u^{\ast}(F)=F\circ u
\end{array}
$$
respects weak equivalences, so that it induces a well defined functor
$$ u^{\ast}: \mathbb{D}_{\cm}(X) \longrightarrow \mathbb{D}_{\cm}(X)\,.$$
The $2$-functoriality of localization implies that we have a prederivator $\mathbb{D}_{\cm}$.
\end{example}
Let $X$ be a small category and let $x$ be an object of $X$. Given an object $F \in \mathbb{D}(X)$ we will write $F_x=x^{\ast}(F)$. The object $F_x$ will be called the {\em fiber of $F$ at the point $x$}.

For a prederivator $\mathbb{D}$, define its {\em oposite} to be the prederivator $\mathbb{D}^{op}$ given by the formula $\mathbb{D}^{op}(X)=\mathbb{D}(X^{op})^{op}$ for all small categories $X$.

\begin{definition}\label{def1}
Let $\mathbb{D}$ be a prederivator. A map $u:X \rightarrow Y$ in $Cat$ has a {\em cohomological direct image functor} (resp. a {\em homological direct image functor}) in $\mathbb{D}$ if the inverse image functor
$$ u^{\ast}: \mathbb{D}(Y) \longrightarrow \mathbb{D}(X)$$
has a right adjoint (resp. a left adjoint)
$$ u_{\ast}: \mathbb{D}(X) \longrightarrow \mathbb{D}(Y) \,\,\,\,\,\,(\text{resp.}\,\,\, u_!: \mathbb{D}(X) \rightarrow \mathbb{D}(Y))\,,$$
called the {\em cohomological direct image functor} (resp. {\em homological direct image functor}) associated to $u$. 
\end{definition}

\begin{notation}\label{not2}
Let $X$ be a small category and $p=p_X:X \rightarrow e$. If $p$ has a cohomological direct image functor in $\mathbb{D}$, one defines for any object $F$ of $\mathbb{D}(X)$ the object of global sections of $F$ as
$$ \Gamma_{\ast}(X,F) = p_{\ast}(F)\,.$$
Dually, if $p$ has a homological direct image in $\mathbb{D}$ then, for any object $F$ of $\mathbb{D}(X)$, one sets
$$ \Gamma_!(X,F)=p_!(F)\,.$$
\end{notation}

\begin{notation}\label{not3}
Let $\mathbb{D}$ be a prederivator, and let
$$
\xymatrix{
X' \ar[r]^v \ar[d]_{u'} & X \ar[d]^u \ar@{}[dl]|{\Downarrow \alpha}\\
Y' \ar[r]_w & Y 
}
$$
be a $2$-diagram in $Cat$. By $2$-functoriality one obtains the following $2$-diagram
$$ 
\xymatrix{
\mathbb{D}(X') & \mathbb{D}(X) \ar[l]_{v^{\ast}} \\
\mathbb{D}(Y') \ar[u]^{u'^{\ast}} \ar@{}[ur]|{\Uparrow \alpha^{\ast}} & \mathbb{D}(Y) \ar[l]^{w^{\ast}} \ar[u]_{u^{\ast}} \,.
}
$$
If we assume that the functors $u$ and $u'$ both have cohomological direct images in $\mathbb{D}$ one can define the {\em base change morphism} induced by $\alpha$
$$ \beta: w^{\ast}u_{\ast} \rightarrow u'_{\ast}v^{\ast}$$
$$
\xymatrix{
\mathbb{D}(X') \ar[d]_{u_{\ast}'} & \mathbb{D}(X) \ar[l]_{v^{\ast}} \ar[d]^{u_{\ast}} \\
\mathbb{D}(Y') & \mathbb{D}(Y) \ar[l]^{w^{\ast}} \ar@{}[ul]|{\Uparrow \beta}
}
$$
as follows. The counit $u^{\ast}u_{\ast} \rightarrow 1_{\mathbb{D}(X)}$ induces a morphism $v^{\ast}u^{\ast}u_{\ast} \rightarrow v^{\ast}$, and by composition with $\alpha^{\ast}u_{\ast}$, a morphism $u'^{\ast}w^{\ast}u_{\ast} \rightarrow v^{\ast}$. This gives $\beta$ by adjunction.

This construction will be used in the following situation: let $u:X \rightarrow Y$ be a map in $Cat$ and let $y$ be a point of $Y$. According to notation \ref{not1} we have a functor $j: X/y \rightarrow X$, defined by the formula $j(x,f)=x$, where $f:u(x) \rightarrow y$ is a morphism in $Y$. If $p:X/y \rightarrow e$ is the canonical map one obtains the $2$-diagram below, where $\alpha$ denotes the $2$-cell defined by the formula $\alpha_{(x,f)}=f$
$$
\xymatrix{
X/y \ar[d]_p \ar[r]^j & X \ar[d]^u \ar@{}[dl]|{\Downarrow \alpha} \\
e \ar[r]_y & Y\,.
}
$$
\end{notation} 
Using notation~\ref{not2}, the associated base change morphism gives rise to a canonical morphism
$$ u_{\ast}(F)_y \rightarrow \Gamma_{\ast}(X/y, F/y)$$
for any object $F \in \mathbb{D}(X)$, where $F/y=j^{\ast}(F)$. Dually one has canonical morphisms
$$ \Gamma_!(y\backslash X, y\backslash F) \rightarrow u_!(F)_y$$
where $y\backslash F = k^{\ast}(F)$ and $k$ denotes the canonical functor from $y\backslash X$ to $X$.

\begin{notation}\label{not4}
Let $X$ and $Y$ be two small categories. Using the $2$-functoriality of $\mathbb{D}$, one defines a functor
$$d_{X,Y}: \mathbb{D}(X\times Y) \rightarrow \mathsf{Fun}(X^{op},\mathbb{D}(Y))$$
as follows. Setting $X'=X \times Y$, we have a canonical functor
$$ \mathsf{Fun}(Y,X')^{op} \longrightarrow \mathsf{Fun}(\mathbb{D}(X'), \mathbb{D}(Y))$$
which defines a functor
$$ \mathsf{Fun}(Y,X')^{op} \times \mathbb{D}(X') \longrightarrow \mathbb{D}(Y)$$
and then a functor
$$ \mathbb{D}(X') \longrightarrow \mathsf{Fun}(\mathsf{Fun}(Y,X')^{op}, \mathbb{D}(Y))\,.$$
Using the canonical functor
$$ X \rightarrow \mathsf{Fun}(Y,X \times Y), \,\,\,\, x \mapsto (y \mapsto (x,y))\,,$$
this gives the desired functor.

In particular, for any small category $X$, one gets a functor
$$ d_X=d_{X,e} : \mathbb{D}(X) \rightarrow \mathsf{Fun}(X^{op}, \mathbb{D}(e))\,.$$
If $F$ is an object of $\mathbb{D}(X)$, then $d_X(F)$ is the presheaf on $X$ with values in $\mathbb{D}(e)$ defined by 
$$ x \mapsto F_x\,.$$ 
\end{notation}

\begin{definition}
A {\em derivator} is a prederivator $\mathbb{D}$ with the following properties.
\begin{itemize}
\item[{\bf Der1}] (Non-triviality axiom). For any finite set $I$ and any family $\{X_i, i \in I\}$ of small categories, the canonical functor
$$ \mathbb{D}(\coprod_{i \in I} X_i) \longrightarrow \prod_{i \in I} \mathbb{D}(X_i)$$
is an equivalence of categories.
\item[{\bf Der2}] (Conservativity axiom). For any small category $X$, the family of functors
$$
\begin{array}{rcl}
 x^{\ast}: \mathbb{D}(X)&  \rightarrow &\mathbb{D}(e)\\ 
 F & \mapsto & x^{\ast}(F)=F_x
\end{array}
$$
corresponding to the points $x$ of $X$ is conservative. In other words: if $\varphi: F \rightarrow G$ is a morphism in $\mathbb{D}(X)$, such that for any point $x$ of $X$ the map $\varphi_x: F_x \rightarrow G_x$ is an isomophism in $\mathbb{D}(e)$, then $\varphi$ is an isomorphism in $\mathbb{D}(X)$.
\item[{\bf Der3}] (Direct image axiom). Any functor in $Cat$ has a cohomological direct image functor and a homological direct image functor in $\mathbb{D}$ (see definition~\ref{def1}).
\item[{\bf Der4}] (Base change axiom). For any functor $u: X \rightarrow Y$ in $Cat$, any point $y$ of $Y$ and any object $F$ in $\mathbb{D}(X)$, the canonical base change morphisms (see notation~\ref{not3})
$$ u_{\ast}(F)_y \rightarrow \Gamma_{\ast}(X/y, F/y) \,\,\,\, \text{and} \,\,\,\, \Gamma_!(y\backslash X, y\backslash F) \rightarrow u_!(F)_y$$
are isomorphisms in $\mathbb{D}(e)$.
\item[{\bf Der5}] (Essential surjectivity axiom). Let $I$ be the category corresponding to the graph
$$ 0 \leftarrow 1\,.$$
For any small category $X$, the functor
$$ d_{I,X}: \mathbb{D}(I\times X) \rightarrow \mathsf{Fun}(I^{op}, \mathbb{D}(X))$$
(see notation~\ref{not4}) is full and essentially surjective. 
\end{itemize}
\end{definition}
\begin{example}
By \cite{Cisinski1}, any Quillen model category $\cm$ gives rise to a derivator denoted $\mathsf{HO}(\cm)$. 
\end{example}
We denote by $\mathsf{Ho}(\mathcal{M})$
the homotopy category of $\mathcal{M}$. By definition, it equals $\mathsf{HO}(\mathcal{M})(e)$. 
\begin{definition}
A derivator $\mathbb{D}$ is {\em strong} if for every
finite free category $X$ and every small category $BY$, the natural
functor
$$ \mathbb{D}(X \times Y) \longrightarrow
\mathsf{Fun}(X^{op},\mathbb{D}(Y))\,,$$
see notation~\ref{not4}, is full and essentially surjective.
\end{definition}

Notice that a strong derivator is the same thing as a small homotopy
theory in the sense of Heller \cite{Heller}. Notice also that by proposition $2.15$ in
\cite{catder}, $\mathsf{HO}(\mathcal{M})$ is a strong derivator.

\begin{definition}
A derivator $\mathbb{D}$ is {\em regular} if in $\mathbb{D}$, sequential homotopy
colimits commute with finite products and homotopy pullbacks.
\end{definition}

\begin{notation}\label{not5}
Let $X$ be a category. Remember that a {\em sieve} (or a crible) in $X$ is a full subcategory $U$ of $X$ such that, for any object $x$ of $X$, if there exists a morphism $x \rightarrow u$ with $u$ in $U$ then $x$ is in $U$. Dually a {\em cosieve} (or a cocrible) in $X$ is a full subcategory $Z$ of $X$ such that, for any morphism $z \rightarrow x$ in $X$, if $z$ is in $Z$ then so is $x$.

A functor $j:U \rightarrow X$ is an {\em open immersion} if it is injective on objects, fully faithful, and if $j(U)$ is a sieve in $X$. Dually a functor $i:Z \rightarrow X$ is a {\em closed immersion} if it is injective on objects, fully faithful, and if $i(Z)$ is a cosieve in $X$. One can easily show that open immersions and closed immersions are stable by composition and pullback.
\end{notation}

\begin{definition}
A derivator $\mathbb{D}$ is {\em pointed} if it satisfies the following property.
\begin{itemize}
\item[{\bf Der6}] (Exceptional axiom). For any closed immersion $i:Z \rightarrow X$ in $Cat$ the cohomological direct image functor
$$i_{\ast}:\mathbb{D}(Z) \longrightarrow \mathbb{D}(X)$$
has a right adjoint
$$ i^!: \mathbb{D}(X) \longrightarrow \mathbb{D}(Z)$$
called the {\em exceptional inverse image functor} associated to $i$. Dually, for any open immersion $j:U \rightarrow X$ the homological direct image functor
$$ j_!: \mathbb{D}(U) \longrightarrow \mathbb{D}(X)$$
has a left adjoint
$$ j^{?}: \mathbb{D}(X) \longrightarrow \mathbb{D}(U)$$
called the {\em coexceptional inverse image functor} associated to $j$.
\end{itemize}
\end{definition}
Let $\square$ be the category given by the commutative square
$$
\xymatrix{
 (0,0) & (0,1) \ar[l] \\
 (1,0) \ar[u] & (1,1) \ar[l] \ar[u] \,.
}
$$
We are interested in two of its subcategories. The subcategory $\lr$ is
$$
\xymatrix{
 & (0,1)  \\
 (1,0)  & (1,1) \ar[l] \ar[u] \,.
}
$$
and $\ul$ is the subcategory
$$
\xymatrix{
 (0,0) & (0,1) \ar[l] \\
 (1,0) \ar[u] &  \,.
}
$$
We thus have two inclusion functors
$$ \sigma: \lr \rightarrow \square\,\,\,\, \text{and} \,\,\,\, \tau: \ul \rightarrow \square$$
($\sigma$ is an open immersion and $\tau$ a closed immersion). A {\em global commutative square} in $\mathbb{D}$ is an object of $\mathbb{D}(\square)$. A global commutative square $C$ in $\mathbb{D}$ is thus locally of shape
$$
\xymatrix{
C_{0,0} \ar[r] \ar[d] & C_{0,1} \ar[d] \\
C_{1,0} \ar[r] & C_{1,1}
}
$$
in $\mathbb{D}(e)$.

A global commutative square $C$ in $\mathbb{D}$ is {\em cartesian} (or a {\em homotopy pullback square}) if, for any global commutative square $B$ in $\mathbb{D}$, the canonical map
$$ \mathsf{Hom}_{\mathbb{D}(\square)}(B,C) \longrightarrow \mathsf{Hom}_{\mathbb{D}(\lrcorner)}(\sigma^{\ast}(B),\sigma^{\ast}(C))$$
is bijective. Dually, a global commutative square $B$ in $\mathbb{D}$ is {\em cocartesian} (or a {\em homotopy pushout square}) if, for any global commutative square $C$ in $\mathbb{D}$, the canonical map
$$ \mathsf{Hom}_{\mathbb{D}(\square)}(B,C) \longrightarrow \mathsf{Hom}_{\mathbb{D}(\ulcorner)}(\tau^{\ast}(B),\tau^{\ast}(C))$$ is bijective.

As $\square$ is isomorphic to its opposite $\square^{op}$, one can see that a global commutative square in $\mathbb{D}$ is cartesian (resp. cocartesian) if and only if is cocartesian (resp. cartesian) as a global commutative square in $\mathbb{D}^{op}$.

\begin{definition}
A derivator $\mathbb{D}$ is {\em triangulated} or {\em stable} if it is pointed and satisfies the following axiom:
\begin{itemize}
\item[{\bf Der7}] (Stability axiom). A global commutative square in $\mathbb{D}$ is cartesian if and only if it is cocartesian. 
\end{itemize}
\end{definition} 
\begin{theorem}[\cite{Malt1}]
For any triangulated derivator $\mathbb{D}$ and small category $X$ the category $\mathbb{D}(X)$ has a canonical triangulated structure.
\end{theorem}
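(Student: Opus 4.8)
The plan is to first reduce to the case $X = e$ and then equip the base category of an arbitrary triangulated derivator with a triangulated structure by hand. For the reduction, observe that the prederivator $\mathbb{D}^X$ defined by $\mathbb{D}^X(Y) = \mathbb{D}(X \times Y)$ (with $u^* = (\mathrm{id}_X \times u)^*$) satisfies $\mathbb{D}^X(e) = \mathbb{D}(X)$, so it suffices to show that $\mathbb{D}^X$ is again a triangulated derivator. Axioms Der1, Der2, Der3 are immediate from $2$-functoriality, Der5 for $\mathbb{D}^X$ is literally Der5 for $\mathbb{D}$ with test category $X \times Z$, and Der4 transfers via the usual base change calculus of derivators. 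The two points needing a word are Der6 and Der7: one checks that $i \times \mathrm{id}_X$ is again a closed (resp.\ open) immersion when $i$ is, so the exceptional inverse images transfer; and one uses the \emph{fiberwise} detection of homotopy (co)cartesian squares --- a global commutative square in $\mathbb{D}^X$, i.e.\ an object of $\mathbb{D}(\square \times X)$, is (co)cartesian if and only if its restriction to every point of $X$ is (co)cartesian in $\mathbb{D}$ --- which follows from base change along the homotopy exact squares $\ul \times \{x\} \hookrightarrow \ul \times X$ and $\square \times \{x\} \hookrightarrow \square \times X$ together with conservativity (Der2). From now on write $\ca = \mathbb{D}(e)$ for a triangulated derivator $\mathbb{D}$.

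Next I would construct the shift functor. Using Der6 (which in particular provides a zero object $0 \in \ca$) and Der3, for $a \in \ca$ one forms the $\ul$-shaped diagram $0 \leftarrow a \rightarrow 0$, left Kan extends it along $\tau$ to a homotopy cocartesian square in $\mathbb{D}(\square)$, and sets $\Sigma a$ to be its fourth vertex; dually, from $0 \rightarrow a \leftarrow 0$ and a homotopy cartesian square one obtains $\Omega a$. Functoriality of $\Sigma$ and $\Omega$ comes from Der5 (fullness and essential surjectivity of $d_{I, \square}$). Now stability enters: by Der7 each of these squares is simultaneously cartesian and cocartesian, whence the unit $\mathrm{id}_{\ca} \rightarrow \Omega\Sigma$ and the counit $\Sigma\Omega \rightarrow \mathrm{id}_{\ca}$ are isomorphisms, so $\Sigma : \ca \xrightarrow{\sim} \ca$ is an equivalence. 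This is the one place where ``triangulated'' is used rather than merely ``pointed''.

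I would then declare a triangle $a \rightarrow b \rightarrow c \rightarrow \Sigma a$ \emph{distinguished} if it is isomorphic to one extracted from a homotopy cocartesian square $D \in \mathbb{D}(\square)$ with $D_{0,0} = a$, $D_{0,1} = b$, $D_{1,0} = 0$ and $D_{1,1} = c$, the connecting map being induced by comparing $D$ with the defining square of $\Sigma a$. The bulk of the work is the verification of TR1--TR4. TR1 (closure under isomorphism, the trivial triangle on each object, and completion of an arbitrary morphism to a distinguished triangle) is built into the definition via Der3. For the rotation TR2 one pastes two vertically adjacent homotopy cocartesian squares (the pasting lemma, itself a consequence of Der3--Der4) in a $2 \times 3$ diagram all of whose small squares are cocartesian, reads off that $b \rightarrow c \rightarrow \Sigma a \rightarrow \Sigma b$ is again of the required form, and uses stability to invert the rotation as well. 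For TR3 one is given a commutative square relating the left-hand parts of two distinguished triangles; lifting it first over $\ul$ using Der5 ($d_{I,\ul}$) and then right Kan extending (the source squares being cocartesian, hence determined by their $\ul$-restrictions) produces a morphism of global squares in $\mathbb{D}(\square \times I)$, from which the desired morphism of triangles is extracted, its third component being distinguished by conservativity. Finally one checks that none of this depends on the chosen homotopy Kan extensions, so the structure is genuinely \emph{canonical}; this is a diagram chase using uniqueness of adjoints up to unique isomorphism.

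The main obstacle is the octahedral axiom TR4. I expect to treat it by evaluating $\mathbb{D}$ on a poset of ``$3 \times 3$'' type (nine objects, equivalently a cube with several vertices occupied by $0$), chosen so that a single coherent diagram contains all three octahedral triangles together with their compatibility maps at once, with every relevant small square homotopy cocartesian; the pasting lemma then identifies the cofibers, and Der7 is invoked repeatedly to move freely between pushouts and pullbacks. The delicate part is arranging the indexing poset so that Der5 genuinely yields the needed coherent lift of the given data, and bookkeeping the identification $\Omega = \Sigma^{-1}$ so that every connecting morphism lands in the correctly shifted object.
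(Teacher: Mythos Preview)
The paper does not actually prove this theorem: it is stated with a citation to \cite{Malt1} and no argument is given in the text. So there is nothing in the paper to compare your proposal against line by line.

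That said, your sketch is essentially the standard proof one finds in the cited work of Maltsiniotis (and, in a more polished form, in Groth's later exposition). The reduction to $X=e$ via the shifted derivator $\mathbb{D}^X$, the construction of $\Sigma$ and $\Omega$ from bicartesian squares with zero corners, the definition of distinguished triangles as cofiber sequences extracted from such squares, and the verification of TR1--TR4 by pasting of (co)cartesian squares together with Der5 and conservativity is exactly the approach taken there. Your intuition about TR4 is also correct: one works with a staircase-shaped subposet of $[2]\times[2]$ (not quite a full $3\times 3$ grid, but a diagram supporting three composable cofiber squares) and repeatedly applies the pasting lemma. One small correction of terminology: for TR3 you want to \emph{left} Kan extend along $\tau\times\mathrm{id}_I$ (not right Kan extend), since the source squares are cocartesian and hence determined by $\tau_!$ of their $\ul$-restrictions. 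Otherwise the outline is sound and matches the cited reference.
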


Let $\mathbb{D}$ and $\mathbb{D}'$ be derivators. 
We denote by $\underline{\mathsf{Hom}}(\mathbb{D},\mathbb{D}')$ the
category of all morphisms of derivators, by
$\underline{\mathsf{Hom}}_{!}(\mathbb{D},\mathbb{D}')$ the
category of morphisms of derivators which commute with homotopy
colimits \cite[3.25]{Cisinski} and by
$\underline{\mathsf{Hom}}_{flt}(\mathbb{D},\mathbb{D}')$ the
category of morphisms of derivators which commute with filtered homotopy
colimits, see \cite[5]{Cis-Nee}.

\section{Derived Kan extensions}\label{extension}
Let $A$ be a small category and $\mathsf{Fun}(A^{op},Sset)$ the
Quillen model category of simplicial pre-sheaves on $A$, endowed with
the projective model structure, see \cite{HAG}.
We have at our disposal the functor
$$
\begin{array}{ccc}
A & \stackrel{h}{\longrightarrow} & \mathsf{Fun}(A^{op},Sset) \\
X & \longmapsto & \mathsf{Hom}_A(?,X)\,,
\end{array}
$$
where $\mathsf{Hom}_A(?,X)$ is considered as a constant simplicial set.

The functor $h$ gives rise to a morphism of pre-derivators
$$ \underline{A} \stackrel{h}{\longrightarrow}
\mathsf{HO}(\mathsf{Fun}(A^{op},Sset))\,.$$

Using the notation of \cite{Cisinski}, we denote by $\mathsf{Hot}_A$
the derivator $\mathsf{HO}(\mathsf{Fun}(A^{op}, Sset))$. The following
results are proven in \cite{Cisinski}.

Let $\mathbb{D}$ be a derivator.

\begin{theorem}\label{Cin}
The morphism $h$ induces an
equivalence of categories
$$
\xymatrix{
\underline{\mathsf{Hom}}(\underline{A},\mathbb{D})
\ar@<-1ex>[d]_{\varphi} \\
\underline{\mathsf{Hom}}_!(\mathsf{Hot}_A, \mathbb{D})
\ar@<-1ex>[u]_{h^{\ast}}\,.
}
$$
\end{theorem}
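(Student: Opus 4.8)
The plan is to exhibit $h^{\ast}$ and a Kan-extension functor $\varphi$ as quasi-inverse equivalences, following Cisinski's approach to derived Kan extensions. First I would describe the candidate quasi-inverse $\varphi$. Given a morphism of prederivators $F \colon \underline{A} \to \mathbb{D}$, one wants to produce a morphism $\varphi(F) \colon \mathsf{Hot}_A \to \mathbb{D}$ commuting with homotopy colimits. The key point is that every object of $\mathsf{Fun}(A^{op}, Sset)$ is canonically a homotopy colimit of representables: for a simplicial presheaf $G$, the category of elements (or the Bousfield--Kan construction) expresses $G$ as $\operatorname{hocolim}$ over $h$ of a diagram indexed by $A/G$. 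Since $\varphi(F)$ is required to commute with homotopy colimits and to restrict along $h$ to $F$, this forces its value, and one takes this as the definition: $\varphi(F) := \mathbb{L}\mathbf{Lan}_h F$, the left homotopy Kan extension of $F$ along $h$, which makes sense because $\mathbb{D}$ satisfies \textbf{Der3} (all homotopy colimits exist) and is functorial at the level of derivators.

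Next I would check the two composites. The composite $h^{\ast} \circ \varphi$: by the defining property of the left Kan extension along the (homotopically fully faithful) Yoneda embedding $h$, the unit $F \to h^{\ast}\varphi(F)$ is an isomorphism — this is exactly the statement that $h$ is homotopically fully faithful and that left Kan extension along a fully faithful functor restricts back to the original, which one verifies fiber-wise over each $X \in A$ using that $A/h_X$ has a terminal object $(X, \mathrm{id})$. The composite $\varphi \circ h^{\ast}$: given $\Phi \colon \mathsf{Hot}_A \to \mathbb{D}$ commuting with homotopy colimits, the counit $\varphi(h^{\ast}\Phi) \to \Phi$ is evaluated on an arbitrary simplicial presheaf $G$; writing $G$ as the homotopy colimit of representables indexed by $A/G$ and using that both $\Phi$ and $\varphi(h^{\ast}\Phi) = \mathbb{L}\mathbf{Lan}_h(h^{\ast}\Phi)$ commute with this homotopy colimit, the counit on $G$ is identified with the canonical comparison built from the isomorphisms on representables, hence an isomorphism. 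Here one uses \textbf{Der1}, \textbf{Der2} and \textbf{Der4} to reduce checking an isomorphism in $\mathbb{D}$ to checking it fiber-wise, and one uses that the relevant homotopy colimit decomposition of a simplicial presheaf is compatible with the structure of the projective model structure on $\mathsf{Fun}(A^{op}, Sset)$.

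Finally I would verify the $2$-categorical refinement: $h^{\ast}$ and $\varphi$ are not merely bijective on objects but induce an equivalence of the Hom-categories, i.e. they are compatible with modifications/natural transformations of morphisms of derivators; this follows formally once the unit and counit above are shown to be invertible, since a fully faithful functor with a left adjoint whose unit and counit are isomorphisms is an adjoint equivalence, and adjoint equivalences of functor categories upgrade automatically. The main obstacle I expect is the careful construction of $\varphi$ as an honest morphism of \emph{derivators} (not just a functor $\mathsf{Hot}_A(e) \to \mathbb{D}(e)$): one must produce it compatibly over all indexing categories $X \in Cat$ and check it commutes with all the structure functors $u^{\ast}$, $u_!$, which is where the bulk of Cisinski's machinery in \cite{Cisinski} is genuinely needed — in particular the fact that left homotopy Kan extension along $h$ can be performed uniformly in families, using that $\mathsf{Hot}_A$ is itself a representable derivator and the calculus of homotopy colimits in derivators is stable under base change (\textbf{Der4}).
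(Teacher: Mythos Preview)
Your proposal is correct in outline, but it takes a substantially different route from the paper. The paper's proof consists of a single sentence: it observes the identification $\underline{\mathsf{Hom}}(\underline{A},\mathbb{D}) \simeq \mathbb{D}(A^{op})$ (which follows directly from the definition of the prederivator $\underline{A}$) and then invokes Corollary~3.26 of Cisinski~\cite{Cisinski} as a black box. What you have written is, in effect, a sketch of the argument \emph{behind} that corollary: construct $\varphi$ as the derived left Kan extension along the Yoneda embedding, use that $h$ is homotopically fully faithful to see the unit is invertible, and use the Bousfield--Kan presentation of an arbitrary simplicial presheaf as a homotopy colimit of representables to see the counit is invertible. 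Your own caveat at the end --- that the genuine work lies in constructing $\varphi$ coherently as a morphism of derivators over all indexing categories and verifying the base-change compatibilities --- is exactly the content of Cisinski's paper, and is precisely why the present paper chooses to cite rather than reprove. So both approaches are valid: the paper's buys brevity by deferring to the literature, while yours makes the mechanism of the equivalence visible but would, if carried out in full, amount to reproducing a nontrivial portion of~\cite{Cisinski}.
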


\begin{proof}
This theorem is equivalent to corollary $3.26$ in \cite{Cisinski},
since we have
$$ \underline{\mathsf{Hom}}(\underline{A},\mathbb{D}) \simeq \mathbb{D}(A^{op})\,.$$
\end{proof}

\begin{lemma}\label{ad}
We have an adjunction
$$
\xymatrix{
\underline{\mathsf{Hom}}(\mathsf{Hot}_A,\mathbb{D})
\ar@<2ex>[d]^{\Psi} \\
*+<1pc>{\underline{\mathsf{Hom}}_!(\mathsf{Hot}_A, \mathbb{D})}
\ar@{^{(}->}[u]^{inc} \,,
}
$$
where 
$$\Psi(F):=\varphi(F \circ h)\,.$$
\end{lemma}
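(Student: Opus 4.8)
The plan is to construct the functor $\Psi$ as a left adjoint to the inclusion $inc$ and verify the adjunction property. The formula $\Psi(F) := \varphi(F \circ h)$ already tells us what $\Psi$ should be on objects: given a morphism of derivators $F: \mathsf{Hot}_A \to \mathbb{D}$, restrict along $h$ to get a morphism of prederivators $F \circ h: \underline{A} \to \mathbb{D}$, which by Theorem~\ref{Cin} lives in $\underline{\mathsf{Hom}}(\underline{A},\mathbb{D})$, and then apply the equivalence $\varphi$ to land back in $\underline{\mathsf{Hom}}_!(\mathsf{Hot}_A,\mathbb{D})$. First I would check that this assignment is functorial in $F$, i.e. extends to $2$-cells (natural transformations of morphisms of derivators), which is immediate since precomposition with $h$ and the equivalence $\varphi$ are both $2$-functorial.

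Next I would produce the unit and counit. For the counit: given $F \in \underline{\mathsf{Hom}}_!(\mathsf{Hot}_A,\mathbb{D})$, we need a map $\Psi(inc(F)) = \varphi(F\circ h) \to F$ in $\underline{\mathsf{Hom}}_!(\mathsf{Hot}_A,\mathbb{D})$; but since $\varphi$ and $h^\ast$ are mutually inverse equivalences by Theorem~\ref{Cin}, and $h^\ast(F) = F \circ h$ essentially by definition, we have $\varphi(F \circ h) = \varphi(h^\ast(F)) \cong F$ canonically — so the counit is in fact an isomorphism. For the unit: given an arbitrary $G \in \underline{\mathsf{Hom}}(\mathsf{Hot}_A,\mathbb{D})$, we need a map $\eta_G: G \to inc(\Psi(G)) = \varphi(G \circ h)$. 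Here I would use the universal property underlying Theorem~\ref{Cin}: the equivalence $\varphi$ is realized by left Kan extension along $h$ (this is corollary~$3.26$ of \cite{Cisinski}), so there is a canonical comparison morphism from any morphism of derivators $G$ to the cocontinuous morphism $\varphi(G\circ h)$ obtained by Kan-extending its restriction, namely the one induced by the adjunction $(h_!, h^\ast)$ of Kan extension. Explicitly, $G\circ h = h^\ast(G)$ and the unit of $(h_!, h^\ast)$ gives $G \circ h \to h^\ast h_!(G\circ h)$, to which one applies adjointness; tracking this through identifies $\eta_G$.

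Finally I would verify the triangle identities. Since the counit is an isomorphism, one of the two triangle identities is automatic up to that isomorphism, and the other reduces to a statement purely about the equivalence $(\varphi, h^\ast)$ of Theorem~\ref{Cin} together with the universal property of left Kan extension — namely that Kan-extending and then restricting recovers the original restriction, which is exactly the content that makes $\varphi \circ h^\ast \cong \mathrm{id}$ on the cocontinuous side. Concretely: $inc \Psi G = \varphi(h^\ast G)$, applying $\Psi$ again gives $\varphi(h^\ast \varphi(h^\ast G)) \cong \varphi(h^\ast G)$ using $h^\ast \varphi \cong \mathrm{id}$, and compatibility of $\eta$ with this isomorphism is the triangle identity. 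The main obstacle I anticipate is not any single hard step but rather bookkeeping: making precise the sense in which "$\varphi$ is left Kan extension along $h$'' so that the unit $\eta_G$ is genuinely canonical and $2$-natural in $G$, and checking this is compatible with the $2$-categorical structure on $\underline{\mathsf{Hom}}$. Once one unwinds corollary~$3.26$ of \cite{Cisinski} this should be routine, but it requires care to state the Kan-extension adjunction at the level of morphisms of derivators rather than just objects of $\mathbb{D}(A^{op})$.
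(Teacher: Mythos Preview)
You have the direction of the adjunction backwards. The paper proves $inc \dashv \Psi$, not $\Psi \dashv inc$: it constructs a universal $2$-morphism $\epsilon : inc \circ \Psi \to Id$ (the counit of $inc \dashv \Psi$), and this is also how the lemma is used later in the proof of Theorem~\ref{flt}, where one needs a natural map $\Psi(G) \to G$. Concretely, for $F \in \underline{\mathsf{Hom}}(\mathsf{Hot}_A,\mathbb{D})$ and $X \in \mathsf{Hot}_A(L)$, the paper writes $X \simeq p_! q^\ast(h)$ via the Grothendieck construction $\nabla\!\int\! X$ and defines
\[
\epsilon_L(X) : \Psi(F)(X) = p_! q^\ast F(h) = p_! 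F(q^\ast h) \longrightarrow F(p_! q^\ast h) \simeq F(X),
\]
the canonical colimit-comparison map. This is the only direction in which a natural comparison exists: $\Psi(F)(X)$ is built as a homotopy colimit of values $F(h(a))$, and such a colimit maps \emph{to} $F$ of the colimit, not from it.

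Your proposed unit $\eta_G : G \to \varphi(G\circ h)$ does not exist in general. You attempt to produce it from ``the unit of $(h_!, h^\ast)$'', but that adjunction lives at the level of objects of $\mathbb{D}(A^{op})$, not morphisms of derivators; unwinding it gives exactly the map $\Psi(G)(X) \to G(X)$ above, in the opposite direction to what you need. Your observation that $\Psi \circ inc \cong Id$ is correct, but this isomorphism is the (inverse of the) unit of $inc \dashv \Psi$, not a counit witnessing $\Psi \dashv inc$. The fix is straightforward: swap the roles, build $\epsilon$ as the colimit-comparison map, and check universality using Theorem~\ref{Cin} --- exactly as the paper does.
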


\begin{proof}
We construct a universal $2$-morphism of functors
$$\epsilon : \, inc \circ \Psi \longrightarrow Id\,.$$
Let $F$ be a morphism of derivators belonging to
$\underline{\mathsf{Hom}}(\mathsf{Hot}_A,\mathbb{D})$. Let $L$ be a
small category and $X$ an object of $\mathsf{Hot}_A(L)$. Recall from
\cite{Cisinski} that we have the diagram
$$
\xymatrix{
 & \nabla \int X \ar[dl]_{\pi} \ar[dr]^{\varpi} & \\
L^{op} & & A\,.
}
$$
Now, let $p$ be the functor $\pi^{op}$ and $q$ the functor $\varpi^{op}$. By
the dual of proposition $1.15$ in \cite{Cisinski}, we have the
following functorial isomorphism
$$ p_! q^{\ast}(h) \stackrel{\sim}{\longrightarrow} X\,.$$
Finally let $\epsilon_L(X)$ be the composed morphism
$$ \epsilon_L(X) : \, \Psi(F)(X) = p_!q^{\ast}F(h) = p_!F(q^{\ast}h)
\rightarrow F(p_!q^{\ast}h) \stackrel{\sim}{\rightarrow}F(X)$$ and notice, using theorem~\ref{Cin}, that $\epsilon$ induces an adjunction.
\end{proof}

\section{Localization: model categories versus derivators}\label{localisation}
Let $\mathcal{M}$ be a left proper, cellular Quillen model
category, see \cite{Hirschhorn}. 

We start by fixing a frame on $\mathcal{M}$, see
definition $16.6.21$ in \cite{Hirschhorn}. Let $D$ be a
small category and $F$ a functor from $D$ to
$\mathcal{M}$. We denote by $\mbox{hocolim} \,F$ the object of
$\mathcal{M}$, as in definition $19.1.2$ of \cite{Hirschhorn}.
Let $S$ be a set of morphisms in $\mathcal{M}$ and denote by $\mathsf{L}_S\mathcal{M}$ the
  left Bousfield localization of $\mathcal{M}$ by $S$. 

Notice that the Quillen adjunction
$$
\xymatrix{
\mathcal{M} \ar@<-1ex>[d]_{Id} \\
\mathsf{L}_S \mathcal{M} \ar@<-1ex>[u]_{Id} \,,
}
$$
induces a morphism of derivators
$$ \gamma: \mathsf{HO}(\mathcal{M})
\stackrel{\mathbb{L}Id}{\longrightarrow}
\mathsf{HO}(\mathsf{L}_S \mathcal{M})$$
which commutes with homotopy colimits.

\begin{proposition}\label{Cisin}
Let $\mathcal{W}_S$ be the smallest class of morphisms in
$\mathcal{M}$ satisfying the following properties:
\begin{itemize}
\item[a)] Every element in $S$ belongs to $\mathcal{W}_S$.
\item[b)] Every weak equivalence of $\mathcal{M}$ belongs to
  $\mathcal{W}_S$.
\item[c)] If in a commutative triangle, two out of three morphisms
    belong to $\mathcal{W}_S$, then so does the third one. The class
    $\mathcal{W}_S$ is stable under retractions.
\item[d)] Let $D$ be a small category and $F$ and $G$ functors from
  $D$ to $\mathcal{M}$. If $\eta$ is a morphism of functors from $F$
  to $G$ such that for every object $d$ in $D$, $F(d)$ and $G(d)$ are cofibrant
  objects and the morphism $\eta(d)$
  belongs to $\mathcal{W}_S$, then so does the morphism
$$ \mbox{hocolim}\, F \longrightarrow \mbox{hocolim} \,G\,.$$
\end{itemize}
Then the class $\mathcal{W}_S$ equals the class of $S$-local
equivalences in $\mathcal{M}$, see \cite{Hirschhorn}.
\end{proposition}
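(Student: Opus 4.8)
The plan is to prove the two inclusions $\mathcal{W}_S \subseteq \{S\text{-local equivalences}\}$ and $\{S\text{-local equivalences}\} \subseteq \mathcal{W}_S$ separately, the second one being the substantive direction. For the first inclusion, I would observe that the class $\mathcal{W}$ of $S$-local weak equivalences of $\mathsf{L}_S\mathcal{M}$ satisfies conditions a)--d): condition a) and b) are immediate from the construction of a left Bousfield localization (elements of $S$ become weak equivalences, and original weak equivalences are preserved); condition c) is the two-out-of-three property together with closure under retracts, which holds in any model category; and condition d) holds because $\mathrm{hocolim}$ in $\mathsf{L}_S\mathcal{M}$ may be computed using the frame fixed on $\mathcal{M}$ (left Bousfield localization does not change the underlying category, the cofibrations, or the cofibrant objects), and a levelwise weak equivalence between diagrams of cofibrant objects induces a weak equivalence on homotopy colimits. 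Since $\mathcal{W}_S$ is by definition the \emph{smallest} class with properties a)--d), we get $\mathcal{W}_S \subseteq \mathcal{W}$.

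For the reverse inclusion I would argue as follows. Let $f: X \to Y$ be an $S$-local weak equivalence; I want to show $f \in \mathcal{W}_S$. By property c) I may freely replace $f$ up to the two-out-of-three relation, so using functorial (co)fibrant replacement in $\mathcal{M}$ — whose structure maps are weak equivalences of $\mathcal{M}$, hence lie in $\mathcal{W}_S$ by b) — I reduce to the case where $X$ and $Y$ are both cofibrant and fibrant in $\mathcal{M}$. The key point is then a constructive description of the $S$-local equivalences between such objects: every $S$-local equivalence between cofibrant objects can be built from the maps in $S$ (and the weak equivalences of $\mathcal{M}$) by the operations allowed in c) and d). Concretely, one uses the small object argument relative to the set $S$ (or rather its "horns", the maps $S \otimes \partial\Delta^n \cup \cdots$ used to build a cofibrant resolution of $\mathsf{L}_S\mathcal{M}$): any object admits a map to an $S$-local (i.e. $\mathsf{L}_S\mathcal{M}$-fibrant) object obtained as a transfinite composition of pushouts of coproducts of generating $S$-horns along cofibrations between cofibrant objects. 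Each such elementary step is a homotopy pushout of a map in the saturation of $S$, hence lies in $\mathcal{W}_S$ by d) (homotopy pushouts are particular homotopy colimits over a diagram of cofibrant objects) together with a) and b); transfinite composition is again a homotopy colimit of cofibrant objects, so stays in $\mathcal{W}_S$ by d). This exhibits, for each cofibrant $X$, a map $X \to \hat X$ in $\mathcal{W}_S$ with $\hat X$ fibrant in $\mathsf{L}_S\mathcal{M}$. Applying this to both $X$ and $Y$ and using functoriality, the square relating $f$ to $\hat f : \hat X \to \hat Y$ has its two horizontal maps in $\mathcal{W}_S$; by c) it suffices to see $\hat f \in \mathcal{W}_S$. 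But $\hat f$ is now an $S$-local equivalence between objects that are fibrant in $\mathsf{L}_S\mathcal{M}$, hence an honest weak equivalence of $\mathcal{M}$, so $\hat f \in \mathcal{W}_S$ by b). Combining, $f \in \mathcal{W}_S$.

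The main obstacle is the construction in the reverse direction: one must produce the $\mathsf{L}_S\mathcal{M}$-fibrant resolution $X \to \hat X$ as a \emph{transfinite composition of homotopy pushouts of maps obtained from $S$ by the closure operations}, rather than as an abstract factorization, and one must verify that each stage genuinely falls under clause d) — in particular that all objects appearing (sources and targets of the attaching maps, and the intermediate objects of the transfinite tower) are cofibrant, so that the homotopy colimit computations are valid. This is exactly where left properness and cellularity of $\mathcal{M}$ are used: left properness guarantees that the relevant pushouts are homotopy pushouts, and cellularity (or rather combinatoriality) guarantees that the small object argument applies to the set $S$ and its horns, producing a \emph{set} of generating maps whose cell complexes exhaust the $S$-local equivalences up to the operations in a)--d). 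The bookkeeping for the transfinite induction — checking that $\mathcal{W}_S$, being closed under the homotopy colimit operation d), absorbs composites of arbitrary ordinal length — is routine once the one-step claim is in place.
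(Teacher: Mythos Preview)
Your outline is correct and follows essentially the same route as the paper: show the $S$-local equivalences satisfy a)--d), then for the reverse inclusion reduce to a cofibrant source, build the $\mathsf{L}_S\mathcal{M}$-fibrant replacement $j(X)\colon X\to \mathsf{L}_S X$ via the small object argument on the horns of $S$, verify that $\mathcal{W}_S$ is stable under pushouts along cofibrations between cofibrant objects and under transfinite composition (both instances of d)), and conclude by two-out-of-three once the replacement of $f$ is an ordinary weak equivalence.

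The one place where the paper supplies content beyond your sketch is precisely the step you flag as the main obstacle: showing that each generating horn $\Lambda(g)$ (for $g\colon A\to B$ in $S$) lies in $\mathcal{W}_S$. Your phrase ``a map in the saturation of $S$'' is too vague here --- the horns are not obtained from $S$ by any obvious colimit closure. The paper's argument is: in the factorization
\[
\tilde{\mathbf A}\otimes\Delta[n]\;\xrightarrow{\ I\ }\;\tilde{\mathbf A}\otimes\Delta[n]\amalg_{\tilde{\mathbf A}\otimes\partial\Delta[n]}\tilde{\mathbf B}\otimes\partial\Delta[n]\;\xrightarrow{\ \Lambda(g)\ }\;\tilde{\mathbf B}\otimes\Delta[n],
\]
the composite $\tilde g\otimes 1_{\Delta[n]}=\tilde g^{\,n}$ is in $\mathcal{W}_S$ (it is weakly equivalent to $g$, so use b) and c)); the map $I$ is the pushout of $\tilde g\otimes 1_{\partial\Delta[n]}$, which one identifies with the induced map on latching objects $\mathsf{L}_n\tilde{\mathbf A}\to \mathsf{L}_n\tilde{\mathbf B}$ and then shows lies in $\mathcal{W}_S$ by applying d) to the homotopy colimit over the latching category $\partial(\overrightarrow{\Delta}\!\downarrow\![n])$ of maps each weakly equivalent to $g$. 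Two-out-of-three then yields $\Lambda(g)\in\mathcal{W}_S$. With this in hand, the rest of your argument goes through exactly as you describe.
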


\begin{proof}
The class of $S$-local equivalences satisfies properties $a)$,
$b)$, $c)$ and $d)$: Properties $a)$ and $b)$ are satisfied by
definition, propositions $3.2.3$ and $3.2.4$ in \cite{Hirschhorn} imply property
$c)$ and proposition $3.2.5$ in \cite{Hirschhorn} implies property $d)$.

Let us now show that conversely, each $S$-local equivalence is in $\mathcal{W}_S$.
Let 
$$X \stackrel{g}{\rightarrow} Y$$
 be an $S$-local equivalence in $\mathcal{M}$. Without loss of generality, we can suppose that $X$ is
 cofibrant. Indeed, let $Q(X)$ be a cofibrant resolution of $X$ and consider
 the diagram
$$
\xymatrix{
Q(X) \ar[d]_{\pi}^{\sim} \ar[dr]^{g\circ \pi} & \\
X \ar[r]_g & Y\,.
}
$$
Notice that since $\pi$ is a weak equivalence, $g$ is an $S$-local
equivalence if and only if $g \circ \pi$ is one.

By theorem $4.3.6$ in \cite{Hirschhorn}, $g$ is an $S$-local equivalence
if and only if the morphism $\mathsf{L}_S(g)$ appearing in the diagram
$$
\xymatrix{
X \ar[rr]^g \ar[d]_{j(X)} & &  Y \ar[d]^{j(Y)} \\
\mathsf{L}_SX \ar[rr]_{\mathsf{L}_S(g)} & &  \mathsf{L}_SY 
}
$$
is a weak equivalence in $\mathcal{M}$.
This shows that it is enough to prove that $j(X)$ and $j(Y)$ belong to
$\mathcal{W}_S$. Apply the small object argument to the morphism
$$ X \longrightarrow \ast $$
using the set $\widetilde{\Lambda(S)}$, see proposition $4.2.5$ in
\cite{Hirschhorn}. We have the factorization 
$$
\xymatrix{
X \ar[rr] \ar[dr]_{j(X)} & &  \ast \\
 & \mathsf{L}_S(X) \ar[ur] & ,
}
$$
where $j(X)$ is a relative $\widetilde{\Lambda(S)}$-cell complex.

We will now prove two stability conditions concerning the class
$\mathcal{W}_S$:
\begin{itemize}
\item[S1)] Consider the following push-out
$$
\xymatrix{
*+<1pc>{W_0} \ar[r] \ar@{>->}[d]_f  \ar@{}[dr]|{\lrcorner} & W_2 \ar[d]^{f_{\ast}} \\
W_1 \ar[r] & W_3 \,,
}
$$
where $W_0$, $W_1$ and $W_2$ are cofibrant objects in $\mathcal{M}$
and $f$ is a cofibration which belongs to $\mathcal{W}_S$. Observe that
$f_{\ast}$ corresponds to the colimit of the morphism of diagrams
$$
\xymatrix{
*+<1pc>{W_0} \ar@{=}[r] \ar@{>->}[d]_f & W_0 \ar@{=}[d] \ar[r] & W_2 \ar@{=}[d]
\\
W_1 & *+<1pc>{W_0} \ar@{>->}[l]^f \ar[r] & W_2 \,.
}
$$
Now, proposition $19.9.4$ in \cite{Hirschhorn} and property $d)$ 
imply that $f_{\ast}$ belongs to $\mathcal{W}_S$.

\item[S2)] Consider the following diagram
$$
\xymatrix{
\mathbf{X} :X_0 \,\,\ar@{>->}[r]^{f_0} & *+<1pc>{X_1} \ar@{>->}[r]^{f_1} &  *+<1pc>{X_2} \ar@{>->}[r]^{f_2} &  *+<1pc>{X_3} \ar@{>->}[r] & \ldots
}
$$
in $\mathcal{M}$, where the objects are cofibrant and the morphisms
are cofibrations which belong to the class $\mathcal{W}_S$. Observe that
the transfinite composition of $\mathbf{X}$ corresponds to the colimit of the
morphism of diagrams
$$
\xymatrix{
X_0 \ar@{=}[r] \ar@{=}[d] & *+<1pc>{X_0} \ar@{=}[r] \ar@{>->}[d]^{f_0} & *+<1pc>{X_0}
\ar@{=}[r] \ar@{>->}[d]^{f_1 \circ f_0} & \ldots \\
*+<1pc>{X_0} \ar@{>->}[r]_{f_0} & *+<1pc>{X_1} \ar@{>->}[r]_{f_1} & *+<1pc>{X_2} \ar@{>->}[r] &
\ldots \,\,. 
}
$$
Now, since $\mathbf{X}$ is a Reedy cofibrant diagram on category with
fibrant constants, see definition $15.10.1$ in \cite{Hirschhorn},
theorem $19.9.1$ from \cite{Hirschhorn} and property $d)$ imply that the transfinite
composition of $\mathbf{X}$ belongs to $\mathcal{W}_S$. Notice that
the above argument can be immediatly generalized to a transfinite
composition of a $\lambda$-sequence, where $\lambda$ denotes an
ordinal, see section $10.2$ in \cite{Hirschhorn}.
\end{itemize}

Now, the construction of the morphism $j(X)$ and the stability
conditions $S1)$ and $S2)$ shows us that it is enough to prove that the elements of
$\widetilde{\Lambda(S)}$ belong to $\mathcal{W}_S$.
By proposition $4.2.5$ in \cite{Hirschhorn}, it is sufficient to show
that the set 
$$ \Lambda(S)= \{ \tilde{{\bf A}}\otimes \Delta[n]
\underset{\tilde{{\bf A}}\otimes
  \partial \Delta[n]}{\amalg} \tilde{{\bf B}}\otimes \Delta[n]
\stackrel{\Lambda(g)}{\longrightarrow} \tilde{{\bf B}}\otimes \Delta[n] \,|\,
(A \stackrel{g}{\rightarrow} B) \in S, \, n \geq 0\} \,,$$
of horns in $S$ is contained in $\mathcal{W}_S$. Recall from definition $4.2.1$ in
\cite{Hirschhorn} that $\tilde{g}: \tilde{{\bf A}} \rightarrow
\tilde{{\bf B}}$
denotes a cosimplicial resolution of $g: A \rightarrow B$ and $\tilde{g}$
is a Reedy cofibration.
We have the diagram
$$
\xymatrix{
\tilde{{\bf A}} \otimes \partial \Delta[n] \ar[d]_{1 \otimes i}
\ar[rr]^{\tilde{g} \otimes 1}  & &   \tilde{{\bf B}} \otimes \partial
\Delta[n] \ar[d]^{1 \otimes i} \\
\tilde{{\bf A}} \otimes \Delta[n] \ar[rr]_{\tilde{g} \otimes 1}  & &
\tilde{{\bf B}} \otimes \Delta[n] \,.
}
$$
Observe that the morphism
$$ \tilde{{\bf A}} \otimes \Delta[n] \stackrel{\tilde{g} \otimes 1}{\longrightarrow}
\tilde{{\bf B}} \otimes \Delta[n]$$
identifies with 
$$ \tilde{{\bf A}}^n \stackrel{\tilde{g}^n}{\longrightarrow}
\tilde{{\bf B}}^n\,,$$
and so belongs to $\mathcal{W}_S$. Now, the morphism
$$ \tilde{{\bf A}}\otimes \partial \Delta[n] \stackrel{\tilde{g}\otimes \mathbf{1}_{\partial \Delta[n]}}{\longrightarrow}
\tilde{{\bf B}}\otimes \partial \Delta[n]$$
corresponds to the induced map of latching objects
$$\mathsf{L}_n \tilde{{\bf A}} \longrightarrow  \mathsf{L}_n \tilde{{\bf B}}\,,$$
which is a cofibration in $\mathcal{M}$ by proposition $15.3.11$ in
\cite{Hirschhorn}.

Now by propositions $15.10.4$, $16.3.12$ and theorem $19.9.1$ of
\cite{Hirschhorn}, we have the following commutative diagram
$$
\xymatrix{
\underset{\partial(\stackrel{\rightarrow}{\Delta}\downarrow
  [n])}{\mbox{hocolim}} \, \tilde{{\bf A}} \ar[r] \ar[d]_{\sim} & \underset{\partial(\stackrel{\rightarrow}{\Delta}\downarrow
  [n])}{\mbox{hocolim}} \, \tilde{{\bf B}} \ar[d]^{\sim} \\
\mathsf{L}_n \tilde{{\bf A}} \ar[r] & \mathsf{L}_n \tilde{{\bf B}}\,,
}
$$
where the vertival arrows are weak equivalences and $\partial(\stackrel{\rightarrow}{\Delta}\downarrow
  [n])$ denotes the category of strictly increasing maps with target $[n]$. By property $d)$ of
the class $\mathcal{W}_S$, we conclude that $\tilde{g} \otimes \mathbf{1}_{\partial\Delta[n]}$
belongs to $\mathcal{W}_S$.

We have the following diagram
$$
\xymatrix{
\tilde{{\bf A}}\otimes \Delta[n] \underset{\tilde{{\bf A}}\otimes
  \partial \Delta[n]}{\amalg} \tilde{{\bf B}}\otimes \Delta[n]
\ar[drr]^{\Lambda(g)} & & \\
\tilde{{\bf A}} \otimes \Delta[n] \ar[u]^I \ar[rr]_{\tilde{g}\otimes
  1} & & \tilde{{\bf B}} \otimes \Delta[n]\,.
}
$$
Notice that the morphism $I$ belongs to $\mathcal{W}_S$ by the
stability condition $S1)$ applied to the morphism
$$ \tilde{{\bf A}} \otimes \partial \Delta[n]  \stackrel{\tilde{g}
  \otimes 1}{\longrightarrow} \tilde{{\bf B}} \otimes \partial \Delta[n]\,,$$
which is a cofibration and belongs to $\mathcal{W}_S$. Since the
morphism $\tilde{g}\otimes 1$ belongs to $\mathcal{W}_S$ so does $\Lambda(g)$.

This proves the proposition.
\end{proof}
Let $\mathbb{D}$ be derivator and $S$ a class of
morphisms in $\mathbb{D}(e)$.

\begin{definition}[Cisinski \cite{Letter}]
The derivator $\mathbb{D}$ admits a {\em left Bousfield localization}
by $S$ if there exists a morphism of derivators
$$ \gamma : \mathbb{D} \rightarrow \mathsf{L}_S\mathbb{D}\,,$$
which commutes with homotopy colimits, sends the elements of $S$ to
isomorphisms in $\mathsf{L}_S\mathbb{D}(e)$ and satisfies the
universal property: for every derivator $\mathbb{D}'$ the morphism
$\gamma$ induces an equivalence of categories
$$\underline{\mathsf{Hom}}_!(\mathsf{L}_S\mathbb{D},\mathbb{D}')
  \stackrel{\gamma^{\ast}}{\longrightarrow}
  \underline{\mathsf{Hom}}_{!,S}(\mathbb{D},\mathbb{D}')\,,$$
where $\underline{\mathsf{Hom}}_{!,S}(\mathbb{D},\mathbb{D}')$ denotes
the category of morphisms of derivators which commute with homotopy
colimits and send the elements of $S$ to isomorphisms in $\mathbb{D}'(e)$.
\end{definition}

\begin{lemma}\label{lettri}
Suppose that $\mathbb{D}$ is a triangulated derivator, $S$ is stable
under the loop space functor $\Omega(e) : \mathbb{D}(e) \rightarrow
\mathbb{D}(e)$, see \cite{Cis-Nee}, and $\mathbb{D}$ admits a left
Bousfield localization $\mathsf{L}_S\mathbb{D}$ by $S$.

Then $\mathsf{L}_S\mathbb{D}$ is also a triangulated derivator. 
\end{lemma}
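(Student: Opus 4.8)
The plan is to verify that $\mathsf{L}_S\mathbb{D}$ satisfies axioms \textbf{Der6} and \textbf{Der7}, using that it already is a derivator (being a left Bousfield localization) and that $\gamma:\mathbb{D}\to\mathsf{L}_S\mathbb{D}$ commutes with homotopy colimits. First I would recall that since $\mathbb{D}$ is pointed, the point object $0\in\mathbb{D}(e)$ is both initial and final, and its image $\gamma(0)$ has the same property in $\mathsf{L}_S\mathbb{D}(e)$: indeed $\gamma$ preserves the initial object because it commutes with homotopy colimits (the initial object is the homotopy colimit of the empty diagram), and finality follows from the universal property of the localization together with the fact that $\gamma$ is essentially surjective. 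Hence $\mathsf{L}_S\mathbb{D}$ is pointed, so \textbf{Der6} holds. (One can also argue more directly: pointedness of a derivator is equivalent to the zero object being a homotopy colimit, which $\gamma$ preserves.)

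The substance is \textbf{Der7}: a global commutative square in $\mathsf{L}_S\mathbb{D}$ is cartesian iff it is cocartesian. The key observation is that the homotopy \emph{pushout} (cocartesian) squares in $\mathsf{L}_S\mathbb{D}$ are exactly the images under $\gamma$ of homotopy pushout squares in $\mathbb{D}$, because $\gamma$ commutes with homotopy colimits and, by the universal property, every object of $\mathsf{L}_S\mathbb{D}(\square)$ is (up to isomorphism) obtained from one in $\mathbb{D}(\square)$ — more precisely, one shows $\gamma$ induces an essentially surjective functor at the level of $\mathbb{D}(\square)$ and preserves and detects the cocartesian property. Now take a cocartesian square $C'$ in $\mathsf{L}_S\mathbb{D}$; write $C' \cong \gamma(C)$ for a cocartesian square $C$ in $\mathbb{D}$. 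Since $\mathbb{D}$ is triangulated, $C$ is also cartesian in $\mathbb{D}$. The remaining point is that $\gamma$ sends homotopy \emph{pullback} squares to homotopy pullback squares; this is where the hypothesis that $S$ is stable under the loop functor $\Omega(e)$ enters. Concretely, a square being cartesian in a pointed derivator is detected by the induced map on total (co)fibers being an isomorphism after applying $\Omega$; since $S$ is $\Omega$-stable, the class of $S$-local equivalences is compatible with this, so $\gamma$ preserves cartesianness. Therefore $\gamma(C)=C'$ is cartesian in $\mathsf{L}_S\mathbb{D}$.

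The converse direction — a cartesian square in $\mathsf{L}_S\mathbb{D}$ is cocartesian — is proved symmetrically, again realizing the square as coming from $\mathbb{D}$: a cartesian square $C'\cong\gamma(C)$ with $C$ cartesian in $\mathbb{D}$, hence $C$ cocartesian, hence $\gamma(C)=C'$ cocartesian. I would organize this by establishing the single lemma that $\gamma$ both preserves and reflects cartesian and cocartesian squares (preservation for cocartesian is automatic; preservation for cartesian uses $\Omega$-stability of $S$; reflection in both cases follows from essential surjectivity of $\gamma$ on $\mathbb{D}(\square)$ modulo $S$-local equivalences plus conservativity \textbf{Der2}).

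The main obstacle is the claim that $\gamma$ preserves homotopy pullback squares: unlike homotopy colimits, homotopy limits are \emph{not} generally preserved by a left Bousfield localization, and the only reason it works here is the $\Omega$-stability of $S$. Making this precise requires describing a cartesian square via a cofiber-type construction that $\gamma$ respects — roughly, in a triangulated derivator a square is cartesian iff the total homotopy cofiber is zero, and the total homotopy cofiber of $\gamma(C)$ is $\gamma$ of the total homotopy cofiber of $C$; one then needs that if an object $X$ of $\mathbb{D}(e)$ becomes zero in $\mathsf{L}_S\mathbb{D}(e)$ — equivalently $X\to 0$ is an $S$-local equivalence — the same holds after looping/delooping, which is exactly guaranteed by $\Omega$-stability of $S$ (and, dually, by the fact that in a triangulated derivator $\Sigma$ is an equivalence, so $\Sigma$-stability is automatic). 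This is the step I would spend the most care on; the rest is formal manipulation with the universal property of $\mathsf{L}_S\mathbb{D}$ and the axioms of derivators.
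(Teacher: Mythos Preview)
Your approach differs substantially from the paper's, and the central step has a circularity problem. The paper does not verify \textbf{Der7} directly; instead it uses the characterization (from \cite{Cis-Nee}, \cite{Heller}) that a pointed derivator is triangulated if and only if the suspension/loop adjunction $(\Sigma,\Omega)$ is an equivalence of derivators. The argument is then short: since $\mathbb{D}$ is triangulated, $\Sigma$ and $\Omega$ are inverse equivalences and in particular both commute with homotopy colimits. The morphism $\Omega$ descends to $\mathsf{L}_S\mathbb{D}$ because $S$ is $\Omega(e)$-stable; the morphism $\Sigma$ descends because $\gamma\circ\Sigma$ sends each $s\in S$ to an isomorphism (one writes $\Sigma(e)=p_!\circ(0,0)_*$ and uses that $\gamma$ commutes with homotopy colimits). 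The induced adjunction on $\mathsf{L}_S\mathbb{D}$ is then an equivalence by the universal property, and the lemma follows.

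Your direct attack on \textbf{Der7} breaks down at exactly the point you flag as the main obstacle. You propose to show that $\gamma$ preserves cartesian squares via the criterion ``total homotopy cofiber is zero''; but that criterion characterizes \emph{cocartesian} squares in any pointed derivator, and only coincides with cartesianness once the target derivator is already known to be triangulated. So from ``$C$ cartesian in $\mathbb{D}$'' you get ``total cofiber of $C$ is zero'', hence ``total cofiber of $\gamma(C)$ is zero'', hence ``$\gamma(C)$ is cocartesian in $\mathsf{L}_S\mathbb{D}$'' --- which you already knew --- but not that $\gamma(C)$ is cartesian, unless you assume what you are trying to prove. The $\Omega$-stability of $S$ does not rescue this step in the way you suggest: if the total cofiber is already zero in $\mathbb{D}$, no stability hypothesis is needed to see it stays zero, and if you try to work with total \emph{fibers} instead, $\gamma$ has no reason to preserve them. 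Similarly, in your converse direction you assume a cartesian square in $\mathsf{L}_S\mathbb{D}$ lifts to a \emph{cartesian} square in $\mathbb{D}$; even granting essential surjectivity of $\gamma$ on $\mathbb{D}(\square)$, there is no control over cartesianness of the lift. The paper's route via the $(\Sigma,\Omega)$-equivalence sidesteps all of this.
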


\begin{proof}
Recall from \cite{Cis-Nee} that since $\mathbb{D}$ is a triangulated
derivator, we have the following equivalence
$$
\xymatrix{
\mathbb{D} \ar@<1ex>[d]^{\Omega} \\
\mathbb{D} \ar@<1ex>[u]^{\Sigma}
}
$$
Notice that both morphisms of derivators, $\Sigma$ and $\Omega$,
commute with homotopy colimits. Since $S$ is stable under the functor
$\Omega(e): \mathbb{D}(e) \rightarrow \mathbb{D}(e)$ and $\mathbb{D}$
admits a left Bousfield localization $\mathsf{L}_S\mathbb{D}$ by $S$,
we have an induced morphism 
$$ \Omega: \mathsf{L}_S\mathbb{D} \rightarrow
\mathsf{L}_S\mathbb{D}\,.$$
Let $s$ be an element of $S$. We now show that the image of $s$ by the
functor $\gamma \circ \Sigma$ is an isomorphism in
$\mathsf{L}_S\mathbb{D}(e)$. For this consider the category $\ul$, see
section~\ref{pre}, and the functors
$$
\begin{array}{rcl}
(0,0): e \rightarrow \ul & \mbox{and} & p:\ul \rightarrow e\,.
\end{array}
$$
Now recall from section $7$ from \cite{Heller} that 
$$ \Omega(e) := p_! \circ (0,0)_{\ast}\,.$$
This description shows us that the image of $s$ under the functor
$\gamma \circ \Sigma$ is an isomorphism in $\mathsf{L}_S\mathbb{D}(e)$
because $\gamma$ commutes with homotopy colimits. In conclusion, we
have an induced adjunction
$$
\xymatrix{
\mathsf{L}_S\mathbb{D} \ar@<1ex>[d]^{\Omega}\\
\mathsf{L}_S\mathbb{D} \ar@<1ex>[u]^{\Sigma}
}
$$ 
which is clearly an equivalence. This proves the lemma.
\end{proof}

\begin{theorem}[Cisinski \cite{Letter}]\label{Cisinsk}
The morphism of derivators
$$\gamma: \mathsf{HO}(\mathcal{M})
\stackrel{\mathbb{L}Id}{\longrightarrow} \mathsf{HO}(\mathsf{L}_S
\mathcal{M})$$
is a left Bousfield localization of $\mathsf{HO}(\mathcal{M})$ by the
image of the set $S$ in $\mathsf{Ho}(\mathcal{M})$.
\end{theorem}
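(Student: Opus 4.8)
The plan is to verify the three conditions in Cisinski's definition of a left Bousfield localization of derivators for $\mathbb{D} = \mathsf{HO}(\mathcal{M})$ and the image $\overline{S}$ of $S$ in $\mathsf{Ho}(\mathcal{M}) = \mathsf{HO}(\mathcal{M})(e)$. Two of them are immediate: $\gamma = \mathbb{L}\mathrm{Id}$ commutes with homotopy colimits (noted above), and $\gamma$ sends $\overline{S}$ to isomorphisms in $\mathsf{Ho}(\mathsf{L}_S\mathcal{M})$ because every morphism of $S$ is an $S$-local equivalence (and $\mathsf{L}_S\mathcal{M}$ exists as a Quillen model category by the standing left properness and cellularity hypotheses). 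So the whole content is the universal property: for an arbitrary derivator $\mathbb{D}'$, the precomposition functor
$$\gamma^{\ast}\colon \underline{\mathsf{Hom}}_!(\mathsf{HO}(\mathsf{L}_S\mathcal{M}),\mathbb{D}') \longrightarrow \underline{\mathsf{Hom}}_{!,\overline{S}}(\mathsf{HO}(\mathcal{M}),\mathbb{D}')$$
must be shown to be an equivalence of categories; the engine will be Proposition~\ref{Cisin}.

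First I would fix $F$ in $\underline{\mathsf{Hom}}_{!,\overline{S}}(\mathsf{HO}(\mathcal{M}),\mathbb{D}')$ and prove that $F(e)$ inverts every $S$-local equivalence. Let $\mathcal{W}_F$ be the class of morphisms of $\mathcal{M}$ whose image in $\mathsf{Ho}(\mathcal{M})$ is sent to an isomorphism by $F(e)$, and check that $\mathcal{W}_F$ satisfies properties a)--d) of Proposition~\ref{Cisin}: a) is the hypothesis $S \subseteq \mathcal{W}_F$; b) holds since weak equivalences of $\mathcal{M}$ are already invertible in $\mathsf{Ho}(\mathcal{M})$; c) is formal, the class of arrows inverted by a functor satisfying two-out-of-three and being closed under retracts. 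The substantial point is d). Given a natural transformation $\eta$ between pointwise-cofibrant functors $D \to \mathcal{M}$ which is pointwise in $\mathcal{W}_F$, regard $\eta$ as a morphism of $\mathsf{HO}(\mathcal{M})(D^{op})$; its fibres at the objects of $D$ lie in $\mathcal{W}_F$, so, because $F$ commutes with the evaluation functors, the fibres of $F(\eta)$ are isomorphisms, whence $F(\eta)$ is an isomorphism by the conservativity axiom \textbf{Der2} of $\mathbb{D}'$. Since $F$ commutes with homotopy colimits and the homological direct image $p_!$ along $p\colon D^{op} \to e$ computes the homotopy colimit appearing in Proposition~\ref{Cisin}(d), it follows that $F(e)$ inverts $\mathrm{hocolim}\,\eta$, giving d). Proposition~\ref{Cisin} then yields that $\mathcal{W}_F$ contains all $S$-local equivalences.

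Next I would transport this from the base $e$ to every small category $X$ and factor $F$ through $\gamma$. Since left Bousfield localization leaves the underlying categories unchanged, $\mathsf{HO}(\mathsf{L}_S\mathcal{M})(X)$ is the localization of $\mathsf{Fun}(X^{op},\mathcal{M})$ at the pointwise $S$-local equivalences, hence --- the pointwise weak equivalences being among them --- also the localization of $\mathsf{HO}(\mathcal{M})(X)$ at the image of that class, with $\gamma(X)$ the localization functor; in particular $\gamma(X)$ is bijective on objects. If $\varphi$ is a morphism of $\mathsf{Fun}(X^{op},\mathcal{M})$ that is a pointwise $S$-local equivalence, each of its fibres lies in $\mathcal{W}_F$ by the previous step, so $F(X)(\varphi)$ has isomorphisms as fibres and is an isomorphism by \textbf{Der2}; thus $F(X)$ inverts the localizing class and factors uniquely as $F(X) = \overline{F}(X)\circ\gamma(X)$. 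By the $2$-functoriality of localization these functors assemble into a morphism of derivators $\overline{F}\colon \mathsf{HO}(\mathsf{L}_S\mathcal{M}) \to \mathbb{D}'$ with $\overline{F}\circ\gamma \cong F$, and $\overline{F}$ commutes with homotopy colimits: for $u\colon X \to Y$, precomposing the canonical comparison $u_!\,\overline{F}(X) \to \overline{F}(Y)\,u_!$ with $\gamma(X)$ gives an isomorphism (because $\gamma$ and $F = \overline{F}\gamma$ commute with homotopy colimits), and $\gamma(X)$ is essentially surjective. This establishes essential surjectivity of $\gamma^{\ast}$.

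Finally, $\gamma^{\ast}$ is fully faithful, levelwise: each $\gamma(X)$ being a localization functor, precomposition with it is a fully faithful functor on the corresponding functor categories, and a modification between two morphisms of $\mathsf{HO}(\mathsf{L}_S\mathcal{M})$ into $\mathbb{D}'$ is exactly a compatible family of natural transformations, recovered uniquely from its image under $\gamma^{\ast}$ using again that the $\gamma(X)$ are essentially surjective. Hence $\gamma^{\ast}$ is an equivalence and the theorem follows. I expect the delicate point to be property d) in the second paragraph: one must carefully match the model-categorical homotopy colimit of Proposition~\ref{Cisin}(d), defined via a frame on $\mathcal{M}$, with the homological direct image $p_!$ of the derivator $\mathsf{HO}(\mathcal{M})$ --- so that the hypothesis that $F$ commutes with homotopy colimits can be brought to bear --- all the remaining steps being formal manipulations with localizations, adjunctions and the axioms \textbf{Der2}--\textbf{Der4}.
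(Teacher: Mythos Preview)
Your proof is correct and follows essentially the same route as the paper: both reduce essential surjectivity of $\gamma^{\ast}$ to showing that $F(e)$ inverts all $S$-local equivalences, and both do this by checking that the class $\mathcal{W}_F$ of morphisms inverted by $F(e)$ satisfies conditions a)--d) of Proposition~\ref{Cisin}, the key input being that $F$ commutes with homotopy colimits. The one genuine difference is in the argument for full faithfulness of $\gamma^{\ast}$: the paper exploits that $\gamma$ admits a fully faithful right adjoint $\sigma = \mathbb{R}\mathrm{Id}$ (coming from the Quillen pair $(\mathrm{Id},\mathrm{Id})$), so that $\sigma^{\ast}\gamma^{\ast}\cong\mathrm{Id}$ gives full faithfulness in one line, whereas you argue levelwise using that each $\gamma(X)$ is a localization functor. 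Both are valid; the paper's use of the adjoint is slicker, while your version has the virtue of making explicit why the resulting $\overline{F}$ still commutes with homotopy colimits, a point the paper leaves implicit.
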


\begin{proof}
Let $\mathbb{D}$ be a derivator.

The morphism $\gamma $ admits a fully faithful right adjoint 
$$\sigma : \, \mathsf{HO}(\mathsf{L}_S \mathcal{M})
\longrightarrow \mathsf{HO}(\mathcal{M})\,.$$
Therefore, the induced functor
$$ \gamma^{\ast} : \underline{\mathsf{Hom}}_!(\mathsf{HO}(\mathsf{L}_S
\mathcal{M}),
\mathbb{D}) \longrightarrow
\underline{\mathsf{Hom}}_{!,S}(\mathsf{HO}(\mathcal{M}),
\mathbb{D})\,,$$
admits a left adjoint $\sigma^{\ast}$ and $\sigma^{\ast}\gamma^{\ast}=
(\gamma \sigma)^{\ast}$ is isomorphic to the identity. Therefore
$\gamma^{\ast}$ is fully faithful. We now show that
$\gamma^*$ is essentially surjective. Let $F$ be an object of
$\underline{\mathsf{Hom}}_{!,S}(\mathsf{HO}(\mathcal{M}),
\mathbb{D})$. Notice that since $\mathbb{D}$ satisfies the conservativity
axiom, it is sufficient to show that the functor
$$ F(e): \mathsf{Ho}(\mathcal{M}) \rightarrow \mathbb{D}(e)$$
sends the images in $\mathsf{Ho}(\mathcal{M})$ of $S$-local
equivalences of $\mathcal{M}$ to isomorphisms in $\mathbb{D}(e)$.
The morphism $F$ then becomes naturally a morphism of derivators
$$ \overline{F}: \mathsf{HO}(\mathsf{L}_S\mathcal{M}) \rightarrow
\mathbb{D}$$
such that $\gamma^*(\overline{F})=F$. Now, since $F$ commutes with
homotopy colimits, the functor
$$ \mathcal{M} \rightarrow \mathsf{Ho}(\mathcal{M}) \rightarrow
\mathbb{D}(e)$$
sends the elements of $\mathcal{W}_S$ to isomorphisms. This proves the
theorem since by proposition~\ref{Cisin} the class $\mathcal{W}_S$ equals the class of $S$-local equivalences in $\mathcal{M}$.
\end{proof}

\section{Filtered homotopy colimits}\label{homotopy}
Let $\mathcal{M}$ be a cellular Quillen model category, with $I$ the
set of generating cofibrations. Suppose that the domains and codomains
of the elements of $I$ are cofibrant, $\aleph_0$-compact,
$\aleph_0$-small and homotopically finitely presented, see definition
$2.1.1$ in \cite{Toen-Vaq}.

\begin{example}\label{mori}
Consider the quasi-equivalent, resp. quasi-equiconic, resp. Morita,
Quillen model structure on $\mathsf{dgcat}$ constructed in \cite{addendum} \cite{IMRN} \cite{cras}.

Recall that a dg functor $F:\cc \rightarrow \ce$ is a
quasi-equivalence, resp. quasi-equiconic, 
resp. a Morita dg functor,  if it satisfies one of the following conditions $\mathsf{C}1)$
or $\mathsf{C}2)$:
\begin{itemize}
\item[C1)] The dg category $\mathcal{C}$ is empty and all the objects
  of $\mathcal{E}$ are contractible.
\item[C2)] For every object $c_1, c_2 \in \cc$, the
  morphism of complexes from $\mathsf{Hom}_{\mathcal{C}}(c_1,c_2)$ to
  $\mathsf{Hom}_{\mathcal{E}}(F(c_1)),F(c_2))$ is a quasi-isomorphism
  and the functor $\mathsf{H}^0(F)$,
  resp. $\mathsf{H}^0(\mbox{pre-tr}(F))$, resp. $\mathsf{H}^0(\mbox{pre-tr}(F))^{\kar}$, is essentially surjective.
\end{itemize}
Observe that the domains and codomains of the set $I$ of generating
cofibrations in $\mathsf{dgcat}$ satisfy the conditions above for all
the Quillen model structures.
\end{example}

The following proposition is a simplification of proposition $2.2$ in
\cite{Toen-Vaq}.

\begin{proposition}\label{prop}
Let $\mathcal{M}$ be a Quillen model category which satisfies the
conditions above. Then 
\begin{itemize}
\item[1)] A filtered colimit of trivial fibrations is a trivial
  fibration.
\item[2)] For any filtered diagram $X_i$ in $\mathcal{M}$, the natural
  morphism
$$ \underset{i \in I}{\mbox{hocolim}}\, X_i \longrightarrow
  \underset{i \in I}{\mbox{colim}}\, X_i$$
is an isomorphism in $\mathsf{Ho}(\mathcal{M})$.
\item[3)] Any object $X$ in $\mathcal{M}$ is equivalent to a filtered
  colimit of strict finite $I$-cell objects.
\item[4)] An object $X$ in $\mathcal{M}$ is homotopically finitely
  presented if and only if it is equivalent to a rectract of a strict
  finite $I$-cell object.
\end{itemize}
\end{proposition}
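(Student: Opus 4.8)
The plan is to deduce the four statements from the work of Toën--Vaquié, specifically from proposition $2.2$ in \cite{Toen-Vaq}, by checking that the hypotheses we have imposed on $\mathcal{M}$ (cellular, with generating cofibrations whose domains and codomains are cofibrant, $\aleph_0$-compact, $\aleph_0$-small and homotopically finitely presented) coincide with, or imply, those needed there. Since the statement is called ``a simplification of proposition $2.2$'' in the excerpt, the bulk of the proof is really a citation together with the routine verifications that nothing is lost in passing to our slightly stronger compactness hypotheses.

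First I would treat $1)$. The domains and codomains of the generating cofibrations $I$ are $\aleph_0$-small, so maps out of them commute with filtered colimits; a filtered colimit of trivial fibrations therefore again has the right lifting property against $I$, hence is a trivial fibration. Then for $2)$ I would argue that in a cellular model category a filtered diagram $X_i$ can be replaced, up to levelwise weak equivalence, by a filtered diagram of cofibrant objects with cofibrations as transition maps (using the small object argument and the $\aleph_0$-smallness to stay inside the filtered system); such a diagram is Reedy cofibrant over a category with fibrant constants, so its homotopy colimit computes its colimit, which gives the displayed isomorphism in $\mathsf{Ho}(\mathcal{M})$. For $3)$ I would observe that every object $X$ is a filtered colimit of its strict finite $I$-subcomplexes (the finite $I$-cell objects mapping to $X$ form a filtered category because finite unions of cells are again finite cell subcomplexes), and invoke $2)$ to upgrade this to an equivalence with the corresponding homotopy colimit. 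Finally, for $4)$, one direction is immediate since a strict finite $I$-cell object is homotopically finitely presented (its cells are built from homotopically finitely presented generators, and homotopically finitely presented objects are closed under finite homotopy colimits and retracts); conversely, if $X$ is homotopically finitely presented, applying $3)$ writes $X$ as a filtered homotopy colimit of strict finite $I$-cell objects $Y_j$, so the identity of $X$ factors through some $Y_j$ up to homotopy, exhibiting $X$ as a retract of $Y_j$ in $\mathsf{Ho}(\mathcal{M})$.

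The main obstacle is the careful bookkeeping in $2)$ and $3)$: one must ensure that the cofibrant replacement and the passage to finite subcomplexes can be carried out compatibly across the whole filtered diagram without leaving the filtered setting, and that the Reedy-cofibrancy/fibrant-constants hypotheses needed to identify $\mathrm{hocolim}$ with $\mathrm{colim}$ genuinely hold. This is exactly where the $\aleph_0$-compactness and $\aleph_0$-smallness assumptions (rather than merely some larger cardinal) are used, and where our statement is genuinely a simplification of \cite{Toen-Vaq}: with these hypotheses the transfinite inductions collapse to ordinary sequential or filtered ones. I would therefore structure the proof as: reduce to \cite[Prop.~2.2]{Toen-Vaq} by matching hypotheses, and then indicate the three or four places above where the simplification makes the argument more direct, leaving the verification that our compactness conditions imply theirs as the one point requiring a short explicit check.
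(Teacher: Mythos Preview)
Your proposal is correct and follows the same approach as the paper: reduce to proposition~2.2 of \cite{Toen-Vaq} and check that the hypotheses match. The paper's own proof is in fact even terser than yours---it simply says that the proofs of 1), 2), 3) are identical to those in \cite{Toen-Vaq}, and that the proof of 4) is also the same once one observes that the domains and codomains of $I$ are homotopically finitely presented \emph{by hypothesis} (this is exactly the ``simplification'': in \cite{Toen-Vaq} that fact must be derived, here it is assumed). Your sketches of the individual arguments are accurate but not needed for the paper's purposes.
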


\begin{proof}
The proof of $1)$, $2)$ and $3)$ is exactly the same as that of proposition
$2.2$ in \cite{Toen-Vaq}. The proof of $4)$ is also the same once we
observe that the domains and codomains of the elements of the set $I$
are already homotopically finitely presented by hypothesis.
\end{proof}

In everything that follows, we fix:
\begin{itemize}
\item[-] A co-simplicial resolution functor 
$$(\Gamma (-) : \mathcal{M} \rightarrow \mathcal{M}^{\Delta}, \, i)$$
in the model category
  $\mathcal{M}$, see definition $16.1.8$ in \cite{Hirschhorn}. This
  means that for every object $X$ in $\mathcal{M}$, $\Gamma(X)$ is
  cofibrant in the Reedy model structure on $\mathcal{M}^{\Delta}$ and 
$$ i(X) : \Gamma(X) \stackrel{\sim}{\longrightarrow} c^{\ast}(X)$$
is a weak equivalence on $\mathcal{M}^{\Delta}$, where $c^{\ast}(X)$
denotes the constant co-simplicial object associated with $X$. 
\item[-] A fibrant resolution functor 
$$( (-)_f : \mathcal{M} \rightarrow \mathcal{M}, \, \epsilon )$$
in the model category $\mathcal{M}$, see \cite{Hirschhorn}.
\end{itemize}

\begin{definition}
Let $\mathcal{M}_f$ be the smallest full subcategory of $\mathcal{M}$
such that
\begin{itemize}
\item[-]  $\mathcal{M}_f$ contains (a representative of the
  isomorphism class of) each strictly finite $I$-cell object of
  $\mathcal{M}$ and
\item[-] the category $\mathcal{M}_f$ is stable under the functors $(-)_f$ and $\Gamma(-)^n,\, n \geq 0$.
\end{itemize}
\end{definition}

\begin{remark}
Notice that $\mathcal{M}_f$ is a small category and that every object
in $\mathcal{M}_f$ is weakly equivalent to a strict finite $I$-cell.
\end{remark}

We have the inclusion
$$ \mathcal{M}_f \stackrel{I}{\hookrightarrow} \mathcal{M}\,. $$

\begin{definition}
Let $S$ be the set of pre-images of the weak equivalences in
$\mathcal{M}$ under the functor $i$.
\end{definition}

\begin{lemma}\label{fulfat}
The induced functor
$$ \mathcal{M}_f[S^{-1}] \stackrel{\mathsf{Ho}(I)}{\longrightarrow} \mathsf{Ho}(\mathcal{M})$$
is fully faithful, where $\mathcal{M}_f[S^{-1}]$ denotes the
localization of $\mathcal{M}$ by the set $S$.
\end{lemma}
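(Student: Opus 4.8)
The plan is to prove that $\mathsf{Ho}(I)$ is bijective on each Hom-set $\mathcal{M}_f[S^{-1}](X,Y)\to\mathsf{Ho}(\mathcal{M})(X,Y)$ with $X,Y\in\mathcal{M}_f$. The first thing I would record is that every object of $\mathcal{M}_f$ is cofibrant in $\mathcal{M}$: a strictly finite $I$-cell object is cofibrant because the domains of the generating cofibrations are cofibrant; each $\Gamma(X)^n$ is cofibrant because $\Gamma(X)$ is Reedy cofibrant in $\mathcal{M}^{\Delta}$; and $X_f$ is cofibrant whenever $X$ is, since $\epsilon(X)\colon X\to X_f$ is (by construction from the functorial factorization) a trivial cofibration. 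As $\mathcal{M}_f$ is generated from (representatives of the isomorphism classes of) strictly finite $I$-cell objects under the operations $(-)_f$ and $\Gamma(-)^n$, the claim follows by induction. Hence, for $X,Y\in\mathcal{M}_f$, the set $\mathsf{Ho}(\mathcal{M})(X,Y)$ may be computed homotopically with $X$ serving as its own cofibrant replacement.

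Next I would compute the target using standard facts about mapping spaces in model categories (\cite{Hirschhorn}). Since $X$ is cofibrant, $Y_f$ is fibrant and $\epsilon(Y)$ is a weak equivalence, there are natural identifications $\mathsf{Ho}(\mathcal{M})(X,Y)\cong\mathsf{Ho}(\mathcal{M})(X,Y_f)\cong\pi_0\,\mathsf{Hom}_{\mathcal{M}}(\Gamma(X)^{\bullet},Y_f)$, the last simplicial set being the model of the mapping space $\mathsf{Map}(X,Y_f)$ furnished by the chosen cosimplicial resolution. The essential observation is that every object ($\Gamma(X)^n$ and $Y_f$) and every structure map (the cofaces and codegeneracies of $\Gamma(X)^{\bullet}$, the weak equivalences $i(X)^n\colon\Gamma(X)^n\to X$, and $\epsilon(Y)\colon Y\to Y_f$) occurring in this computation already lies inside $\mathcal{M}_f$, and these weak equivalences belong to $S$; consequently all of them become isomorphisms in $\mathcal{M}_f[S^{-1}]$, so in particular $X\cong\Gamma(X)^0$ and $Y\cong Y_f$ there.

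I would then identify the source with the same $\pi_0$. The heart of the matter is a homotopical calculus of left fractions for $S$ inside $\mathcal{M}_f$. Using that $\mathcal{M}_f$ is closed under $\Gamma(-)^n$ and $(-)_f$, so that fibrant replacements and left homotopies of maps between objects of $\mathcal{M}_f$ may be performed without leaving $\mathcal{M}_f$, one shows: (i) any finite zig-zag of morphisms of $\mathcal{M}_f$ whose backward arrows lie in $S$ can be straightened, without changing its class in $\mathcal{M}_f[S^{-1}]$, to a single roof $X\xleftarrow{\,s\,}Z\to Y_f\xleftarrow{\epsilon(Y)}Y$ with $Z\in\mathcal{M}_f$ and $s\in S$; and (ii) two such roofs represent the same morphism of $\mathcal{M}_f[S^{-1}]$ precisely when the corresponding maps $Z\to Y_f$ become left homotopic in $\mathcal{M}$ (a homotopy being, once more, realizable inside $\mathcal{M}_f$). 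Taking $Z=\Gamma(X)^0$ and $s=i(X)^0$, this produces a bijection $\mathcal{M}_f[S^{-1}](X,Y)\cong\pi_0\,\mathsf{Hom}_{\mathcal{M}}(\Gamma(X)^{\bullet},Y_f)$ that is manifestly compatible with $\mathsf{Ho}(I)$ and with the computation of the preceding paragraph, which gives full faithfulness. Surjectivity on Hom-sets is the easy half: any class in $\mathsf{Ho}(\mathcal{M})(X,Y)$ is represented by an honest morphism $\Gamma(X)^0\to Y_f$ of $\mathcal{M}$, hence of $\mathcal{M}_f$ by fullness of $\mathcal{M}_f\hookrightarrow\mathcal{M}$, hence by a morphism of $\mathcal{M}_f[S^{-1}]$.

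The main obstacle is the straightening step, that is, the calculus-of-fractions argument. Since $\mathcal{M}_f$ is not a model category (it is neither complete nor cocomplete), the usual Ken Brown and Gabriel--Zisman black boxes are unavailable, and the reduction of an arbitrary zig-zag to a roof of the above special form must be carried out by hand, using only the two stability properties of $\mathcal{M}_f$ together with the fact that the arrows of $S$ are exactly the structural weak equivalences of the functors $\Gamma(-)^n$ and $(-)_f$. Once this bookkeeping is done, matching the resulting equivalence relation on roofs with the simplicial-homotopy relation that computes $\pi_0$ of the mapping space is formal.
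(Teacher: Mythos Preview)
Your approach is in the right spirit---you correctly identify that every object of $\mathcal{M}_f$ is cofibrant and that the needed replacements and cylinder objects stay within $\mathcal{M}_f$---but you then make the argument considerably harder than necessary, and the part you flag as the ``main obstacle'' is precisely what the paper avoids. The paper does not attempt to compute $\mathcal{M}_f[S^{-1}](X,Y)$ by reducing arbitrary zig-zags to roofs via an ad hoc calculus of fractions. Instead it observes that Quillen's standard construction of the homotopy category (theorem~8.3.5 in \cite{Hirschhorn}) can be run verbatim inside $\mathcal{M}_f$: for $X,Y\in\mathcal{M}_f$ the object $Y_f\in\mathcal{M}_f$ is a fibrant replacement of $Y$, the object $\Gamma(X)^0\in\mathcal{M}_f$ is a cofibrant replacement of $X$, and
\[
\Gamma(X)^0\amalg\Gamma(X)^0 \xrightarrow{\,d^0\amalg d^1\,} \Gamma(X)^1 \xrightarrow{\,s^0\,} \Gamma(X)^0
\]
is a cylinder object for $\Gamma(X)^0$ lying in $\mathcal{M}_f$ (proposition~16.1.6 of \cite{Hirschhorn}). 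Since Quillen's proof that the localization at weak equivalences is given by left homotopy classes of maps between cofibrant--fibrant replacements uses only these ingredients, restricting it to $\mathcal{M}_f$ exhibits $\mathcal{M}_f[S^{-1}]$ directly as a full subcategory of $\mathsf{Ho}(\mathcal{M})$. Your straightening step is thus not an obstacle to be carried out by hand: it is already packaged inside the theorem you should be citing.

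There is also a factual slip: $S$ is \emph{not} merely the set of structural weak equivalences produced by $\Gamma(-)^n$ and $(-)_f$; by definition it is the set of \emph{all} morphisms of $\mathcal{M}_f$ that become weak equivalences in $\mathcal{M}$. With your smaller set the calculus-of-fractions bookkeeping you propose would not even be addressing the correct localization a priori, and some of your reductions (which rely on inverting only those particular arrows) would need further justification. Correcting $S$ and then invoking Quillen's construction as above is both shorter and complete.
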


\begin{proof}
Let $X$, $Y$ be objects of $\mathcal{M}_f$. Notice that $(Y)_f$ is a fibrant
resolution of $Y$ in $\mathcal{M}$ which belongs to $\mathcal{M}_f$ and 
$$ 
\xymatrix{
\Gamma(X)^0 \coprod \Gamma(X)^0 \ar[d]_{d^0 \coprod
  d^1} \ar[r] &  \Gamma(X)^0\\
\Gamma(X)^1 \ar[ur]_{s^0}
}
$$
is a cylinder object for $\Gamma(X)^0$, see proposition $16.1.6.$ from
\cite{Hirschhorn}. Since $\cm_f$ is also stable under the functors $\Gamma(-)^n, \, n\geq 0$, this cylinder object also belongs to $\mathcal{M}_f$. This implies that if in the construction of the homotopy category $\mathsf{Ho}(\cm)$, as in theorem $8.35$ of \cite{Hirschhorn}, we restrict ourselves to $\cm_f$ we recover $\cm_f[S^{-1}]$ as a full subcategory of $\mathsf{Ho}(\cm)$. This implies the lemma.
\end{proof}

We denote by $\mathsf{Fun}(\mathcal{M}_f^{op},Sset)$ the Quillen model
category of simplicial pre-sheaves on $\mathcal{M}_f$ endowed with the
projective model structure, see section~\ref{extension}.
Let $\Sigma$ be the image in $\mathsf{Fun}(\mathcal{M}_f^{op},Sset)$ by the functor $h$, see
section~\ref{extension}, of the set $S$ in $\mathcal{M}_f$. Since the
category $\mathsf{Fun}(\mathcal{M}_f^{op},Sset)$ is cellular and left
proper, its left Bousfield localization by the set $\Sigma$ exists,
see \cite{Hirschhorn}. We denote it by
$\mathsf{L}_{\Sigma}\mathsf{Fun}(\mathcal{M}_f^{op},Sset)$. We have a
composed functor that we still denote by $h$
$$h: \mathcal{M}_f \rightarrow \mathsf{Fun}(\mathcal{M}_f^{op},Sset)
\stackrel{Id}{\rightarrow} \mathsf{L}_{\Sigma}\mathsf{Fun}(\mathcal{M}_f^{op},Sset)\,.$$

Now, consider the functor
$$
\begin{array}{ccc}
\underline{h}: \mathcal{M} & \longrightarrow &
\mathsf{Fun}(\mathcal{M}_f^{op},Sset)\\
X & \longmapsto & \mathsf{Hom}(\Gamma(-),X)_{|\mathcal{M}_f}\,.
\end{array}
$$
We also have a composed functor that we still denote by $\underline{h}$

$$\underline{h}: \mathcal{M} \rightarrow \mathsf{Fun}(\mathcal{M}_f^{op},Sset)
\stackrel{Id}{\rightarrow} \mathsf{L}_{\Sigma}\mathsf{Fun}(\mathcal{M}_f^{op},Sset)\,.$$

Now, observe that the natural equivalence
$$ i(-) : \Gamma(-) \longrightarrow c^{\ast}(-)\,,$$
induces, for every object $X$ in $\mathcal{M}_f$, a morphism $\Psi(X)$
in $\mathsf{L}_{\Sigma}\mathsf{Fun}(\mathcal{M}_f^{op},Sset)$
$$\Psi(X): \, h(X) = \mathsf{Hom}(c^{\ast}(-),X) \longrightarrow
\mathsf{Hom}(\Gamma(-),X) =: (\underline{h} \circ I)(X)\,,$$
which is functorial in $X$.

\begin{lemma}
The functor $\underline{h}$ preserves weak equivalences between
fibrant objects.
\end{lemma}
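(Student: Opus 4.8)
The plan is to exploit the explicit description $\underline{h}(X) = \mathsf{Hom}(\Gamma(-),X)_{|\mathcal{M}_f}$ together with the fact that $\Gamma(-)^n$ is a cosimplicial resolution functor and that the relevant source objects are homotopically finitely presented. First I would observe that, by construction of the projective (and hence of the localized $\mathsf{L}_\Sigma$-local) model structure on $\mathsf{Fun}(\mathcal{M}_f^{op}, Sset)$, it suffices to check that $\underline{h}(X) \to \underline{h}(Y)$ is a \emph{sectionwise} weak equivalence of simplicial presheaves, i.e. that for every object $Z \in \mathcal{M}_f$ the map of simplicial sets $\mathsf{Hom}(\Gamma(Z), X) \to \mathsf{Hom}(\Gamma(Z), Y)$ is a weak equivalence. (A sectionwise weak equivalence is in particular a weak equivalence in the projective structure, and the identity functor to the localization sends weak equivalences to weak equivalences.)

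Next I would recall that for a cosimplicial resolution $\Gamma(Z)$ and a fibrant object $W$ of $\mathcal{M}$, the simplicial set $\mathsf{Hom}(\Gamma(Z), W)$ is precisely a homotopy function complex $\mathsf{map}(Z, W)$ in the sense of Hirschhorn, Chapter~17 --- this is the standard fact that a cosimplicial resolution of the source computes the derived mapping space into a fibrant target. Here $X$ and $Y$ are fibrant by hypothesis, and $Z \in \mathcal{M}_f$ is cofibrant (every strict finite $I$-cell object is cofibrant, and the closure operations defining $\mathcal{M}_f$ preserve cofibrancy, since $(-)_f$ and $\Gamma(-)^n$ both land in cofibrant objects). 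Therefore both $\mathsf{Hom}(\Gamma(Z), X)$ and $\mathsf{Hom}(\Gamma(Z), Y)$ are homotopy function complexes between cofibrant-fibrant objects, and a weak equivalence $X \xrightarrow{\sim} Y$ of fibrant objects induces a weak equivalence of homotopy function complexes $\mathsf{map}(Z, X) \xrightarrow{\sim} \mathsf{map}(Z, Y)$ by the invariance of homotopy function complexes (Hirschhorn, Theorem~17.6.3 and Proposition~17.7.4). Running this over all $Z \in \mathcal{M}_f$ gives the sectionwise weak equivalence, hence the lemma.

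The main subtlety to check carefully --- and the step I expect to require the most care --- is the compatibility of the naive enriched hom $\mathsf{Hom}(\Gamma(Z), X)$ (the simplicial set $[n] \mapsto \mathrm{Hom}_{\mathcal{M}}(\Gamma(Z)^n, X)$) with Hirschhorn's homotopy function complex: one must confirm that the cosimplicial \emph{resolution} property of $\Gamma$ (Reedy cofibrancy of $\Gamma(Z)$ in $\mathcal{M}^\Delta$, plus the weak equivalence $i(Z): \Gamma(Z) \xrightarrow{\sim} c^\ast(Z)$) is exactly what is needed to identify this simplicial set, up to weak equivalence, with the derived mapping space, and that fibrancy of $X$ (resp. $Y$) is what makes this identification hold on the nose rather than only after further fibrant replacement. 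Once that bookkeeping is in place, no compactness or finiteness hypothesis on $\mathcal{M}_f$ is actually needed for this particular lemma --- those enter later --- and the argument is purely formal model-category theory.
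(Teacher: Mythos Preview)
Your proposal is correct and takes essentially the same approach as the paper: both identify $\mathsf{Hom}(\Gamma(Z),X)$ with the homotopy function complex $\mathsf{Map}_{\mathcal{M}}(Z,X)$ when $X$ is fibrant, and then invoke the invariance of mapping spaces under weak equivalences of fibrant targets.

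One small correction to an inessential aside: your claim that every $Z \in \mathcal{M}_f$ is cofibrant because ``$(-)_f$ lands in cofibrant objects'' is not justified---a fibrant resolution functor need not preserve cofibrancy. Fortunately this is irrelevant to the argument: the Reedy cofibrancy of $\Gamma(Z)$ (built into the definition of a cosimplicial resolution) already guarantees that $\mathsf{Hom}(\Gamma(Z),X)$ computes the homotopy function complex for fibrant $X$, whether or not $Z$ itself is cofibrant. You may simply drop the cofibrancy claim for $Z$.
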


\begin{proof}
Let $X$ be a fibrant object in $\mathcal{M}$. We have an equivalence
$$ \mathsf{Hom}(\Gamma(Y),X) \stackrel{\sim}{\longrightarrow}
\mathsf{Map}_{\mathcal{M}}(Y,X)\,,$$
see \cite{Hirschhorn}. This implies the lemma.
\end{proof}

\begin{remark}
The previous lemma implies that the functor $\underline{h}$ admits a right
derived functor
$$
\begin{array}{ccc}
\mathbb{R}\underline{h}: \mathsf{Ho}(\mathcal{M}) & \longrightarrow &
\mathsf{Ho}(\mathsf{L}_{\Sigma}\mathsf{Fun}(\mathcal{M}_f^{op},Sset))\\
X & \longmapsto & \mathsf{Hom}(\Gamma(-),X_f)_{|\mathcal{M}_f}\,.
\end{array}
$$
\end{remark}

Since the functor
$$h: \mathcal{M}_f \rightarrow \mathsf{L}_{\Sigma}\mathsf{Fun}(\mathcal{M}_f^{op},Sset)\,,$$ 
sends, by definition, the elements of $S$ to weak equivalences, we have an induced morphism
$$ \mathsf{Ho}(h): \mathcal{M}_f[S^{-1}] \rightarrow \mathsf{Ho}(\mathsf{L}_{\Sigma}\mathsf{Fun}(\mathcal{M}_f^{op},Sset))\,.$$

\begin{remark}\label{funcpont}
Notice that lemma $4.2.2$ from \cite{HAG} implies that for every $X$ in
$\mathcal{M}_f$, the morphism $\Psi(X)$
$$ \Psi(X):\, \mathsf{Ho}(h)(X) \longrightarrow
(\mathbb{R}\underline{h} \circ \mathsf{Ho}(I))(X)$$
is an isomorphism in
$\mathsf{Ho}(\mathsf{L}_{\Sigma}\mathsf{Fun}(\mathcal{M}_f^{op},Sset))$.
\end{remark}

This shows that the functors
$$ \mathsf{Ho}(h), \, \mathbb{R}\underline{h} \circ \mathsf{Ho}(I) : \,
\mathcal{M}_f[S^{-1}] \rightarrow \mathsf{Ho}(\mathsf{L}_{\Sigma}\mathsf{Fun}(\mathcal{M}_f^{op},Sset)$$
are canonically isomorphic and so we have the following diagram
$$
\xymatrix{
\mathcal{M}_f[S^{-1}] \ar[rr]^{\mathsf{Ho}(I)}
\ar[d]_{\mathsf{Ho}(h)} & &  \mathsf{Ho}(\mathcal{M})
\ar[dll]^{\mathbb{R}\underline{h}}\,, \\
\mathsf{Ho}(\mathsf{L}_{\Sigma}\mathsf{Fun}(\mathcal{M}_f^o,Sset)) & &  
}
$$
which is commutative up to isomorphism.

\begin{lemma}\label{filtered}
The functor $\mathbb{R}\underline{h}$ commutes with filtered homotopy colimits.
\end{lemma}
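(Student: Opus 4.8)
The plan is to reduce the statement to the well-understood behavior of representable presheaves under filtered colimits, exploiting that $\mathbb{R}\underline{h}(X) = \mathsf{Hom}(\Gamma(-),X_f)_{|\mathcal{M}_f}$ and that the objects of $\mathcal{M}_f$ are homotopically finitely presented. Let $(X_i)_{i\in I}$ be a filtered diagram in $\mathcal{M}$. By proposition~\ref{prop}, part $2)$, we may compute $\underset{i}{\mbox{hocolim}}\, X_i$ as an ordinary filtered colimit $\underset{i}{\mbox{colim}}\, X_i$, after first replacing the diagram by a cofibrant (or more precisely $I$-cell) resolution if necessary; using part $3)$ each $X_i$ is itself equivalent to a filtered colimit of strict finite $I$-cell objects, so without loss of generality we may assume the diagram takes values in $\mathcal{M}_f$. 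First I would therefore fix a fibrant replacement functor $(-)_f$ and recall that on $\mathsf{L}_{\Sigma}\mathsf{Fun}(\mathcal{M}_f^{op},Sset)$ the homotopy colimit of a filtered diagram is again computed levelwise (sectionwise) up to weak equivalence, since filtered colimits of simplicial sets are homotopy colimits and left Bousfield localization does not change the class of objects.

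The key computation is then the following chain of weak equivalences in $Sset$, natural in the test object $Y\in\mathcal{M}_f$:
$$
\big(\mathbb{R}\underline{h}(\underset{i}{\mbox{hocolim}}\, X_i)\big)(Y)
\;\simeq\; \mathsf{Map}_{\mathcal{M}}\big(Y,(\underset{i}{\mbox{colim}}\, X_i)_f\big)
\;\simeq\; \underset{i}{\mbox{hocolim}}\;\mathsf{Map}_{\mathcal{M}}(Y,(X_i)_f)
\;\simeq\; \big(\underset{i}{\mbox{hocolim}}\;\mathbb{R}\underline{h}(X_i)\big)(Y).
$$
The first equivalence is the remark following the lemma on $\underline{h}$ preserving weak equivalences between fibrant objects, together with the identification $\mathsf{Hom}(\Gamma(Y),Z)\xrightarrow{\sim}\mathsf{Map}_{\mathcal{M}}(Y,Z)$ for $Z$ fibrant. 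The middle equivalence is exactly where homotopical finite presentability of $Y$ enters: the mapping-space functor $\mathsf{Map}_{\mathcal{M}}(Y,-)$ out of a homotopically finitely presented object commutes with filtered homotopy colimits, which is one of the defining properties in definition $2.1.1$ of \cite{Toen-Vaq} (and one must combine it with part $1)$ of proposition~\ref{prop} to see that a filtered colimit of the fibrant resolutions $(X_i)_f$ computes the fibrant resolution of the colimit). The last equivalence is the sectionwise description of filtered homotopy colimits in the presheaf category recalled above.

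Finally I would package these equivalences into a statement about morphisms of derivators rather than just objects: one checks that the comparison map $\underset{i}{\mbox{hocolim}}\;\mathbb{R}\underline{h}(X_i)\to \mathbb{R}\underline{h}(\underset{i}{\mbox{hocolim}}\, X_i)$ induced by functoriality is, by the conservativity axiom {\bf Der2} applied at the points $Y$ of $\mathcal{M}_f$, an isomorphism, and that this holds compatibly over all base categories $L$ (a filtered diagram in $\mathsf{Ho}(\mathcal{M})(L)=\mathsf{Ho}(\mathcal{M}(L))$ is handled the same way, since all the model-categorical inputs are stable under passing to diagram categories). The main obstacle is the middle equivalence: one has to be careful that ``homotopically finitely presented'' gives commutation with filtered \emph{homotopy} colimits of fibrant objects, and to match this with the fact that $\underline{h}$ only has the expected homotopical meaning after fibrant replacement — so the genuine content is checking that $(\underset{i}{\mbox{colim}}\, X_i)_f$ and $\underset{i}{\mbox{colim}}\,(X_i)_f$ agree in $\mathsf{Ho}(\mathcal{M})$, which is precisely part $1)$ of proposition~\ref{prop}.
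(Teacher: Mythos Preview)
Your proposal is correct and follows essentially the same approach as the paper: reduce to a pointwise check in $Sset$ using that homotopy colimits in the presheaf category are computed objectwise, and then use that each test object $Y\in\mathcal{M}_f$ is homotopically finitely presented to commute $\mathsf{Map}(Y,-)$ past the filtered homotopy colimit. The paper streamlines slightly by assuming from the outset that each diagram object is fibrant (so that the colimit is itself fibrant by part~1) of proposition~\ref{prop}), which avoids tracking fibrant replacements separately; your preliminary reduction to diagrams valued in $\mathcal{M}_f$ is unnecessary and not used in the rest of your argument.
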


\begin{proof}
Let $\{Y_i\}_{i \in I}$ be a filtered diagram in $\mathcal{M}$. We
can suppose, without loss of generality, that $Y_i$ is fibrant in
$\mathcal{M}$. By proposition~\ref{prop}, the natural
morphism
$$ \underset{i \in I}{\mbox{hocolim}} \,Y_i \longrightarrow \underset{i \in
  I}{\mbox{colim}} \,Y_i$$
is an isomorphism in $\mathsf{Ho}(\mathcal{M})$ and $\underset{i \in I}{\mbox{colim}}
\,Y_i$ is also fibrant. Since the functor
$$ \mathsf{Ho}(\mathsf{Fun}(\mathcal{M}_f^{op},Sset))
\stackrel{\mathbb{L}Id}{\longrightarrow}
\mathsf{Ho}(\mathsf{L}_{\Sigma}\mathsf{Fun}(\mathcal{M}_f^{op},Sset))\,,$$
commutes with homotopy colimits and in 
$\mathsf{Ho}(\mathsf{Fun}(\mathcal{M}_f^{op},Sset))$ they are calculated objectwise, it is
sufficient to show that the morphism
$$ \underset{i \in I}{\mbox{hocolim}} \,\mathbb{R}\underline{h}(Y_i)(X)
\longrightarrow \mathbb{R}\underline{h}(\underset{i \in I}{\mbox{colim}}
\,Y_i)(X)$$
is an isomorphism in $\mathsf{Ho}(Sset)$, for every object $X$ in
$\mathcal{M}_f$. Now, since every object $X$ in $\mathcal{M}_f$ is homotopically finitely presented,
see proposition~\ref{prop}, we have the following equivalences:
$$
\begin{array}{rcl}
\mathbb{R}\underline{h}(\underset{i \in I}{\mbox{colim}} \,Y_i)(X) & =
& 
\mathsf{Hom}(\Gamma (X), \underset{i \in I}{\mbox{colim}} \,Y_i)\\
 & \simeq  & \mathsf{Map}(\Gamma (X), \underset{i \in I}{\mbox{colim}} \,Y_i)\\ 
& \simeq & \underset{i \in I}{\mbox{colim}}\, \mathsf{Map}(X,Y_i)\\
& \simeq & \underset{i \in I}{\mbox{hocolim}} \,\mathbb{R}\underline{h}(Y_i)(X)
\end{array}
$$
This proves the lemma.
\end{proof}

We now denote by $\mathsf{L}_{\Sigma}\mathsf{Hot}_{\mathcal{M}_f}$ the derivator associated with
$\mathsf{L}_{\Sigma}\mathsf{Fun}(\mathcal{M}_f^{op},Sset)$ and by
$\underline{\mathcal{M}_f}[S^{-1}]$ the pre-derivator
$\underline{\mathcal{M}_f}$ localized at the set $S$, see examples \ref{example1} and \ref{example2}.

Observe that the morphism of functors
$$ \Psi : h \longrightarrow \underline{h}\circ I$$
induces a $2$-morphism of derivators
$$ \overline{\Psi}: \, \mathsf{Ho}(h) \longrightarrow \mathbb{R}\underline{h}\circ
\mathsf{Ho}(I)\,.$$

\begin{lemma}\label{2-mor}
The $2$-morphism $\overline{\Psi}$ is an isomorphism.
\end{lemma}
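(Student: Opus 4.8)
The plan is to prove that $\overline{\Psi}$ is an isomorphism of derivators, i.e.\ that for every small category $L$ the induced natural transformation $\overline{\Psi}_L$ between the functors $\mathsf{Ho}(h)(L)$ and $(\mathbb{R}\underline{h}\circ \mathsf{Ho}(I))(L)$ from $\underline{\mathcal{M}_f}[S^{-1}](L)$ to $\mathsf{L}_{\Sigma}\mathsf{Hot}_{\mathcal{M}_f}(L)$ is an isomorphism. Since $\mathsf{L}_{\Sigma}\mathsf{Hot}_{\mathcal{M}_f}$ satisfies the conservativity axiom \textbf{Der2}, it is enough to check that $\overline{\Psi}_L$ becomes an isomorphism after applying each point functor $\ell^{\ast}$, $\ell \in L$. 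By $2$-functoriality, $\ell^{\ast}\circ \mathsf{Ho}(h)(L)$ and $\ell^{\ast}\circ(\mathbb{R}\underline{h}\circ\mathsf{Ho}(I))(L)$ factor through the corresponding functors at the point $e$, so the whole statement reduces to the case $L=e$.

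At the level of $e$, the claim is precisely that the morphism $\Psi(X): \mathsf{Ho}(h)(X) \to (\mathbb{R}\underline{h}\circ\mathsf{Ho}(I))(X)$ is an isomorphism in $\mathsf{Ho}(\mathsf{L}_{\Sigma}\mathsf{Fun}(\mathcal{M}_f^{op},Sset))$ for every object $X$ of $\mathcal{M}_f[S^{-1}]$. But this is exactly the content of remark~\ref{funcpont}: there we invoked lemma $4.2.2$ of \cite{HAG} to conclude that for every $X$ in $\mathcal{M}_f$ the comparison map
$$ \Psi(X):\, \mathsf{Ho}(h)(X) \longrightarrow (\mathbb{R}\underline{h}\circ \mathsf{Ho}(I))(X)$$
is an isomorphism. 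So first I would recall remark~\ref{funcpont}, then observe that the objects of $\mathcal{M}_f[S^{-1}]$ are literally the objects of $\mathcal{M}_f$, so that the pointwise statement is available on the nose.

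The main steps, in order: (1) reduce to checking $\overline{\Psi}$ is a pointwise isomorphism using \textbf{Der2} for the target derivator; (2) reduce from an arbitrary small category $L$ to $L=e$ by $2$-functoriality, since both $\mathsf{Ho}(h)$ and $\mathbb{R}\underline{h}\circ\mathsf{Ho}(I)$ are morphisms of derivators and $\overline{\Psi}$ is a $2$-morphism, so its component at a point $\ell\in L$ is computed in $\mathsf{L}_{\Sigma}\mathsf{Hot}_{\mathcal{M}_f}(e)$; (3) identify the resulting map with $\Psi(X)$ and conclude by remark~\ref{funcpont}. The only point requiring a little care is step (2): one must make sure that $\ell^{\ast}$ applied to the morphism of derivators $\mathsf{Ho}(h)$ (respectively $\mathbb{R}\underline{h}\circ\mathsf{Ho}(I)$) really does compute the functor $\mathsf{Ho}(h)(e)$ on the fiber $X_{\ell}$, which is immediate from the definition of a morphism of (pre)derivators and the fact that $\overline{\Psi}$ is a $2$-natural transformation; there is no genuine obstacle here, the argument is a bookkeeping reduction to the already-established remark~\ref{funcpont}.
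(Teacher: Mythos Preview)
Your proof is correct and follows essentially the same approach as the paper: reduce to the terminal category $e$ via the conservativity axiom \textbf{Der2} for $\mathsf{L}_{\Sigma}\mathsf{Hot}_{\mathcal{M}_f}$, then invoke remark~\ref{funcpont}. The paper's own proof is a two-line version of exactly this argument.
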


\begin{proof}
For the terminal category $e$, the $2$-morphism $\overline{\Psi}$
coincides with the morphism of functors of
remark~\ref{funcpont}. Since this one is an isomorphism, so is
$\overline{\Psi}$ by conservativity. This proves the lemma.
\end{proof}

As before, we have the following diagram
$$
\xymatrix{
\underline{\mathcal{M}_f}[S^{-1}] \ar[rr]^{\mathsf{Ho}(I)}
\ar[d]_{\mathsf{Ho}(h)} & &  \mathsf{HO}(\mathcal{M})
\ar[dll]^{\mathbb{R}\underline{h}}\,, \\
\mathsf{L}_{\Sigma}\mathsf{Hot}_{\mathcal{M}_f} & &  
}
$$
which is commutative up to isomorphism in the $2$-category of
pre-derivators. Notice that by lemma~\ref{filtered},
$\mathbb{R}\underline{h}$ commutes with filtered homotopy colimits.

Let $\mathbb{D}$ be a derivator.

\begin{lemma}\label{colim}
The morphism of pre-derivators
$$ \underline{\mathcal{M}_f}[S^{-1}]
\stackrel{\mathsf{Ho}(h)}{\longrightarrow}
\mathsf{L}_{\Sigma}\mathsf{Hot}_{\mathcal{M}_f} \,,$$
induces an equivalence of categories
$$ \underline{\mathsf{Hom}}_!(\mathsf{L}_{\Sigma}\mathsf{Hot}_{\mathcal{M}_f},
\mathbb{D}) \stackrel{\mathsf{Ho}(h)^{\ast}}{\longrightarrow}
\underline{\mathsf{Hom}}(\underline{\mathcal{M}_f}[S^{-1}], \mathbb{D})\,.$$
\end{lemma}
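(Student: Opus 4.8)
The plan is to reduce Lemma~\ref{colim} to the universal property of $\mathsf{Hot}_{\mathcal{M}_f}$ established in Theorem~\ref{Cin}, together with the left Bousfield localization machinery of Section~\ref{localisation}. First I would factor the morphism $\mathsf{Ho}(h)$ through the composite
$$ \underline{\mathcal{M}_f}[S^{-1}] \stackrel{\mathsf{Ho}(h')}{\longrightarrow} \mathsf{Hot}_{\mathcal{M}_f} \stackrel{\gamma}{\longrightarrow} \mathsf{L}_{\Sigma}\mathsf{Hot}_{\mathcal{M}_f}\,, $$
where $h'$ is the Yoneda morphism into the unlocalized derivator $\mathsf{Hot}_{\mathcal{M}_f} = \mathsf{HO}(\mathsf{Fun}(\mathcal{M}_f^{op},Sset))$ (recall $\mathsf{Ho}(h)$ was explicitly defined as $\mathsf{Ho}$ of $h$ post-composed with the localization functor $\mathbb{L}\mathrm{Id}$). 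By Theorem~\ref{Cisinsk}, $\gamma$ exhibits $\mathsf{L}_{\Sigma}\mathsf{Hot}_{\mathcal{M}_f}$ as the left Bousfield localization of $\mathsf{Hot}_{\mathcal{M}_f}$ by (the image of) $\Sigma$, so for any derivator $\mathbb{D}$ precomposition with $\gamma$ yields an equivalence
$$ \underline{\mathsf{Hom}}_!(\mathsf{L}_{\Sigma}\mathsf{Hot}_{\mathcal{M}_f},\mathbb{D}) \stackrel{\gamma^{\ast}}{\longrightarrow} \underline{\mathsf{Hom}}_{!,\Sigma}(\mathsf{Hot}_{\mathcal{M}_f},\mathbb{D})\,, $$
where the target is the full subcategory of homotopy-colimit-preserving morphisms sending $\Sigma$ to isomorphisms.

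Next I would invoke Theorem~\ref{Cin}: the Yoneda morphism $h'$ induces an equivalence $\underline{\mathsf{Hom}}(\underline{\mathcal{M}_f},\mathbb{D}) \simeq \underline{\mathsf{Hom}}_!(\mathsf{Hot}_{\mathcal{M}_f},\mathbb{D})$. I would then restrict this equivalence along the two localizations: on the source side, $\underline{\mathsf{Hom}}(\underline{\mathcal{M}_f}[S^{-1}],\mathbb{D})$ is exactly the full subcategory of $\underline{\mathsf{Hom}}(\underline{\mathcal{M}_f},\mathbb{D}) \simeq \mathbb{D}(\mathcal{M}_f^{op})$ consisting of those objects (equivalently, morphisms $\underline{\mathcal{M}_f} \to \mathbb{D}$) that send the arrows of $S$ to isomorphisms; on the target side, under the equivalence of Theorem~\ref{Cin} a homotopy-colimit-preserving morphism $\mathsf{Hot}_{\mathcal{M}_f} \to \mathbb{D}$ sends $\Sigma = h'(S)$ to isomorphisms precisely when the corresponding morphism $\underline{\mathcal{M}_f} \to \mathbb{D}$ sends $S$ to isomorphisms (since $\Sigma$ is by definition the image of $S$ under $h'$, and the equivalence is implemented by precomposition with $h'$). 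Hence Theorem~\ref{Cin} restricts to an equivalence
$$ \underline{\mathsf{Hom}}(\underline{\mathcal{M}_f}[S^{-1}],\mathbb{D}) \stackrel{\sim}{\longrightarrow} \underline{\mathsf{Hom}}_{!,\Sigma}(\mathsf{Hot}_{\mathcal{M}_f},\mathbb{D})\,. $$
Composing this with the equivalence $\gamma^{\ast}$ above, and tracking that the composite functor is precisely precomposition with $\gamma \circ h' = \mathsf{Ho}(h)$, gives the desired equivalence $\mathsf{Ho}(h)^{\ast}$.

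The main obstacle I expect is the bookkeeping identifying $\underline{\mathsf{Hom}}(\underline{\mathcal{M}_f}[S^{-1}],\mathbb{D})$ as a full subcategory of $\underline{\mathsf{Hom}}(\underline{\mathcal{M}_f},\mathbb{D})$ and, dually, checking that ``$F$ sends $S$ to isomorphisms'' corresponds under Theorem~\ref{Cin} to ``the Kan-extended morphism sends $\Sigma$ to isomorphisms.'' One direction is immediate because the extension is computed by a pointwise left Kan extension formula $p_! q^{\ast} h'$ as in Lemma~\ref{ad}, which is compatible with evaluation at representables; the subtle point is that a homotopy-colimit-preserving morphism out of $\mathsf{Hot}_{\mathcal{M}_f}$ is determined by its restriction to representables, so the condition on all of $\Sigma$ is equivalent to the condition on $S$ — this is where one genuinely uses that $\mathsf{Hot}_{\mathcal{M}_f}$ is freely generated under homotopy colimits by $\underline{\mathcal{M}_f}$. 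Everything else is formal manipulation of adjunctions and full subcategories, and the statement follows by transitivity of the two equivalences.
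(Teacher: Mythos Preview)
Your proposal is correct and follows essentially the same route as the paper: factor $\mathsf{Ho}(h)$ as $\gamma\circ h'$, apply Theorem~\ref{Cisinsk} to identify $\underline{\mathsf{Hom}}_!(\mathsf{L}_{\Sigma}\mathsf{Hot}_{\mathcal{M}_f},\mathbb{D})$ with $\underline{\mathsf{Hom}}_{!,\Sigma}(\mathsf{Hot}_{\mathcal{M}_f},\mathbb{D})$, then use Theorem~\ref{Cin} to match the condition ``sends $\Sigma$ to isomorphisms'' with ``sends $S$ to isomorphisms,'' and finally recognize this as the defining property of $\underline{\mathsf{Hom}}(\underline{\mathcal{M}_f}[S^{-1}],\mathbb{D})$. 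The paper's proof is terser but structurally identical; the ``obstacle'' you flag is handled there in a single sentence by appeal to the definition of the localized prederivator.
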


\begin{proof}
The category
$\underline{\mathsf{Hom}}_!(\mathsf{L}_{\Sigma}\mathsf{Hot}_{\mathcal{M}_f},\mathbb{D})$
is equivalent, by theorem~\ref{Cisinsk}, to the category $\underline{\mathsf{Hom}}_{!,\Sigma}(\mathsf{Hot}_{\mathcal{M}_f},
\mathbb{D})$. This last
category identifies, under the equivalence
$$ \underline{\mathsf{Hom}}_!(\mathsf{Hot}_{\mathcal{M}_f}, \mathbb{D})
\rightarrow
\underline{\mathsf{Hom}}(\underline{\mathcal{M}_f},\mathbb{D})$$
given by theorem~\ref{Cin}, with the full subcategory of
$\underline{\mathsf{Hom}}(\underline{\mathcal{M}_f},\mathbb{D})$
consisting of the morphisms of pre-derivators which send the elements
of $S$ to isomorphisms in $\mathbb{D}(e)$. Now observe that this last
category identifies with
$\underline{\mathsf{Hom}}(\underline{\mathcal{M}_f}[S^{-1}],
\mathbb{D})$, by definition of the localized pre-derivator $\underline{\mathcal{M}_f}[S^{-1}]$.
This proves the lemma.
\end{proof}

Recall from section $9.5$ in \cite{Dugger} that the
co-simplicial resolution functor $\Gamma(-)$ that we have fixed in the
beginning of this section allows us to construct a Quillen adjunction:
$$
\xymatrix{
\mathcal{M} \ar@<1ex>[d]^{\underline{h}=sing} \\
\mathsf{Fun}(\mathcal{M}_f^{op},Sset) \ar@<1ex>[u]^{Re} \,.
}
$$
Since the functor $Re$ sends the elements of $\Sigma$ to weak
equivalences in $\mathcal{M}$, we have the following Quillen adjunction
$$
\xymatrix{
\mathcal{M} \ar@<1ex>[d]^{\underline{h}} \\
\mathsf{L}_{\Sigma}\mathsf{Fun}(\mathcal{M}_f^{op},Sset)
\ar@<1ex>[u]^{Re}\,,
}
$$
and a natural weak equivalence
$$ \eta : Re \circ h \stackrel{\sim}{\longrightarrow} I\,,$$
see \cite{Dugger}.

This implies that we have the following diagram
$$
\xymatrix{
\mathcal{M}_f[S^{-1}] \ar[rr]^{\mathsf{Ho}(I)}
\ar[d]_{\mathsf{Ho}(h)} & &  \mathsf{Ho}(\mathcal{M})
\ar@<1ex>[dll]^{\mathbb{R}\underline{h}}\,, \\
\mathsf{Ho}(\mathsf{L}_{\Sigma}\mathsf{Fun}(\mathcal{M}_f^{op},Sset))
\ar@<1ex>[urr]^{\mathbb{L}Re} & &  
}
$$
which is commutative up to isomorphism.

We now claim that $\mathbb{L}Re \circ \mathbb{R}\underline{h}$
is naturally isomorphic to the identity. Indeed, by
proposition~\ref{prop}, each object of $\mathcal{M}$ is isomorphic in
$\mathsf{Ho}(\mathcal{M})$, to a filtered colimit of strict finite
$I$-cell objects. Since
$\mathbb{R}\underline{h}$ and $\mathbb{L}Re$ commute with filtered
homotopy colimits and $\mathbb{L}Re \circ
\mathsf{Ho}(h) \simeq \mbox{Id}$, we conclude that
$\mathbb{L}Re \circ \mathbb{R}\underline{h}$ is naturally
isomorphic to the identity. This implies that the morphism
$\mathbb{R}\underline{h}$ is fully faithful.

Now, observe that the natural weak
equivalence $\eta$ induces a $2$-isomorphism and so we obtain the
following diagram
$$
\xymatrix{
\underline{\mathcal{M}_f}[S^{-1}] \ar[rr]^{\mathsf{Ho}(I)}
\ar[d]_{\mathsf{Ho}(h)} & &  \mathsf{HO}(\mathcal{M})
\ar@<1ex>[dll]^{\mathbb{R}\underline{h}}\,, \\
\mathsf{L}_{\Sigma}\mathsf{Hot}_{\mathcal{M}_f}
\ar@<1ex>[urr]^{\mathbb{L}Re} & &  
}
$$
which is commutative up to isomorphism in the $2$-category of pre-derivators. Notice that $\mathbb{L}Re \circ
\mathbb{R}\underline{h}$ is naturally isomorphic to the identity (by
conservativity) and
so the morphism of derivators $\mathbb{R}\underline{h}$ is fully faithful.

Let $\mathbb{D}$ be a derivator.

\begin{theorem}\label{flt}
The morphism of derivators
$$ \mathsf{HO}(\mathcal{M})
\stackrel{\mathbb{R}\underline{h}}{\longrightarrow}
\mathsf{L}_{\Sigma}\mathsf{Hot}_{\mathcal{M}_f}\,,$$
induces an equivalence of categories
$$ \underline{\mathsf{Hom}}_!(\mathsf{L}_{\Sigma}\mathsf{Hot}_{\mathcal{M}_f},
\mathbb{D}) \stackrel{\mathbb{R}\underline{h}^{\ast}}{\longrightarrow}
\underline{\mathsf{Hom}}_{flt}(\mathsf{HO}(\mathcal{M}), \mathbb{D})\,,$$
where $\underline{\mathsf{Hom}}_{flt}(\mathsf{HO}(\mathcal{M}),
\mathbb{D})$ denotes the category of morphisms of derivators which
commute with filtered homotopy colimits.
\end{theorem}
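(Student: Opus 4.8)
The plan is to realize $\mathbb{R}\underline{h}^{\ast}$ as the equivalence of Lemma~\ref{colim} transported across the triangle relating $\mathsf{Ho}(h)$, $\mathsf{Ho}(I)$ and $\mathbb{R}\underline{h}$. First one checks that $\mathbb{R}\underline{h}^{\ast}$ is well defined with the stated target: if $G$ commutes with homotopy colimits then $G\circ\mathbb{R}\underline{h}$ commutes with filtered homotopy colimits, since $\mathbb{R}\underline{h}$ does by Lemma~\ref{filtered}, so $\mathbb{R}\underline{h}^{\ast}$ restricts to a functor $\underline{\mathsf{Hom}}_!(\mathsf{L}_{\Sigma}\mathsf{Hot}_{\mathcal{M}_f},\mathbb{D})\to\underline{\mathsf{Hom}}_{flt}(\mathsf{HO}(\mathcal{M}),\mathbb{D})$. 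The $2$-isomorphism $\mathbb{R}\underline{h}\circ\mathsf{Ho}(I)\simeq\mathsf{Ho}(h)$ recorded just before the statement gives, after precomposition, a $2$-isomorphism of functors
$$\mathsf{Ho}(I)^{\ast}\circ\mathbb{R}\underline{h}^{\ast}\ \simeq\ \mathsf{Ho}(h)^{\ast}\colon\ \underline{\mathsf{Hom}}_!(\mathsf{L}_{\Sigma}\mathsf{Hot}_{\mathcal{M}_f},\mathbb{D})\ \longrightarrow\ \underline{\mathsf{Hom}}(\underline{\mathcal{M}_f}[S^{-1}],\mathbb{D})\,.$$
By Lemma~\ref{colim} the functor $\mathsf{Ho}(h)^{\ast}$ on the right is an equivalence, so by the two-out-of-three property for equivalences of categories it suffices to prove that restriction along $\mathsf{Ho}(I)$ is an equivalence
$$\mathsf{Ho}(I)^{\ast}\colon\ \underline{\mathsf{Hom}}_{flt}(\mathsf{HO}(\mathcal{M}),\mathbb{D})\ \longrightarrow\ \underline{\mathsf{Hom}}(\underline{\mathcal{M}_f}[S^{-1}],\mathbb{D})\,.$$

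Essential surjectivity is immediate with the tools at hand: given $F\in\underline{\mathsf{Hom}}(\underline{\mathcal{M}_f}[S^{-1}],\mathbb{D})$, Lemma~\ref{colim} produces $\widehat{F}\in\underline{\mathsf{Hom}}_!(\mathsf{L}_{\Sigma}\mathsf{Hot}_{\mathcal{M}_f},\mathbb{D})$ with $\widehat{F}\circ\mathsf{Ho}(h)\simeq F$, and then $\widehat{F}\circ\mathbb{R}\underline{h}$ lies in $\underline{\mathsf{Hom}}_{flt}(\mathsf{HO}(\mathcal{M}),\mathbb{D})$ by Lemma~\ref{filtered} and restricts along $\mathsf{Ho}(I)$ to $\widehat{F}\circ\mathbb{R}\underline{h}\circ\mathsf{Ho}(I)\simeq\widehat{F}\circ\mathsf{Ho}(h)\simeq F$. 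For full faithfulness, the substance is the rigidity statement that a morphism of derivators $\mathsf{HO}(\mathcal{M})\to\mathbb{D}$ commuting with filtered homotopy colimits, as well as a $2$-morphism between two such, is determined by and can be reconstructed from its restriction along $\mathsf{Ho}(I)$ to $\underline{\mathcal{M}_f}[S^{-1}]$. By the conservativity axiom {\bf Der2}, and since the morphisms and $2$-morphisms in question are compatible with the structure functors $x^{\ast}$, it is enough to verify this at the terminal category $e$; there it asserts that a functor $\mathsf{Ho}(\mathcal{M})\to\mathbb{D}(e)$ preserving filtered homotopy colimits, together with a natural transformation of two such, is determined by its restriction to the full subcategory $\mathcal{M}_f[S^{-1}]\hookrightarrow\mathsf{Ho}(\mathcal{M})$ of Lemma~\ref{fulfat}, and that any such datum extends. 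This is exactly what Proposition~\ref{prop} makes possible: every object of $\mathsf{Ho}(\mathcal{M})$ is isomorphic to a filtered homotopy colimit of strict finite $I$-cell objects, which lie in $\mathcal{M}_f$, so the value on a general object is forced by the corresponding cocone, while the assignment sending such a colimit to the colimit of the values supplies the extension.

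The hard part is precisely this last point: making the reconstruction canonical — functorial, compatible with all the $u^{\ast}$, and independent of the chosen presentation of an object as a filtered homotopy colimit of finite cells. I would organize it using the adjunction $\mathbb{L}Re\dashv\mathbb{R}\underline{h}$ with $\mathbb{R}\underline{h}$ fully faithful, established just before the theorem. Given $G\in\underline{\mathsf{Hom}}_{flt}(\mathsf{HO}(\mathcal{M}),\mathbb{D})$, set $F:=\mathsf{Ho}(I)^{\ast}(G)$ and let $\widehat{F}$ be its extension from Lemma~\ref{colim}; then $\widehat{F}\circ\mathbb{R}\underline{h}$ and $G$ both commute with filtered homotopy colimits and agree on $\mathcal{M}_f$, so Proposition~\ref{prop} forces a canonical isomorphism $\widehat{F}\circ\mathbb{R}\underline{h}\simeq G$. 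Carrying out the same bookkeeping at the level of $2$-morphisms, and using that $\mathsf{Ho}(h)^{\ast}$ is already known to be an equivalence (hence full and faithful), yields full faithfulness of $\mathsf{Ho}(I)^{\ast}$, and therefore of $\mathbb{R}\underline{h}^{\ast}$, completing the proof.
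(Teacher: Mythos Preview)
Your strategy—factoring through the triangle and reducing via Lemma~\ref{colim} to showing that $\mathsf{Ho}(I)^*$ is an equivalence—is close to the paper's, and essential surjectivity is fine, but there is a real gap in full faithfulness. The appeal to {\bf Der2} to ``reduce to $e$'' is illegitimate here: conservativity says a morphism in $\mathbb{D}(L)$ is an isomorphism iff all its fibers are, but it does \emph{not} make the underlying-diagram functor $d_L:\mathbb{D}(L)\to\mathsf{Fun}(L^{op},\mathbb{D}(e))$ faithful, and in general it is not. Hence two $2$-morphisms of derivators can agree at $e$ (equivalently, have equal fibers at every point of every $L$) without being equal, and you cannot deduce faithfulness of $\mathsf{Ho}(I)^*$ this way. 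For the same reason, the assertion that Proposition~\ref{prop} ``forces a canonical isomorphism $\widehat{F}\circ\mathbb{R}\underline{h}\simeq G$'' is exactly where the content hides: agreeing on $\mathcal{M}_f$ and preserving filtered homotopy colimits does not by itself manufacture a $2$-morphism of derivators—you must first \emph{construct} a specific comparison map, and only then may {\bf Der2} and Proposition~\ref{prop} be used (now legitimately) to check it is invertible.

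The paper supplies this missing construction via Lemma~\ref{ad}, which you never invoke. That lemma gives a right adjoint $\Psi$ to the inclusion $\underline{\mathsf{Hom}}_!\hookrightarrow\underline{\mathsf{Hom}}$; composing with the adjunction on $\underline{\mathsf{Hom}}$-categories induced by $\mathbb{L}Re\dashv\mathbb{R}\underline{h}$ exhibits the restricted $\mathbb{R}\underline{h}^*:\underline{\mathsf{Hom}}_!(\mathsf{L}_\Sigma\mathsf{Hot}_{\mathcal{M}_f},\mathbb{D})\to\underline{\mathsf{Hom}}(\mathsf{HO}(\mathcal{M}),\mathbb{D})$ as a left adjoint with right adjoint $\Psi\circ\mathbb{L}Re^*$, so full faithfulness comes for free once the unit is checked to be an isomorphism. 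Unwinding the definitions, your $\widehat{F}$ is precisely $\Psi(G\circ\mathbb{L}Re)$, and the counit of Lemma~\ref{ad}, precomposed with $\mathbb{R}\underline{h}$, is the canonical comparison $\widehat{F}\circ\mathbb{R}\underline{h}\to G$ you were asserting; verifying that this specific $2$-morphism is an isomorphism is then done exactly as you sketch, via {\bf Der2} and Proposition~\ref{prop}.
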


\begin{proof}
We have the following adjunction
$$
\xymatrix{
\underline{\mathsf{Hom}}(\mathsf{HO}(\mathcal{M}), \mathbb{D})
\ar@<1ex>[d]^{\mathbb{L}{Re}^{\ast}} \\
\underline{\mathsf{Hom}}(\mathsf{L}_{\Sigma}\mathsf{Hot}_{\mathcal{M}_f},
\mathbb{D}) \ar@<1ex>[u]^{\mathbb{R} \underline{h}^{\ast}} \,,
}
$$
with ${\mathbb{R}\underline{h}}^{\ast}$ a fully faithful functor.

Now notice that the adjunction of lemma~\ref{ad} induces naturally an
adjunction
$$
\xymatrix{
\underline{\mathsf{Hom}}(\mathsf{L}_{\Sigma}\mathsf{Hot}_{\mathcal{M}_f},\mathbb{D})
\ar@<2ex>[d]^{\Psi} \\
\underline{\mathsf{Hom}}_!(\mathsf{L}_{\Sigma}\mathsf{Hot}_{\mathcal{M}_f}, \mathbb{D})
\ar@{^{(}->}[u] \,.
}
$$
This implies that the composed functor
$$ {\mathbb{R}\underline{h}}^{\ast}: \, \underline{\mathsf{Hom}}_!(\mathsf{L}_{\Sigma}\mathsf{Hot}_{\mathcal{M}_f},
\mathbb{D}) \longrightarrow
\underline{\mathsf{Hom}}_{flt}(\mathsf{HO}(\mathcal{M}), \mathbb{D})$$
is fully faithful.

We now show that this functor is essentially surjective.

Let $F$ be an object of
$\underline{\mathsf{Hom}}_{flt}(\mathsf{HO}(\mathcal{M}),\mathbb{D})$.
Consider the morphism
$$ \mathbb{L}{Re}^{\ast}(F) := F \circ \mathbb{L}Re \,.$$
Notice that this morphism does not necessarily commute with homotopy
colimits. 
Now, by the above adjunction, we have a universal $2$-morphism
$$ \varphi : \, \Psi({\mathbb{L}Re}^{\ast}(F)) \longrightarrow
{\mathbb{L}Re}^{\ast}(F)\,.$$
Consider the $2$-morphism
$${\mathbb{R}\underline{h}}^{\ast}:\,
{\mathbb{R}\underline{h}}^{\ast}((\Psi \circ
{\mathbb{L}Re}^{\ast})(F)) \longrightarrow
({\mathbb{R}\underline{h}}^{\ast} \circ {\mathbb{L}Re}^{\ast})(F)
\simeq F\,.$$

Now, we will show that this $2$-morphism is a $2$-isomorphism. By
conservativity, it is
sufficient to show this for the case of the terminal
category $e$. For this, observe that ${\mathbb{R}\underline{h}}^{\ast}(\varphi)$ induces an isomorphism
$$ \Psi({\mathbb{L}Re}^{\ast}(F)) \circ \mathbb{R}\underline{h} \circ
\mathsf{Ho}(I) \longrightarrow F \circ \mathsf{Ho}(I)\,.$$
Now each object of $\mathcal{M}$ is isomorphic, in
$\mathsf{Ho}(\mathcal{M})$, to a filtered colimit of strict finite
$I$-cell objects. Since $F$ and
$\Psi({\mathbb{L}Re}^{\ast}(F))$ commute with filtered homotopy
colimits, ${\mathbb{R}\underline{h}}^{\ast}(\varphi)$ induces an
isomorphism.
This shows that the functor $\mathbb{R}\underline{h}^{\ast}$ is
essentially surjective. 

This proves the theorem.
\end{proof}

\section{Pointed derivators}\label{chappoint}
Recall from the previous section that we have constructed a derivator
$\mathsf{L}_{\Sigma}\mathsf{Hot}_{\mathcal{M}_f}$ associated with a
Quillen model category $\mathcal{M}$ satisfying suitable compactness assumptions. 

Now suppose that $\mathsf{Ho}(\mathcal{M})$ is pointed, i.e. that the
morphism
$$ \emptyset \longrightarrow \ast \,,$$
in $\mathcal{M}$, where $\emptyset$ denotes the initial object and $\ast$
the terminal one, is a weak equivalence.
Consider the morphism
$$ P: \widetilde{\emptyset} \longrightarrow h(\emptyset)\,,$$
where $\widetilde{\emptyset}$ denotes the initial object in $\mathsf{L}_{\Sigma}\mathsf{Fun}(\mathcal{M}_f^{op},Sset)$.

Observe that, since $\mathbb{R}\underline{h}$ admits a left adjoint,
$h(\emptyset)$ identifies with the terminal object in 
$$ \mathsf{Ho}(\mathsf{L}_{\Sigma}\mathsf{Fun}(\mathcal{M}_f^{op},Sset))\,,$$
because
$$h(\emptyset)=  \mathsf{Ho}(h)(\emptyset) \stackrel{\sim}{\rightarrow}
\mathbb{R}\underline{h}\circ \mathsf{Ho}(I)(\emptyset) \stackrel{\sim}{\rightarrow} \mathbb{R}\underline{h}(\ast)\,.$$
We denote by 
$$ \mathsf{L}_{\Sigma,P}\mathsf{Fun}(\mathcal{M}_f^{op},Sset)\,,$$
the left Bousfield localization of
$\mathsf{L}_{\Sigma}\mathsf{Fun}(\mathcal{M}_f^{op},Sset)$ at the
morphism $P$.

Notice that the category
$$
\mathsf{Ho}(\mathsf{L}_{\Sigma,P}\mathsf{Fun}(\mathcal{M}_f^{op},Sset))\,,$$
is now a pointed one.

We have the following morphisms of derivators
$$
\xymatrix{
\mathsf{Ho}(\mathcal{M}) \ar@<1ex>[d]^{\mathbb{R}\underline{h}} \\
\mathsf{L}_{\Sigma}\mathsf{Hot}_{\mathcal{M}_f}
\ar@<1ex>^{\mathbb{L}Re}[u] \ar[d]_{\Phi} \\
\mathsf{L}_{\Sigma,P}\mathsf{Hot}_{\mathcal{M}_f}. 
}
$$
By construction, we have a pointed morphism of derivators
$$ \mathsf{HO}(\mathcal{M}) \stackrel{\Phi \circ
  \mathbb{R}\underline{h}}{\longrightarrow}
\mathsf{L}_{\Sigma,P}\mathsf{Hot}_{\mathcal{M}_f}\,,$$
which commutes with filtered homotopy colimits and preserves the point.

Let $\mathbb{D}$ be a pointed derivator.

\begin{proposition}\label{ext}
The morphism of derivators $\Phi \circ \mathbb{R}\underline{h}$
induces an equivalence of categories
$$\underline{\mathsf{Hom}}_!(\mathsf{L}_{\Sigma,P}\mathsf{Hot}_{\mathcal{M}_f},\mathbb{D})
\stackrel{(\Phi \circ
  \mathbb{R}\underline{h})^{\ast}}{\longrightarrow}
\underline{\mathsf{Hom}}_{flt,p}(\mathsf{HO}(\mathcal{M}),\mathbb{D})\,,$$
where
$\underline{\mathsf{Hom}}_{flt,p}(\mathsf{HO}(\mathcal{M}),\mathbb{D})$
denotes the category of morphisms of derivators which commute with
filtered homotopy colimits and preserve the point.
\end{proposition}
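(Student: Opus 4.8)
The plan is to factor $\Phi\circ\mathbb{R}\underline{h}$ through the two morphisms already studied and to combine Theorem~\ref{flt} with the universal property of a left Bousfield localization of derivators.

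First I would note that $\Phi$ is induced by the Quillen adjunction $(\mathrm{Id},\mathrm{Id})$ between $\mathsf{L}_{\Sigma}\mathsf{Fun}(\mathcal{M}_f^{op},Sset)$ and its left Bousfield localization at $P$. A left Bousfield localization of a left proper cellular model category is again left proper and cellular, so Theorem~\ref{Cisinsk} applies and tells us that
$$ \Phi:\mathsf{L}_{\Sigma}\mathsf{Hot}_{\mathcal{M}_f}\longrightarrow \mathsf{L}_{\Sigma,P}\mathsf{Hot}_{\mathcal{M}_f} $$
is a left Bousfield localization of derivators by the image of the one-element set $\{P\}$. Hence $\Phi^{\ast}$ induces an equivalence
$$ \underline{\mathsf{Hom}}_!(\mathsf{L}_{\Sigma,P}\mathsf{Hot}_{\mathcal{M}_f},\mathbb{D})\stackrel{\sim}{\longrightarrow}\underline{\mathsf{Hom}}_{!,\{P\}}(\mathsf{L}_{\Sigma}\mathsf{Hot}_{\mathcal{M}_f},\mathbb{D}), $$
where the right-hand side denotes the full subcategory of homotopy-colimit-preserving morphisms which send $P$ to an isomorphism. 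Since $(\Phi\circ\mathbb{R}\underline{h})^{\ast}=\mathbb{R}\underline{h}^{\ast}\circ\Phi^{\ast}$, and $\mathbb{R}\underline{h}^{\ast}$ is the equivalence of Theorem~\ref{flt} onto $\underline{\mathsf{Hom}}_{flt}(\mathsf{HO}(\mathcal{M}),\mathbb{D})$, it will be enough to check that $\mathbb{R}\underline{h}^{\ast}$ carries $\underline{\mathsf{Hom}}_{!,\{P\}}(\mathsf{L}_{\Sigma}\mathsf{Hot}_{\mathcal{M}_f},\mathbb{D})$ onto $\underline{\mathsf{Hom}}_{flt,p}(\mathsf{HO}(\mathcal{M}),\mathbb{D})$.

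To see this, let $G$ be an object of $\underline{\mathsf{Hom}}_!(\mathsf{L}_{\Sigma}\mathsf{Hot}_{\mathcal{M}_f},\mathbb{D})$. Since $G$ commutes with homotopy colimits it sends the initial object $\widetilde{\emptyset}$, which is the homotopy colimit of the empty diagram, to the initial object of $\mathbb{D}(e)$; as $\mathbb{D}$ is pointed, this is the zero object. On the other hand, as recalled just before the construction of $P$, there is a canonical isomorphism $h(\emptyset)\simeq\mathbb{R}\underline{h}(\ast)$ where $\ast$ is the zero object of $\mathsf{HO}(\mathcal{M})(e)$. Applying the (functorial) morphism $G$ to $P:\widetilde{\emptyset}\to h(\emptyset)$ then yields the unique morphism $0\to (G\circ\mathbb{R}\underline{h})(\ast)$ in $\mathbb{D}(e)$, and this morphism is an isomorphism exactly when $(G\circ\mathbb{R}\underline{h})(\ast)$ is a zero object, i.e. exactly when $G\circ\mathbb{R}\underline{h}$ preserves the point. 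Here preservation over $e$ is enough, since the point of $\mathbb{D}(X)$ is $p_X^{\ast}$ of the point of $\mathbb{D}(e)$ and every morphism of derivators commutes with $p_X^{\ast}$. Thus $G$ lies in $\underline{\mathsf{Hom}}_{!,\{P\}}$ if and only if $G\circ\mathbb{R}\underline{h}$ lies in $\underline{\mathsf{Hom}}_{flt,p}$.

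Finally I would assemble the pieces: by the previous paragraph $\mathbb{R}\underline{h}^{\ast}$ restricts to a fully faithful functor $\underline{\mathsf{Hom}}_{!,\{P\}}(\mathsf{L}_{\Sigma}\mathsf{Hot}_{\mathcal{M}_f},\mathbb{D})\to\underline{\mathsf{Hom}}_{flt,p}(\mathsf{HO}(\mathcal{M}),\mathbb{D})$ --- full faithfulness being inherited from Theorem~\ref{flt} --- and it is essentially surjective, because any $F$ in $\underline{\mathsf{Hom}}_{flt,p}$ is, by Theorem~\ref{flt}, isomorphic to $G\circ\mathbb{R}\underline{h}$ for some homotopy-colimit-preserving $G$, which must then send $P$ to an isomorphism. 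Precomposing with the equivalence $\Phi^{\ast}$ gives the asserted equivalence induced by $(\Phi\circ\mathbb{R}\underline{h})^{\ast}$. The only genuinely delicate point I foresee is the bookkeeping around the point --- verifying that $G(\widetilde{\emptyset})$ really is the zero object of $\mathbb{D}(e)$, that the identification $h(\emptyset)\simeq\mathbb{R}\underline{h}(\ast)$ is compatible with applying $G$, and that ``preserving the point over $e$'' upgrades to ``preserving the point'' as a morphism of derivators; everything else is a formal combination of Theorems~\ref{Cisinsk} and~\ref{flt}.
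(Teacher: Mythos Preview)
Your proof is correct and follows essentially the same approach as the paper: factor $(\Phi\circ\mathbb{R}\underline{h})^{\ast}$ as $\mathbb{R}\underline{h}^{\ast}\circ\Phi^{\ast}$, invoke Theorem~\ref{Cisinsk} for $\Phi^{\ast}$ and Theorem~\ref{flt} for $\mathbb{R}\underline{h}^{\ast}$, and then verify that under $\mathbb{R}\underline{h}^{\ast}$ the condition ``sends $P$ to an isomorphism'' corresponds exactly to ``preserves the point''. The only cosmetic difference is that the paper checks the two directions of this correspondence separately --- using for the reverse direction the explicit inverse $\Psi(\mathbb{L}Re^{\ast}(-))$ from the proof of Theorem~\ref{flt} --- whereas you package it as a single ``if and only if'' via the observation that $G(P)$ is the unique map $0\to (G\circ\mathbb{R}\underline{h})(\ast)$.
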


\begin{proof}
By theorem~\ref{Cisinsk}, we have an equivalence of categories
$$\underline{\mathsf{Hom}}_!(\mathsf{L}_{\Sigma,P}\mathsf{Hot}_{\mathcal{M}_f},\mathbb{D})
\stackrel{\Phi^{\ast}}{\longrightarrow}
\underline{\mathsf{Hom}}_{!,P}(\mathsf{L}_{\Sigma}\mathsf{Hot}_{\mathcal{M}_f},\mathbb{D})\,.$$
By theorem~\ref{flt}, we have an equivalence of categories
$$\underline{\mathsf{Hom}}_!(\mathsf{L}_{\Sigma}\mathsf{Hot}_{\mathcal{M}_f},\mathbb{D})
\stackrel{\mathbb{R}\underline{h}^{\ast}}{\longrightarrow}
\underline{\mathsf{Hom}}_{flt}(\mathsf{HO}(\mathcal{M}),\mathbb{D})\,.$$
We now show that under this last equivalence, the category 
$\underline{\mathsf{Hom}}_{!,P}(\mathsf{L}_{\Sigma}\mathsf{Hot}_{\mathcal{M}_f},\mathbb{D})$
identifies with
$\underline{\mathsf{Hom}}_{flt,p}(\mathsf{HO}(\mathcal{M}),\mathbb{D})$.
Let $F$ be an object of
$\underline{\mathsf{Hom}}_{!,P}(\mathsf{L}_{\Sigma}\mathsf{Hot}_{\mathcal{M}_f},\mathbb{D})$.
Since $F$ commutes with homotopy colimits, it preserves the initial
object. This implies that $F \circ \mathbb{R}\underline{h}$ belongs to $\underline{\mathsf{Hom}}_{flt,p}(\mathsf{HO}(\mathcal{M},\mathbb{D})\,.$

Let now $G$ be an object of
$\underline{\mathsf{Hom}}_{flt,p}(\mathsf{HO}(\mathcal{M}),\mathbb{D})$.
Consider, as in the proof of theorem~\ref{flt}, the morphism
$$ \Psi({\mathbb{L}Re}^{\ast}(G)):
\mathsf{L}_{\Sigma}\mathsf{Hot}_{\mathcal{M}_f} \longrightarrow
\mathbb{D}\,.$$

Since $\Psi({\mathbb{L}Re}^{\ast}(G))$ commutes with homotopy
colimits, by construction, it sends $\widetilde{\emptyset}$ to the
point of $\mathbb{D}$. Observe also that $h(\emptyset)$ is also sent to
the point of $\mathbb{D}$ because
$$\Psi({\mathbb{L}Re}^{\ast}(G))(h(\emptyset)) \simeq
G(\emptyset)\,.$$

This proves the proposition.

\end{proof}

\section{Small weak generators}\label{small}

Let $\mathcal{N}$ be a pointed, left proper, compactly generated
Quillen model category as in definition $2.1$ of \cite{Toen-Vaq}. Observe that in particular this implies that
$\mathcal{N}$ is finitely generated, as in section $7.4$ in \cite{Hovey}.
We denote by $\mathcal{G}$ the set of cofibers of the generating
cofibrations $I$ in $\mathcal{N}$. By corollary $7.4.4$ in
\cite{Hovey}, the set $\mathcal{G}$ is a set of small weak generators
for $\mathsf{Ho}(\mathcal{N})$, see definitions $7.2.1$ and $7.2.2$ in
\cite{Hovey}.
Let $S$ be a set of morphisms in
$\mathcal{N}$ between objects which are homotopically finitely
presented, see \cite{Toen-Vaq}, and $\mathsf{L}_S\mathcal{N}$ the left Bousfield localization
of $\mathcal{N}$ by $S$. We have an adjunction
$$
\xymatrix{
\mathsf{Ho}(\mathcal{N}) \ar@<-1ex>[d]_{\mathbb{L}Id} \\
\mathsf{Ho}(\mathsf{L}_S\mathcal{N}) \ar@<-1ex>[u]_{\mathbb{R}Id}\,.
}
$$
\begin{lemma}\label{gener}
The image of the set $\mathcal{G}$ under the functor $\mathbb{L}Id$ is a set of small weak generators in
$\mathsf{Ho}(\mathsf{L}_S\mathcal{N})$.
\end{lemma}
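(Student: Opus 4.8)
The plan is to verify the two defining properties of a set of small weak generators for $\mathsf{Ho}(\mathsf{L}_S\mathcal{N})$, namely (i) that the objects in $\mathbb{L}Id(\mathcal{G})$ detect isomorphisms (the generation property, definition $7.2.1$ in \cite{Hovey}), and (ii) that each such object is small, i.e. $\mathsf{Hom}_{\mathsf{Ho}(\mathsf{L}_S\mathcal{N})}(\mathbb{L}Id(G),-)$ commutes with (filtered) coproducts (definition $7.2.2$ in \cite{Hovey}). The central observation, which makes both properties tractable, is the adjunction $(\mathbb{L}Id, \mathbb{R}Id)$ between $\mathsf{Ho}(\mathcal{N})$ and $\mathsf{Ho}(\mathsf{L}_S\mathcal{N})$ together with the fact that $\mathbb{R}Id$ is fully faithful and identifies $\mathsf{Ho}(\mathsf{L}_S\mathcal{N})$ with the full subcategory of $S$-local objects in $\mathsf{Ho}(\mathcal{N})$.

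For the generation property (i): let $\varphi: X \to Y$ be a morphism in $\mathsf{Ho}(\mathsf{L}_S\mathcal{N})$ such that $\mathsf{Hom}(\mathbb{L}Id(G)[n], \varphi)$ is a bijection for all $G \in \mathcal{G}$ and all $n \in \mathbb{Z}$. By the adjunction, $\mathsf{Hom}(\mathbb{L}Id(G)[n], \varphi) \cong \mathsf{Hom}(G[n], \mathbb{R}Id(\varphi))$, so $\mathbb{R}Id(\varphi)$ is detected as an isomorphism by $\mathcal{G}$ in $\mathsf{Ho}(\mathcal{N})$; since $\mathcal{G}$ is a set of weak generators there by hypothesis, $\mathbb{R}Id(\varphi)$ is an isomorphism, and since $\mathbb{R}Id$ is fully faithful (hence conservative on its image), $\varphi$ is an isomorphism.

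For the smallness property (ii): here the hypothesis that $S$ consists of morphisms between homotopically finitely presented objects is what is needed, since it guarantees (via proposition~\ref{prop}, or rather the results of \cite{Toen-Vaq}) that filtered homotopy colimits in $\mathsf{Ho}(\mathcal{N})$ of $S$-local objects remain $S$-local, so that the inclusion $\mathsf{Ho}(\mathsf{L}_S\mathcal{N}) \hookrightarrow \mathsf{Ho}(\mathcal{N})$ commutes with filtered homotopy colimits. Given a (filtered) family $\{X_i\}$ in $\mathsf{Ho}(\mathsf{L}_S\mathcal{N})$, one then computes
$$
\mathsf{Hom}_{\mathsf{Ho}(\mathsf{L}_S\mathcal{N})}\big(\mathbb{L}Id(G), \coprod_i X_i\big) \cong \mathsf{Hom}_{\mathsf{Ho}(\mathcal{N})}\big(G, \mathbb{R}Id(\coprod_i X_i)\big) \cong \mathsf{Hom}_{\mathsf{Ho}(\mathcal{N})}\big(G, \coprod_i \mathbb{R}Id(X_i)\big)\,,
$$
using that $\mathbb{R}Id$ preserves the coproduct; then smallness of $G$ in $\mathsf{Ho}(\mathcal{N})$ (corollary $7.4.4$ in \cite{Hovey}) lets one pull the coproduct out, and the adjunction pulls $\mathbb{R}Id$ back off each term, yielding $\coprod_i \mathsf{Hom}_{\mathsf{Ho}(\mathsf{L}_S\mathcal{N})}(\mathbb{L}Id(G), X_i)$ as desired. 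The main obstacle is the preservation of coproducts by $\mathbb{R}Id$ on filtered systems, i.e. establishing that filtered homotopy colimits of $S$-local objects are $S$-local; this is where the homotopically-finitely-presented hypothesis on the domains and codomains of $S$ is essential, and it should be extracted carefully from \cite{Toen-Vaq} (the point being that mapping out of a homotopically finitely presented object commutes with the relevant filtered colimits, so $S$-locality, being a condition phrased in terms of such mapping spaces, is preserved).
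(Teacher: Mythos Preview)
Your proposal is correct and follows essentially the same route as the paper: both argue weak generation via the adjunction $(\mathbb{L}Id,\mathbb{R}Id)$ and full faithfulness of $\mathbb{R}Id$, and both reduce smallness to the claim that the full subcategory of $S$-local objects is closed under filtered homotopy colimits, which is exactly where the homotopically-finitely-presented hypothesis on the domains and codomains of $S$ enters. The only difference is that the paper spells out this closure argument explicitly (using proposition~\ref{prop} to identify filtered homotopy colimits with ordinary filtered colimits, and then the evident $\mathsf{Map}$-commutation square), whereas you defer it to \cite{Toen-Vaq}; you have correctly identified the crux and there is nothing to fix.
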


\begin{proof}
The previous adjunction is equivalent to
$$
\xymatrix{
\mathsf{Ho}(\mathcal{N}) \ar@<-2ex>[d]_{(-)_f} \\
*+<1pc>{\mathsf{Ho}(\mathcal{N})_S} \ar@{_{(}->}[u]\,,
}
$$
where $\mathsf{Ho}(\mathcal{N})_S$ denotes the full subcategory of
$\mathsf{Ho}(\mathcal{M})$ formed by the $S$-local objects of
$\mathcal{N}$ and $(-)_f$ denotes a fibrant resolution functor in
$\mathsf{L}_S\mathcal{N}$, see \cite{Hirschhorn}. Clearly, this
implies that the image of the set $\mathcal{G}$ under the functor $(-)_f$ is a set of weak generators in
$\mathsf{Ho}(\mathsf{L}_S\mathcal{N})$.

We now show that the $S$-local
objects in $\mathcal{N}$ are stable under filtered homotopy colimits.
Let ${\{ X_i \}}_{i \in I}$ be a filtered diagram of $S$-local
objects.
By proposition~\ref{prop}, we have an
isomorphism
$$ \underset{i \in I}{\mbox{hocolim}}\,X_i \stackrel{\sim}{\longrightarrow}
\underset{i \in I}{\mbox{colim}}\,X_i$$
in $\mathsf{Ho}(\mathcal{N})$.
We now show that $\underset{i \in I}{\mbox{colim}}\,X_i$ is an $S$-local
object.
Let $g:A \rightarrow B$ be an element of $S$. We have at our disposal the
following commutative diagram
$$
\xymatrix{
\mathsf{Map}(B,\underset{i \in I}{\mbox{colim}}\,X_i) \ar[rr]^{g^*} & & 
\mathsf{Map}(A,\underset{i \in I}{\mbox{colim}}\,X_i) \\
\underset{i \in I}{\mbox{colim}}\,\mathsf{Map}(B,X_i) \ar[u]^{\sim}   \ar[rr]_{\underset{i
    \in I}{\mbox{\mbox{colim}}} \, g_i^*} & & \underset{i \in
  I}{\mbox{colim}}\,\mathsf{Map}(A,X_i) \ar[u]_{\sim}\,.
}
$$
Now observe that since $A$ and $B$ are homotopically finitely
presented objects, the vertical arrows in the diagram are isomorphisms
in $\mathsf{Ho}(Sset)$. Since each object $X_i$ is $S$-local, the
morphism $g_i^*$ is an isomorphism in $\mathsf{Ho}(Sset)$ and so is $\underset{i
    \in I}{\mbox{colim}} \, g_i^*$. This implies that $\underset{i \in
    I}{\mbox{colim}}\,X_i$ is an $S$-local object. This shows that the
  inclusion
$$ \mathsf{Ho}(\mathcal{N})_S \hookrightarrow
\mathsf{Ho}(\mathcal{N})\,,$$
commutes with filtered homotopy colimits and so the image of the
set $\mathcal{G}$ under the functor $(-)_f$ consists of small objects in
$\mathsf{Ho}(\mathsf{L}_S\mathcal{N})$.

This proves the lemma.
\end{proof}

Recall from the previous chapter that we have constructed a pointed
derivator $\mathsf{L}_{\Sigma,P}\mathsf{Hot}_{\mathcal{M}_f}$. We will
now construct a strictly pointed Quillen model category whose
associated derivator is equivalent to $\mathsf{L}_{\Sigma,P}\mathsf{Hot}_{\mathcal{M}_f}$.
Consider the pointed Quillen model category
$$ \ast \downarrow \mathsf{Fun}(\mathcal{M}^{op}_f,Sset) =
\mathsf{Fun}(\mathcal{M}_f^{op}, Sset_{\bullet})\,.$$
We have the following Quillen adjunction
$$
\xymatrix{
\mathsf{Fun}(\mathcal{M}^{op}_f,Sset_{\bullet}) \ar@<1ex>[d]^U \\
\mathsf{Fun}(\mathcal{M}_f^{op},Sset) \ar@<1ex>[u]^{(-)_{+}}\,,
}
$$
where $U$ denotes the forgetful functor.

We denote by 
$\mathsf{L}_{\Sigma,P}\mathsf{Fun}(\mathcal{M}_f^{op},Sset_{\bullet})$
the left Bousfield localization of
$\mathsf{Fun}(\mathcal{M}_f^{op},Sset_{\bullet})$ by the image of the set
$\Sigma \cup \{ P\}$ under the functor $(-)_{+}$.
We denote by ${\mathsf{L}_{\Sigma,P}\mathsf{Hot}_{\mathcal{M}_f}}_{\bullet}$ the
derivator associated with $\mathsf{L}_{\Sigma,P}\mathsf{Fun}(\mathcal{M}_f^{op},Sset_{\bullet})$.

\begin{remark}\label{remar}
Since the derivators associated with
$\mathsf{L}_{\Sigma,P}\mathsf{Fun}(\mathcal{M}_f^{op},Sset)$ and
$\mathsf{L}_{\Sigma,P}\mathsf{Fun}(\mathcal{M}_f^{op},Sset_{\bullet})$
are caracterized by the same universal property we have a canonical equivalence of pointed derivators
$$ \mathsf{L}_{\Sigma,P}\mathsf{Hot}_{\mathcal{M}_f}
\stackrel{\sim}{\longrightarrow}
{\mathsf{L}_{\Sigma,P}\mathsf{Hot}_{\mathcal{M}_f}}_{\bullet}$$
Notice also that the category
$\mathsf{Fun}(\mathcal{M}_f^{op},Sset_{\bullet})$ endowed with the
projective model structure is pointed, left proper, compactly
generated and that the domains and codomains of the elements of the
set $(\Sigma \cup \{P\})_+$ are homotopically finitely presented
objects.
Therefore by lemma~\ref{gener}, the set 
$$ \mathcal{G}= \{
\mathbf{F}^X_{\Delta[n]_+ / \partial \Delta[n]_+ } |
\, X \in
\mathcal{M}_f\,, n\geq 0 \} \,,$$
of cofibers of the generating cofibrations in
$\mathsf{Fun}(\mathcal{M}_f^{op},Sset_{\bullet})$ is a set of small weak generators in $\mathsf{Ho}(\mathsf{L}_{\Sigma,P}\mathsf{Fun}(\mathcal{M}_f^{op},Sset_{\bullet}))$.
\end{remark}

\section{Stabilization}\label{spectra}

Let $\mathbb{D}$ be a regular pointed strong derivator.

In \cite{Heller}, Heller constructs a universal morphism to a triangulated
strong derivator
$$ \mathbb{D} \stackrel{stab}{\longrightarrow}
\mathsf{St}(\mathbb{D})\,,$$
which commutes with homotopy colimits.

This construction is done in two steps.
First consider the following ordered set
$$ \mathbf{V} := \{ (i,j) |\, |i -j|\leq 1\} \subset \mathbb{Z} \times \mathbb{Z}$$
naturally as a small category. We denote by
$$ \overset{\cdot}{\mathbf{V}}:= \{ (i,j) |\, |i-j|=1\} \subset
\mathbb{V}\,,$$
the full subcategory of `points on the boundary'.

Now, let $\mathsf{Spec}(\mathbb{D})$ be the full subderivator of
$\mathbb{D}_{\mathbf{V}}$, see definition $3.4$ in \cite{Cis-Nee}, formed by the objects $X$ in
$\mathbb{D}_{\mathbf{V}}(L)$, whose image under the functor
$$ \mathbb{D}_{\mathbf{V}}(L)=\mathbb{D}(\mathbf{V} \times L)
\longrightarrow \mathsf{Fun}(\mathbf{V}^{op},\mathbb{D}(L))$$
is of the form
$$
\xymatrix{
 & \ast \ar[r] & X_{(1,1)}\, \cdots \\
\ast \ar[r] & X_{(0,0)} \ar[r] \ar[u] & \ast \ar[u] \\
\cdots\,X_{(-1,-1)} \ar[u] \ar[r] & \ast \ar[u] & \,,
}
$$ 
see section $8$ in \cite{Heller}.
We have an evaluation functor $ev_{(0,0)}:\mathsf{Spec}(\mathbb{D})
\rightarrow \mathbb{D}$, which admits a left adjoint $L[0,0]$.

Finally, let $\mathsf{St}(\mathbb{D})$ be the full reflexive subderivator of
$\mathsf{Spec}(\mathbb{D})$ consisting of the $\Omega$-spectra, as
defined in \cite{Heller}.

We have the following adjunctions
$$
\xymatrix{
\mathbb{D} \ar@<-1ex>[d]_{L[0,0]} \ar@/_4pc/[dd]_{stab} \\
\mathsf{Spec}(\mathbb{D}) \ar@<-1ex>[u]_{ev_{(0,0)}}
\ar@<-1ex>[d]_{loc} \\
*+<1pc>{\mathsf{St}(\mathbb{D})} \ar@{_{(}->}[u] \,,
}
$$
in the $2$-category of derivators.

Let $\mathbb{T}$ be a triangulated strong derivator.
The following theorem is proved in \cite{Heller}.

\begin{theorem}\label{HellerT}
The morphism $stab$ induces an equivalence of categories
$$ \underline{\mathsf{Hom}}_!(\mathsf{St}(\mathbb{D}),\mathbb{T})
\stackrel{stab^*}{\longrightarrow}
\underline{\mathsf{Hom}}_!(\mathbb{D},\mathbb{T})\,.$$
\end{theorem}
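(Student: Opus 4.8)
The plan is to build an explicit quasi-inverse $\Xi$ to $stab^{\ast}$ and to verify the two triangle identities, the recurring point being that the target $\mathbb{T}$ is \emph{stable}, so that its loop functor $\Omega_{\mathbb{T}}$ and every iterated shift is an equivalence. Given $F \in \underline{\mathsf{Hom}}_!(\mathbb{D},\mathbb{T})$, and writing $X_n := X_{(n,n)}$ for the diagonal terms of an object $X$ of $\mathsf{St}(\mathbb{D})$, I would put
$$ \Xi(F)(X) \;:=\; \underset{n}{\mbox{hocolim}}\ \Omega_{\mathbb{T}}^{\,n}\, F(X_n)\,, $$
which is legitimate because $\Omega_{\mathbb{T}}$ is invertible. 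What must then be checked is that this formula underlies a genuine morphism of derivators $\mathsf{St}(\mathbb{D}) \to \mathbb{T}$ — compatible with \emph{all} base-change functors $u^{\ast}$, not merely defined on objects over $e$ — that it commutes with homotopy colimits, and that $\Xi(F)\circ stab \cong F$ together with $\Xi(stab^{\ast}G)\cong G$.

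To organize this I would first record two structural facts about Heller's construction. \textbf{(i)} Since $\mathbb{T}$ is already triangulated, $stab_{\mathbb{T}}\colon \mathbb{T}\to \mathsf{St}(\mathbb{T})$ is an equivalence: as $\Omega_{\mathbb{T}}$ is invertible, every $\Omega$-spectrum object of $\mathbb{T}$ is reconstructed from its value at $(0,0)$, so $ev_{(0,0)}$ restricts to an equivalence $\mathsf{St}(\mathbb{T})\stackrel{\sim}{\to}\mathbb{T}$ inverse to $stab_{\mathbb{T}}$. \textbf{(ii)} The derivator $\mathsf{Spec}(\mathbb{D})$ is generated under homotopy colimits by the essential images of $\mathrm{sh}^{\,n}\circ L[0,0]$, $n\in\mathbb{Z}$: every prespectrum $X$ is canonically the sequential homotopy colimit of the free prespectra $\mathrm{sh}^{-n}L[0,0](X_n)$. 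Applying the cocontinuous reflector $loc$ and using that the shift of the stable derivator $\mathsf{St}(\mathbb{D})$ is invertible, one gets that $\mathsf{St}(\mathbb{D})$ is generated under homotopy colimits and the shift by the objects $stab(d)$, $d$ an object of $\mathbb{D}$.

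Essential surjectivity then follows: $\Xi(F)$ is cocontinuous because a sequential homotopy colimit of the invertible shifts applied to $F$ of a homotopy colimit is again such a homotopy colimit; and on generators one has $\Xi(F)(stab(d)) \cong \mbox{hocolim}_n \Omega_{\mathbb{T}}^{\,n}\Sigma_{\mathbb{T}}^{\,n}F(d) \cong F(d)$, naturally, so $stab^{\ast}\Xi(F)\cong F$. (Equivalently, one may package this via functoriality of $\mathsf{Spec}(-)$ for cocontinuous morphisms and the equivalence of (i), obtaining $\Xi(F)$ as $\mathsf{St}(\mathbb{D})\to\mathsf{St}(\mathbb{T})\stackrel{\sim}{\to}\mathbb{T}$.) For full faithfulness, given $G_1,G_2\in\underline{\mathsf{Hom}}_!(\mathsf{St}(\mathbb{D}),\mathbb{T})$ and a $2$-morphism $\theta\colon G_1\circ stab\to G_2\circ stab$, the generation statement (ii) forces a unique extension: $\theta$ is determined on the shifts $\mathrm{sh}^{\,n}stab(d)$ by $\mathrm{sh}^{\,n}G_i(\theta_d)$, and then on all of $\mathsf{St}(\mathbb{D})$ by passage to homotopy colimits, uniqueness being automatic since both $G_i$ are cocontinuous. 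Hence $stab^{\ast}$ is fully faithful, completing the proof.

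The step I expect to be the main obstacle is making the generation statement (ii) precise and usable \emph{at the level of derivators}: that a morphism of derivators commuting with homotopy colimits out of $\mathsf{St}(\mathbb{D})$ is determined — $2$-functorially and compatibly with every $u^{\ast}$ — by its restriction along $stab$, and dually that the pointwise formula for $\Xi(F)$ genuinely assembles into such a morphism. This is exactly where the hypotheses that $\mathbb{D}$ be a \emph{regular} pointed \emph{strong} derivator and $\mathbb{T}$ a \emph{strong} triangulated derivator are used, together with the explicit combinatorics of the shape $\mathbf{V}$ and of the Bousfield localization cutting out the $\Omega$-spectra inside $\mathsf{Spec}(\mathbb{D})$; granting this, everything else is formal manipulation of the adjunctions $L[0,0]\dashv ev_{(0,0)}$ and $loc\dashv inc$ displayed above.
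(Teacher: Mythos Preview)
The paper does not give its own proof of this theorem: it states the result and attributes it to Heller~\cite{Heller} with the sentence ``The following theorem is proved in \cite{Heller}.'' So there is no proof in the paper to compare against.

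That said, your sketch is a faithful outline of Heller's argument. The two structural ingredients you isolate --- (i) that $stab_{\mathbb{T}}\colon\mathbb{T}\to\mathsf{St}(\mathbb{T})$ is an equivalence when $\mathbb{T}$ is already triangulated, and (ii) that every object of $\mathsf{St}(\mathbb{D})$ is a sequential homotopy colimit of shifts of free spectra --- are exactly the ones Heller uses, and the packaged version you mention (apply $\mathsf{St}(-)$ to $F$ and then invert $stab_{\mathbb{T}}$) is the cleanest way to produce the quasi-inverse. Your explicit formula $\Xi(F)(X)=\mbox{hocolim}_n\,\Omega_{\mathbb{T}}^{\,n}F(X_n)$ is what this unpacks to on objects.

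You are also right that the genuine work lies in the step you flag as the main obstacle: promoting the pointwise formula to a morphism of \emph{derivators} (coherent with all $u^{\ast}$) and verifying that the generation statement (ii) holds coherently, not just objectwise in $\mathbb{D}(e)$. This is where Heller uses the regularity and strength hypotheses, and it is not a formality --- it occupies the bulk of the relevant sections of \cite{Heller}. Your proposal correctly locates the difficulty but does not resolve it; as a sketch of the strategy it is accurate, but as a proof it would need those coherence verifications filled in, for which one ultimately has to go back to Heller's paper.
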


\begin{lemma}\label{gener1}
Let $\mathcal{G}$ be a set of objects in $\mathbb{D}(e)$
which satisfies the following conditions:
\begin{itemize}
\item[A1)] If, for each $g$ in $\mathcal{G}$, we have
$$ \mathsf{Hom}_{\mathbb{D}(e)}(g,X)=\{*\}\,,$$
then $X$ is isomorphic to $*$, where $*$ denotes
 the terminal and initial object in $\mathbb{D}(e)$.

\item[A2)] For every set $K$ and each $g$ in $\mathcal{G}$  the
  canonical map
$$\underset{\stackrel{S \subseteq K}{S finite}}{\mbox{colim}}\,
\mathsf{Hom}_{\mathbb{D}(e)}(g, \underset{\alpha \in S}{\coprod}
X_{\alpha}) \stackrel{\sim}{\rightarrow} \mathsf{Hom}_{\mathbb{D}(e)}(g, \underset{\alpha \in S}{\coprod}
X_{\alpha})$$
is bijective.
\end{itemize}
Then the set 
$$ {\{ \Sigma^n stab(g) \,|\, g \in \mathcal{G}, \, n \in \mathbb{Z} \}}$$
of objects in $\mathsf{St}(\mathbb{D})(e)$, where $\Sigma$ denotes the
suspension functor in $\mathsf{St}(\mathbb{D})(e)$, satisfies conditions $A1)$
and $A2)$.
\end{lemma}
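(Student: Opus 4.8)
The plan is to transport both conditions along the adjunctions displayed above, reducing them to the hypotheses A1) and A2) on $\mathcal{G}$ in $\mathbb{D}(e)$. Since $stab = loc \circ L[0,0]$ is a composite of left adjoints, it has a right adjoint $R := ev_{(0,0)} \circ inc : \mathsf{St}(\mathbb{D}) \to \mathbb{D}$, which sends an $\Omega$-spectrum $Z$ to its zeroth term $Z_{(0,0)}$; using in addition that $(\Sigma,\Omega)$ is an adjoint self-equivalence of the triangulated derivator $\mathsf{St}(\mathbb{D})$, under which $\Omega^n$ is the shift of the $\mathbf{V}$-grading by $n$, one obtains for every $Z \in \mathsf{St}(\mathbb{D})(e)$ and every $n \in \mathbb{Z}$ a natural bijection
$$ \mathsf{Hom}_{\mathsf{St}(\mathbb{D})(e)}(\Sigma^n stab(g), Z) \;\cong\; \mathsf{Hom}_{\mathbb{D}(e)}(g, Z_{(n,n)})\,, $$
where $Z_{(n,n)} = ev_{(n,n)}(inc(Z))$ denotes the $n$-th term of $Z$. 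Every reduction below rests on this identification.

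For A1), assume the left-hand side vanishes for all $g \in \mathcal{G}$ and all $n \in \mathbb{Z}$. Then, for each fixed $n$, condition A1) for $\mathcal{G}$ forces $Z_{(n,n)} \cong *$. As $Z$ lies in $\mathsf{St}(\mathbb{D})(e) \subseteq \mathsf{Spec}(\mathbb{D})(e)$, the remaining fibres of $inc(Z) \in \mathbb{D}(\mathbf{V})$, namely those over the points of $\overset{\cdot}{\mathbf{V}}$, are the point by the very definition of $\mathsf{Spec}(\mathbb{D})$. Hence every fibre of $inc(Z)$ over a point of $\mathbf{V}$ is the point, and the conservativity axiom {\bf Der2} for $\mathbb{D}$, applied to the category $\mathbf{V}$, gives $inc(Z) \cong *$, whence $Z \cong *$ since $inc$ is fully faithful. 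Thus $\{\Sigma^n stab(g)\}$ satisfies A1).

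For A2), fix a set $K$ and a family $\{Y_\alpha\}_{\alpha \in K}$ in $\mathsf{St}(\mathbb{D})(e)$. By the identification above the canonical comparison map becomes
$$ \underset{S \subseteq K\,\text{finite}}{\mbox{colim}}\; \mathsf{Hom}_{\mathbb{D}(e)}\big(g,\,(\coprod_{\alpha \in S} Y_\alpha)_{(n,n)}\big) \longrightarrow \mathsf{Hom}_{\mathbb{D}(e)}\big(g,\,(\coprod_{\alpha \in K} Y_\alpha)_{(n,n)}\big)\,, $$
so the whole point is to describe the $n$-th term of a coproduct formed in $\mathsf{St}(\mathbb{D})(e)$. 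Here I would argue as follows: each fibre functor $x^{\ast} : \mathbb{D}(\mathbf{V}) \to \mathbb{D}(e)$ has both adjoints (axiom {\bf Der3}), hence commutes with all homotopy colimits, so coproducts and filtered homotopy colimits in $\mathsf{Spec}(\mathbb{D})$ are computed termwise; and since $\mathbb{D}$ is regular, $\Omega$ commutes with filtered homotopy colimits in $\mathbb{D}$, so a termwise filtered homotopy colimit of $\Omega$-spectra is again an $\Omega$-spectrum, whence filtered homotopy colimits in $\mathsf{St}(\mathbb{D})$ are also termwise. Combining this with $\coprod_{\alpha \in K} Y_\alpha = \mbox{hocolim}_S \coprod_{\alpha \in S} Y_\alpha$ and with the fact that the reflector $loc$ is a left adjoint, one gets a termwise description of $(\coprod_{\alpha \in K} Y_\alpha)_{(n,n)}$ as a filtered homotopy colimit which, through the spectrification telescope $loc(W)_{(n,n)} \simeq \mbox{hocolim}_j \Omega^j W_{(n+j,n+j)}$, is assembled from coproducts and loops in $\mathbb{D}(e)$, compatibly with the finite-$S$ truncations. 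Feeding in the smallness A2) of $g$ — together with that of the suspensions $\Sigma^j g$, which holds in the situations where this lemma is applied — and interchanging the two filtered homotopy colimits (over $S$ and over $j$) then yields the bijectivity of the comparison map.

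I expect this last step to be the real obstacle. Coproducts in $\mathsf{St}(\mathbb{D})(e)$ are genuinely not computed termwise — one must pass through the $\Omega$-spectrification $loc$ — so one cannot simply turn $\mathsf{Hom}(\Sigma^n stab(g),-)$ of a coproduct into a coproduct of $\mathsf{Hom}$'s. Regularity of $\mathbb{D}$ is exactly what lets one resolve $loc$, and hence the coproduct in $\mathsf{St}(\mathbb{D})$, into an explicit filtered homotopy colimit built from coproducts and loops in $\mathbb{D}$; only after that resolution does the compactness furnished by A2) become applicable.
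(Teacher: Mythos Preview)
Your treatment of A1) coincides with the paper's: both use the adjunction to identify $\mathsf{Hom}_{\mathsf{St}(\mathbb{D})(e)}(\Sigma^n stab(g),Z)$ with $\mathsf{Hom}_{\mathbb{D}(e)}(g,Z_{(n,n)})$, invoke A1) for $\mathcal{G}$ pointwise, and conclude by conservativity.

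For A2) the paper's argument is considerably shorter and avoids the extra hypothesis on the $\Sigma^j g$ that you end up needing. Rather than resolving the coproduct through the spectrification telescope, the paper runs the following chain: first use that $\Omega$ is an auto\-equivalence of $\mathsf{St}(\mathbb{D})$, hence commutes with coproducts there, to rewrite
\[
\mathsf{Hom}_{\mathsf{St}(\mathbb{D})(e)}\bigl(\Sigma^n stab(g),\textstyle\bigoplus_{\alpha\in K} X_\alpha\bigr)\;\simeq\;\mathsf{Hom}_{\mathsf{St}(\mathbb{D})(e)}\bigl(stab(g),\textstyle\bigoplus_{\alpha\in K}\Omega^n X_\alpha\bigr);
\]
then pass through the adjunction to $\mathsf{Hom}_{\mathbb{D}(e)}\bigl(g,\,ev_{(0,0)}\bigoplus_{\alpha\in K}\Omega^n X_\alpha\bigr)$; then commute $ev_{(0,0)}$ with the coproduct; apply A2) for $g$ in $\mathbb{D}(e)$; and finally reverse every step with a finite $S$ in place of $K$. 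No appeal to smallness of $\Sigma^j g$ is made, and no explicit unwinding of $loc$ is needed.

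Your worry that coproducts in $\mathsf{St}(\mathbb{D})(e)$ are not termwise is exactly the content of the step $ev_{(0,0)}\bigl(\bigoplus_K Y_\alpha\bigr)\simeq\coprod_K ev_{(0,0)}(Y_\alpha)$, which the paper asserts without further comment. Your detour through the formula $loc(W)_{(n,n)}\simeq\mbox{hocolim}_j\,\Omega^j W_{(n+j,n+j)}$ makes the role of regularity more transparent, but the price is that you must control $\mathsf{Hom}_{\mathbb{D}(e)}(g,\Omega^j\!\coprod\cdots)$, which is what forces you to assume A2) for the $\Sigma^j g$ --- a hypothesis not present in the statement. If you want a self-contained argument, it is cleaner to justify directly that the right adjoint $ev_{(0,0)}\circ inc:\mathsf{St}(\mathbb{D})\to\mathbb{D}$ preserves arbitrary coproducts (using regularity to reduce an arbitrary coproduct to a filtered homotopy colimit of finite ones, and the triangulated structure on $\mathsf{St}(\mathbb{D})(e)$ for the finite case), and then follow the paper's chain.
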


\begin{proof}
Let $\underline{X}$ be an object of $\mathsf{St}(\mathbb{D})(e)$.
Suppose that for each $g$ in $\mathcal{G}$ and $n$ in $\mathbb{Z}$, we have 
$$ \mathsf{Hom}_{\mathsf{St}(\mathbb{D})(e)}(\Sigma^n stab(g),
\underline{X}) = \{\ast\}\,.$$
Then by the following isomorphisms
$$
\begin{array}{rcl}
 \mathsf{Hom}_{\mathsf{St}(\mathbb{D})(e)}(\Sigma^n stab(g),
\underline{X}) & \simeq  & \mathsf{Hom}_{\mathsf{St}(\mathbb{D})(e)}(stab(g),
\Omega^n \underline{X})\\
& \simeq & \mathsf{Hom}_{\mathbb{D}(e)}(g,
ev_{(0,0)} \Omega^n \underline{X})\\
& \simeq & \mathsf{Hom}_{\mathbb{D}(e)}(g,
ev_{(n,n)}\underline{X})\,,
\end{array}
$$
we conclude that, for all $n$ in $\mathbb{Z}$, we have 
$$ ev_{(n,n)} \underline{X} = \ast\,.$$
By the conservativity axiom, $\underline{X}$ is isomorphic to $\ast$ in $\mathsf{St}(\mathbb{D})(e)$. This shows condition $A1)$. Now
observe that condition $A2)$ follows from the following isomorphisms
$$
\begin{array}{rcl}
\mathsf{Hom}_{\mathsf{St}(\mathbb{D})(e)}(\Sigma^n stab(g),
\underset{\alpha \in K}{\bigoplus} \underline{X_{\alpha}})&
\simeq & \mathsf{Hom}_{\mathsf{St}(\mathbb{D})(e)}(stab(g),
 \Omega^n \underset{\alpha \in K}{\bigoplus} \underline{X_{\alpha}})\\

& \simeq & \mathsf{Hom}_{\mathsf{St}(\mathbb{D})(e)}(stab(g), \underset{\alpha
  \in K}{\bigoplus} \Omega^n \underline{X_{\alpha}})\\

& \simeq & \mathsf{Hom}_{\mathbb{D}(e)}(g, ev_{(0,0)} \underset{\alpha
  \in K}{\coprod} \Omega^n \underline{X_{\alpha}})\\

&\simeq &\mathsf{Hom}_{\mathbb{D}(e)}(g, \underset{\alpha
  \in K}{\coprod} ev_{(0,0)} \Omega^n \underline{X_{\alpha}})\\

&\simeq &\underset{\stackrel{S \subseteq K}{S finite}}{\mbox{colim}} \, \mathsf{Hom}_{\mathbb{D}(e)}(g, \underset{\alpha
  \in S}{\coprod} ev_{(0,0)} \Omega^n \underline{X_{\alpha}})\\

&\simeq &\underset{\stackrel{S \subseteq K}{S finite}}{\mbox{colim}}
\,\mathsf{Hom}_{\mathbb{D}(e)}(g, ev_{(0,0)} \underset{\alpha
  \in S}{\coprod} \Omega^n \underline{X_{\alpha}})\\

&\simeq &\underset{\stackrel{S \subseteq K}{S finite}}{\mbox{colim}}
\,\mathsf{Hom}_{\mathsf{St}(\mathbb{D})(e)}(stab(g),\underset{\alpha
  \in S}{\bigoplus} \Omega^n \underline{X_{\alpha}})\\

&\simeq &\underset{\stackrel{S \subseteq K}{S finite}}{\mbox{colim}}
\,\mathsf{Hom}_{\mathsf{St}(\mathbb{D})(e)}(\Sigma^n stab(g),\underset{\alpha
  \in S}{\bigoplus} \underline{X_{\alpha}})

\end{array}
$$

\end{proof}

\begin{lemma}\label{gen1}
Let $\mathbb{T}$ be a triangulated derivator and $\mathcal{G}$ a set
of objects in $\mathbb{T}(e)$ which satisfies conditions $A1)$ and $A2)$ of lemma~\ref{gener1}.

Then for every small category $L$ and every point $x: e \rightarrow L$
in $L$, the set 
$$ \{x_!(g)\,|\,g \in \mathcal{G}, \, x:e\rightarrow L\}$$ satisfies conditions
$A1)$ and $A2)$ in the category $\mathbb{T}(L)$.
\end{lemma}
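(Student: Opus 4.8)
The plan is to deduce both conditions for the set $\{x_!(g)\}$ in $\mathbb{T}(L)$ directly from the corresponding conditions for $\mathcal{G}$ in $\mathbb{T}(e)$, using only the adjunction $(x_!, x^\ast)$ supplied by axiom {\bf Der3} together with the observation that the fiber functor $x^\ast = \mathbb{T}(x)\colon \mathbb{T}(L) \to \mathbb{T}(e)$ is simultaneously a left adjoint (to $x_!$) and a right adjoint (to $x_\ast$); in particular it preserves the zero object and commutes with all coproducts that exist in $\mathbb{T}(L)$. As in lemma~\ref{gener1}, the coproducts appearing in $A2)$ are understood to exist (the finite ones are provided by {\bf Der1}, and the triangulated derivators to which this will be applied admit arbitrary coproducts).

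First I would check $A1)$. Let $X \in \mathbb{T}(L)$ satisfy $\mathsf{Hom}_{\mathbb{T}(L)}(x_!(g), X) = \{\ast\}$ for every $g \in \mathcal{G}$ and every point $x\colon e \to L$. By the adjunction $x_! \dashv x^\ast$ this means $\mathsf{Hom}_{\mathbb{T}(e)}(g, X_x) = \{\ast\}$ for every $g \in \mathcal{G}$, where $X_x = x^\ast(X)$. Fixing $x$ and invoking condition $A1)$ for $\mathcal{G}$ in $\mathbb{T}(e)$, we obtain $X_x \cong \ast$ for every point $x$ of $L$. Since $x^\ast$ preserves the zero object, the fiber at $x$ of the canonical morphism $X \to \ast$ in $\mathbb{T}(L)$ is a morphism between objects isomorphic to $\ast$, hence an isomorphism; by the conservativity axiom {\bf Der2}, $X \to \ast$ is therefore an isomorphism, i.e. $X \cong \ast$.

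Next I would check $A2)$. Fix an element $x_!(g)$ of the generating set and a family $\{X_\alpha\}_{\alpha \in K}$ in $\mathbb{T}(L)$. Because $x^\ast$ is a left adjoint it commutes with coproducts, so together with $x_! \dashv x^\ast$ we obtain, for every subset $S \subseteq K$, a bijection
$$ \mathsf{Hom}_{\mathbb{T}(L)}\bigl(x_!(g),\, \coprod_{\alpha \in S} X_\alpha\bigr) \;\cong\; \mathsf{Hom}_{\mathbb{T}(e)}\bigl(g,\, \coprod_{\alpha \in S} (X_\alpha)_x\bigr)\,, $$
natural in $S$ with respect to the inclusions among subsets of $K$. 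Under these identifications the canonical map of $A2)$ for $x_!(g)$ in $\mathbb{T}(L)$ becomes the canonical map
$$ \underset{\stackrel{S \subseteq K}{S\,\mathrm{finite}}}{\mbox{colim}}\; \mathsf{Hom}_{\mathbb{T}(e)}\bigl(g,\, \coprod_{\alpha \in S} (X_\alpha)_x\bigr) \;\longrightarrow\; \mathsf{Hom}_{\mathbb{T}(e)}\bigl(g,\, \coprod_{\alpha \in K} (X_\alpha)_x\bigr)\,, $$
which is bijective by condition $A2)$ for $g \in \mathcal{G}$ in $\mathbb{T}(e)$. Hence $\{x_!(g)\}$ satisfies $A2)$ in $\mathbb{T}(L)$.

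The argument is purely formal, and I do not expect any serious obstacle; the one point deserving attention is the bookkeeping with infinite coproducts, where the essential input is simply that the existence of the right adjoint $x_\ast$ forces $x^\ast$ to preserve coproducts, so that restricting to a fiber turns a coproduct in $\mathbb{T}(L)$ into the corresponding coproduct in $\mathbb{T}(e)$.
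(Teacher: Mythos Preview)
Your proof is correct and follows essentially the same route as the paper: use the adjunction $x_! \dashv x^\ast$ to pass to fibers, invoke $A1)$ and conservativity for the first condition, and for $A2)$ use that $x^\ast$ preserves coproducts (because it admits the right adjoint $x_\ast$) together with $A2)$ for $g$ in $\mathbb{T}(e)$. One small slip: in your opening paragraph the parenthetical labels are swapped --- $x^\ast$ is a \emph{right} adjoint to $x_!$ and a \emph{left} adjoint to $x_\ast$ --- but you use the correct adjunctions throughout the argument, so this does not affect the proof.
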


\begin{proof}
Suppose that 
$$ \mathsf{Hom}_{\mathbb{T}(L)}(x_!(g),M) = \{\ast \}\,,$$
for every $g \in \mathcal{G}$ and every point $x$ in $L$. Then by adjunction
$x^{\ast}M$ is isomorphic to $\ast$ in $\mathbb{T}(e)$ and so by the
conservativity axiom, $M$ is isomorphic to $\ast$ in
$\mathbb{T}(L)$. This shows condition $A1)$. Condition $A2)$ follows
from the following isomorphisms
$$
\begin{array}{rcl}
\mathsf{Hom}_{\mathbb{T}(L)}(x_!(g),\underset{\alpha \in
  K}{\bigoplus} M_{\alpha})&
\simeq &\mathsf{Hom}_{\mathbb{T}(e)}(g,x^{\ast} \underset{\alpha \in
  K}{\bigoplus} M_{\alpha})\\

&\simeq &\mathsf{Hom}_{\mathbb{T}(e)}(g, \underset{\alpha \in
  K}{\bigoplus} x^{\ast} M_{\alpha})\\

&\simeq & \underset{\alpha \in K}{\bigoplus}   \mathsf{Hom}_{\mathbb{T}(e)}(g, x^{\ast} M_{\alpha})\\

&\simeq & \underset{\alpha \in K}{\bigoplus}
\mathsf{Hom}_{\mathbb{T}(L)}(x_!(g), M_{\alpha})\,.

\end{array}
$$
\end{proof}

\begin{remark}\label{important}
Notice that if $\mathbb{D}$ is a regular pointed strong derivator and we
have at our disposal of a set $\mathcal{G}$ of objects in $\mathbb{D}(e)$
which satisfies conditions $A1)$ and $A2)$, then lemma~\ref{gener} and lemma~\ref{gen1} imply
that $\mathsf{St}(\mathbb{D})(L)$ is a compactly generated triangulated
category, for every small category $L$.
\end{remark}

\subsection*{Relation with Hovey/Schwede's stabilization}

We will now relate Heller's construction with the construction of
spectra as it is done by Hovey in \cite{Spectra} and Schwede in \cite{Schwede}.

Let $\mathcal{M}$ be a pointed, simplicial, left proper, cellular,
almost finitely generated Quillen model category, see definition $4.1$ in
\cite{Spectra}, where sequential colimits commute with finite products
and homotopy pullbacks. This implies in particular that the associated derivator
$\mathsf{HO}(\mathcal{M})$ will be regular.

\begin{example}\label{exem}
Consider the category
$\mathsf{L}_{\Sigma,P}\mathsf{Fun}(\mathcal{M}_f^{op},Sset_{\bullet})$
defined in section~\ref{small}.
Notice that the category of pointed simplicial pre-sheaves
$\mathsf{Fun}(\mathcal{M}_f^{op},Sset_{\bullet})$ is pointed, simplicial,
left proper, cellular and even finitely generated, see definition
$4.1$ in \cite{Spectra}. Since limits and colimits in
$\mathsf{Fun}(\mathcal{M}_f^{op},Sset_{\bullet})$ are calculated
objectwise, we conclude that sequential colimits commute with finite
products. 
Now, by theorem $4.1.1$ in \cite{Hirschhorn} the category
$\mathsf{L}_{\Sigma,P}\mathsf{Fun}(\mathcal{M}_f^{op},Sset_{\bullet})$ is
also pointed, simplicial, left proper and cellular.

Now observe that the domains and codomains of each morphism in
$\Lambda((\Sigma \cup \{P\})_{+})$, see definition $4.2.1$ in
\cite{Hirschhorn}, are finitely presented, since the forgetful functor 
$$ \mathsf{Fun}(\mathcal{M}_f^{op}, Sset_{\bullet}) \rightarrow \mathsf{Fun}(\mathcal{M}^{op},Sset)$$
commutes with filtered colimits and homotopy pullbacks. Now, by proposition $4.2.4$
in \cite{Hirschhorn}, we conclude that a morphism $A
\stackrel{f}{\rightarrow} B$ in
$\mathsf{L}_{\Sigma,P}\mathsf{Fun}(\mathcal{M}_f^{op},Sset_{\bullet})$,
with $B$ a local object, is a local fibration if and
only if it has the right lifting property with respect to the set 
$$ J \cup \Lambda((\Sigma \cup \{P\})_{+})\,,$$
where $J$ denotes the set of generating acyclic cofibrations in
$\mathsf{Fun}(\mathcal{M}_f^{op},Sset_{\bullet})$. This shows that
$\mathsf{L}_{\Sigma,P}\mathsf{Fun}(\mathcal{M}_f^{op},Sset_{\bullet})$ is
almost finitely generated.

\end{example}

Recall from section $1.2$ in \cite{Schwede} that since $\mathcal{M}$
is a pointed, simplicial model category, we have a Quillen adjunction
$$
\xymatrix{
\mathcal{M} \ar@<-1ex>[d]_{\Sigma(-)} \\
\mathcal{M} \ar@<-1ex>[u]_{\Omega(-)}\,,
}
$$
where $\Sigma(X)$ denotes the suspension of an object $X$ , i.e. the
pushout of the diagram
$$
\xymatrix{
X \otimes \partial \Delta^1 \ar[r] \ar[d] & X \otimes \Delta^1 \\
\ast & .
}
$$
Recall also that in \cite{Spectra} and \cite{Schwede} the authors
construct a stable Quillen model category
$\mathsf{Sp}^{\mathbb{N}}(\mathcal{M})$ of spectra associated with
$\mathcal{M}$ and with the left Quillen functor $\Sigma(-)$.
We have the following Quillen adjunction, see~\cite{Spectra},
$$
\xymatrix{
\mathcal{M} \ar@<-1ex>[d]_{\Sigma^{\infty}} \\
\mathsf{Sp}^{\mathbb{N}}(\mathcal{M}) \ar@<-1ex>[u]_{ev_0}
}
$$
and thus a morphism to a strong triangulated derivator
$$ \mathsf{HO}(\mathcal{M})
\stackrel{\mathbb{L}\Sigma^{\infty}}{\longrightarrow}
  \mathsf{HO}(\mathsf{Sp}^{\mathbb{N}}(\mathcal{M}))$$
which commutes with homotopy colimits.

By theorem~\ref{HellerT}, we have at our disposal a diagram
$$
\xymatrix{
\mathsf{HO}(\mathcal{M}) \ar[d]_{stab}
\ar[dr]^{\mathbb{L}\Sigma^{\infty}} & \\
\mathsf{St}(\mathsf{HO}(\mathcal{M})) \ar[r]_{\varphi} & \mathsf{HO}(\mathsf{Sp}^{\mathbb{N}}(\mathcal{M}))\,, 
}
$$
which is commutative up to isomorphism in the $2$-category of derivators.

Now suppose also that we have a set $\mathcal{G}$ of small weak generators in $\mathsf{Ho}(\mathcal{M})$, as in
definitions $7.2.1$ and $7.2.2$ in \cite{Spectra}.
Suppose also that each object of $\mathcal{G}$ considered in $\mathcal{M}$ is cofibrant,
finitely presented, homotopy finitely presented and has a finitely
presented cylinder object.

\begin{example}\label{ex2}
Observe that the category
$\mathsf{Fun}(\mathcal{M}_f^{op},Sset_{\bullet})$ is pointed and finitely
generated. By corollary $7.4.4$ in \cite{Hovey}, the set 
$$ \mathcal{G}= \{
\mathbf{F}^X_{\Delta[n]_{+} / \partial \Delta[n]_{+} } |
\, X \in
\mathcal{M}_f\,, n\geq 0 \} \,,$$
is a set of small weak generators in
$\mathsf{Ho}(\mathsf{Fun}(\mathcal{M}^{op}_f,Sset_{\bullet}))$. Since the
domains and codomains of the set 
$$ (\Sigma \cup \{ P\} )_{+}$$
are homotopically finitely presented objects, lemma~\ref{gener}
implies that $\mathcal{G}$ is a set of small weak generators
in $\mathsf{Ho}(\mathsf{L}_{\Sigma,P}
\mathsf{Fun}(\mathcal{M}_f^{op},Sset_{\bullet}))$. Clearly the elements
of $\mathcal{G}$ are cofibrant, finitely presented and have a finitely
presented cylinder object. They are also homotopically finitely presented.
\end{example}

Under the hypotheses above on the category $\mathcal{M}$, we have the following comparison theorem

\begin{theorem}\label{repre}
The induced morphism of triangulated derivators
$$ \varphi: \mathsf{St}(\mathsf{HO}(\mathcal{M})) \longrightarrow
\mathsf{HO}(\mathsf{Sp}^{\mathbb{N}}(\mathcal{M}))$$
is an equivalence.
\end{theorem}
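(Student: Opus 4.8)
The plan is to show that the morphism of derivators $\varphi$ is an equivalence by checking it induces equivalences on each value $\mathbb{D}(L)$, and in fact it suffices to do this for $L=e$ by a standard argument: both source and target are triangulated strong derivators, $\varphi$ commutes with homotopy colimits, and one can use \texttt{Der2} (conservativity) together with the fact that fibers commute with $\varphi$. So the heart of the matter is to prove that
$$\varphi(e):\mathsf{St}(\mathsf{HO}(\mathcal{M}))(e)\longrightarrow \mathsf{Ho}(\mathsf{Sp}^{\mathbb{N}}(\mathcal{M}))$$
is an equivalence of triangulated categories.

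First I would invoke Theorem~\ref{HellerT}: since $\mathsf{HO}(\mathsf{Sp}^{\mathbb{N}}(\mathcal{M}))$ is a triangulated strong derivator and $\mathbb{L}\Sigma^{\infty}$ commutes with homotopy colimits, the morphism $\varphi$ is, up to isomorphism, the unique homotopy-colimit-preserving morphism with $\varphi\circ stab\simeq \mathbb{L}\Sigma^{\infty}$. Both categories are compactly generated triangulated categories: for the target this is classical (the generators being $\Sigma^{\infty}$ of a set of small weak generators of $\mathcal{M}$, shifted), and for the source this is exactly Remark~\ref{important}, using that $\mathcal{M}$ carries the set $\mathcal{G}$ of small weak generators satisfying conditions A1), A2) (verified via Examples~\ref{exem} and \ref{ex2}). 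The functor $\varphi(e)$ is exact (it is a morphism of triangulated derivators), commutes with arbitrary coproducts (since $\varphi$ commutes with homotopy colimits), and sends the compact generators $\Sigma^{n}stab(g)$ of the source to the compact generators $\Sigma^{n}\Sigma^{\infty}(g)$ of the target, because $\varphi\circ stab\simeq \mathbb{L}\Sigma^{\infty}$ and $\varphi$ commutes with the suspension functor.

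Next I would verify that $\varphi(e)$ is \emph{fully faithful on generators}, i.e.\ that for all $g,g'\in\mathcal{G}$ and all $m,n\in\mathbb{Z}$ the map
$$\mathsf{Hom}_{\mathsf{St}(\mathsf{HO}(\mathcal{M}))(e)}(\Sigma^{m}stab(g),\Sigma^{n}stab(g'))\longrightarrow \mathsf{Hom}_{\mathsf{Ho}(\mathsf{Sp}^{\mathbb{N}}(\mathcal{M}))}(\Sigma^{m}\Sigma^{\infty}(g),\Sigma^{n}\Sigma^{\infty}(g'))$$
is bijective. On the source side, using the adjunctions $(L[0,0],ev_{(0,0)})$ and $(loc,inc)$ displayed before Theorem~\ref{HellerT}, together with the description of $\Omega$-spectra in $\mathsf{St}$, one computes this Hom group as a filtered colimit $\mathrm{colim}_k\,\mathsf{Hom}_{\mathsf{Ho}(\mathcal{M})}(\Sigma^{m+k}g,\Sigma^{n+k}g')$ (the stable homotopy groups in the sense of Heller's construction). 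On the target side, the analogous computation in $\mathsf{Sp}^{\mathbb{N}}(\mathcal{M})$ — using that the generators are finitely presented with finitely presented cylinder objects, so that mapping out of $\Sigma^{\infty}g$ commutes with the relevant sequential colimits defining stable maps — gives the \emph{same} filtered colimit. Hence the comparison map is an isomorphism. The point where the compactness/finite-presentation hypotheses on $\mathcal{G}$ really get used is precisely here: one needs $\mathsf{Map}(g,-)$ to commute with the transfinite/sequential homotopy colimits built into both stabilization procedures.

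Finally, a standard devissage completes the argument: a homotopy-colimit-preserving exact functor between compactly generated triangulated categories that restricts to an equivalence on compact generators is an equivalence. Indeed, by the coproduct-preservation and the computation above, $\varphi(e)$ is fully faithful on the thick subcategory generated by the $\Sigma^{n}stab(g)$ closed under coproducts, which by the generation hypothesis (condition A1 of Lemma~\ref{gener1}, giving that $\mathcal{G}$ detects isomorphisms) is all of $\mathsf{St}(\mathsf{HO}(\mathcal{M}))(e)$; and essential surjectivity follows because the image is a coproduct-closed triangulated subcategory containing the compact generators of the target. Passing from $\mathbb{D}(e)$ to general $\mathbb{D}(L)$ is then routine: both sides are strong triangulated derivators, $\varphi$ commutes with the inverse-image functors $x^{\ast}$, and Lemma~\ref{gen1} shows the generating sets behave well under $x_!$, so conservativity (\texttt{Der2}) upgrades the equivalence at $e$ to an equivalence at every $L$. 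The main obstacle I anticipate is the matching of the two Hom-colimit computations — making precise that Heller's $\Omega$-spectrum description and Hovey/Schwede's stable model structure compute the \emph{same} stable homotopy groups on the generators — which is where all the almost-finitely-generated and finite-presentation hypotheses on $\mathcal{M}$ and on $\mathcal{G}$ are consumed.
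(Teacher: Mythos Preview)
Your proposal is correct and follows essentially the same strategy as the paper. The paper packages your reduction-to-$e$ and devissage steps into an auxiliary Proposition~\ref{aux} (a morphism of strong triangulated derivators between compactly generated theories that is bijective on Homs between a suspension-stable set of compact generators and sends them to compact generators of the target is an equivalence), and then verifies its two hypotheses exactly as you outline: condition (b) by showing $\{\Sigma^n\mathbb{L}\Sigma^{\infty}(g)\}$ generate $\mathsf{Ho}(\mathsf{Sp}^{\mathbb{N}}(\mathcal{M}))$ compactly, and condition (a) by computing both Hom-sets as the same filtered colimit $\mathrm{colim}_j\,\mathsf{Hom}_{\mathsf{Ho}(\mathcal{M})}(g_1,\Omega^j\Sigma^j g_2)$ (which agrees with your $\mathrm{colim}_k\,\mathsf{Hom}(\Sigma^k g,\Sigma^k g')$ by adjunction). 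Your anticipated ``main obstacle'' is precisely where the paper spends its effort, invoking Heller's description of $loc$ on one side and Hovey's corollary~4.13 on the other.
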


The proof of theorem~\ref{repre} will consist in verifying the
conditions of the following general proposition.

\begin{proposition}\label{aux}
Let $F: \mathbb{T}_1 \rightarrow \mathbb{T}_2$ be a morphism of strong
triangulated derivators. Suppose that the triangulated categories
$\mathbb{T}_1(e)$ and $\mathbb{T}_2(e)$ are compactly generated and
that there is a set $\mathcal{G} \subset \mathbb{T}_1(e)$ of compact generators,
which is stable under suspensions and satisfies the following conditions~: 
\begin{itemize}
\item[a)] $F(e)$ induces bijections
$$
  \mathsf{Hom}_{\mathbb{T}_1(e)}(g_1,g_2) \rightarrow
  \mathsf{Hom}_{\mathbb{T}_2(e)}(Fg_1, Fg_2), \, \forall g_1, g_2 \in
  \mathcal{G}$$ 
and
\item[b)] the set of objects $\{ Fg \, | \, g \in \mathcal{G} \}$ is a
  set of compact generators in $\mathbb{T}_2(e)$.
\end{itemize}
Then the morphism $F$ is an equivalence of derivators.
\end{proposition}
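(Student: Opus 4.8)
Here is the strategy I would follow.

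The plan is to prove that $F$ is an equivalence by showing that $F(L)\colon\mathbb{T}_1(L)\to\mathbb{T}_2(L)$ is an equivalence of triangulated categories for every small category $L$; this is exactly what it means for $F$ to be an equivalence of derivators. I would treat the case $L=e$ first and then propagate it to an arbitrary $L$. The one supplementary fact I use, available for all the morphisms to which we apply this proposition (in particular for $\varphi$), is that $F$ commutes with homotopy colimits — concretely, that $F(e)$ commutes with arbitrary coproducts and that $F$ commutes with the homological left Kan extensions $x_!$ attached to the points $x\colon e\to L$.

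For $L=e$, write $\mathcal{S}=\mathbb{T}_1(e)$, $\mathcal{T}=\mathbb{T}_2(e)$ and $G=F(e)$. Using that $\mathcal{G}$ is stable under suspension and $G$ is exact, hypothesis a) upgrades at once to the statement that the canonical maps $\mathsf{Hom}_{\mathcal{S}}(g_1,g_2[n])\to\mathsf{Hom}_{\mathcal{T}}(Gg_1,Gg_2[n])$ are bijective for all $g_1,g_2\in\mathcal{G}$ and all $n\in\mathbb{Z}$. I would then run the standard argument with localizing subcategories (full triangulated subcategories closed under arbitrary coproducts) twice. First, fix $g\in\mathcal{G}$, so that $g$ and $Gg$ are both compact; the full subcategory of those $Y\in\mathcal{S}$ for which the canonical map $\mathsf{Hom}_{\mathcal{S}}(g,Y[n])\to\mathsf{Hom}_{\mathcal{T}}(Gg,GY[n])$ is bijective for all $n$ is localizing — it is closed under coproducts since $g$ and $Gg$ are compact, $G$ preserves coproducts, and a coproduct of bijections is a bijection — and it contains $\mathcal{G}$; since $\mathcal{G}$ generates $\mathcal{S}$ it is all of $\mathcal{S}$. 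Second, fix an arbitrary $Y\in\mathcal{S}$; the full subcategory of those $X\in\mathcal{S}$ for which $\mathsf{Hom}_{\mathcal{S}}(X,Y)\to\mathsf{Hom}_{\mathcal{T}}(GX,GY)$ is bijective is again localizing — closed under coproducts because $\mathsf{Hom}(\coprod X_i,-)=\prod\mathsf{Hom}(X_i,-)$ and $G$ preserves coproducts — and it contains $\mathcal{G}$ by the first step, hence equals $\mathcal{S}$. Thus $G$ is fully faithful. Finally, the essential image of $G$ is a localizing subcategory of $\mathcal{T}$ (here one uses that $G$ is exact, fully faithful, and preserves coproducts) containing $G(\mathcal{G})$, which generates $\mathcal{T}$ by hypothesis b); so $G$ is essentially surjective, and therefore an equivalence.

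For a general small category $L$, I would first record that any suspension-stable set of compact generators satisfies conditions $A1)$ and $A2)$ of Lemma~\ref{gener1}; hence, by Lemma~\ref{gen1}, the sets $\mathcal{G}_L=\{x_!(g)\mid g\in\mathcal{G},\ x\colon e\to L\}$ and $\{x_!(Fg)\}$ are suspension-stable sets of compact generators of $\mathbb{T}_1(L)$ and $\mathbb{T}_2(L)$ respectively, so both these categories are compactly generated. Because $F$ commutes with $x^{\ast}$ and with $x_!$ one has $F(L)\circ x_!\simeq x_!\circ F(e)$, so that $F(L)$ carries $\mathcal{G}_L$ onto the generating set $\{x_!(Fg)\}$ — this is hypothesis b) for $F(L)$. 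For hypothesis a) at level $L$, the adjunctions $(x_!,x^{\ast})$ together with the equivalence $F(e)$ already established give, for $g,g'\in\mathcal{G}$, $x,y\colon e\to L$ and $n\in\mathbb{Z}$,
$$\mathsf{Hom}_{\mathbb{T}_1(L)}(x_!g,y_!g'[n])\;\simeq\;\mathsf{Hom}_{\mathbb{T}_1(e)}(g,x^{\ast}y_!g'[n])\;\simeq\;\mathsf{Hom}_{\mathbb{T}_2(e)}(Fg,x^{\ast}y_!Fg'[n])\;\simeq\;\mathsf{Hom}_{\mathbb{T}_2(L)}(x_!Fg,y_!Fg'[n]),$$
the middle isomorphism using in addition the identifications $F(e)\circ x^{\ast}\simeq x^{\ast}\circ F(L)$ and $F(L)\circ y_!\simeq y_!\circ F(e)$, valid since $F$ is a morphism of derivators commuting with homotopy colimits. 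Applying the $L=e$ argument verbatim to $F(L)$ and $\mathcal{G}_L$ now shows that $F(L)$ is an equivalence; as $L$ was arbitrary, $F$ is an equivalence of derivators.

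I expect the substantive content to be concentrated in the two localizing-subcategory arguments of the second paragraph — this is where compactness of the generators, hypotheses a) and b), and coproduct-preservation of $F$ are all used — while the main source of friction is the last paragraph: one must keep careful track of the canonical identifications $F(L)\circ x_!\simeq x_!\circ F(e)$ and $F(e)\circ x^{\ast}\simeq x^{\ast}\circ F(L)$ and check that the displayed composite really computes the map induced by $F(L)$. Everything beyond that is formal, given Lemma~\ref{gen1}.
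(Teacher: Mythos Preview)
Your proposal is correct and follows essentially the same route as the paper: first show $F(e)$ is an equivalence (the paper dispatches this with a citation to Neeman, while you spell out the two localizing-subcategory arguments), then use Lemma~\ref{gen1} to produce compact generators $\{x_!(g)\}$ for $\mathbb{T}_i(L)$ and verify conditions a) and b) for $F(L)$ via the same adjunction chain $\mathsf{Hom}(x_!g,y_!g')\simeq\mathsf{Hom}(g,x^\ast y_!g')$. You are also right to flag that the argument tacitly requires $F$ to commute with homotopy colimits --- the paper invokes this when writing $F(x_!(g))=x_!(Fg)$ without having listed it among the hypotheses.
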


\begin{proof}
Conditions $a)$ and $b)$ imply that $F(e)$ is an equivalence of
triangulated categories, see~\cite{Neeman}.

Now, let $L$ be a small category. We show that conditions $a)$ and
$b)$ are also verified by $F(L)$, $\mathbb{T}_1(L)$ and $\mathbb{T}_2(L)$. By lemma~\ref{gen1} the sets
$$
\begin{array}{rcl}
\{ x_!(g) |\, g \in \mathcal{G}, \, x:e \rightarrow L\} & \mbox{and} & \{ x_!(Fg) |\, g \in \mathcal{G}, \, x:e \rightarrow L\}
\end{array}
$$
consist of compact generators for $\mathbb{T}_1(L)$, resp. $\mathbb{T}_2(L)$, which are stable under suspensions. Since $F$
commutes with homotopy colimits $F(x_!(g)) = x_!(Fg)$ and so the following isomorphisms
$$
\begin{array}{rcl}
\mathsf{Hom}_{\mathbb{T}_1(L)}(x_!(g_1), x_!(g_2)) & \simeq &
\mathsf{Hom}_{\mathbb{T}_1(e)}(g_1, x^{\ast} x_!(g_2)) \\
 & \simeq & \mathsf{Hom}_{\mathbb{T}_2(e)}(F(g_1), F(x^{\ast} x_!(g_2)))
 \\
 & \simeq & \mathsf{Hom}_{\mathbb{T}_2(e)}(Fg_1, x^{\ast} F(x_!(g_2)))
 \\
 & \simeq &  \mathsf{Hom}_{\mathbb{T}_2(L)}(x_! F(g_1), x_!F(g_2))
\end{array}
$$
imply the proposition.
\end{proof}

Let us now prove theorem~\ref{repre}:
\begin{proof}

Let us first prove condition $b)$ of proposition~\ref{aux}. Since the
set $\mathcal{G}$ of small generators in $\mathsf{Ho}(\mathcal{M})$
satisfies the conditions of lemma~\ref{gener1}, we have a set 
$$ {\{ \Sigma^n stab(g) \,|\, g \in \mathcal{G}, \, n \in \mathbb{Z} \}}$$
of compact generators in $\mathsf{St}(\mathsf{HO}(\mathcal{M}))(e)$,
which is stable under suspensions. We now show that the set 
$$ {\{ \Sigma^n \mathbb{L} \Sigma^{\infty}(g) \,|\, g \in \mathcal{G}, \, n \in \mathbb{Z} \}}$$
is a set of compact generators in
$\mathsf{Ho}(\mathsf{Sp}^{\mathbb{N}}(\mathcal{M}))$. These objects
are compact because the functor $\mathbb{R}ev_0$ in the adjunction
$$
\xymatrix{
\mathsf{Ho}(\mathcal{M}) \ar@<-1ex>[d]_{\mathbb{L}\Sigma^{\infty}} \\
\mathsf{Ho}(\mathsf{Sp}^{\mathbb{N}}(\mathcal{M})) \ar@<-1ex>[u]_{\mathbb{R}ev_0}
}
$$
commutes with filtered homotopy colimits.
We now show that they form a set of generators. Let $Y$ be an object in
$\mathsf{Ho}(\mathsf{Sp}^{\mathbb{N}}(\mathcal{M}))$, that we can
suppose, without loss of generality, to be an $\Omega$-spectrum,
see \cite{Spectra}.
Suppose that 
$$ 
\mathsf{Hom}(\Sigma^n \mathbb{L}\Sigma^{\infty}(g_i),Y)
\simeq \underset{m}{\mbox{colim}}\, \mathsf{Hom}(g_i, \Omega^m Y_{m+p}) =
\{ \ast \},\,\, n \geq 0 \,.
$$ 
Since $Y$ is an $\Omega$-spectrum, we have
$$ Y_p = \ast,\,\,\,\forall p\geq 0\,.$$
This implies that $Y$ is isomorphic to $\ast$ in
$\mathsf{Ho}(\mathsf{Sp}^{\mathbb{N}}(\mathcal{M}))$.

We now show condition $a)$.
Let $g_1$ and $g_2$ be objects in $\mathcal{G}$. Observe that we have the following isomorphisms, see \cite{Heller}
$$
\begin{array}{l}
\mathsf{Hom}_{\mathsf{St}(\mathsf{HO}(\mathcal{M}))(e)}(stab(g_1),stab(g_2))
\\
\simeq \mathsf{Hom}_{\mathsf{Ho}(\mathcal{M})}(g_1, (ev_{(0,0)} \circ loc
\circ L[0,0])(g_2))\\
\simeq \mathsf{Hom}_{\mathsf{Ho}(\mathcal{M})}(g_1,
ev_{(0,0)}(\mbox{hocolim}\,( L[0,0](g_2) \rightarrow
\Omega\sigma L[0,0](g_2) \rightarrow \ldots )))\\
\simeq \mathsf{Hom}_{\mathsf{Ho}(\mathcal{M})}(g_1,
\mbox{hocolim}\, ev_{(0,0)} (L[0,0](g_2) \rightarrow
\Omega\sigma L[0,0](g_2) \rightarrow \cdots ))\\
\simeq \underset{j}{\mbox{colim}}\,
\mathsf{Hom}_{\mathsf{Ho}(\mathcal{M})}(g_1, \Omega^j \Sigma^j(g_2))\,.
\end{array}
$$
Now, by corollary $4.13$ in \cite{Spectra}, we have
$$
\begin{array}{ccc}
\mathsf{Hom}_{\mathsf{Ho}(\mathsf{Sp}^{\mathbb{N}}(\mathcal{M}))}(\mathbb{L}\Sigma^{\infty}(g_1),
\mathbb{L}\Sigma^{\infty}(g_2)) & \simeq & \mathsf{Hom}_{\mathsf{Ho}(\mathsf{Sp}^{\mathbb{N}}(\mathcal{M}))}(\Sigma^{\infty}(g_1),
(\Sigma^{\infty}(g_2))_f) \\
& \simeq & \underset{m}{\mbox{colim}}\,
\mathsf{Hom}_{\mathsf{Ho}(\mathcal{M})}(g_1, \Omega^m (\Sigma^m(g_2))_f)\,,
\end{array}
$$
where $(\Sigma^{\infty}(g_2))_f$ denotes a levelwise fibrant
resolution of $\Sigma^{\infty}(g_2)$ in the category $\mathsf{Sp}^{\mathbb{N}}(\mathcal{M})$.

Now, notice that since $g_2$ is cofibrant, so is $\Sigma^m(g_2)$ and
so we have the following isomorphism
$$ \Omega^m (\Sigma^m(g_2))_f \stackrel{\sim}{\longrightarrow}
(\mathbb{R}\Omega)^m \circ (\mathbb{L}\Sigma)^m(g_2)$$
in  $\mathsf{Ho}(\mathsf{SP}^{\mathbb{N}}(\mathcal{M}))$.
This implies that for $j\geq 0$, we have an isomorphism
$$ \Omega^j \Sigma^j(g_2) \stackrel{\sim}{\longrightarrow} \Omega^j
(\Sigma^j(g_2))_f$$
in $\mathsf{Ho}(\mathsf{Sp}^{\mathbb{N}}(\mathcal{M}))$ and so
$$ \mathsf{Hom}_{\mathsf{St}(\mathsf{HO}(\mathcal{M}))(e)}(stab(g_1), stab(g_2)) =
\mathsf{Hom}_{\mathsf{Ho}(\mathsf{Sp}^{\mathbb{N}}(\mathcal{M}))}(\mathbb{L}\Sigma^{\infty}(g_1),
\mathbb{L}\Sigma^{\infty}(g_2))\,.$$
Let now $p$ be an integer.
Notice that 
$$ \mathsf{Hom}_{\mathsf{St}(\mathsf{HO}(\mathcal{M}))(e)}(stab(g_1),
\Sigma^p stab(g_2)) = \underset{j}{\mbox{colim}}\,
\mathsf{Hom}_{\mathsf{Ho}(\mathcal{M})}(g_1, \Omega^j
\Sigma^{j+p}(g_2))$$
and that 
$$
\mathsf{Hom}_{\mathsf{Ho}(\mathsf{Sp}^{\mathbb{N}}(\mathcal{M}))}(\mathbb{L}
\Sigma^{\infty} (g_1),
\Sigma^p\mathbb{L}\Sigma^{\infty}(g_2))= \underset{m}{\mbox{colim}}\,
\mathsf{Hom}_{\mathsf{Ho}(\mathcal{M})}(g_1,
\Omega^m(\Sigma^{m+p}(g_2))_f)\,.$$

This proves condition $a)$ and so the theorem is proven.
\end{proof}

\begin{remark}\label{Tr}
If we consider for $\mathcal{M}$ the category
$\mathsf{L}_{\Sigma,P}\mathsf{Fun}(\mathcal{M}_f^{op}, Sset_{\bullet})$, we
have equivalences of derivators
$$ \varphi :
\mathsf{St}({\mathsf{L}_{\Sigma,P}\mathsf{Hot}_{\mathcal{M}_f}}_{\bullet})
\stackrel{\sim}{\longrightarrow}
\mathsf{HO}(\mathsf{Sp}^{\mathbb{N}}(\mathsf{L}_{\Sigma,P}\mathsf{Fun}(\mathcal{M}_f^{op},
Sset_{\bullet}))) \stackrel{\sim}{\leftarrow} \mathsf{St}(\mathsf{L}_{\Sigma,P}\mathsf{Hot}_{\mathcal{M}_f}) \,.$$
\end{remark}
Let $\mathbb{D}$ be a strong triangulated derivator.

Now, by theorem~\ref{HellerT} and proposition~\ref{ext}, we have the
following proposition

\begin{proposition}\label{Trin}
We have an equivalence of categories
$$
\underline{\mathsf{Hom}}_!(\mathsf{St}(\mathsf{L}_{\Sigma,P} \mathsf{Hot}_{\mathsf{dgcat}_f}),\mathbb{D})
\stackrel{(stab\circ \Phi \circ
  \mathbb{R}\underline{h})^{\ast}}{\longrightarrow} \underline{\mathsf{Hom}}_{flt,p}(\mathsf{HO}(\mathsf{dgcat}),\mathbb{D})\,.$$
\end{proposition}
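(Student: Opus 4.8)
The plan is to deduce Proposition~\ref{Trin} by composing the equivalence of Theorem~\ref{HellerT} with that of Proposition~\ref{ext}, after checking that the hypotheses of both results are met in the present situation. Throughout I would take for $\mathcal{M}$ the Morita Quillen model category $\mathsf{dgcat}$ of Example~\ref{mori}, so that $\mathcal{M}_f=\mathsf{dgcat}_f$. The first task is to verify that the derivator $\mathsf{L}_{\Sigma,P}\mathsf{Hot}_{\mathsf{dgcat}_f}$ is a \emph{regular pointed strong} derivator, so that Heller's stabilization construction of Section~\ref{spectra} applies to it. By Remark~\ref{remar} it is canonically equivalent, as a pointed derivator, to the homotopy derivator ${\mathsf{L}_{\Sigma,P}\mathsf{Hot}_{\mathsf{dgcat}_f}}_{\bullet}$ of the pointed model category $\mathsf{L}_{\Sigma,P}\mathsf{Fun}(\mathsf{dgcat}_f^{op},Sset_{\bullet})$. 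It is strong because it is the homotopy derivator of a Quillen model category (cf.~the discussion after the definition of strong derivator and \cite{catder}); it is pointed because that model category is pointed; and by Example~\ref{exem} the category $\mathsf{L}_{\Sigma,P}\mathsf{Fun}(\mathsf{dgcat}_f^{op},Sset_{\bullet})$ is pointed, simplicial, left proper, cellular and almost finitely generated, with sequential colimits commuting with finite products and homotopy pullbacks, whence its homotopy derivator is regular.

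With this in hand, Theorem~\ref{HellerT}, applied to the regular pointed strong derivator $\mathsf{L}_{\Sigma,P}\mathsf{Hot}_{\mathsf{dgcat}_f}$ and to the strong triangulated derivator $\mathbb{D}$, yields an equivalence of categories
$$\underline{\mathsf{Hom}}_!(\mathsf{St}(\mathsf{L}_{\Sigma,P}\mathsf{Hot}_{\mathsf{dgcat}_f}),\mathbb{D}) \stackrel{stab^{\ast}}{\longrightarrow} \underline{\mathsf{Hom}}_!(\mathsf{L}_{\Sigma,P}\mathsf{Hot}_{\mathsf{dgcat}_f},\mathbb{D})\,.$$
Since $\mathbb{D}$, being triangulated, is in particular pointed, Proposition~\ref{ext} applied with $\mathcal{M}=\mathsf{dgcat}$ yields an equivalence of categories
$$\underline{\mathsf{Hom}}_!(\mathsf{L}_{\Sigma,P}\mathsf{Hot}_{\mathsf{dgcat}_f},\mathbb{D}) \stackrel{(\Phi\circ\mathbb{R}\underline{h})^{\ast}}{\longrightarrow} \underline{\mathsf{Hom}}_{flt,p}(\mathsf{HO}(\mathsf{dgcat}),\mathbb{D})\,.$$
Composing these two equivalences and using $(stab\circ\Phi\circ\mathbb{R}\underline{h})^{\ast}=(\Phi\circ\mathbb{R}\underline{h})^{\ast}\circ stab^{\ast}$ gives the desired equivalence. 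One only has to observe that the composite $stab\circ\Phi\circ\mathbb{R}\underline{h}$ indeed lands in $\underline{\mathsf{Hom}}_{flt,p}(\mathsf{HO}(\mathsf{dgcat}),\mathbb{D})$: the morphism $\Phi\circ\mathbb{R}\underline{h}$ commutes with filtered homotopy colimits and preserves the point by its construction in Section~\ref{chappoint}, while $stab$ commutes with all homotopy colimits by Heller, hence in particular with filtered ones, and, commuting with homotopy colimits, it preserves the initial object, which coincides with the point in the pointed setting.

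The only step that is not purely formal bookkeeping of previously established equivalences is the regularity of $\mathsf{L}_{\Sigma,P}\mathsf{Hot}_{\mathsf{dgcat}_f}$, i.e.\ that sequential homotopy colimits in it commute with finite products and homotopy pullbacks; this is precisely what Example~\ref{exem} supplies, the essential inputs being that (co)limits in $\mathsf{Fun}(\mathsf{dgcat}_f^{op},Sset_{\bullet})$ are computed objectwise, so that sequential colimits there commute with finite products, and that left Bousfield localization at a set of maps between homotopically finitely presented objects preserves the almost-finitely-generated property. Granting that, the proposition is a direct composition of Theorem~\ref{HellerT} and Proposition~\ref{ext}.
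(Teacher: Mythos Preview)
Your proposal is correct and follows exactly the approach the paper indicates: the proposition is stated immediately after the sentence ``Now, by theorem~\ref{HellerT} and proposition~\ref{ext}, we have the following proposition'', and your argument is precisely the composition of those two equivalences, with the added care of verifying (via Example~\ref{exem} and Remark~\ref{remar}) that $\mathsf{L}_{\Sigma,P}\mathsf{Hot}_{\mathsf{dgcat}_f}$ is a regular pointed strong derivator so that Theorem~\ref{HellerT} applies. You have simply made explicit what the paper leaves as a one-line citation.
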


Since the category $Sset_{\bullet}$ satisfies all the conditions of
theorem~\ref{repre}, we have the following characterization of the
classical category of spectra, after Bousfield-Friedlander~\cite{Bos-Fri}, by a
universal property.

\begin{proposition}
We have an equivalence of categories
$$\underline{\mathsf{Hom}}_!(\mathsf{HO}(\mathsf{Sp}^{\mathbb{N}}(Sset_{\bullet})),\mathbb{D})
\stackrel{\sim}{\longrightarrow} \mathbb{D}(e) \,.$$
\end{proposition}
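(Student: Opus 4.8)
The plan is to apply Theorem~\ref{repre} together with Theorem~\ref{HellerT} to the model category $\cm = Sset_{\bullet}$, whose stabilization is the classical category of Bousfield--Friedlander spectra $\mathsf{Sp}^{\mathbb{N}}(Sset_{\bullet})$. First I would record that $Sset_{\bullet}$ is a pointed, simplicial, left proper, cellular, almost finitely generated (indeed finitely generated) Quillen model category in which sequential colimits commute with finite products and homotopy pullbacks; these are standard facts about pointed simplicial sets, so the hypotheses of Theorem~\ref{repre} are satisfied. Moreover the single object $\mathcal{G} = \{ S^0 \}$ (the $0$-sphere, i.e. $\Delta[0]_+$) is a small weak generator for $\mathsf{Ho}(Sset_{\bullet})$, and $S^0$ is cofibrant, finitely presented, homotopically finitely presented, and has a finitely presented cylinder object. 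Hence Theorem~\ref{repre} gives an equivalence of strong triangulated derivators
$$ \varphi: \mathsf{St}(\mathsf{HO}(Sset_{\bullet})) \stackrel{\sim}{\longrightarrow} \mathsf{HO}(\mathsf{Sp}^{\mathbb{N}}(Sset_{\bullet}))\,. $$

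Next I would combine this with Heller's universal property. By Theorem~\ref{HellerT}, applied to the regular pointed strong derivator $\mathbb{D}_0 := \mathsf{HO}(Sset_{\bullet})$ and the triangulated strong derivator $\mathbb{D}$, the morphism $stab$ induces an equivalence
$$ \underline{\mathsf{Hom}}_!(\mathsf{St}(\mathsf{HO}(Sset_{\bullet})),\mathbb{D}) \stackrel{stab^*}{\longrightarrow} \underline{\mathsf{Hom}}_!(\mathsf{HO}(Sset_{\bullet}),\mathbb{D})\,. $$
Transporting along the equivalence $\varphi$ of the previous paragraph (equivalences of derivators induce equivalences on the $\underline{\mathsf{Hom}}_!$-categories), we get
$$ \underline{\mathsf{Hom}}_!(\mathsf{HO}(\mathsf{Sp}^{\mathbb{N}}(Sset_{\bullet})),\mathbb{D}) \stackrel{\sim}{\longrightarrow} \underline{\mathsf{Hom}}_!(\mathsf{HO}(Sset_{\bullet}),\mathbb{D})\,. $$

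Finally I would identify the right-hand side with $\mathbb{D}(e)$. The derivator $\mathsf{HO}(Sset_{\bullet})$ is the pointed analogue of $\mathsf{Hot}$, and by Theorem~\ref{Cin} (Cisinski's universal property of $\mathsf{Hot}_A$ for $A = e$, together with the pointed variant used in section~\ref{chappoint}) the category of homotopy-colimit-preserving morphisms out of it into a \emph{pointed} derivator is equivalent to $\mathbb{D}(e)$ --- more precisely, $\underline{\mathsf{Hom}}_!(\mathsf{Hot}, \mathbb{D}) \simeq \underline{\mathsf{Hom}}(\underline{e}, \mathbb{D}) \simeq \mathbb{D}(e)$ for any derivator $\mathbb{D}$, and the pointed version with $Sset_{\bullet}$ in place of $Sset$ holds for pointed $\mathbb{D}$ since the extra localization at $P$ is automatically satisfied by any colimit-preserving morphism to a pointed target (as in the proof of Proposition~\ref{ext}). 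Composing the three equivalences yields the desired equivalence $\underline{\mathsf{Hom}}_!(\mathsf{HO}(\mathsf{Sp}^{\mathbb{N}}(Sset_{\bullet})),\mathbb{D}) \stackrel{\sim}{\longrightarrow} \mathbb{D}(e)$.

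The main obstacle I anticipate is not conceptual but bookkeeping: one must make sure that the target $\mathbb{D}$ being a strong triangulated (hence pointed) derivator is exactly what is needed to (i) legitimately apply Heller's Theorem~\ref{HellerT} (which requires $\mathbb{D}$ triangulated strong) and (ii) identify $\underline{\mathsf{Hom}}_!(\mathsf{HO}(Sset_{\bullet}),\mathbb{D})$ with $\mathbb{D}(e)$ rather than with a proper subcategory thereof; this last point uses that a homotopy-colimit-preserving morphism out of the \emph{unpointed} $\mathsf{Hot}$ automatically sends $\emptyset$ to the zero object of $\mathbb{D}$, so nothing is lost in passing to the pointed version. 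Once these compatibilities are checked, the argument is a three-step concatenation of equivalences already available in the excerpt.
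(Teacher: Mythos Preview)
Your proposal is correct and follows essentially the same route as the paper: apply Theorem~\ref{repre} (together with Heller's Theorem~\ref{HellerT}) to reduce from $\mathsf{Sp}^{\mathbb{N}}(Sset_{\bullet})$ to $Sset_{\bullet}$, then invoke Cisinski's Theorem~\ref{Cin} with $A=e$ to identify $\underline{\mathsf{Hom}}_!(\mathsf{Hot},\mathbb{D})\simeq\mathbb{D}(e)$. The paper records the chain of equivalences $\underline{\mathsf{Hom}}_!(\mathsf{HO}(\mathsf{Sp}^{\mathbb{N}}(Sset_{\bullet})),\mathbb{D})\simeq\underline{\mathsf{Hom}}_!(\mathsf{Hot}_{\bullet},\mathbb{D})\simeq\underline{\mathsf{Hom}}_!(\mathsf{Hot},\mathbb{D})\simeq\mathbb{D}(e)$, handling your ``bookkeeping'' concern by inserting the explicit step $\mathsf{Hot}_{\bullet}\simeq\mathsf{Hot}$ (for pointed targets) rather than arguing directly.
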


\begin{proof}
By theorems~\ref{repre} and \ref{Cin}, we have the following
equivalences
$$
\begin{array}{rcl}
\underline{\mathsf{Hom}}_!(\mathsf{HO}(\mathsf{Sp}^{\mathbb{N}}(Sset_{\bullet})),\mathbb{D})
& 
\simeq & 
\underline{\mathsf{Hom}}_!(\mathsf{HO}(Sset_{\bullet}),\mathbb{D})\\
& = & \underline{\mathsf{Hom}}_!(\mathsf{Hot}_{\bullet},\mathbb{D}) \\
& \simeq & \underline{\mathsf{Hom}}_!(\mathsf{Hot},\mathbb{D}) \\
& \simeq & \mathbb{D}(e) \,.
\end{array}
$$
This proves the proposition.
\end{proof}

\begin{remark}
An analoguos caracterization of the category of spectra, but in the
context of stable $\infty$-categories is proved in \cite[17.6]{Lurie}.
\end{remark}

\section{DG quotients}\label{chapquotient}
Recall from \cite{addendum} \cite{IMRN} that we have at our disposal a Morita Quillen model structure on the category of small dg
categories $\mathsf{dgcat}$, see example~\ref{mori}. As shown in
\cite{addendum} \cite{IMRN} the homotopy category $\mathsf{Ho}(\mathsf{dgcat})$ is pointed.
In the following, we will be considering this Quillen model structure. We
denote by $I$ its set of generating cofibrations.

\begin{notation}
We denote by $\mathcal{E}$ the set of inclusions of full dg subcategories
$$ \mathcal{G} \hookrightarrow \mathcal{H}\,,$$
where $\mathcal{H}$ is a strict finite $I$-cell.
\end{notation}

Recall that we have a morphism of derivators
$$ \mathcal{U}_t:= stab \circ \Phi \circ \mathbb{R}\underline{h} : \mathsf{HO}(\mathsf{dgcat})
\longrightarrow
\mathsf{St}(\mathsf{L}_{\Sigma,P}\mathsf{Hot}_{\mathsf{dgcat}_f})$$
which commutes with filtered homotopy colimits and preserves the point.

Let us now make some general arguments.

Let $\mathbb{D}$ be a pointed derivator. We denote by $M$ the category
associated to the graph
$$ 0 \leftarrow 1 \,.$$
Consider the functor $t=1 : e \rightarrow M$. Since the functor
$t$ is an open immersion, see notation~\ref{not4} and the derivator $\mathbb{D}$ is pointed,
the functor
$$ t_! : \mathbb{D}(e) \rightarrow \mathbb{D}(M)$$
has a left adjoint
$$ t^? : \mathbb{D}(M) \rightarrow \mathbb{D}(e)\,,$$
see \cite{Cis-Nee}.
We denote it by 
$$ \mathsf{cone}: \mathbb{D}(M) \rightarrow \mathbb{D}(e)\,.$$
Let $F:\mathbb{D} \rightarrow \mathbb{D}'$ be a morphism of pointed
derivators. Notice that we have a natural transformation of functors.
$$S: \mathsf{cone} \circ F(M) \rightarrow F(e) \circ
\mathsf{cone}\,.$$

\begin{proposition}\label{cons}
Let $\mathcal{A} \stackrel{R}{\hookrightarrow} \mathcal{B}$ be an
inclusion of a full dg subcategory and $\ul_R$
$$
\xymatrix{
\mathcal{A} \ar@{^{(}->}[r]^R \ar[d] & \mathcal{B} \,,\\
0 & 
}
$$ 
the associated object in
$\mathsf{HO}(\mathsf{dgcat})(\ul)$, where $0$ denotes the
terminal object in $\mathsf{Ho}(\mathsf{dgcat})$. Then there exists a filtered
category $J$ and an object $D_R$ in
$\mathsf{HO}(\mathsf{dgcat})(\ul \times J)$, such that
$$ p_!(D_R) \stackrel{\sim}{\longrightarrow} \ul_R\,,$$
where $p:\ul \times J \rightarrow \ul$ denotes the
projection functor. Moreover, for every point $j:e \rightarrow J$ in
$J$ the object $(1 \times j)^{\ast}$ in
$\mathsf{HO}(\mathsf{dgcat})(\ul)$ is of the form
$$ 0 \leftarrow Y_j \stackrel{L_j}{\rightarrow} X_j \,,$$
where $Y_j \stackrel{L_j}{\rightarrow} X_j$, belongs to
the set $\mathcal{E}$.
\end{proposition}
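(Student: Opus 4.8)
The aim is to realize $\ul_R$ as a filtered homotopy colimit of diagrams of shape $0\leftarrow Y_j\xrightarrow{L_j}X_j$ with $L_j\in\mathcal{E}$; the object $D_R$ will be induced by an honest strict diagram in $\mathsf{dgcat}$, and Proposition~\ref{prop} --- applicable by Example~\ref{mori} --- will let the relevant homotopy colimits be computed as ordinary filtered colimits.

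The first step is to replace $R\colon\mathcal{A}\hookrightarrow\mathcal{B}$ by a \emph{full} dg subcategory inclusion between convenient objects. Let $q\colon Q\mathcal{B}\xrightarrow{\sim}\mathcal{B}$ be the standard semifree cofibrant resolution; it is the identity on objects and $Q\mathcal{B}$ is an $I$-cell complex. Let $\widetilde{\mathcal{A}}$ be the full dg subcategory of $Q\mathcal{B}$ on the objects of $\mathcal{A}$. Since $q$ is a quasi-equivalence and the identity on objects, the restriction $\widetilde{\mathcal{A}}\to\mathcal{A}$ is again a quasi-equivalence (being the identity on objects it induces an essentially surjective functor on $H^0$, and $\mathrm{Hom}_{\widetilde{\mathcal{A}}}(x,y)=\mathrm{Hom}_{Q\mathcal{B}}(x,y)\xrightarrow{\sim}\mathrm{Hom}_{\mathcal{B}}(x,y)=\mathrm{Hom}_{\mathcal{A}}(x,y)$ for $x,y\in\mathcal{A}$), hence a Morita weak equivalence; so the strict diagram $0\leftarrow\widetilde{\mathcal{A}}\hookrightarrow Q\mathcal{B}$ represents $\ul_R$ in $\mathsf{HO}(\mathsf{dgcat})(\ul)$, now with a genuine full subcategory inclusion in place of $R$.

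Next I would build the filtered category and $D_R$. As in the proof of Proposition~\ref{prop}~3), write $Q\mathcal{B}=\mathrm{colim}_{j\in J}X_j$ over the filtered poset $J$ of finite sets of cells of $Q\mathcal{B}$, where $X_j$ is the sub-$I$-cell-complex generated by $j$ --- a strict finite $I$-cell. For each $j$ put $Y_j:=$ the full dg subcategory of $X_j$ on those of its objects that lie in $\widetilde{\mathcal{A}}$; then $L_j\colon Y_j\hookrightarrow X_j$ belongs to $\mathcal{E}$, and $j\le j'$ forces $X_j\hookrightarrow X_{j'}$ to carry $Y_j$ into $Y_{j'}$. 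Thus the strict diagram $j\mapsto(0\leftarrow Y_j\xrightarrow{L_j}X_j)$ over $J$ defines an object $D_R\in\mathsf{HO}(\mathsf{dgcat})(\ul\times J)$ whose restriction $(1\times j)^{\ast}(D_R)$ is visibly $0\leftarrow Y_j\xrightarrow{L_j}X_j$ with $L_j\in\mathcal{E}$; this is the last assertion.

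Finally I would establish $p_!(D_R)\xrightarrow{\sim}\ul_R$. The strict cocones $X_j\hookrightarrow Q\mathcal{B}$ and $Y_j\to\widetilde{\mathcal{A}}$ --- the latter being a dg functor precisely because $\widetilde{\mathcal{A}}$ is \emph{full} in $Q\mathcal{B}$ --- give a map $D_R\to p^{\ast}(\ul_R)$, hence an adjoint $\theta\colon p_!(D_R)\to\ul_R$; by conservativity it is enough to show $\theta$ is invertible on the three fibres over $\ul$. As $p$ is a projection and each co-slice $y\backslash\ul$ has an initial object, axiom~{\bf Der4} gives $(p_!D_R)_y\cong\mathrm{hocolim}_{j\in J}(D_R|_{\{y\}\times J})$ at every vertex $y$ of $\ul$, which by Proposition~\ref{prop}~2) equals the ordinary filtered colimit in $\mathsf{dgcat}$: this is $0$ at the $0$-vertex, $\mathrm{colim}_j X_j=Q\mathcal{B}\simeq\mathcal{B}$ at the $\mathcal{B}$-vertex, and $\mathrm{colim}_j Y_j$ at the $\mathcal{A}$-vertex. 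The decisive point --- which I expect to be the main obstacle --- is the identity $\mathrm{colim}_j Y_j=\widetilde{\mathcal{A}}$: filtered colimits of dg categories are formed separately on objects and on $\mathrm{Hom}$-complexes, every object of $\widetilde{\mathcal{A}}$ is an object-cell of $Q\mathcal{B}$ and so lies in some $X_j$, hence in $Y_j$, while for $x,y\in\widetilde{\mathcal{A}}$ one has $\mathrm{colim}_j\mathrm{Hom}_{Y_j}(x,y)=\mathrm{colim}_j\mathrm{Hom}_{X_j}(x,y)=\mathrm{Hom}_{Q\mathcal{B}}(x,y)=\mathrm{Hom}_{\widetilde{\mathcal{A}}}(x,y)$, the last equality being fullness of $\widetilde{\mathcal{A}}$ in $Q\mathcal{B}$. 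It is exactly here that the identity-on-objects resolution is needed: for a resolution yielding only a non-full sub-dg-category the subcategories $Y_j$ would reassemble to something strictly larger than $\widetilde{\mathcal{A}}$ and $\theta$ would fail at the $\mathcal{A}$-vertex. The two ancillary facts used --- that a sub-$I$-cell-complex generated by finitely many cells is a strict finite $I$-cell, and that $p_!$ along a projection is computed fibrewise --- are, respectively, part of the proof of Proposition~\ref{prop}~3) and a routine consequence of {\bf Der4}.
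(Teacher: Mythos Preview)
Your proof is correct and essentially matches the paper's: the paper obtains $Q(\mathcal{B})$ via the small object argument, defines your $\widetilde{\mathcal{A}}$ as the fibre product $p^{-1}(\mathcal{A})$ along the trivial fibration $p\colon Q(\mathcal{B})\to\mathcal{B}$ (and likewise $Y_j$ as the fibre product $X_j\times_{Q(\mathcal{B})}p^{-1}(\mathcal{A})$), and then computes the filtered (homotopy) colimit in the projective model structure on $\mathsf{Fun}(\ul,\mathsf{dgcat})$ rather than via {\bf Der4} and conservativity. The pullback formulation sidesteps your tacit assumption that an identity-on-objects $I$-cell resolution exists, but otherwise the arguments coincide.
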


\begin{proof}
Apply the small object argument to the morphism
$$ \emptyset \longrightarrow \mathcal{B}$$
using the set of generating cofibrations $I$ and obtain the
factorization
$$
\xymatrix{
*+<1pc>{\emptyset} \ar[rr] \ar@{>->}[dr]_i & & \mathcal{B} \\
 & Q(\mathcal{B}) \ar@{->>}[ur]^{\sim}_p & \,, 
}
$$
where $i$ is an $I$-cell.
Now consider the following fiber product
$$
\xymatrix{
p^{-1}(\mathcal{A}) \ar@{->>}[d]_{\sim} \ar@{^{(}->}[r]
\ar@{}[dr]|{\ulcorner} &
  Q(\mathcal{B}) \ar@{->>}[d]^p_{\sim} \\
\mathcal{A} \ar@{^{(}->}[r]^J & \mathcal{B}\,.
}
$$
Notice that $p^{-1}(\mathcal{A})$ is a full dg subcategory of
  $Q(\mathcal{B})$. 

Now, by proposition~\ref{prop}, we have an
isomorphism
$$ \underset{j \in J}{\mbox{colim}} \, X_j \stackrel{\sim}{\longrightarrow}
Q(\mathcal{B})\,,$$
where $J$ is the filtered category of inclusions of strict finite
sub-$I$-cells $X_j$ into $Q(\mathcal{B})$.

For each $j \in J$, consider the fiber product
$$
\xymatrix{
Y_j \ar[d] \ar@{^{(}->}[r] \ar@{}[dr]|{\ulcorner} & X_j \ar[d] \\
p^{-1}(\mathcal{A}) \ar[r] \ar@{^{(}->}[r] & Q(\mathcal{B})\,.
}
$$
In this way, we obtain a morphism of diagrams
$$ \{ Y_j \}_{j \in J} \hookrightarrow \{ X_j \}_{j \in
  J} \,,$$
such that for each $j$ in $J$, the inclusion
$$ Y_j \hookrightarrow X_j $$
belongs to the set $\mathcal{E}$ and $J$ is filtered.

Consider now the diagram $D_I$
$$ {\{0 \leftarrow Y_j \hookrightarrow X_j \}}_{j \in
  J}$$
in the category $\mathsf{Fun}(\ul \times J, \mathsf{dgcat})$.
Now, notice that we have the isomorphism
$$ \underset{j \in J}{\mbox{colim}} \, \{0 \leftarrow
Y_j \hookrightarrow X_j \} \stackrel{\sim}{\longrightarrow}
\{ 0
 \leftarrow p^{-1}(\mathcal{A}) \hookrightarrow Q(\mathcal{B}) \}$$
in $\mathsf{Fun}(\ul, \mathsf{dgcat})$ and the weak equivalence
$$ 
\xymatrix{
\{ 0 \ar@{=}[d] & p^{-1}(\mathcal{A}) \ar[l] \ar[d]^{\sim}
\ar@{^{(}->}[r] & Q(\mathcal{B})\} \ar[d]^{\sim} \\
\{ 0 & \mathcal{A} \ar[l] \ar@{^{(}->}[r] & \mathcal{B} \}
}
$$
in $\mathsf{Fun}(\ul, \mathsf{dgcat})$, when endowed with the
projective model structure, see \cite{Hirschhorn}. Since
$\mathsf{Fun}(\ul, \mathsf{dgcat})$ is clearly also compactly
generated, we have an isomorphism
$$ \underset{j \in J}{\mbox{hocolim}} \, ( 0 \leftarrow
Y_j \rightarrow X_j) \stackrel{\sim}{\longrightarrow} \underset{j \in J}{\mbox{colim}} \, ( 0 \leftarrow Y_j \rightarrow X_j)\,.$$

Finally, notice that $D_R$ is an object of
$\mathsf{HO}(\mathsf{dgcat})(\ul \times J)$ and that $p_!(D_R)$,
where $p:\ul \times J \rightarrow J$ denotes the projection
functor, identifies with
$$ \underset{i \in J}{\mbox{hocolim}}\, ( 0 \leftarrow Y_j \rightarrow X_j )\,.$$
This proves the proposition.
\end{proof}

\begin{notation}
We denote by $\mathcal{E}_{st}$ the set of morhisms $S_L$, where $L$
belongs to the set $\mathcal{E}$.
\end{notation}

Let $\mathbb{D}$ be a strong triangulated derivator.
\begin{theorem}\label{invert}
If $$G:
\mathsf{St}(\mathsf{L}_{\Sigma,P}\mathsf{Hot}_{\mathsf{dgcat}_f})
\rightarrow \mathbb{D}$$ is a morphism of triangulated derivators
commuting with arbitrary homotopy colimits and such that $G(e)(S_L)$
is invertible for each $L$ in $\mathcal{E}$, then $G(e)(S_K)$ is
invertible for each inclusion $K:\mathcal{A} \hookrightarrow
\mathcal{B}$ of a full dg subcategory.
\end{theorem}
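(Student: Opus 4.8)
The plan is to bootstrap from the distinguished inclusions $L\in\mathcal{E}$ to an arbitrary full dg subcategory inclusion $K:\ca\hookrightarrow\cb$ by exhibiting $S_K$ as a filtered homotopy colimit of the morphisms $S_L$, and then to exploit that $G$ commutes with homotopy colimits together with the conservativity axiom {\bf Der2} of $\mathbb{D}$.

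First I would apply the approximation Proposition~\ref{cons} to $K$. It produces a filtered category $J$ and an object $D_K$ of $\mathsf{HO}(\dgcat)(\ul\times J)$ with $p_!(D_K)\stackrel{\sim}{\longrightarrow}\ul_K$ for the projection $p:\ul\times J\to\ul$, and such that for each point $j:e\to J$ the fiber $(1\times j)^{\ast}D_K$ is the $\ul$-diagram $0\leftarrow Y_j\to X_j$ with $(Y_j\hookrightarrow X_j)\in\mathcal{E}$. Restricting the diagrams $\ul_K$ and $(1\times j)^{\ast}D_K$ to their arrows $\ca\to\cb$ and $Y_j\to X_j$, and using that homotopy left Kan extension along a projection $-\times J\to-$ is computed fibrewise over the base and therefore commutes with inverse image functors, I obtain a presentation $K\cong\mbox{hocolim}_{j\in J}\,L_j$ of the object $K$ of $\mathsf{HO}(\dgcat)(M)$ as a filtered homotopy colimit of the objects $L_j$ of $\mathsf{HO}(\dgcat)(M)$, each $L_j$ lying in $\mathcal{E}$.

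Next I would carry this presentation through the construction of $S_K$. Recall that $S_K$ is the value at $K$ of the natural transformation $S:\mathsf{cone}\circ\mathcal{U}_t(M)\to\mathcal{U}_t(e)\circ\mathsf{cone}$ attached to the pointed morphism $\mathcal{U}_t$. Since $\mathcal{U}_t$ commutes with filtered homotopy colimits, and since $\mathsf{cone}=t^{?}$, being a left adjoint in the $2$-category of derivators, commutes with all homotopy colimits, both the source $\mathsf{cone}(\mathcal{U}_t(K))$ and the target $\mathcal{U}_t(e)(\mathsf{cone}(K))$ of $S_K$ become the $J$-indexed homotopy colimits of, respectively, the sources and targets of the $S_{L_j}$; by naturality of $S$ the morphisms $S_{L_j}$ then assemble into a single morphism $\underline{S}$ of $\mathsf{St}(\mathsf{L}_{\Sigma,P}\mathsf{Hot}_{\dgcat_f})(J)$ whose homotopy colimit over $J$ is $S_K$. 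I expect this identification — the bookkeeping of the base-change isomorphisms, the interaction of $\mathsf{cone}$ with $p_!$, and the compatibility with filtered homotopy colimits — to be the main technical point; everything else is formal.

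Finally I would apply $G$. Because $G$ commutes with arbitrary homotopy colimits, $G(e)(S_K)$ is the homotopy colimit over $J$ of $G(J)(\underline{S})$. At each point $j$ of $J$ the fiber of $G(J)(\underline{S})$ equals $G(e)(S_{L_j})$, which is invertible by hypothesis since $L_j\in\mathcal{E}$; hence, by the conservativity axiom {\bf Der2}, $G(J)(\underline{S})$ is an isomorphism in $\mathbb{D}(J)$, and therefore so is its homotopy colimit $G(e)(S_K)$. This would prove the theorem.
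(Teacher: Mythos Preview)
Your proposal is correct and follows essentially the same strategy as the paper: use Proposition~\ref{cons} to present $K$ as a filtered homotopy colimit of inclusions $L_j\in\mathcal{E}$, identify $S_K$ with the corresponding filtered homotopy colimit of the $S_{L_j}$ (using that $\mathcal{U}_t$ preserves filtered homotopy colimits and that $\mathsf{cone}$, as a left adjoint, preserves all homotopy colimits), and then conclude via conservativity after applying $G$. The only cosmetic difference is that the paper packages the comparison map as the exchange transformation $\varphi_K:(i_!\circ\mathcal{U}_t)(\ul_K)\to(\mathcal{U}_t\circ i_!)(\ul_K)$ for the open immersion $i:\ul\hookrightarrow\square$ rather than working directly with $S_K$ over $M$, but the two formulations are equivalent and the bookkeeping with base change is the same.
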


\begin{proof}
Let $\mathcal{A} \stackrel{K}{\hookrightarrow} \mathcal{B}$ be an
inclusion of a full dg subcategory. Consider the morphism
$$ \varphi_K := \varphi(\ul_K):(i_! \circ
\mathcal{U}_T)(\ul_K) \longrightarrow (\mathcal{U}_T \circ
i_!)(\ul_K)$$
in
$\mathsf{St}(\mathsf{L}_{\Sigma,P}\mathsf{Hot}_{\mathsf{dgcat}_f})(\square)$.

Let $D_K$ be the object of
$\mathsf{HO}(\mathsf{dgcat})(\ul \times J)$ constructed in
proposition~\ref{cons}. In particular $p'_!(D_K)
\stackrel{\sim}{\rightarrow} \ul_K$, where $p':\ul \times
J \rightarrow \ul$ denotes the projection functor.

The inclusion $i : \ul \hookrightarrow \square$, induces a
commutative square
$$
\xymatrix{
\mathsf{HO}(\mathsf{dgcat})(\square \times J) \ar[d]^{(i\times
  1)^{\ast}} \ar[rr]^{\mathcal{U}_T(\square \times J)} & & 
\mathsf{St}(\mathsf{L}_{\Sigma,P}\mathsf{Hot}_{\mathsf{dgcat}_f})(\square
\times J) \ar[d]^{(i\times 1)^{\ast}} \\
\mathsf{HO}(\mathsf{dgcat})(\ul \times J)
\ar[rr]_{\mathcal{U}_T(\square \times J)} & & \mathsf{St}(\mathsf{L}_{\Sigma,P}\mathsf{Hot}_{\mathsf{dgcat}_f})(\ul
\times J)
}
$$
and a morphism
$$ \Psi : ((i\times 1)_! \circ \mathcal{U}_T(\ul \times J))(D_K)
\longrightarrow (\mathcal{U}_T(\square \times J) \circ (i \times
1)_!)(D_K)\,.$$
We will now show that
$$ p_! \Psi \stackrel{\sim}{\longrightarrow} \varphi_K \,,$$
where $p:\square \times J \rightarrow \square$, denotes the projection
functor.

The fact that we have the following commutative square
$$ 
\xymatrix{
\square & \square \times J \ar[l]_p \\
\ul \ar[u]^i & \ul \times J \ar[u]_{i\times 1} \ar[l]^{p'}
}
$$
and that the morphism of derivators $\mathcal{U}_T$ commutes with
filtered homotopy colimits implies the following equivalences
$$
\begin{array}{lcl}
p_!\Psi & & \\
= p_! \circ (i\times 1)_! \circ \mathcal{U}_T(\ul \times j)(D_K)
& \rightarrow & p_! \circ \mathcal{U}_T(\square\times J)\circ (i\times
1)_! (D_k) \\
\simeq i_! \circ p'_! \circ \mathcal{U}_T(\ul \times J)(D_k) &
\rightarrow & \mathcal{U}_T(\square \times J) \circ p_! \circ (i\times
1)_! (D_K)\\
\simeq i_! \circ \mathcal{U}_T(\ul) \circ p'_!(D_K) &
\rightarrow & \mathcal{U}_T(\square) \circ i_! \circ p'_!(D_K) \\
\simeq (i_! \circ \mathcal{U}_T(\ul))(\ul_K) & \rightarrow
& (\mathcal{U}_T(\square) \circ i_!)(\ul_K)\\
= \varphi_J
\end{array}
$$
This shows that
$$ p_!(\Psi) \stackrel{\sim}{\longrightarrow} \varphi_K\,.$$
We now show that $\Psi$ is an isomorphism. For this, by conservativity, it
is enough to show that for every object $j:e \rightarrow J$ in $J$,
the morphism
$$ (1\times j)^{\ast}(\Psi)\,,$$
is an isomorphism in
$\mathsf{St}(\mathsf{L}_{\Sigma,P}\mathsf{Hot}_{\mathsf{dgcat}_f})(\square)$.
Recall from proposition~\ref{cons} that $(1\times j)^{\ast} (D_K)$
identifies with
$$ \{ \, 0 \leftarrow Y_j
\stackrel{L_j}{\hookrightarrow} X_j \}\,,$$
where $L_j$ belongs to $\mathcal{E}$. We now show that $(1\times
j)^{\ast}(\Psi)$ identifies with $\varphi_{L_j}$, which by hypotheses,
is an isomorphism.

Now, the following commutative diagram
$$
\xymatrix{
\square \ar[r]^{1\times j} & \square \times J \\
\ul \ar[u]^i \ar[r]_{1\times j} & \ul \times J
\ar[u]_{i\times i} 
}
$$
and the dual of proposition $2.8$ in \cite{Cisinski} imply that we
have the following equivalences
$$
\begin{array}{lcl}
(1\times j)^{\ast} \Psi & & \\
= ((1\times j)^{\ast} \circ (i \times 1)_! \circ
\mathcal{U}_T(\ul \times J))(D_K) & \rightarrow &
((1\times j)^{\ast} \circ \mathcal{U}_T(\square \times J) \circ (i
\times 1)_!)(D_K)\\
\simeq (i_! \circ (1\times j)^{\ast} \circ \mathcal{U}_T(\ul
\times J))(D_K) & \rightarrow & (\mathcal{U}_T(\square \times J) \circ
(1\times j)^{\ast} \circ (i \times 1)_!)(D_K)\\
\simeq (i_! \circ \mathcal{U}_T(\ul) \circ (1\times
j)^{\ast})(D_K) & \rightarrow & (\mathcal{U}_T(\square) \circ i_!
\circ (1\times j)^{\ast})(D_K)\\
\simeq i_! \circ \mathcal{U}_T(\ul)(\ul_{L_j}) &
\rightarrow & \mathcal{U}_T(\square) \circ i_!(\ul_{L_j})\\
= \varphi_{L_j} & &
\end{array}
$$
Since by hypotheses $\varphi_{L_j}$ is an isomorphism and the morphism
$G$ commutes with homotopy colimits the theorem is proven.
\end{proof}

\section{The universal localizing invariant}\label{labuniv}

Recall from theorem~\ref{repre} and remark~\ref{Tr} that if we
consider for the category $\mathcal{M}$ the category
$\mathsf{L}_{\Sigma,P}\mathsf{Fun}(\mathsf{dgcat}_f^{op},Sset_{\bullet})$,
see example~\ref{ex2}, we have an equivalence of triangulated
derivators
$$\varphi:
\mathsf{St}(\mathsf{L}_{\Sigma,P}\mathsf{Hot}_{\mathsf{dgcat}_f}) \stackrel{\sim}{\longrightarrow}  \mathsf{HO}(\mathsf{Sp}^{\mathbb{N}}(\mathsf{L}_{\Sigma,P}\mathsf{Fun}(\mathsf{dgcat}_f^{op},
Sset_{\bullet})))\,.$$
Now, stabilize the set $\mathcal{E}_{st}$ defined in the previous
section under the functor loop space and choose for each
element of this stabilized set a representative in the category $\mathsf{Sp}^{\mathbb{N}}(\mathsf{L}_{\Sigma,P}\mathsf{Fun}(\mathsf{dgcat}_f^{op},
Sset_{\bullet}))$. We denote the set of these representatives by
$\widetilde{\mathcal{E}_{st}}$. Since $\mathsf{Sp}^{\mathbb{N}}(\mathsf{L}_{\Sigma,P}\mathsf{Fun}(\mathsf{dgcat}_f^{op},
Sset_{\bullet}))$ is a left proper, cellular Quillen model category,
see \cite{Spectra}, its left Bousfield localization by
$\widetilde{\mathcal{E}_{st}}$ exists. We denote it by $\mathsf{L}_{\widetilde{\mathcal{E}_{st}}}\mathsf{Sp}^{\mathbb{N}}(\mathsf{L}_{\Sigma,P}\mathsf{Fun}(\mathsf{dgcat}_f^{op},
Sset_{\bullet}))$. By lemma~\ref{lettri} it is a stable Quillen
model category.

\begin{remark}
Since the localization morphism
$$ \gamma:\,
\mathsf{St}(\mathsf{L}_{\Sigma,P}\mathsf{Hot}_{\mathsf{dgcat}_f}) 
\stackrel{\mathbb{L}Id}{\longrightarrow}
\mathsf{HO}(\mathsf{L}_{\widetilde{\mathcal{E}_{st}}} \mathsf{Sp}^{\mathbb{N}}(\mathsf{L}_{\Sigma,P}\mathsf{Fun}(\mathsf{dgcat}_f^{op},
Sset_{\bullet})))$$
commutes with homotopy colimits and inverts the set of morphisms
$\mathcal{E}_{st}$, theorem~\ref{invert} allows us to conclude that it
inverts all morphisms $S_K$ for each inclusion $\mathcal{A}
\hookrightarrow \mathcal{B}$ of a full dg subcategory.
\end{remark}

\begin{definition}
\begin{itemize}
\item[-] The {\em Localizing Motivator} of dg categories $\mathcal{M}_{dg}^{loc}$ is the triangulated
derivator associated with the stable Quillen model category 
$$ \mathsf{L}_{\widetilde{\mathcal{E}_{st}}} \mathsf{Sp}^{\mathbb{N}}(\mathsf{L}_{\Sigma,P}\mathsf{Fun}(\mathsf{dgcat}_f^{op},
Sset_{\bullet}))\,.$$
\item[-] The {\em Universal localizing invariant} of dg categories is the canonical morphism of
  derivators $$ \mathcal{U}_l : \mathsf{HO}(\mathsf{dgcat}) \rightarrow \mathcal{M}_{dg}^{loc}\,.$$
\end{itemize}
\end{definition}

We sum up the construction of $\mathcal{M}_{dg}^{loc}$ in the following diagram

$$
\xymatrix{
\underline{\mathsf{dgcat}_f}[S^{-1}] \ar[r] \ar[d]_{\mathsf{Ho}(h)} &
\mathsf{HO}(\mathsf{dgcat}) \ar[dl]^{\mathbb{R}\underline{h}} \ar@/^2pc/[ddddl]^{\mathcal{U}_l}
 \\
\mathsf{L}_{\Sigma}\mathsf{Hot}_{\mathsf{dgcat}_f}
\ar[d]_{\Phi}  \ar@<1ex>[ur]^{\mathbb{L}Re}  & \\
\mathsf{L}_{\Sigma,P}\mathsf{Hot}_{\mathsf{dgcat}_f}
\ar[d]_{stab} & \\
\mathsf{St}({\mathsf{L}_{\Sigma,P}\mathsf{Hot}_{\mathsf{dgcat}_f}})
\ar[d]_{\gamma} & \\
\mathcal{M}_{dg}^{loc} & 
}
$$
Observe that the morphism of derivators $\mathcal{U}_l$ is pointed, commutes with filtered
homotopy colimits and satisfies the following condition:

\begin{itemize}
\item[Dr)] For every inclusion $\mathcal{A} \stackrel{K}{\hookrightarrow}
  \mathcal{B}$ of a full dg subcategory the canonical morphism
$$ S_K:\, \mathsf{cone}(\mathcal{U}_l(\mathcal{A} \stackrel{K}{\hookrightarrow} \mathcal{B}))
\rightarrow \mathcal{U}_l(\mathcal{B}/\mathcal{A})$$
is invertible in $\mathcal{M}_{dg}^{loc}(e)$.
\end{itemize}

We now give a conceptual characterization of condition $Dr)$. Let us now denote by $I$
be the category associated with the graph $0 \leftarrow 1$.

\begin{lemma}\label{mono}
The isomorphism classes in $\mathsf{HO}(\mathsf{dgcat})(I)$ associated
with the inclusions $\mathcal{A} \stackrel{K}{\hookrightarrow}
\mathcal{B}$ of full dg subcategories coincide with the classe of
homotopy monomorphims in $\mathsf{dgcat}$, see section $2$ in \cite{Toen}.
\end{lemma}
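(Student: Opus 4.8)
The plan is to show a two-way correspondence between (isomorphism classes of) objects of $\mathsf{HO}(\mathsf{dgcat})(I)$ coming from inclusions of full dg subcategories and homotopy monomorphisms in $\mathsf{dgcat}$, in the sense of section~2 of \cite{Toen}. Recall that a morphism $f\colon \ca\to\cb$ in a model category is a homotopy monomorphism precisely when the diagonal $\ca\to\ca\times^h_\cb\ca$ is an isomorphism in the homotopy category, equivalently when the homotopy pullback square with both legs equal to $f$ is `degenerate'. So the statement to be proven is: an object $0\leftarrow 1$ of $\mathsf{HO}(\mathsf{dgcat})(I)$, i.e. a morphism $[f]\colon \ca\to\cb$ in $\mathsf{Ho}(\mathsf{dgcat})$, is isomorphic in $\mathsf{HO}(\mathsf{dgcat})(I)$ to (the class of) an inclusion of a full dg subcategory if and only if $[f]$ is a homotopy monomorphism.

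First I would prove the easy direction. Let $K\colon \ca\hookrightarrow\cb$ be an inclusion of a full dg subcategory. Then $K$ is, up to the Morita model structure, a homotopy monomorphism: the fibre product $\ca\times_\cb\ca$ computed in $\mathsf{dgcat}$ is again $\ca$ (because $K$ is fully faithful on Hom-complexes and injective on objects, so the pullback of objects and of Hom-complexes recovers $\ca$ on the nose), and this strict fibre product computes the homotopy fibre product here because one of the two legs can be replaced by a fibration without leaving the class of full-subcategory inclusions — concretely, using the explicit path-object/factorisation in $\mathsf{dgcat}$, one checks that $K$ factors as an inclusion of a full dg subcategory followed by a trivial fibration, so $\ca\times^h_\cb\ca\simeq\ca\times_\cb\ca=\ca$ and the diagonal is an isomorphism. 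Hence $K$ defines a homotopy monomorphism, and any object of $\mathsf{HO}(\mathsf{dgcat})(I)$ isomorphic to such a $\ul_K$ is a homotopy monomorphism, since being a homotopy monomorphism is a property of the isomorphism class of a morphism in $\mathsf{Ho}(\mathsf{dgcat})$.

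Next, the converse: given a homotopy monomorphism $[f]\colon \ca\to\cb$, I would produce an inclusion of a full dg subcategory representing the same object of $\mathsf{HO}(\mathsf{dgcat})(I)$. The standard move is to take the `full dg subcategory of $\cb$ generated by the image of $f$'. Replace $f$ by a cofibration $\widetilde f\colon \ca'\rightarrowtail\cb$ with $\ca'\xrightarrow{\sim}\ca$; then let $\cb_0\subseteq\cb$ be the full dg subcategory on those objects of $\cb$ isomorphic in $\mathsf{H}^0(\cb)$ (resp.\ in the appropriate Morita-theoretic sense) to objects of the image of $\widetilde f$. This gives inclusions of full dg subcategories $\cb_0\hookrightarrow\cb$, and $\widetilde f$ factors through $\cb_0$ via a dg functor $g\colon \ca'\to\cb_0$. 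The key claim is that $g$ is a quasi-equivalence (hence a Morita equivalence). Here is where the homotopy-monomorphism hypothesis is used: $g$ is essentially surjective by construction of $\cb_0$; and full faithfulness of $g$ on Hom-complexes, i.e. that $\ca'(x,y)\to\cb_0(gx,gy)=\cb(fx,fy)$ is a quasi-isomorphism, is exactly a reformulation of the statement that the homotopy fibre product $\ca\times^h_\cb\ca$ is $\ca$, i.e. that $[f]$ is a homotopy monomorphism (one expresses $\ca\times^h_\cb\ca$ via the two-sided bar / cobar construction or via an explicit fibrant replacement of one copy of $f$, and reads off that its Hom-complexes are the homotopy fibres of $\ca(x,y)\oplus\ca(x',y')\to\cb(fx,fy')$, which collapse to $\ca(x,y)$ iff $f$ is homotopically fully faithful). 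Thus $[f]$ is isomorphic in $\mathsf{Ho}(\mathsf{dgcat})$ to $[\cb_0\hookrightarrow\cb]$, and since the isomorphism is compatible with the maps to $\cb$, the corresponding objects of $\mathsf{HO}(\mathsf{dgcat})(I)$ — which by strongness of the derivator $\mathsf{HO}(\mathsf{dgcat})$ and the axiom \textbf{Der5} are detected, up to isomorphism, by their fibres together with the transition map — coincide.

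The main obstacle is the second direction, and specifically the identification of `homotopy monomorphism' with `homotopically fully faithful dg functor': one must compute $\ca\times^h_\cb\ca$ correctly in the Morita model structure and extract from its contractibility the quasi-isomorphism $\ca(x,y)\xrightarrow{\sim}\cb(fx,fy)$. I would handle this by citing the description of homotopy fibre products of dg categories (e.g. via $\ca$-$\ca$-bimodule resolutions, or To\"en's computation of $\mathsf{Map}$ and of homotopy fibre products in the Morita model category), reducing the claim to the fact that a morphism of complexes $C\to D$ is a quasi-isomorphism iff the canonical map $C\to C\times^h_D C$ is one. A secondary technical point is to make sure that passing from $f$ to its cofibrant/fibrant replacements and to the generated full dg subcategory $\cb_0$ does not change the object of $\mathsf{HO}(\mathsf{dgcat})(I)$; this follows from functoriality of the model-categorical replacements applied to the diagram $\mathsf{Fun}(I,\mathsf{dgcat})$ with its projective (= injective, on this shape) model structure, so the natural weak equivalences of $I$-diagrams induce isomorphisms in $\mathsf{HO}(\mathsf{dgcat})(I)$.
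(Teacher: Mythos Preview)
Your overall skeleton matches the paper's: inclusions of full dg subcategories are homotopy monomorphisms, and conversely any homotopy monomorphism is represented by the inclusion of the full dg subcategory on the (essential) image. The difference lies in how you establish the key equivalence ``homotopy monomorphism $\Leftrightarrow$ quasi-fully faithful''. The paper does not compute any homotopy fibre products; instead it uses the mapping-space characterisation of homotopy monomorphisms ($f_*\colon\mathsf{Map}(Z,X)\to\mathsf{Map}(Z,Y)$ injective on $\pi_0$ and an isomorphism on higher $\pi_i$), cites To\"en's Lemma~2.4 in \cite{Toen} (a dg functor is a homotopy monomorphism in the quasi-equivalence model structure iff it is quasi-fully faithful), and then transfers this to the Morita model structure via the identification $\mathsf{Map}_{\mathrm{Morita}}(\ca,\cb)\simeq\mathsf{Map}_{\mathrm{qeq}}(\ca,\cb_f)$ from \cite{IMRN}. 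Thus $F$ is a Morita homotopy monomorphism iff $F_f\colon\ca_f\to\cb_f$ is quasi-fully faithful, after which both directions of the lemma are immediate.

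Your route---proving this equivalence by hand via $\ca\times^h_\cb\ca$---is more self-contained but has a genuine gap. In the easy direction you claim that for a full-subcategory inclusion $K$ the strict pullback $\ca\times_\cb\ca=\ca$ computes the homotopy pullback ``because $K$ factors as an inclusion of a full dg subcategory followed by a trivial fibration''. That is the cofibration/trivial-fibration factorisation, which is irrelevant for a homotopy \emph{pullback}: you need one leg to be a \emph{fibration}, i.e.\ the trivial-cofibration/fibration factorisation, and there is no reason that fibration is again a full-subcategory inclusion. You also never address the passage between the quasi-equivalence and Morita model structures; both the homotopy fibre product and the notion ``quasi-fully faithful'' depend on which weak equivalences are in play, and the paper's fibrant-resolution step is exactly what bridges the two. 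Both issues evaporate if you cite To\"en's lemma and the Morita/quasi-equivalence comparison, as the paper does.
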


\begin{proof}
Recall from section $2$ in \cite{Toen} that in a model category
$\mathcal{M}$ a morphism $X \stackrel{f}{\rightarrow} Y$ is a homotopy
monomorphism if for every object $Z$ in $\mathcal{M}$, the induced
morphism of simplicial sets
$$ \mathsf{Map}(Z,X) \stackrel{f_{\ast}}{\rightarrow}
\mathsf{Map}(Z,Y)$$
induces an injection on $\pi_0$ and isomorphisms on all $\pi_i$ for
$i>0$ (for all base points).

Now, by lemma $2.4$ of \cite{Toen} a dg functor $\mathcal{A}
\stackrel{F}{\rightarrow} \mathcal{B}$ is an homotopy monomorphism on
the quasi-equivalent Quillen model category in $\mathsf{dgcat}$ if and
only if it is {\em quasi-fully faithful}, i.e. for any two objects $X$ and
$Y$ in $\mathcal{A}$ the morphism of complexes
$\mathsf{Hom}_{\mathcal{A}}(X,Y) \rightarrow
\mathsf{Hom}_{\mathcal{B}}(FX,FY)$ is a quasi-isomorphism.

Recall that by corollary $5.10$ of \cite{IMRN} the
mapping space functor $\mathsf{Map}(\mathcal{A},\mathcal{B})$ in the
Morita Quillen model category identifies with the mapping space
$\mathsf{Map}(\mathcal{A},\mathcal{B}_{f})$ in the quasi-equivalent
Quillen model category, where $\mathcal{B}_f$ denotes a Morita fibrant
resolution of $\mathcal{B}$.
This implies that a dg functor $\mathcal{A} \stackrel{F}{\rightarrow}
\mathcal{B}$ is a homotopy monomorphism if and only if $\mathcal{A}_f
\stackrel{F_f}{\rightarrow} \mathcal{B}_f$ is a quasi-fully faithful
dg functor.

Now, notice that an inclusion $\mathcal{A} \hookrightarrow
\mathcal{B}$ of a full dg subcategory is a homotopy
monomorphism. Conversely, let $\mathcal{A} \stackrel{F}{\rightarrow}
\mathcal{B}$ be a homotopy monomorphism. Consider the diagram
$$
\xymatrix{
\widetilde{\mathcal{A}_f} \ar@{^{(}->}[r] & \mathcal{B}_f \ar@{=}[d]  \\
\mathcal{A}_f \ar[u]^{\pi} \ar[r]^{F_f} & \mathcal{B}_f \\
\mathcal{A} \ar[u]^{\sim} \ar[r]_F & \mathcal{B} \ar[u]_{\sim} \,,
}
$$
where $\widetilde{\mathcal{A}_f}$ denotes the full dg subcategory of
$\mathcal{B}_f$ whose objects are those in the image by the dg
functor $F_f$. Since $F_f$ is a quasi-fully faithful dg functor, the dg
functor $\pi$ is a quasi-equivalence.
This proves the lemma.
\end{proof}

\begin{remark}
Lemma~\ref{mono} shows that condition $Dr)$ is equivalent to
\begin{itemize}
\item[Dr')] For every homotopy monomorphism $\mathcal{A}
  \stackrel{F}{\rightarrow} \mathcal{B}$ in
  $\mathsf{HO}(\mathsf{dgcat})(I)$ the canonical morphism 
$$ \mathsf{cone}(\mathcal{U}_l (\mathcal{A} \stackrel{F}{\rightarrow}
\mathcal{B})) \rightarrow \mathcal{U}_l (\mathsf{cone}(F))$$
is invertible in $\mathcal{M}_{dg}^{loc}(e)$.
\end{itemize}
\end{remark}
Let $\mathbb{D}$ be a strong triangulated derivator.
\begin{theorem}\label{principal}
The morphism $\mathcal{U}_l$ induces an equivalence of categories
$$
\underline{\mathsf{Hom}}_!(\mathcal{M}_{dg}^{loc},
\mathbb{D}) \stackrel{\mathcal{U}_l^{\ast}}{\longrightarrow}
\underline{\mathsf{Hom}}_{flt,\,Dr,\,p}(\mathsf{HO}(\mathsf{dgcat}),\mathbb{D})\,,$$
where
$\underline{\mathsf{Hom}}_{flt,\,Dr\,, p}(\mathsf{HO}(\mathsf{dgcat}),\mathbb{D})$
denotes the category of morphisms of derivators which commute with filtered
homotopy colimits, satisfy condition Dr) and preserve the point.
\end{theorem}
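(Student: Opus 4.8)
The plan is to realise $\mathcal{U}_l^{\ast}$ as a composite of two equivalences corresponding to the two stages $\mathcal{U}_l=\gamma\circ\mathcal{U}_t$ of the construction. First, by Proposition~\ref{Trin} the morphism $\mathcal{U}_t=stab\circ\Phi\circ\mathbb{R}\underline{h}$ induces an equivalence
$$ \mathcal{U}_t^{\ast}:\underline{\mathsf{Hom}}_!(\mathsf{St}(\mathsf{L}_{\Sigma,P}\mathsf{Hot}_{\mathsf{dgcat}_f}),\mathbb{D})\stackrel{\sim}{\longrightarrow}\underline{\mathsf{Hom}}_{flt,p}(\mathsf{HO}(\mathsf{dgcat}),\mathbb{D})\,. $$
Second, using the equivalence $\varphi$ of Theorem~\ref{repre} and Remark~\ref{Tr} to identify $\mathsf{St}(\mathsf{L}_{\Sigma,P}\mathsf{Hot}_{\mathsf{dgcat}_f})$ with $\mathsf{HO}(\mathsf{Sp}^{\mathbb{N}}(\mathsf{L}_{\Sigma,P}\mathsf{Fun}(\mathsf{dgcat}_f^{op},Sset_{\bullet})))$, Theorem~\ref{Cisinsk} exhibits $\gamma$ as a left Bousfield localization of $\mathsf{St}(\mathsf{L}_{\Sigma,P}\mathsf{Hot}_{\mathsf{dgcat}_f})$ by the image of $\widetilde{\mathcal{E}_{st}}$, hence $\gamma^{\ast}$ induces an equivalence
$$ \gamma^{\ast}:\underline{\mathsf{Hom}}_!(\mathcal{M}_{dg}^{loc},\mathbb{D})\stackrel{\sim}{\longrightarrow}\underline{\mathsf{Hom}}_{!,\mathcal{E}_{st}}(\mathsf{St}(\mathsf{L}_{\Sigma,P}\mathsf{Hot}_{\mathsf{dgcat}_f}),\mathbb{D})\,, $$
onto the full subcategory of homotopy-colimit-preserving morphisms sending the elements of $\mathcal{E}_{st}$ to isomorphisms. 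Here I use that a morphism commuting with homotopy colimits automatically inverts the $\Omega$-translates of the elements of $\mathcal{E}_{st}$ (same reasoning as in Lemma~\ref{lettri}), so inverting $\mathcal{E}_{st}$ coincides with inverting the image of $\widetilde{\mathcal{E}_{st}}$.

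It then remains to show that under $\mathcal{U}_t^{\ast}$ the subcategory $\underline{\mathsf{Hom}}_{!,\mathcal{E}_{st}}(\mathsf{St}(\mathsf{L}_{\Sigma,P}\mathsf{Hot}_{\mathsf{dgcat}_f}),\mathbb{D})$ corresponds exactly to $\underline{\mathsf{Hom}}_{flt,\,Dr,\,p}(\mathsf{HO}(\mathsf{dgcat}),\mathbb{D})$; granting this, the composite $\mathcal{U}_t^{\ast}\circ\gamma^{\ast}=(\gamma\circ\mathcal{U}_t)^{\ast}=\mathcal{U}_l^{\ast}$ restricts to the asserted equivalence. So fix $F\in\underline{\mathsf{Hom}}_!(\mathsf{St}(\mathsf{L}_{\Sigma,P}\mathsf{Hot}_{\mathsf{dgcat}_f}),\mathbb{D})$ and set $G:=\mathcal{U}_t^{\ast}(F)=F\circ\mathcal{U}_t$. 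Since $F$ commutes with homotopy colimits and $\mathbb{D}$ is triangulated, $F$ is exact and the natural transformation $S:\mathsf{cone}\circ F(M)\rightarrow F(e)\circ\mathsf{cone}$ of Section~\ref{chapquotient} is invertible; therefore, for every inclusion $\mathcal{A}\stackrel{K}{\hookrightarrow}\mathcal{B}$ of a full dg subcategory, the canonical morphism $\mathsf{cone}(G(K))\rightarrow G(\mathcal{B}/\mathcal{A})$ of condition $Dr)$ is carried by this isomorphism to $F(e)(S_K)$, with $S_K$ the morphism in $\mathsf{St}(\mathsf{L}_{\Sigma,P}\mathsf{Hot}_{\mathsf{dgcat}_f})(e)$ of the introduction. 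If $F$ inverts $\mathcal{E}_{st}$, i.e. $F(e)(S_L)$ is invertible for $L\in\mathcal{E}$, then Theorem~\ref{invert} applied to $F$ itself (a morphism of triangulated derivators commuting with arbitrary homotopy colimits) gives that $F(e)(S_K)$ is invertible for every inclusion of a full dg subcategory, so $G$ satisfies $Dr)$. Conversely, if $G$ satisfies $Dr)$, specialising $K$ to the elements $L\in\mathcal{E}$, which are themselves inclusions of full dg subcategories, shows $F(e)(S_L)$ invertible, i.e. $F$ inverts $\mathcal{E}_{st}$.

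Full faithfulness of $\mathcal{U}_l^{\ast}$ onto $\underline{\mathsf{Hom}}_{flt,\,Dr,\,p}(\mathsf{HO}(\mathsf{dgcat}),\mathbb{D})$ is inherited from that of $\mathcal{U}_t^{\ast}$ and $\gamma^{\ast}$; essential surjectivity follows since any $G\in\underline{\mathsf{Hom}}_{flt,\,Dr,\,p}$ in particular lies in $\underline{\mathsf{Hom}}_{flt,p}$, hence equals $\mathcal{U}_t^{\ast}(F)$ for some $F$ commuting with homotopy colimits, which by the previous paragraph inverts $\mathcal{E}_{st}$ and so is $\gamma^{\ast}(F')$ for some $F'\in\underline{\mathsf{Hom}}_!(\mathcal{M}_{dg}^{loc},\mathbb{D})$ with $\mathcal{U}_l^{\ast}(F')\simeq G$. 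The step I expect to require the most care is the identification of the canonical cofiber morphism in condition $Dr)$ with $F(e)(S_K)$: this rests on the compatibility of the construction $S$ of Section~\ref{chapquotient} with composition of pointed morphisms of derivators, together with the fact that a homotopy-colimit-preserving morphism of pointed derivators preserves cones (so that its associated $S$ is invertible). The bookkeeping around the $\Omega$-stabilization in the definition of $\widetilde{\mathcal{E}_{st}}$ is a secondary and routine point.
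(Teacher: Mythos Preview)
Your proposal is correct and follows essentially the same route as the paper: factor $\mathcal{U}_l^{\ast}$ through $\gamma^{\ast}$ (via Theorem~\ref{Cisinsk}) and $\mathcal{U}_t^{\ast}$ (via Proposition~\ref{Trin}), handle the passage from $\widetilde{\mathcal{E}_{st}}$ to $\mathcal{E}_{st}$ by the $\Omega$-compatibility argument of Lemma~\ref{lettri}, and then identify the $\mathcal{E}_{st}$-inverting condition with condition~$Dr)$. The paper's proof is terser on this last identification, simply asserting it, whereas you spell out both directions and correctly invoke Theorem~\ref{invert} for the nontrivial implication; your remarks on the compatibility of $S$ with composition and on cone-preservation are exactly the ingredients the paper leaves implicit.
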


\begin{proof}
By theorem~\ref{Cisinsk}, we have the following equivalence of
categories
$$ \underline{\mathsf{Hom}}_!(\mathcal{M}_{dg}^{loc}
\mathbb{D}) \stackrel{\gamma^{\ast}}{\longrightarrow}
\underline{\mathsf{Hom}}_{!,\widetilde{\mathcal{E}_{st}}}( \mathsf{St}(\mathsf{L}_{\Sigma,P}\mathsf{Fun}(\mathsf{dgcat}_f^{op},
Sset_{\bullet})), \mathbb{D})\,.$$
We now show that we have the following equivalence of categories
$$ \underline{\mathsf{Hom}}_{!,\widetilde{\mathcal{E}_{st}}}(\mathsf{St}(\mathsf{L}_{\Sigma,P}\mathsf{Fun}(\mathsf{dgcat}_f^{op},
Sset_{\bullet})), \mathbb{D}) \stackrel{\sim}{\rightarrow} \underline{\mathsf{Hom}}_{!,\mathcal{E}_{st}}(\mathsf{St}(\mathsf{L}_{\Sigma,P}\mathsf{Fun}(\mathsf{dgcat}_f^{op},
Sset_{\bullet})), \mathbb{D})\,.$$
Let $G$ be an element of  $\underline{\mathsf{Hom}}_{!,\mathcal{E}_{st}}(\mathsf{St}(\mathsf{L}_{\Sigma,P}\mathsf{Fun}(\mathsf{dgcat}_f^{op},
Sset_{\bullet})), \mathbb{D})$ and $s$ an element of
$\mathcal{E}_{st}$. We show that the image of $s$ under the functor
$G(e) \circ \Omega(e)$ is an isomorphism in $\mathbb{D}(e)$. Recall from the proof of lemma~\ref{lettri} that the functor $G(e)$
commutes with $\Sigma(e)$. Since the suspension and loop space
functors in $\mathbb{D}(e)$ are inverse of each other we conclude that
the image of $s$ under the functor $G(e) \circ \Omega(e)$ is an
isomorphism in $\mathbb{D}(e)$.
Now, simply observe that the category on the right hand side of the
above equivalence identifies with
$\underline{\mathsf{Hom}}_{flt,\,Dr\,,
  p}(\mathsf{HO}(\mathsf{dgcat}),\mathbb{D})$ under the equivalence
$$
\underline{\mathsf{Hom}}_!(\mathsf{St}(\mathsf{L}_{\Sigma,P} \mathsf{Hot}_{\mathsf{dgcat}_f}),\mathbb{D})
\stackrel{(stab\circ \Phi \circ
  \mathbb{R}\underline{h})^{\ast}}{\longrightarrow} \underline{\mathsf{Hom}}_{flt,p}(\mathsf{HO}(\mathsf{dgcat}),\mathbb{D})\,,$$ 
of proposition~\ref{Trin}. 

This proves the theorem.
\end{proof}
\begin{notation}
We call an object of the right hand side category of
theorem~\ref{principal} a {\em localizing invariant} of dg categories.
\end{notation}
We now present some examples.

\subsection*{Hochschild and cyclic homology}
Let $\mathcal{A}$ be a small $k$-flat $k$-category. The {\it
  Hochschild chain complex} of $\mathcal{A}$ is the complex
concentrated in homological degrees $p\geq 0$ whose $p$th component is
the sum of the 
$$ \mathcal{A}(X_p,X_0) \otimes \mathcal{A}(X_p,X_{p-1})\otimes
\mathcal{A}(X_{p-1}, X_{p-2})\otimes \cdots \otimes
\mathcal{A}(X_0,X_1)\,,$$
where $X_0, \ldots, X_p$ range through the objects of $\mathcal{A}$,
endowed with the differential
$$ d(f_p \otimes \ldots \otimes f_0) = f_{p-1} \otimes \cdots \otimes
f_0 f_p + \sum_{i=1}^p (-1)^i f_p\otimes \cdots \otimes f_i f_{i-1} \otimes
\cdots \otimes f_0\,.$$
Via the cyclic permutations
$$ t_p(f_{p-1} \otimes \cdots \otimes f_0) = (-1)^pf_0\otimes f_{p-1}
\otimes \cdots \otimes f_1$$
this complex becomes a precyclic chain complex and thus gives rise to
a {\it mixed complex} $C(\mathcal{A})$, i.e. a dg module over the dg
algebra $\Lambda = k[B]/(B^2)$, where $B$ is of degree $-1$ and
$dB=0$. All variants of cyclic homology only depend on
$C(\mathcal{A})$ considered in $\mathcal{D}(\Lambda)$. For example,
the cyclic homology of $\mathcal{A}$ is the homology of the complex
$C(\mathcal{A})\overset{\mathbb{L}}{\otimes}_{\Lambda}k$, \emph{cf.}~\cite{Kassel}.

If $\mathcal{A}$ is a $k$-flat differential graded category, its mixed
complex is the sum-total complex of the bicomplex obtained as the
natural re-interpretation of the above complex. If $\mathcal{A}$ is an
arbitrary dg $k$-category, its Hochschild chain complex is defined as
the one of a $k$-flat (e.g. a cofibrant) resolution of $\mathcal{A}$.
The following theorem is proved in \cite{cyclic}.

\begin{theorem}\label{thmC}
 The map $\mathcal{A} \mapsto C(\mathcal{A})$ yields a
  morphism of derivators
 $$\mathsf{HO}(\mathsf{dgcat}) \rightarrow
  \mathsf{HO}(\Lambda-Mod)\,,$$
 which commutes with filtered homotopy colimits, preserves the point
 and satisfies condition Dr).
\end{theorem}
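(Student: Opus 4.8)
The plan is to verify the three asserted properties of the mixed complex construction $\mathcal{A} \mapsto C(\mathcal{A})$ one at a time, reducing everything to the corresponding statements at the level of the underlying Quillen model categories, since a morphism of derivators is automatically obtained whenever one has a functor between model categories preserving weak equivalences on the relevant subcategories. First I would recall, from \cite{cyclic}, that the assignment $\mathcal{A} \mapsto C(\mathcal{A})$ is defined via a $k$-flat (e.g. cofibrant) resolution of $\mathcal{A}$ and that it descends to a well-defined functor $\mathsf{Ho}(\mathsf{dgcat}) \to \mathsf{Ho}(\Lambda\text{-}\mathrm{Mod}) = \mathcal{D}(\Lambda)$; the $2$-functoriality needed to upgrade this to a morphism of derivators $\mathsf{HO}(\mathsf{dgcat}) \to \mathsf{HO}(\Lambda\text{-}\mathrm{Mod})$ follows since $C(-)$ can be made functorial on the full subcategory of cofibrant dg categories, on which it sends Morita equivalences (in particular quasi-equivalences) to quasi-isomorphisms of mixed complexes --- this is precisely the invariance statement established in \cite{cyclic}, together with the fact that Morita equivalence induces an isomorphism on Hochschild and cyclic homology.

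For the preservation of the point: the mixed complex of the zero dg category (or of $\emptyset$, using that $\mathsf{Ho}(\mathsf{dgcat})$ is pointed with these identified) is the zero mixed complex, so $C(-)$ sends the terminal object to the terminal object of $\mathcal{D}(\Lambda)$, hence preserves the point by \textbf{Der2} applied fibrewise. For the commutation with filtered homotopy colimits: by proposition~\ref{prop}, filtered homotopy colimits in $\mathsf{dgcat}$ are computed as ordinary filtered colimits (up to equivalence), and a filtered colimit of dg categories, termwise on cofibrant representatives, yields a filtered colimit of the associated Hochschild chain complexes because the defining formula for $C(\mathcal{A})$ involves only finite tensor products over $k$ of the Hom-complexes and colimits commute with these; since filtered colimits of complexes are exact and hence compute the homotopy colimit in $\mathcal{D}(\Lambda)$, we obtain $C(\operatorname{hocolim}_i \mathcal{A}_i) \xrightarrow{\sim} \operatorname{hocolim}_i C(\mathcal{A}_i)$. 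One must check this is natural enough to give an isomorphism of morphisms of derivators, which again reduces to the $e$-level by conservativity.

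The main obstacle --- and the technically substantive point --- is condition Dr): that for every inclusion $\mathcal{A} \stackrel{K}{\hookrightarrow} \mathcal{B}$ of a full dg subcategory, the natural map $\mathsf{cone}(C(\mathcal{A}) \to C(\mathcal{B})) \to C(\mathcal{B}/\mathcal{A})$ is an isomorphism in $\mathcal{D}(\Lambda)$, where $\mathcal{B}/\mathcal{A}$ is Drinfeld's dg quotient. This is exactly the localization theorem for cyclic homology, and I would invoke the main result of \cite{cyclic} (the statement that the mixed complex construction sends exact sequences of dg categories to triangles in $\mathcal{D}(\Lambda)$, equivalently that $C(-)$ is a localizing invariant in the classical sense), applied to the short exact sequence $0 \to \mathcal{A} \to \mathcal{B} \to \mathcal{B}/\mathcal{A} \to 0$. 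The only work left is to match the cone in $\mathcal{M}_{dg}^{loc}(e)$ appearing in condition Dr) with the triangle in $\mathcal{D}(\Lambda)$ produced by that theorem: since $C(-)$ preserves the point and $\mathcal{D}(\Lambda)$ is triangulated, the cone of a morphism of derivators computed via $\mathsf{cone} = t^?$ agrees with the usual mapping cone, so the two notions coincide. I expect the verification of naturality of all these identifications --- to ensure one has a genuine morphism of derivators rather than just a fibrewise collection of functors --- to be the most delicate bookkeeping, but it follows the same pattern as the analogous arguments for $\mathbb{R}\underline{h}$ earlier in the paper.
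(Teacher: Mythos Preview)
The paper does not actually prove this theorem: immediately before the statement it writes ``The following theorem is proved in \cite{cyclic}'' and gives no further argument. Your proposal is therefore not so much an alternative proof as an unpacking of what that citation is meant to deliver, and in that respect it is broadly correct: invariance under Morita equivalence, the explicit formula for $C(\mathcal{A})$ showing compatibility with filtered colimits, and above all Keller's localization theorem (that $C(-)$ sends exact sequences of dg categories to triangles in $\mathcal{D}(\Lambda)$) are exactly the ingredients from \cite{cyclic} one needs. One small slip: in your discussion of condition Dr) you refer to ``the cone in $\mathcal{M}_{dg}^{loc}(e)$'', but for a morphism $C:\mathsf{HO}(\mathsf{dgcat})\to\mathsf{HO}(\Lambda\text{-}\mathrm{Mod})$ the relevant cone is computed in the \emph{target} derivator $\mathcal{D}(\Lambda)$, not in $\mathcal{M}_{dg}^{loc}(e)$; you effectively correct this in the next sentence, but the phrasing should be cleaned up.
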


\begin{remark}
By theorem~\ref{principal} the morphism of derivators $C$ factors
through $\mathcal{U}_l$ and so gives rise to a morphism of derivators
$$ C: \mathcal{M}_{dg}^{loc} \rightarrow \mathsf{HO}(\Lambda-Mod)\,.$$
\end{remark}

\subsection*{Non-connective $K$-theory}
Let $\mathcal{A}$ be a small dg category. Its non-connective $K$-theory
spectrum $K(A)$ is defined by applying Schlichting's construction
\cite{Marco} to the Frobenius pair associated with the category of
cofibrant perfect $\mathcal{A}$-modules (to the empty dg category we
associate $0$).
Recall that the conflations in the Frobenius category of cofibrant
perfect $\mathcal{A}$-modules are the short exact sequences which
split in the category of graded $\mathcal{A}$-modules.

\begin{theorem}\label{thmK}
The map $\mathcal{A} \mapsto K(\mathcal{A})$ yields a
  morphism of derivators
 $$\mathsf{HO}(\mathsf{dgcat}) \rightarrow \mathsf{HO}(Spt)\,,$$
  to the derivator associated with the category of spectra, which
  commutes with filtered homotopy
  colimits, preserves the point and satisfies condition Dr).
\end{theorem}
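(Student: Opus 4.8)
The plan is to deduce the four assertions from the structural properties of Schlichting's non-connective $K$-theory functor \cite{Marco} — functoriality, invariance under derived equivalence and under idempotent completion, compatibility with filtered colimits, and the localization theorem — together with Keller's and Drinfeld's \cite{Drinfeld}, \cite{ICM} description of the dg quotient.

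\emph{Morphism of derivators and the point.} The assignment sending a small dg category $\ca$ to the Frobenius category of cofibrant perfect $\ca$-modules (with conflations the sequences that split in the category of graded $\ca$-modules) is strictly functorial on $\mathsf{dgcat}$: a dg functor $\ca\to\cb$ induces $-\otimes_{\ca}\cb$, a left Quillen functor for the projective model structures, which therefore preserves cofibrant objects, preserves perfect modules and graded-split conflations, and is compatible with composition on the nose. A Morita dg functor induces a derived equivalence, under which Schlichting's construction returns a weak equivalence of spectra, so $\ca\mapsto K(\ca)$ is a functor $\mathsf{dgcat}\to Spt$ carrying Morita weak equivalences to weak equivalences. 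Applying it levelwise to the categories $\mathsf{Fun}(X^{op},\mathsf{dgcat})$ and passing to homotopy categories produces a morphism of pre-derivators $\mathsf{HO}(\mathsf{dgcat})\to\mathsf{HO}(Spt)$, i.e. a morphism of derivators. The empty dg category represents the point of $\mathsf{Ho}(\mathsf{dgcat})$ and has trivial associated Frobenius pair, so $K(\emptyset)=0$ by definition and the point is preserved.

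\emph{Filtered homotopy colimits.} By proposition~\ref{prop}, applied to the Morita model structure of example~\ref{mori}, a filtered homotopy colimit of dg categories is computed by the ordinary filtered colimit $\ca=\mathrm{colim}_{j}\ca_{j}$. Every perfect $\ca$-module is induced from a perfect $\ca_{j}$-module at some finite stage, so the Frobenius category of cofibrant perfect $\ca$-modules is the filtered colimit of those of the $\ca_{j}$; since connective $K$-theory and its Bass--Thomason delooping to non-connective $K$-theory commute with filtered colimits of Frobenius pairs, and filtered homotopy colimits in $\mathsf{HO}(Spt)$ agree with filtered colimits up to weak equivalence, the canonical map $\mathrm{hocolim}_{j}K(\ca_{j})\to K(\ca)$ is an isomorphism in $\mathsf{Ho}(Spt)$. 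For condition Dr), let $\ca\hookrightarrow\cb$ be a full dg subcategory: induction $\mathcal{D}_{perf}(\ca)\to\mathcal{D}_{perf}(\cb)$ is fully faithful (being a section of restriction, since $\ca$ is full in $\cb$), and by the description of Drinfeld's dg quotient the category $\mathcal{D}_{perf}(\cb/\ca)$ is the idempotent completion of the Verdier quotient $\mathcal{D}_{perf}(\cb)/\mathcal{D}_{perf}(\ca)$. Hence the Frobenius pairs of cofibrant perfect modules form an exact sequence $\ca\to\cb\to\cb/\ca$ in Schlichting's sense, up to idempotent completion; his localization theorem — which applies precisely because non-connective $K$-theory is invariant under idempotent completion — yields a homotopy fiber sequence $K(\ca)\to K(\cb)\to K(\cb/\ca)$. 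Identifying the connecting map of this fibration with the canonical morphism $S_K$ of the pointed derivator formalism shows that $\mathsf{cone}(K(\ca\hookrightarrow\cb))\to K(\cb/\ca)$ is invertible, i.e. condition Dr) holds.

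The hard part is condition Dr): one must match Drinfeld's triangulated model of the quotient with the Verdier quotient of perfect complexes while keeping track of the idempotent-completion discrepancy, re-express this as an exact sequence of the precise Frobenius pairs to which Schlichting's functor is applied, and verify that his localization connecting map coincides with the derivator-theoretic $S_K$; the remaining three assertions are comparatively formal.
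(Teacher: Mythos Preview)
Your approach is essentially the same as the paper's: both deduce the four assertions from the structural results established in Schlichting's work \cite{Marco}. The paper's proof is much terser---it simply cites Proposition~11.15 of \cite{Marco} (an adaptation of Thomason--Trobaugh's theorem~1.9.8) for the morphism of derivators, Lemma~6.3 of \cite{Marco} for commutation with filtered homotopy colimits, and Theorem~11.10 of \cite{Marco} for condition~Dr)---whereas you unpack the mechanisms behind those citations. One small caveat: the claim that $-\otimes_{\ca}\cb$ is ``compatible with composition on the nose'' is not literally true (tensor products are only associative up to canonical isomorphism), which is part of why the paper defers to Schlichting's Proposition~11.15 rather than arguing functoriality directly; but this does not affect the substance of your argument.
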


\begin{proof}
Proposition $11.15$ in \cite{Marco}, which is an adaption of theorem
$1.9.8$ in \cite{Thomason}, implies that we have a well defined
morphism of derivators
$$ \mathsf{HO}(\mathsf{dgcat}) \rightarrow \mathsf{HO}(Spt)\,.$$
Lemma $6.3$ in \cite{Marco} implies that this morphism commutes with
filtered homotopy colimits and theorem $11.10$ in \cite{Marco} implies that condition
Dr) is satisfied. 
\end{proof}

\begin{remark}
By theorem~\ref{principal}, the morphism of derivators $K$ factors
through $\mathcal{U}_l$ and so gives rise to a morphism of derivators
$$ K :\mathcal{M}_{dg}^{loc} \rightarrow \mathsf{HO}(Spt)\,.$$
\end{remark}

We now establish a connection between Waldhausen's
$S_{\bullet}$-construction, see~\cite{Wald} and the suspension functor
in the triangulated category $\mathcal{M}_{dg}^{loc}(e)$. Let $\mathcal{A}$
be a Morita fibrant dg category, see \cite{addendum}
\cite{IMRN}. Notice that $\mathsf{Z}^0(\mathcal{A})$ carries a natural
exact category structure obtained by pulling back the garded-split
structure on $\mathcal{C}_{dg}(\mathcal{A})$ along the Yoneda functor
$$
\begin{array}{rcl}
h: \mathsf{Z}^0(\mathcal{A}) & \longrightarrow &
\mathcal{C}_{dg}(\mathcal{A})\\
A & \mapsto & \mathsf{Hom}^{\bullet}(?,A)\,.
\end{array}
$$
\begin{notation}\label{simplicialK}
Remark that the simplicial category $S_{\bullet}\mathcal{A}$, obtained by applying Waldhausen's
$S_{\bullet}$-construction to $\mathsf{Z}^0(\ca)$, admits a natural enrichissement over the complexes. We denote by $S_{\bullet}\ca$ this simplicial Morita
fibrant dg category obtained.
\end{notation} 
Recall that $\Delta$ denotes the simplicial category and $p:\Delta \rightarrow e$ the projection functor.
\begin{proposition}\label{real}
There is a canonical isomorphism in $\mathcal{M}_{dg}^{loc}(e)$
$$ p_!\mathcal{U}_l(S_{\bullet}\mathcal{A})
\stackrel{\sim}{\rightarrow} \mathcal{U}_l(\mathcal{A})[1]\,.$$
\end{proposition}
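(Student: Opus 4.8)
The statement asserts that applying $\mathcal{U}_l$ to the simplicial dg category $S_\bullet \mathcal{A}$ and taking the homotopy colimit over $\Delta$ recovers the shift $\mathcal{U}_l(\mathcal{A})[1]$. The idea is to exploit the standard fact from Waldhausen $K$-theory that the $S_\bullet$-construction is a ``delooping'' which, after passing to a suitable stable/triangulated context, becomes literally the suspension functor. Concretely, I would proceed by analysing the low simplicial degrees of $S_\bullet\mathcal{A}$ and using that $\mathcal{U}_l$ sends the relevant exact sequences of dg categories to triangles (condition Dr, which I may assume holds for $\mathcal{U}_l$ by the construction preceding the statement). First I would recall that $S_0\mathcal{A} \simeq 0$, $S_1\mathcal{A} \simeq \mathcal{A}$, and that for each $n$ the dg category $S_n\mathcal{A}$ sits in an iterated extension built out of copies of $\mathcal{A}$ via the cofibration sequences in the $S_\bullet$-construction; in $\mathcal{M}_{dg}^{loc}(e)$ these become cofibre (= fibre, by stability) sequences, so that $\mathcal{U}_l(S_n\mathcal{A}) \simeq \mathcal{U}_l(\mathcal{A})^{\oplus n}$, and the simplicial object $[n] \mapsto \mathcal{U}_l(S_n\mathcal{A})$ is, up to equivalence, the bar construction on $\mathcal{U}_l(\mathcal{A})$ in the stable derivator $\mathcal{M}_{dg}^{loc}$.

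The second step is to compute $p_!$ of this bar-type simplicial object. In any pointed (in fact stable) derivator, the homotopy colimit over $\Delta$ of the simplicial object whose $n$-th term is $X^{\oplus n}$ with the standard bar-construction face and degeneracy maps — i.e. the simplicial object obtained by applying the ``$S_\bullet$ of a point'' / suspension pattern — is canonically $X[1]$. This is precisely the derivator-theoretic incarnation of the statement ``$|S_\bullet(\mathrm{pt})| \simeq S^1$'', and in the triangulated setting it follows from the fact that the homotopy colimit of the simplicial diagram
$$ 0 \;\rightrightarrows\; X \;\substack{\longrightarrow \\ \longrightarrow \\ \longrightarrow}\; X^{\oplus 2} \;\cdots $$
fits into a homotopy pushout square with $X[0]$ and two copies of $0$, forcing the realization to be $X[1]$; one can also phrase this via the skeletal filtration, where the $n$-th stage is obtained from the $(n{-}1)$-st by a pushout along $X \otimes (\Delta^n/\partial\Delta^n)$-type cells. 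I would assemble this using Der3/Der4 and the stability axiom Der7 to identify successive cofibres, exactly as in Heller's stabilization machinery recalled in section~\ref{spectra}.

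The third step is to make the identification natural and honest at the level of the derivator rather than just objectwise: I need the comparison between $p_!\mathcal{U}_l(S_\bullet\mathcal{A})$ and $\mathcal{U}_l(\mathcal{A})[1]$ to come from an actual zig-zag of morphisms of diagrams. Here I would use that $\mathcal{U}_l$ commutes with filtered homotopy colimits and (crucially) preserves the point and sends inclusions of full dg subcategories to triangles, so the natural map $S_\bullet\mathcal{A}$ built from the path-object/cone construction in $\mathsf{dgcat}$ maps to $\mathcal{U}_l(\mathcal{A})[1]$ compatibly in the simplicial direction; taking $p_!$ and invoking conservativity (Der2) over the points of $\Delta$ reduces the claim to the objectwise statement already established.

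\textbf{Main obstacle.} The delicate point is not the degreewise identification $\mathcal{U}_l(S_n\mathcal{A}) \simeq \mathcal{U}_l(\mathcal{A})^{\oplus n}$ — that is a routine consequence of condition Dr) applied to the cofibration sequences defining $S_n$ — but rather producing the comparison \emph{as a map of simplicial objects} and showing the induced map on $p_!$ is an isomorphism. One must check that the face and degeneracy maps of $\mathcal{U}_l(S_\bullet\mathcal{A})$ really match the bar-construction structure maps under the chosen equivalences, and then that $\operatorname{hocolim}_\Delta$ of the bar construction on an object $X$ of a stable derivator is $X[1]$. The latter is essentially Heller's computation, but it requires care with the interplay between the simplicial indexing category $\Delta$, the interval/suspension picture, and the additivity isomorphisms; I expect to spend most of the argument verifying that the skeletal (or equivalently, $\mathrm{sk}_1$-versus-total) comparison closes up into the right homotopy pushout square, so that the realization is the cofibre of $0 \to 0$ relative to $X$, i.e. $\Sigma X = X[1]$.
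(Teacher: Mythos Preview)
Your approach is valid in principle, but the paper takes a shorter and more conceptual route that completely sidesteps the ``main obstacle'' you identified. Instead of computing $\mathcal{U}_l(S_n\mathcal{A}) \simeq \mathcal{U}_l(\mathcal{A})^{\oplus n}$ degreewise and then wrestling with the simplicial coherence of the bar construction, the paper uses the path-object trick (following McCarthy \cite{MacCarthy}): there is a levelwise short exact sequence of simplicial dg categories
\[
0 \longrightarrow \mathcal{A}_\bullet \longrightarrow PS_\bullet\mathcal{A} \longrightarrow S_\bullet\mathcal{A} \longrightarrow 0
\]
in $\mathsf{HO}(\mathsf{dgcat})(\Delta)$, where $\mathcal{A}_\bullet$ is constant and $PS_\bullet\mathcal{A}$ is the simplicial path object (so $PS_n\mathcal{A} = S_{n+1}\mathcal{A}$). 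Applying $\mathcal{U}_l$ and using condition Dr) pointwise together with conservativity yields an honest triangle in $\mathcal{M}_{dg}^{loc}(\Delta)$; applying $p_!$ (which preserves triangles) one is reduced to two easy computations: $p_!$ of a constant simplicial object is the object itself, and $p_!$ of the contractible simplicial object $\mathcal{U}_l(PS_\bullet\mathcal{A})$ is zero. The triangle then reads $\mathcal{U}_l(\mathcal{A}) \to 0 \to p_!\mathcal{U}_l(S_\bullet\mathcal{A}) \to \mathcal{U}_l(\mathcal{A})[1]$, giving the result immediately.

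What the paper's argument buys is that the coherence you were worried about --- matching face and degeneracy maps with the bar-construction structure --- never has to be checked: the sequence $\mathcal{A}_\bullet \to PS_\bullet\mathcal{A} \to S_\bullet\mathcal{A}$ already lives as a genuine diagram of simplicial dg categories, so $\mathcal{U}_l$ produces a genuine map of simplicial objects automatically. Your approach would also work, but the step ``verify that $\mathcal{U}_l(S_\bullet\mathcal{A})$ is the bar construction as a simplicial object'' is exactly the kind of coherence argument that the path-object trick was invented to avoid.
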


\begin{proof}
As in \cite[3.3]{MacCarthy}, we consider the sequence in $\mathsf{HO}(\mathsf{dgcat})(\Delta)$
$$ 0 \rightarrow \mathcal{A}_{\bullet} \rightarrow
PS_{\bullet}\mathcal{A} \rightarrow S_{\bullet}\mathcal{A} \rightarrow
0\,,$$
where $\mathcal{A}_{\bullet}$ denotes the constant simplicial dg
category with value $\mathcal{A}$ and $PS_{\bullet}\mathcal{A}$ the
path object of $S_{\bullet}\mathcal{A}$. For each
point $n: e \rightarrow \Delta$, the $n$th component of the above
sequence is the following short exact sequence in
$\mathsf{Ho}(\mathsf{dgcat})$
$$ 0 \rightarrow \mathcal{A} \stackrel{I}{\rightarrow}
PS_n\mathcal{A}=S_{n+1}\mathcal{A} \stackrel{Q}{\rightarrow}
S_n\mathcal{A} \rightarrow 0\,,$$
where $I$ maps $A \in \mathcal{A}$ to the constant sequence
$$ 
0 \rightarrow A \stackrel{Id}{\rightarrow} A \stackrel{Id}{\rightarrow}
\cdots \stackrel{Id}{\rightarrow} A $$
and $Q$ maps a sequence 
$$ 0 \rightarrow A_0 \rightarrow A_1 \rightarrow \cdots \rightarrow
A_n$$
to
$$ A_1/A_0 \rightarrow \cdots \rightarrow A_n/A_0\,.$$
Since the morphism of derivators $\mathcal{U}_l$ satisfies condition
Dr), the conservativity axiom implies that we obtain a triangle
$$ \mathcal{U}_l(\mathcal{A}_{\bullet}) \rightarrow
\mathcal{U}_l(PS_{\bullet}) \rightarrow \mathcal{U}_l(S_{\bullet})
\rightarrow \mathcal{U}_l(\mathcal{A}_{\bullet})[1]$$
in $\mathcal{M}_{dg}^{loc}(\Delta)$. By applying the functor $p_!$, we obtain
the following triangle
$$ p_! \mathcal{U}_l(\mathcal{A}_{\bullet}) \rightarrow
p_!\mathcal{U}_l(PS_{\bullet}\mathcal{A}) \rightarrow p_!\mathcal{U}_l(S_{\bullet}\mathcal{A}) \rightarrow p_!\mathcal{U}_l(\mathcal{A}_{\bullet})[1]$$
in $\mathcal{M}_{dg}^{loc}(e)$.
We now show that we have natural isomorphisms
$$ p_!\mathcal{U}_l(\mathcal{A}_{\bullet})
\stackrel{\sim}{\rightarrow} \mathcal{U}_l(\mathcal{A})$$
and
$$ p_! \mathcal{U}_l(PS_{\bullet}\mathcal{A})
\stackrel{\sim}{\rightarrow} 0\,,$$
in $\mathcal{M}_{dg}^{loc}(e)$, where $0$ denotes the zero object in the
  triangulated category $\mathcal{M}_{dg}^{loc}(e)$. This clearly implies
  the proposition. 
Since the morphisms of derivators $\Phi$, $stab$ and $\gamma$ commute
with homotopy colimits it is enough to show that we have isomorphisms
$$ p_! \mathbb{R}\underline{h}(\mathcal{A}_{\bullet})
\stackrel{\sim}{\rightarrow} \mathbb{R}\underline{h}(\mathcal{A})$$
and 
$$ p_!\mathbb{R}\underline{h}(PS_{\bullet}\mathcal{A})
\stackrel{\sim}{\rightarrow} \ast$$
in $\mathsf{Hot}_{\mathsf{dgcat}_f}(e)$, where $\ast$ denotes the terminal
object in $\mathsf{Hot}_{\mathsf{dgcat}_f}(e)$. Notice that since
$\mathcal{A}$ and $PS_n\mathcal{A}$, $n \geq 0$ are Morita fibrant dg
categories, we have natural isomorphisms
$$ \underline{h}(\mathcal{A}_{\bullet})
\stackrel{\sim}{\rightarrow}\mathbb{R}\underline{h}(\mathcal{A}_{\bullet})$$
and
$$ \underline{h}(PS_{\bullet}\mathcal{A}) \stackrel{\sim}{\rightarrow}
\mathbb{R}\underline{h}(PS_{\bullet}\mathcal{A})$$
in $\mathsf{Hot}_{\mathsf{dgcat}_f}(\Delta)$.

Now, since homotopy colimits in $\mathsf{Fun}(\mathsf{dgcat}^{op}_f,Sset)$
are calculated objectwise and since $h(\mathcal{A}_{\bullet})$ is a
constant simplicial object in $\mathsf{Fun}(\mathsf{dgcat}^{op}_f,Sset)$, corollary $18.7.7$ in \cite{Hirschhorn} implies that we have an
isomorphism
$$ p_!\mathbb{R}\underline{h}(\mathcal{A}_{\bullet})
\stackrel{\sim}{\rightarrow} \mathbb{R}\underline{h}(\mathcal{A})$$
in $\mathcal{M}_{dg}^{loc}(e)$.

Notice also that since $PS_{\bullet}\mathcal{A}$ is a contractible
simplicial object, see \cite{MacCarthy}, so is
$\underline{h}(PS_{\bullet}\mathcal{A})$. Since homotopy colimits in
$\mathsf{Fun}(\mathsf{dgcat}^{op}_f,Sset)$ are calculated objectwise, we
have an isomorphism
$$ p_!\mathbb{R}\underline{h}(PS_{\bullet}\mathcal{A})
\stackrel{\sim}{\rightarrow} \ast$$
in $\mathsf{Hot}_{\mathsf{dgcat}_f}(e)$.

This proves the proposition.
 
\end{proof}

\section{A Quillen model in terms of presheaves of spectra}\label{chapspectra}
In this section, we construct another Quillen model category whose
associated derivator is the localizing motivator of dg categories $\mathcal{M}_{dg}^{loc}$.

Consider the Quillen adjunction
$$
\xymatrix{
\mathsf{Fun}(\mathsf{dgcat}_f^{op},Sset_{\bullet})\ar@<-1ex>[d]_{\Sigma^{\infty}}\\
\mathsf{Sp}^{\mathbb{N}}(\mathsf{Fun}(\mathsf{dgcat}_f^{op},
Sset_{\bullet})) \ar@<-1ex>[u]_{ev_0}\,.
}
$$
Recall from section~\ref{small} that we have a set of morphisms
$(\Sigma \cup \{P \})_+$ in the category
$\mathsf{Fun}(\mathsf{dgcat}_f^{op}, Sset_{\bullet})$. Now stabilize the
image of this set by the derived functor $\mathbb{L}\Sigma^{\infty}$, under the functor loop space in
$\mathsf{Ho}(\mathsf{Sp}^{\mathbb{N}}(\mathsf{Fun}(\mathsf{dgcat}^{op}_f,Sset_{\bullet})))$.
For each one of the morphims thus obtained, choose a representative
in the model category
$\mathsf{Sp}^{\mathbb{N}}(\mathsf{Fun}(\mathsf{dgcat}^{op}_f,Sset_{\bullet}))$.
\begin{notation}
Let us denote this set by $G$ and by
$\mathsf{L}_G\mathsf{Sp}^{\mathbb{N}}(\mathsf{Fun}(\mathsf{dgcat}_f^{op},Sset_{\bullet}))$
the associated left Bousfield localization.
\end{notation}

\begin{proposition}
We have an equivalence of triangulated strong derivators
$$
\mathsf{HO}(\mathsf{Sp}^{\mathbb{N}}(\mathsf{L}_{\Sigma,P}\mathsf{Fun}(\mathcal{M}_f^{op},
Sset_{\bullet}))) \stackrel{\sim}{\longrightarrow}
\mathsf{HO}(\mathsf{L}_G \mathsf{Sp}^{\mathbb{N}}(\mathsf{Fun}(\mathcal{M}_f^{op},Sset_{\bullet})))\,.$$
\end{proposition}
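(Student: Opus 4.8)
The plan is to exhibit both derivators in the statement as triangulated strong derivators which corepresent, on the $2$-category of triangulated strong derivators, one and the same $2$-functor via morphisms commuting with homotopy colimits; the desired equivalence is then a formal consequence, exactly as in remark~\ref{remar}. Both categories occurring in the statement are stable Quillen model categories — the source by \cite{Spectra}, the target by lemma~\ref{lettri} — so both associated derivators are triangulated and strong; I fix a triangulated strong derivator $\mathbb{D}$ throughout.

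For the left-hand side I would first use remark~\ref{Tr} (which relies on theorem~\ref{repre}, available here thanks to examples~\ref{exem} and~\ref{ex2}) to identify $\mathsf{HO}(\mathsf{Sp}^{\mathbb{N}}(\mathsf{L}_{\Sigma,P}\mathsf{Fun}(\mathcal{M}_f^{op},Sset_{\bullet})))$ with $\mathsf{St}({\mathsf{L}_{\Sigma,P}\mathsf{Hot}_{\mathcal{M}_f}}_{\bullet})$. Theorem~\ref{HellerT} then yields an equivalence $\underline{\mathsf{Hom}}_!(\mathsf{St}({\mathsf{L}_{\Sigma,P}\mathsf{Hot}_{\mathcal{M}_f}}_{\bullet}),\mathbb{D}) \simeq \underline{\mathsf{Hom}}_!({\mathsf{L}_{\Sigma,P}\mathsf{Hot}_{\mathcal{M}_f}}_{\bullet},\mathbb{D})$, and theorem~\ref{Cisinsk} identifies this with the full subcategory $\underline{\mathsf{Hom}}_{!,(\Sigma\cup\{P\})_+}(\mathsf{HO}(\mathsf{Fun}(\mathcal{M}_f^{op},Sset_{\bullet})),\mathbb{D})$ of morphisms commuting with homotopy colimits and sending the image of $(\Sigma\cup\{P\})_+$ to isomorphisms in $\mathbb{D}(e)$.

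For the right-hand side I would apply theorem~\ref{Cisinsk} to get $\underline{\mathsf{Hom}}_!(\mathsf{HO}(\mathsf{L}_G\mathsf{Sp}^{\mathbb{N}}(\mathsf{Fun}(\mathcal{M}_f^{op},Sset_{\bullet}))),\mathbb{D}) \simeq \underline{\mathsf{Hom}}_{!,G}(\mathsf{HO}(\mathsf{Sp}^{\mathbb{N}}(\mathsf{Fun}(\mathcal{M}_f^{op},Sset_{\bullet}))),\mathbb{D})$, then invoke theorem~\ref{repre} for $\mathsf{Fun}(\mathcal{M}_f^{op},Sset_{\bullet})$ itself — which is pointed, simplicial, left proper, cellular, finitely generated, with the small weak generators of example~\ref{ex2} — to replace $\mathsf{HO}(\mathsf{Sp}^{\mathbb{N}}(\mathsf{Fun}(\mathcal{M}_f^{op},Sset_{\bullet})))$ by $\mathsf{St}(\mathsf{HO}(\mathsf{Fun}(\mathcal{M}_f^{op},Sset_{\bullet})))$. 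Under the comparison morphism $\varphi$ of theorem~\ref{repre}, which identifies $\mathbb{L}\Sigma^{\infty}$ with $stab$, the set $G$ corresponds to $\{\,\Sigma^n\, stab(f)\,|\,f\in(\Sigma\cup\{P\})_+,\ n\in\mathbb{Z}\,\}$, and theorem~\ref{HellerT} again brings everything back to $\mathsf{HO}(\mathsf{Fun}(\mathcal{M}_f^{op},Sset_{\bullet}))$. The hard part will be this last comparison: I must show that, under the equivalence $stab^{\ast}$, a morphism $F$ inverts $\{\Sigma^n\, stab(f)\}$ precisely when $F\circ stab$ inverts $(\Sigma\cup\{P\})_+$. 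This rests on the observation (already used in the proof of proposition~\ref{ext}) that a morphism commuting with homotopy colimits preserves the initial object and is therefore pointed, hence commutes with the suspension functor $\Sigma$; since $\Sigma$ is moreover invertible on $\mathsf{St}(\mathsf{HO}(\mathsf{Fun}(\mathcal{M}_f^{op},Sset_{\bullet})))(e)$, inverting all $\Sigma^n\, stab(f)$ amounts to inverting $stab(f)$, which by theorem~\ref{HellerT} means that $(F\circ stab)(f)$ is invertible.

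Putting these computations together, both sides corepresent the $2$-functor $\mathbb{D}\mapsto\underline{\mathsf{Hom}}_{!,(\Sigma\cup\{P\})_+}(\mathsf{HO}(\mathsf{Fun}(\mathcal{M}_f^{op},Sset_{\bullet})),\mathbb{D})$ via homotopy-colimit-preserving morphisms of derivators, so a $2$-Yoneda argument produces a canonical equivalence between them; concretely it is realized by the (derived) Quillen functor linking the two constructions to $\mathsf{Fun}(\mathcal{M}_f^{op},Sset_{\bullet})$, which inverts the relevant localizing sets by construction. I expect the only remaining subtleties to be bookkeeping — matching pointed and unpointed versions of $\mathsf{Hot}_{\mathcal{M}_f}$, and the loop-space form of the set $G$ — both of which are dispatched exactly as in remarks~\ref{remar} and~\ref{Tr}.
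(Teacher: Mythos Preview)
Your proposal is correct and follows exactly the same approach as the paper: the paper's proof is the single sentence ``Observe that theorems~\ref{Cisinsk} and~\ref{repre} imply that both derivators have the same universal property,'' and what you have written is a careful unpacking of precisely that sentence, invoking theorem~\ref{HellerT} (which underlies theorem~\ref{repre}) and handling the loop-stabilized set $G$ just as in the proof of theorem~\ref{principal}. One minor point of bookkeeping: the set $G$ in the paper is obtained by stabilizing under the loop space functor (so $\Omega^n$ for $n\geq 0$) rather than under all $\Sigma^n$ with $n\in\mathbb{Z}$, but since $\Sigma$ and $\Omega$ are inverse equivalences on the stable derivator and any homotopy-colimit-preserving morphism commutes with $\Sigma$, this makes no difference to the argument, exactly as you note.
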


\begin{proof}

Observe that theorems \ref{Cisinsk} and \ref{repre} imply that both
derivators have the same universal property. This proves the proposition.
\end{proof}

\begin{remark}

Notice that the stable Quillen model category
$$
\mathsf{Sp}^{\mathbb{N}}(\mathsf{Fun}(\mathcal{M}_f^{op},Sset_{\bullet}))$$
identifies with 
$$\mathsf{Fun}(\mathcal{M}_f^{op},\mathsf{Sp}^{\mathbb{N}}(Sset_{\bullet}))$$
endowed with the projective model structure.
\end{remark}
The above considerations imply the following proposition.

\begin{proposition}
We have an equivalence of derivators
$$ \mathsf{HO}(\mathsf{L}_{\widetilde{\mathcal{E}_{st}},G}\mathsf{Fun}(\mathsf{dgcat}_f^{op},\mathsf{Sp}^{\mathbb{N}}(Sset_{\bullet}))) \stackrel{\sim}{\longrightarrow} \mathcal{M}_{dg}^{loc} \,.$$
\end{proposition}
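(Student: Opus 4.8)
The plan is to chain together the two equivalences already proved in this section with the \emph{definition} of $\mathcal{M}_{dg}^{loc}$, reducing the statement to the formal fact that a composite of two left Bousfield localizations of a left proper cellular model category is again a left Bousfield localization, at the union of the two localizing sets. First I would recall that, by definition, $\mathcal{M}_{dg}^{loc}$ is the derivator associated with the stable Quillen model category $\mathsf{L}_{\widetilde{\mathcal{E}_{st}}}\mathsf{Sp}^{\mathbb{N}}(\mathsf{L}_{\Sigma,P}\mathsf{Fun}(\mathsf{dgcat}_f^{op},Sset_{\bullet}))$. By theorem~\ref{Cisinsk}, passing to associated derivators turns this into the left Bousfield localization, in the sense of derivators, of $\mathsf{HO}(\mathsf{Sp}^{\mathbb{N}}(\mathsf{L}_{\Sigma,P}\mathsf{Fun}(\mathsf{dgcat}_f^{op},Sset_{\bullet})))$ at the image of $\mathcal{E}_{st}$.

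Next I would invoke the first proposition of this section: there is a canonical equivalence of triangulated strong derivators $\mathsf{HO}(\mathsf{Sp}^{\mathbb{N}}(\mathsf{L}_{\Sigma,P}\mathsf{Fun}(\mathsf{dgcat}_f^{op},Sset_{\bullet}))) \stackrel{\sim}{\longrightarrow} \mathsf{HO}(\mathsf{L}_G\mathsf{Sp}^{\mathbb{N}}(\mathsf{Fun}(\mathsf{dgcat}_f^{op},Sset_{\bullet})))$, both sides being, by theorems~\ref{Cisinsk} and~\ref{repre}, localizations of the common derivator $\mathsf{HO}(\mathsf{Sp}^{\mathbb{N}}(\mathsf{Fun}(\mathsf{dgcat}_f^{op},Sset_{\bullet})))$ at the (stabilized) image of $\Sigma\cup\{P\}$; by construction $G$ \emph{is} exactly that stabilized image, so the equivalence carries the class $\mathcal{E}_{st}$ to the same class on both sides and localizing either side at $\mathcal{E}_{st}$ yields the same derivator. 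Now I compose localizations: since $\mathsf{Sp}^{\mathbb{N}}(\mathsf{Fun}(\mathsf{dgcat}_f^{op},Sset_{\bullet}))$ is left proper and cellular, theorem~$4.1.1$ of~\cite{Hirschhorn} ensures $\mathsf{L}_G$ of it is again left proper and cellular, so its further left Bousfield localization at $\widetilde{\mathcal{E}_{st}}$ exists, and one has the identity of model categories $\mathsf{L}_{\widetilde{\mathcal{E}_{st}}}\mathsf{L}_G \mathsf{Sp}^{\mathbb{N}}(\mathsf{Fun}(\mathsf{dgcat}_f^{op},Sset_{\bullet})) = \mathsf{L}_{\widetilde{\mathcal{E}_{st}},G}\mathsf{Sp}^{\mathbb{N}}(\mathsf{Fun}(\mathsf{dgcat}_f^{op},Sset_{\bullet}))$, both having the same cofibrations and the same weak equivalences, namely the local equivalences for the union of the two sets. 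Applying theorem~\ref{Cisinsk} once more and the remark identifying $\mathsf{Sp}^{\mathbb{N}}(\mathsf{Fun}(\mathsf{dgcat}_f^{op},Sset_{\bullet}))$ with $\mathsf{Fun}(\mathsf{dgcat}_f^{op},\mathsf{Sp}^{\mathbb{N}}(Sset_{\bullet}))$ in the projective model structure then gives the claimed equivalence with $\mathcal{M}_{dg}^{loc}$.

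The step I expect to be the main obstacle is the bookkeeping of the localizing classes: one must check that, under the equivalence of the first proposition of this section, the morphisms to be inverted genuinely match $\mathcal{E}_{st}$ on both sides, i.e. that ``localize at $\Sigma\cup\{P\}$, then stabilize, then localize at $\mathcal{E}_{st}$'' can be reshuffled into ``stabilize, then localize once at the union $\widetilde{\mathcal{E}_{st}}\cup G$'' at the level of these concrete presheaf-of-spectra models. This is essentially a uniqueness statement: every derivator appearing is a localization of $\mathsf{HO}(\mathsf{Sp}^{\mathbb{N}}(\mathsf{Fun}(\mathsf{dgcat}_f^{op},Sset_{\bullet})))$, it is determined by its universal property, and that universal property depends only on the class of morphisms sent to isomorphisms; so once the two classes are seen to agree — which they do, since $G$ and $\widetilde{\mathcal{E}_{st}}$ are by construction the stabilized images of $\Sigma\cup\{P\}$ and of $\mathcal{E}_{st}$ respectively — the conclusion follows formally, with no new input beyond theorems~\ref{Cisinsk},~\ref{repre} and the compatibility of iterated Bousfield localizations.
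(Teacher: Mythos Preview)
Your proposal is correct and follows exactly the approach the paper intends: the paper itself gives no formal proof of this proposition, merely stating that ``the above considerations imply'' it, where ``the above considerations'' are precisely the first proposition of the section (the equivalence between stabilizing-then-localizing and localizing-at-$G$), the remark identifying $\mathsf{Sp}^{\mathbb{N}}(\mathsf{Fun}(\mathsf{dgcat}_f^{op},Sset_{\bullet}))$ with $\mathsf{Fun}(\mathsf{dgcat}_f^{op},\mathsf{Sp}^{\mathbb{N}}(Sset_{\bullet}))$, and the compatibility of iterated Bousfield localizations. Your write-up simply makes these implicit steps explicit, which is an improvement over the paper's one-line gesture.
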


\section{Upper  triangular  DG categories}\label{trimat}

In this section we study upper triangular dg categories using the
formalism of Quillen's homotopical algebra. In the next section, we
will relate this important class of dg categories with split short
exact sequences in $\mathsf{Ho}(\mathsf{dgcat})$.

\begin{definition}
An {\em upper triangular} dg category $\underline{\mathcal{B}}$ is
given by an upper triangular matrix
$$
\begin{array}{rcl}
\underline{\mathcal{B}} & := & \begin{pmatrix} \mathcal{A} & X \\ 0 &
  \mathcal{C} \end{pmatrix}\,,
\end{array}
$$
where $\mathcal{A}$ and $\mathcal{C}$ are small dg categories and $X$
is a $\mathcal{A}$-$\mathcal{C}$-bimodule.

A morphism $\underline{F} : \underline{\mathcal{B}} \rightarrow
\underline{\mathcal{B}'}$ of upper triangular dg categories is given by a triple
$\underline{F}:=(F_{\mathcal{A}}, F_{\mathcal{C}}, F_X)$, where
$F_{\mathcal{A}}$, resp. $F_{\mathcal{C}}$, is a dg functor from
  $\mathcal{A}$ to $\mathcal{A}'$, resp. from $\mathcal{C}$ to
  $\mathcal{C}'$, and $F_X$ is a morphism of
  $\mathcal{A}$-$\mathcal{C}$-bimodules from $X$ to $X'$ (we consider
  $X'$ endowed with
  the action induced by $F_{\mathcal{A}}$ and $F_{\mathcal{C}}$). The
  composition is the natural one.
\end{definition} 

\begin{notation}
We denote by $\mathsf{dgcat}^{tr}$ the category of upper triangular dg categories.
\end{notation}

Let $\underline{\mathcal{B}} \in \mathsf{dgcat}^{tr}$.
\begin{definition}
Let $|\underline{\mathcal{B}}|$ be the {\em totalization} of
$\underline{\mathcal{B}}$, i.e. the small dg category whose set of
objects is the disjoint union of the set of objects of $\mathcal{A}$
and $\mathcal{C}$ and whose morphisms are given by
$$
\begin{array}{rcl}
\mathsf{Hom}_{|\underline{\mathcal{B}}|}(x,x') & := & 
\left\{
\begin{array}{ccl}
\mathsf{Hom}_{\mathcal{A}}(x,x') & \mbox{if} & x,\, x' \in \mathcal{A} \\
\mathsf{Hom}_{\mathcal{C}}(x,x') & \mbox{if} & x,\, x' \in \mathcal{C} \\
X(x,x') & \mbox{if} & x \in \mathcal{A},\, x' \in \mathcal{C} \\
0 & \mbox{if} &  x \in \mathcal{C},\, x' \in \mathcal{A}
\end{array}\right.\,.
\end{array}
$$
\end{definition}
We have the following adjunction
$$
\xymatrix{
\mathsf{dgcat}^{tr} \ar@<-1ex>[d]_{|-|} \\
\mathsf{dgcat} \ar@<-1ex>[u]_I \,,
}
$$
where 
$$
\begin{array}{rcl}
I(\mathcal{B}') & := & \begin{pmatrix} \mathcal{B}' & \mathsf{Hom}_{\mathcal{B}'}(-,-) \\
0 & \mathcal{B}' \end{pmatrix}\,.
\end{array}
$$
\begin{lemma}\label{triancomp}
The category $\mathsf{dgcat}^{tr}$ is complete and cocomplete.
\end{lemma}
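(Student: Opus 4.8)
The strategy is to realise $\mathsf{dgcat}^{tr}$ as the total category of a bifibration over $\mathsf{dgcat}\times\mathsf{dgcat}$, and to build limits and colimits explicitly out of those of $\mathsf{dgcat}$ (which is complete and cocomplete, being endowed with a Quillen model structure) and those of categories of dg bimodules. First I would consider the projection $p\colon \mathsf{dgcat}^{tr}\to\mathsf{dgcat}\times\mathsf{dgcat}$ sending $\begin{pmatrix}\mathcal{A} & X\\ 0 & \mathcal{C}\end{pmatrix}$ to $(\mathcal{A},\mathcal{C})$. Its fibre over $(\mathcal{A},\mathcal{C})$ is the category of (dg) $\mathcal{A}$-$\mathcal{C}$-bimodules, a category of dg functors out of $\mathcal{A}^{op}\otimes_k\mathcal{C}$, which is complete and cocomplete with limits and colimits computed pointwise in complexes of $k$-modules. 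For a morphism $(F_{\mathcal{A}},F_{\mathcal{C}})\colon(\mathcal{A},\mathcal{C})\to(\mathcal{A}',\mathcal{C}')$, restriction of scalars $\mathrm{res}$ from $\mathcal{A}'$-$\mathcal{C}'$-bimodules to $\mathcal{A}$-$\mathcal{C}$-bimodules preserves all limits and colimits (computed pointwise) and has a left adjoint $\mathrm{ind}$ (induction, i.e. left Kan extension) and a right adjoint $\mathrm{coind}$ (coinduction). Recalling that a morphism $\underline{\mathcal{B}}\to\underline{\mathcal{B}'}$ in $\mathsf{dgcat}^{tr}$ has for bimodule component a map $X\to\mathrm{res}\,X'$, this exhibits $\mathsf{dgcat}^{tr}$ as the Grothendieck construction of the pseudofunctor on $\mathsf{dgcat}\times\mathsf{dgcat}$ assigning to $(\mathcal{A},\mathcal{C})$ the category of $\mathcal{A}$-$\mathcal{C}$-bimodules, with transition functors given by induction, and $p$ as a bifibration.

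Then I would construct limits directly. Given $D\colon J\to\mathsf{dgcat}^{tr}$ with $D(j)=(\mathcal{A}_j,\mathcal{C}_j,X_j)$, set $\mathcal{A}:=\underset{j}{\lim}\,\mathcal{A}_j$ and $\mathcal{C}:=\underset{j}{\lim}\,\mathcal{C}_j$ in $\mathsf{dgcat}$, write $\pi_j$ for the canonical dg functors, restrict each $X_j$ to an $\mathcal{A}$-$\mathcal{C}$-bimodule $\pi_j^{*}X_j$; the bimodule components of $D$ make $j\mapsto\pi_j^{*}X_j$ into a $J$-diagram of $\mathcal{A}$-$\mathcal{C}$-bimodules, and I put $X:=\underset{j}{\lim}\,\pi_j^{*}X_j$. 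One checks that $(\mathcal{A},\mathcal{C},X)$, with the evident projections, is $\lim D$: a cone with vertex $(\mathcal{B}_1',\mathcal{B}_2',Y)$ amounts to a pair of dg functors $\phi\colon\mathcal{B}_1'\to\mathcal{A}$, $\psi\colon\mathcal{B}_2'\to\mathcal{C}$ together with a cone of bimodule maps $Y\to(\phi,\psi)^{*}\pi_j^{*}X_j$, and since $(\phi,\psi)^{*}$ preserves limits this last datum is the same as a single map $Y\to(\phi,\psi)^{*}X$.

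Dually, colimits would be built by setting $\mathcal{A}:=\underset{j}{\mbox{colim}}\,\mathcal{A}_j$, $\mathcal{C}:=\underset{j}{\mbox{colim}}\,\mathcal{C}_j$ with structure functors $\iota_j$, transporting each $X_j$ to an $\mathcal{A}$-$\mathcal{C}$-bimodule $(\iota_j)_{!}X_j$ by induction, and putting $X:=\underset{j}{\mbox{colim}}\,(\iota_j)_{!}X_j$; the adjunctions $(\iota_j)_{!}\dashv\iota_j^{*}$ together with the compatibility of restriction with composition identify cocones under $D$ with vertex $(\mathcal{B}_1',\mathcal{B}_2',Y)$ with morphisms $(\mathcal{A},\mathcal{C},X)\to(\mathcal{B}_1',\mathcal{B}_2',Y)$ in $\mathsf{dgcat}^{tr}$, so $(\mathcal{A},\mathcal{C},X)$ is $\mbox{colim}\,D$.

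The only point requiring real care is the colimit: one cannot take a naive pointwise colimit of the $X_j$ since they are bimodules over varying base pairs, so one must first induce them up to bimodules over $\mbox{colim}\,\mathcal{A}_j$ and $\mbox{colim}\,\mathcal{C}_j$, and it is here that the cocompleteness of complexes of $k$-modules (guaranteeing existence of induction) and the fact that induction is a left adjoint (hence commutes with the colimit subsequently formed) are used. Everything else is a routine verification of universal properties. Alternatively, one may simply invoke the general fact that the Grothendieck construction of a pseudofunctor over a complete (resp. cocomplete) base, with complete (resp. cocomplete) fibres and limit-preserving (resp. colimit-preserving) transition functors, is again complete (resp. cocomplete).
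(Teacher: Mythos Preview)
Your argument is correct and complete, but it takes a genuinely different route from the paper's. For limits you and the paper agree in substance: the paper writes the limit as $\begin{pmatrix}\lim_j\mathcal{A}_j & \lim_j X_j\\ 0 & \lim_j\mathcal{C}_j\end{pmatrix}$, which is exactly your construction once one unpacks ``$\lim_j X_j$'' as the limit of the restricted bimodules $\pi_j^{*}X_j$ over the base $(\lim_j\mathcal{A}_j,\lim_j\mathcal{C}_j)$; you are simply more explicit about this restriction step. For colimits, however, the paper does \emph{not} induce the $X_j$ up along the structure maps and take a colimit of bimodules. Instead it exploits the totalization functor $|-|\colon\mathsf{dgcat}^{tr}\to\mathsf{dgcat}$ (which is left adjoint to $I$ and hence preserves colimits): one forms $\mathrm{colim}_j\,|\underline{\mathcal{B}_j}|$ in $\mathsf{dgcat}$ and reads off the desired bimodule as the Hom-complexes between the $\mathcal{A}$-type and $\mathcal{C}$-type objects in that colimit dg category. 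Your Grothendieck-construction/bifibration argument is more systematic and makes the universal property transparent via the $\mathrm{ind}\dashv\mathrm{res}$ adjunctions, without appealing to the ambient adjunction $|-|\dashv I$; the paper's approach is terser because it offloads all the work to the already-known cocompleteness of $\mathsf{dgcat}$.
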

\begin{proof}
Let $\{\underline{\mathcal{B}_j}\}_{j \in J}$ be a diagram in
$\mathsf{dgcat}^{tr}$. Observe that the upper triangular dg category
$$
\begin{pmatrix}
\underset{j \in J}{\mbox{colim}\,\mathcal{A}_j} & \underset{j \in
  J}{\mbox{colim}}\,|\underline{\mathcal{B}_j}|(-,-)  \\
0 & \underset{j \in J}{\mbox{colim}}\,\mathcal{C}_j
\end{pmatrix}\,,
$$
where $\underset{j \in
  J}{\mbox{colim}}\,|\underline{\mathcal{B}_j}|(-,-)$ is the $\underset{j \in
  J}{\mbox{colim}}\,\mathcal{A}_j$-$\underset{j \in
  J}{\mbox{colim}}\,\mathcal{C}_j$-bimodule naturally associated
with the dg category $\underset{j \in
  J}{\mbox{colim}}\,|\underline{\mathcal{B}_j}|$, corresponds to $\underset{j \in
  J}{\mbox{colim}}\,\underline{\mathcal{B}_j}$. Observe also that the
upper triangular dg category
$$
\begin{pmatrix}
\underset{j \in J}{\mathsf{lim}}\, \mathcal{A}_j & \underset{j \in J}{\mathsf{lim}}\,X_j \\
0 & \underset{j \in J}{\mathsf{lim}}\,\mathcal{C}_j
\end{pmatrix}\,,
$$
corresponds to $\underset{j \in
  J}{\mathsf{lim}}\,\underline{\mathcal{B}_j}$.
This proves the lemma.
\end{proof}

\begin{notation}
Let $p_1(\underline{\mathcal{B}}) := \mathcal{A}$ and $p_2(\underline{\mathcal{B}}) := \mathcal{C}$.
\end{notation}
We have at our disposal the following adjunction
$$
\xymatrix{
\mathsf{dgcat}^{tr} \ar@<1ex>[d]^{p_1 \times p_2} \\
\mathsf{dgcat}\times\mathsf{dgcat} \ar@<1ex>[u]^E \,,
}
$$
where 
$$
\begin{array}{rcl}
E(\mathcal{B}', \mathcal{B}'') & := & \begin{pmatrix} \mathcal{B}' & 0 \\
0 & \mathcal{B}'' \end{pmatrix}\,.
\end{array}
$$
Recall from \cite{addendum}\cite{IMRN}\cite{cras} that $\mathsf{dgcat}$ admits a structure of cofibrantly
generated Quillen model category whose weak equivalences are the
Morita dg functors. This structure clearly induces a componentwise
model structure on $\mathsf{dgcat} \times \mathsf{dgcat}$ which is
also cofibrantly generated.

\begin{proposition}\label{prop1}
The category $\mathsf{dgcat}^{tr}$ admits a structure of cofibrantly generated
Quillen model category whose weak equivalences, resp. fibration, are
the morphisms $\underline{F}: \underline{\mathcal{B}} \rightarrow
\underline{\mathcal{B}'}$ such that $(p_1 \times p_2) (\underline{F})$
are quasi-equivalences, resp. fibrations, in $\mathsf{dgcat}\times \mathsf{dgcat}$.
\end{proposition}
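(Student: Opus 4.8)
The plan is to obtain the model structure by transfer along the adjunction $E \dashv (p_1 \times p_2)$, starting from the cofibrantly generated product model structure on $\mathsf{dgcat} \times \mathsf{dgcat}$. First I would record two formal facts. Fact one: a direct inspection shows that $E$ is \emph{left} adjoint to $p_1 \times p_2$ (the zero bimodule is initial among $\mathcal{A}$-$\mathcal{C}$-bimodules, so a morphism out of $E(\mathcal{B}', \mathcal{B}'')$ amounts to a pair of dg functors), whence $p_1 \times p_2$ is a right adjoint and $(p_1 \times p_2) \circ E = \mathrm{id}_{\mathsf{dgcat} \times \mathsf{dgcat}}$. Fact two: by the explicit description of colimits in Lemma~\ref{triancomp}, the $\mathcal{A}$-component, resp.\ $\mathcal{C}$-component, of a colimit of upper triangular dg categories is the colimit of the $\mathcal{A}$-components, resp.\ $\mathcal{C}$-components; hence $p_1 \times p_2$ \emph{preserves all colimits}. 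Lemma~\ref{triancomp} also gives completeness and cocompleteness of $\mathsf{dgcat}^{tr}$, so we are in position to apply a transfer theorem.

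Next I would invoke Kan's recognition theorem (see e.g.\ \cite{Hirschhorn}). Write $I$, resp.\ $J$, for the generating cofibrations, resp.\ generating trivial cofibrations, of the chosen cofibrantly generated model structure on $\mathsf{dgcat}$; their domains and codomains are small (Example~\ref{mori}). The product model structure on $\mathsf{dgcat} \times \mathsf{dgcat}$ is then cofibrantly generated, with generating (trivial) cofibrations $I_\times$, resp.\ $J_\times$, built from $I$, resp.\ $J$, in the usual way, again with small domains. Declare a morphism $\underline{F}$ of $\mathsf{dgcat}^{tr}$ to be a weak equivalence, resp.\ a fibration, precisely when $(p_1 \times p_2)(\underline{F})$ is one in $\mathsf{dgcat} \times \mathsf{dgcat}$, and take $E(I_\times)$ and $E(J_\times)$ as candidate generating sets. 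The smallness hypothesis of the recognition theorem is automatic: $E$ is a left adjoint, hence preserves colimits, so it carries the small domains of $I_\times$, $J_\times$ to small objects of the cocomplete category $\mathsf{dgcat}^{tr}$.

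The real content is the acyclicity condition: every relative $E(J_\times)$-cell complex must be a weak equivalence, i.e.\ be sent by $p_1 \times p_2$ to a weak equivalence. Here Fact two does all the work. Since $p_1 \times p_2$ preserves colimits and $(p_1 \times p_2) \circ E = \mathrm{id}$, it carries a pushout of $E(j)$ (for $j \in J_\times$) along an arbitrary morphism to a pushout of $j$ in $\mathsf{dgcat} \times \mathsf{dgcat}$, hence to a trivial cofibration; likewise it commutes with the coproducts and transfinite compositions occurring in a relative cell complex, and trivial cofibrations in $\mathsf{dgcat} \times \mathsf{dgcat}$ are stable under all of these. Thus $p_1 \times p_2$ sends every relative $E(J_\times)$-cell complex to a trivial cofibration, in particular to a weak equivalence, which is exactly the acyclicity condition. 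The recognition theorem then yields the cofibrantly generated Quillen model structure on $\mathsf{dgcat}^{tr}$ with generating cofibrations $E(I_\times)$ and generating trivial cofibrations $E(J_\times)$, whose weak equivalences and fibrations are, by construction, the morphisms $\underline{F}$ with $(p_1 \times p_2)(\underline{F})$ a quasi-equivalence, resp.\ a fibration. The only delicate points are the verification that $E$ is left adjoint to $p_1 \times p_2$ and, above all, that $p_1 \times p_2$ preserves colimits; once these are extracted from Lemma~\ref{triancomp}, the acyclicity condition is formal and the remainder is standard transfer machinery.
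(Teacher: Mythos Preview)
Your proof is correct and follows essentially the same route as the paper: both apply the transfer theorem (Hirschhorn~11.3.2) along the adjunction $E \dashv p_1 \times p_2$, reducing the acyclicity condition to the fact that $p_1 \times p_2$ preserves colimits and satisfies $(p_1 \times p_2)\circ E = \mathrm{id}$. The only minor difference is that you extract colimit preservation of $p_1 \times p_2$ from the explicit formula in Lemma~\ref{triancomp}, whereas the paper obtains it more slickly by observing that $E$ is also a \emph{right} adjoint to $p_1 \times p_2$ (the zero bimodule is terminal as well as initial among bimodules), so $p_1 \times p_2$ is a left adjoint.
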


\begin{proof}
We show that
the previous adjunction $(E,p_1 \times p_2)$ verifies conditions $(1)$ and $(2)$ of theorem
$11.3.2$ from \cite{Hirschhorn}. 
\begin{itemize}
\item[(1)] Since the functor $E$ is also a right adjoint to $p_1
  \times p_2$, the functor $p_1 \times p_2$ commutes with colimits and
  so condition $(1)$ is verified.
\item[(2)] Let $J$, resp. $J \times J$, be the set of generating trivial
cofibrations in $\mathsf{dgcat}$, resp. in $\mathsf{dgcat} \times \mathsf{dgcat}$. Since the
functor $p_1 \times p_2$ commutes with filtered colimits it is enough
to prove the following: let $\underline{G}: \underline{\mathcal{B}'}
\rightarrow \underline{\mathcal{B}''}$ be an element of the set
$E(J \times J)$, $\underline{\mathcal{B}}$ an object in
$\mathsf{dgcat}^{tr}$ and $\underline{\mathcal{B}'} \rightarrow
\underline{\mathcal{B}}$ a morphism in $\mathsf{dgcat}^{tr}$. Consider
the following push-out in $\mathsf{dgcat}^{tr}$~:
$$
\xymatrix{
\underline{\mathcal{B}'} \ar[r] \ar[d]_{\underline{G}}
\ar@{}[dr]|{\lrcorner} &
\underline{\mathcal{B}} \ar[d]^{\underline{G}_{\ast}} \\
\underline{\mathcal{B}''} \ar[r] &
\underline{\mathcal{B}''}\underset{\underline{\mathcal{B}'}}{\coprod}\underline{\mathcal{B}}\,.
}
$$
We now prove that $(p_1 \times p_2)(\underline{G}_{\ast})$ is a
weak-equivalence in $\mathsf{dgcat}\times \mathsf{dgcat}$. Observe
that the image of the previous push-out under the functors $p_1$ and
$p_2$ correspond to the following two push-outs in $\mathsf{dgcat}$~:
$$
\xymatrix{
*+<1pc>{\mathcal{A}'} \ar[r] \ar@{>->}[d]_{G_{\mathcal{A}'}}^{\sim} \ar@{}[dr]|{\lrcorner}  & \mathcal{A}
  \ar[d]^{G_{\mathcal{A}'_{\ast}}}  &  *+<1pc>{\mathcal{C}'} \ar[r]
  \ar@{>->}[d]_{G_{\mathcal{C}'}}^{\sim}  \ar@{}[dr]|{\lrcorner}  & \mathcal{C}
  \ar[d]^{G_{\mathcal{C}'_{\ast}}} \\
\mathcal{A}'' \ar[r] & \mathcal{A}'' \underset{\mathcal{A}'}{\coprod}
\mathcal{A} & \mathcal{C}'' \ar[r] & \mathcal{C}'' \underset{\mathcal{C}'}{\coprod}
\mathcal{C}\,.
}
$$
Since $G_{\mathcal{A}'_{\ast}}$ and $G_{\mathcal{C}'_{\ast}}$
belong to $J$ the morphism
$$(p_1\times p_2)(\underline{G}_{\ast})=(G_{\mathcal{A}'_{\ast}},
G_{\mathcal{C}'_{\ast}})$$
is a weak-equivalence in $\mathsf{dgcat} \times \mathsf{dgcat}$. This proves condition $(2)$. 
\end{itemize} 
The proposition is then proven.
\end{proof}

Let $\underline{\mathcal{B}}$, $\underline{\mathcal{B}'} \in \mathsf{dgcat}^{tr}$.

\begin{definition}
A morphism $\underline{F}: \underline{\mathcal{B}} \rightarrow
\underline{\mathcal{B}'}$ is a {\em total Morita dg functor} if
$F_{\mathcal{A}}$ and $F_{\mathcal{C}}$ are Morita dg functors, see
\cite{addendum}\cite{IMRN}\cite{cras}, and $F_X$
    is a quasi-isomorphism of $\mathcal{A}$-$\mathcal{C}$-bimodules.
\end{definition}

\begin{remark}
Notice that if $\underline{F}$ is a total Morita dg functor then
$|\underline{F}|$ is a Morita dg functor in $\mathsf{dgcat}$ but the converse is not true.
\end{remark}

\begin{theorem}\label{theo1}
The category $\mathsf{dgcat}^{tr}$ admits a structure of cofibrantly generated
Quillen model category whose weak equivalences $\mathcal{W}$ are the
total Morita dg functors and whose fibrations are the morphisms
$\underline{F}:\underline{\mathcal{B}} \rightarrow
\underline{\mathcal{B}'}$ such that $F_{\mathcal{A}}$ and
  $F_{\mathcal{C}}$ are Morita fibrations, see 
  \cite{addendum} \cite{IMRN}, and $F_X$ is a componentwise surjective morphism of bimodules.
\end{theorem}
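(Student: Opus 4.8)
The plan is to invoke Kan's recognition theorem for cofibrantly generated model categories, \cite[Theorem~11.3.1]{Hirschhorn}, applied to $\mathsf{dgcat}^{tr}$ with explicit generating sets. Let $I_{\mathsf{dgcat}\times\mathsf{dgcat}}$ and $J_{\mathsf{dgcat}\times\mathsf{dgcat}}$ be the generating cofibrations and generating trivial cofibrations of the componentwise Morita structure on $\mathsf{dgcat}\times\mathsf{dgcat}$, and let $S(n)\hookrightarrow D(n)$, $n\in\mathbb{Z}$, be the generating cofibrations of the category of complexes, regarded as morphisms of $(k,k)$-bimodules. I would take
$$ I \;=\; E(I_{\mathsf{dgcat}\times\mathsf{dgcat}}) \;\cup\; \left\{\, \begin{pmatrix} k & S(n) \\ 0 & k \end{pmatrix} \longrightarrow \begin{pmatrix} k & D(n) \\ 0 & k \end{pmatrix} \;\Big|\; n\in\mathbb{Z} \,\right\}$$
and
$$ J \;=\; E(J_{\mathsf{dgcat}\times\mathsf{dgcat}}) \;\cup\; \left\{\, \begin{pmatrix} k & 0 \\ 0 & k \end{pmatrix} \longrightarrow \begin{pmatrix} k & D(n) \\ 0 & k \end{pmatrix} \;\Big|\; n\in\mathbb{Z} \,\right\}.$$
Since colimits in $\mathsf{dgcat}^{tr}$ are computed through totalizations (Lemma~\ref{triancomp}) and $\mathsf{dgcat}$ is locally presentable, the sources of the morphisms in $I$ and $J$ are small, so the smallness hypotheses hold; and $\mathcal{W}$ visibly has the two-out-of-three property and is closed under retracts, being detected by the Morita weak equivalences of $F_{\mathcal{A}}$ and $F_{\mathcal{C}}$ together with the quasi-isomorphism class of $F_X$.

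The next step is to identify the $I$-injective and $J$-injective morphisms. Because $E$ is left adjoint to $p_1\times p_2$ (see the proof of Proposition~\ref{prop1}), a morphism $\underline{F}$ has the right lifting property with respect to $E(I_{\mathsf{dgcat}\times\mathsf{dgcat}})$, resp. $E(J_{\mathsf{dgcat}\times\mathsf{dgcat}})$, precisely when $F_{\mathcal{A}}$ and $F_{\mathcal{C}}$ are trivial Morita fibrations, resp. Morita fibrations. Granting this, a pushout of a cell $\left(\begin{smallmatrix}k&S(n)\\0&k\end{smallmatrix}\right)\to\left(\begin{smallmatrix}k&D(n)\\0&k\end{smallmatrix}\right)$ along an attaching map into $\underline{\mathcal{B}}$ amounts to freely adjoining to $X$ a bounding chain of a prescribed cycle in $X(a,c)$ for arbitrary objects $a\in\mathcal{A}$, $c\in\mathcal{C}$; hence the right lifting property against these cells says exactly that $F_X$ is a componentwise surjective quasi-isomorphism of bimodules, while the right lifting property against the $J$-cells $\left(\begin{smallmatrix}k&0\\0&k\end{smallmatrix}\right)\to\left(\begin{smallmatrix}k&D(n)\\0&k\end{smallmatrix}\right)$ says that $F_X$ is componentwise surjective. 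Thus $I$-inj is exactly the class of trivial fibrations claimed by the theorem and $J$-inj is exactly the asserted class of fibrations; in particular $I$-inj $\subseteq\mathcal{W}$, and since each element of $J$ has the left lifting property against $I$-inj (one component at a time) one gets $J\subseteq I$-cof and hence $I$-inj $\subseteq\mathcal{W}\cap J$-inj.

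The heart of the matter is to show that every relative $J$-cell complex lies in $\mathcal{W}\cap I$-cof; the inclusion into $I$-cof is automatic from $J\subseteq I$-cof, so the point is that such a map is a total Morita dg functor. Writing it as a transfinite composite of pushouts of elements of $J$, one analyses the two kinds of step via Lemma~\ref{triancomp}. A pushout of $\left(\begin{smallmatrix}k&0\\0&k\end{smallmatrix}\right)\to\left(\begin{smallmatrix}k&D(n)\\0&k\end{smallmatrix}\right)$ leaves $\mathcal{A}$ and $\mathcal{C}$ fixed and replaces $X$ by $X\oplus\big(\mathcal{A}(-,a)\otimes_k D(n)\otimes_k\mathcal{C}(c,-)\big)$, which is a pushout of a generating trivial cofibration of the projective model structure on $\mathcal{A}$-$\mathcal{C}$-bimodules and in particular a quasi-isomorphism; a pushout of an element of $E(J_{\mathsf{dgcat}\times\mathsf{dgcat}})$ replaces $\mathcal{A}$ (or $\mathcal{C}$) by a Morita-equivalent dg category along a Morita trivial cofibration and base-changes $X$ along it. Trivial cofibrations of $\mathsf{dgcat}$ and quasi-isomorphisms of complexes are stable under the relevant filtered transfinite compositions, so what remains — and this is the main obstacle — is to check that base change of $X$ along these Morita trivial cofibrations is again a quasi-isomorphism of bimodules. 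Here one must use the explicit form of the generating trivial cofibrations of $\mathsf{dgcat}$ from \cite{IMRN} \cite{cras}: their codomains are $h$-flat (over $k$) over their domains, so underived base change coincides with derived base change, and the latter is an equivalence because a Morita equivalence $\mathcal{A}\to\bar{\mathcal{A}}$ induces an equivalence $\mathsf{D}(\mathcal{A}^{op}\otimes\mathcal{C})\simeq\mathsf{D}(\bar{\mathcal{A}}^{op}\otimes\mathcal{C})$ whose quasi-inverse is restriction of scalars, so that the unit $X\to(X\otimes_{\mathcal{A}}\bar{\mathcal{A}})|_{\mathcal{A}}$ is a quasi-isomorphism.

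With $J$-cell $\subseteq\mathcal{W}\cap I$-cof and $I$-inj $\subseteq\mathcal{W}\cap J$-inj verified, \cite[Theorem~11.3.1]{Hirschhorn} yields a cofibrantly generated model structure on $\mathsf{dgcat}^{tr}$ with weak equivalences $\mathcal{W}$, generating (trivial) cofibrations $I$ (resp.\ $J$), cofibrations $I$-cof and fibrations $J$-inj; by the computation of $J$-inj above, the fibrations are exactly the morphisms $\underline{F}$ with $F_{\mathcal{A}}$ and $F_{\mathcal{C}}$ Morita fibrations and $F_X$ componentwise surjective, as asserted.
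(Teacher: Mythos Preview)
Your proof follows the paper's line for line: the same recognition theorem (the paper cites Hovey's Theorem~2.1.19, equivalent to your Hirschhorn~11.3.1), the same generating sets $I=E(I_{\mathsf{dgcat}\times\mathsf{dgcat}})\cup\tilde I$ and $J=E(J_{\mathsf{dgcat}\times\mathsf{dgcat}})\cup\tilde J$, the same identification of $I$- and $J$-injectives, and the same analysis of pushouts along $\tilde J$.

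You are in fact more careful than the paper at one point. For pushouts along elements of $E(J_{\mathsf{dgcat}\times\mathsf{dgcat}})$ the paper simply invokes Proposition~\ref{prop1}, but that proposition only places such pushouts in the coarser class $\mathcal W'$ (no condition on the bimodule component), not in $\mathcal W$; you correctly isolate this as the real content and supply a base-change argument. One small correction to your justification: the assertion that a Morita equivalence $\mathcal A\to\bar{\mathcal A}$ induces an equivalence $\mathsf D(\mathcal A^{op}\otimes\mathcal C)\simeq\mathsf D(\bar{\mathcal A}^{op}\otimes\mathcal C)$ is not true for arbitrary $\mathcal C$ without further hypotheses. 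What you should use instead is that a Morita dg functor is already quasi-fully faithful (this is condition~C2 in Example~\ref{mori}), so for $a,a'\in\mathcal A$ the map $\mathcal A(a,a')\to\bar{\mathcal A}(a,a')$ is a quasi-isomorphism; combined with your $h$-flatness observation (which lets you commute the quasi-isomorphism past the tensor product), the co-Yoneda lemma then yields $X(a,c)\xrightarrow{\sim}X'(a,c)$ directly, without passing through derived categories of bimodules.
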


\begin{proof}
The proof is based on enlarging the set $E(I \times I)$,
resp. $E(J \times J)$, of
generating cofibrations, resp. generating trivial cofibrations, of the
Quillen model structure of proposition~\ref{prop1}.

Let $\tilde{I}$ be the set of morphisms in $\mathsf{dgcat}^{tr}$
$$
\begin{array}{rcl}
\begin{pmatrix} k & S^{n-1} \\ 0 & k \end{pmatrix} & \hookrightarrow &  
\begin{pmatrix} k & D^{n} \\ 0 & k \end{pmatrix}\,, n \in \mathbb{Z}\,,  
\end{array}
$$
where $S^{n-1}$ is the complex $k[n-1]$ and $D^n$ the mapping cone on
the identity of $S^{n-1}$. The $k$-$k$-bimodule $S^{n-1}$ is sent to
$D^n$ by the identity on $k$ in degree $n-1$.

Consider also the set $\tilde{J}$ of morphisms in $\mathsf{dgcat}^{tr}$
$$
\begin{array}{rcl}
\begin{pmatrix} k & 0 \\ 0 & k \end{pmatrix} & \hookrightarrow &  
\begin{pmatrix} k & D^{n} \\ 0 & k \end{pmatrix}\,, n \in \mathbb{Z}\,.  
\end{array}
$$
Observe that a morphism $\underline{F} :\underline{\mathcal{B}}
\rightarrow \underline{\mathcal{B}'}$ in $\mathsf{dgcat}^{tr}$ has the
right lifting property (=R.L.P.) with respect to the set $\tilde{J}$,
resp. $\tilde{I}$, if and only if $F_X$ is a componentwise surjective morphism,
resp. surjective quasi-isomorphism, of $\mathcal{A}$-$\mathcal{C}$-bimodules.

Define $I:= E(I\times I) \cup \tilde{I}$ as the set of {\em generating
cofibrations} in $\mathsf{dgcat}^{tr}$ and $J:=E(J\times J)\cup
\tilde{J}$ as the set of {\em generating trivial cofibrations}. We now prove
that conditions $(1)$-$(6)$ of theorem $2.1.19$ from \cite{Hovey} are
satisfied. This is clearly the case for conditions $(1)$-$(3)$. 
\begin{itemize}

\item[(4)] We now prove that $J\text{-}cell \subset \mathcal{W}$, see
\cite{Hirschhorn}. Since by proposition~\ref{prop1} we have $E(J\times J)\text{-}cell
\subset \mathcal{W}'$, where $\mathcal{W}'$ denotes the weak equivalences of
proposition~\ref{prop1}, it is enough to prove that pushouts with respect
to any morphism in $\tilde{J}$ belong to $\mathcal{W}$.
Let $n$ be an integer and $\underline{\mathcal{B}}$ an object in
$\mathsf{dgcat}^{tr}$. Consider the following push-out in $\mathsf{dgcat}^{tr}$~:
$$
\xymatrix{
{\begin{pmatrix} k & 0 \\ 0 & k \end{pmatrix}} \ar[r]^{\underline{T}}
\ar@{^{(}->}[d] \ar@{}[dr]|{\lrcorner}
& \underline{\mathcal{B}} \ar[d]^{\underline{R}} \\
{\begin{pmatrix} k & D^{n} \\ 0 & k \end{pmatrix}} \ar[r] & \underline{\mathcal{B}'}\,.
}
$$
Notice that the morphism $\underline{T}$ corresponds to specifying
an object $A$ in $\mathcal{A}$ and an object $C$ in $\mathcal{C}$. The
upper triangular dg category $\underline{\mathcal{B}'}$ is then
obtained from $\underline{\mathcal{B}}$ by gluing a new morphism of
degree $n$ from $A$ to $C$. Observe that $R_{\mathcal{A}}$ and
  $R_{\mathcal{C}}$ are the identity dg functors and that $R_X$ is a
  quasi-isomorphism of bimodules. This shows that $\underline{R}$
  belongs to $\mathcal{W}$ and so condition $(4)$ is proved.

\item[(5-6)] We now show that R.L.P.($I$)=R.L.P.($J$) $\cap$ $\mathcal{W}$.
The proof of proposition~\ref{prop1} implies that
R.L.P.($E(I\times I)$)=R.L.P.($E(J \times J)$) $\cap$ $\mathcal{W}'$. Let $\underline{F}:\underline{\mathcal{B}} \rightarrow
  \underline{\mathcal{B}'}$ be a morphism in
  R.L.P.($\tilde{I}$). Clearly $\underline{F}$ belongs to
  R.L.P.($\tilde{J}$) and $F_X$ is a quasi-isomorphism of
  bimodules. This shows that R.L.P.($I$) $\subset$ R.L.P.($J$) $\cap$
  $\mathcal{W}$. 
Let now $\underline{F}:\underline{\mathcal{B}} \rightarrow
  \underline{\mathcal{B}'}$ be a morphism in
  R.L.P.($\tilde{J}$) $\cap$ $\mathcal{W}$. Clearly $\underline{F}$
  belongs to R.L.P.($\tilde{I}$) and so R.L.P.($J$) $\cap$
  $\mathcal{W}$ $\subset$ R.L.P($I$). This proves conditions $(5)$ and $(6)$.

\end{itemize}
This proves the theorem.
\end{proof}

\begin{remark}\label{filtri}
Notice that the Quillen model structure of theorem~\ref{theo1} is
cellular, see \cite{Hirschhorn} and that the domains and codomains of
$I$ (the set of generating cofibrations) are cofibrant,
$\aleph_0$-compact, $\aleph_0$-small and homotopically finitely
presented, see definition $2.1.1.$ from \cite{Toen-Vaq}. This implies
that we are in the conditions of proposition~\ref{prop} and so any object
$\underline{\mathcal{B}}$ in $\mathsf{dgcat}^{tr}$ is weakly equivalent
to a filtered colimit of strict finite $I$-cell objects.
\end{remark}

\begin{proposition}\label{comspl}
If $\underline{\mathcal{B}}$ be a strict finite $I$-cell object in
$\mathsf{dgcat}^{tr}$, then $p_1(\underline{\mathcal{B}})$,
$p_2(\underline{\mathcal{B}})$ and $|\underline{\mathcal{B}}|$ are
strict finite $I$-cell objects in $\mathsf{dgcat}$.
\end{proposition}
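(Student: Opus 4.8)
The plan is to argue by induction on the number of cells in a strict finite $I$-cell presentation of $\underline{\mathcal{B}}$. Recall from the proof of theorem~\ref{theo1} that the set $I$ of generating cofibrations in $\mathsf{dgcat}^{tr}$ is $E(I\times I)\cup \tilde I$, where $I$ (on the right) is the set of generating cofibrations of the Morita model structure on $\mathsf{dgcat}$, and $\tilde I$ consists of the inclusions $\bigl(\begin{smallmatrix} k & S^{n-1} \\ 0 & k\end{smallmatrix}\bigr) \hookrightarrow \bigl(\begin{smallmatrix} k & D^{n} \\ 0 & k\end{smallmatrix}\bigr)$. The base of the induction is the initial object $\bigl(\begin{smallmatrix}\emptyset & 0 \\ 0 & \emptyset\end{smallmatrix}\bigr)$ of $\mathsf{dgcat}^{tr}$, for which $p_1$, $p_2$ and $|-|$ all return the initial (empty) dg category, which is a strict finite $I$-cell in $\mathsf{dgcat}$.

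For the inductive step, suppose $\underline{\mathcal{B}}$ is obtained from a strict finite $I$-cell object $\underline{\mathcal{B}}_0$ by a single pushout along a generating cofibration, and assume $p_1(\underline{\mathcal{B}}_0)$, $p_2(\underline{\mathcal{B}}_0)$, $|\underline{\mathcal{B}}_0|$ are strict finite $I$-cells. I would distinguish two cases according to the type of the attached cell. First, if the cell lies in $E(I\times I)$, i.e. the pushout is along $E(f,g)$ for $f,g$ generating cofibrations of $\mathsf{dgcat}$, then, as computed in lemma~\ref{triancomp}, the functor $p_1$ (resp. $p_2$) takes this pushout to the pushout in $\mathsf{dgcat}$ along $f$ (resp. $g$); since pushing out along one of $f$, $g$ and then the other adds finitely many cells, $p_1(\underline{\mathcal{B}})$ and $p_2(\underline{\mathcal{B}})$ remain strict finite $I$-cells. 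For the totalization, one checks directly from the definition of $|-|$ that gluing $E(f,g)$ onto $\underline{\mathcal{B}}_0$ amounts to gluing $f$ and then $g$ onto $|\underline{\mathcal{B}}_0|$ (the off-diagonal bimodule is unchanged), so $|\underline{\mathcal{B}}|$ is again a strict finite $I$-cell. Second, if the cell lies in $\tilde I$, then $p_1$ and $p_2$ send the pushout to pushouts along $k \to k$, i.e. to identities — so $p_1(\underline{\mathcal{B}})=p_1(\underline{\mathcal{B}}_0)$ and likewise for $p_2$; for $|-|$, the effect (described in step (4) of the proof of theorem~\ref{theo1}) is precisely to glue a single new morphism of degree $n$ from an object $A$ of $p_1(\underline{\mathcal{B}}_0)$ to an object $C$ of $p_2(\underline{\mathcal{B}}_0)$, which is one of the generating cofibrations of $\mathsf{dgcat}$ (the inclusion $S^{n-1}\hookrightarrow D^n$ of dg categories with two objects). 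Hence $|\underline{\mathcal{B}}|$ acquires exactly one more cell and stays a strict finite $I$-cell.

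The main obstacle is purely bookkeeping: making precise that $p_1$, $p_2$ and $|-|$ carry pushouts of generating cofibrations in $\mathsf{dgcat}^{tr}$ to finite iterated pushouts of generating cofibrations in $\mathsf{dgcat}$, and in particular identifying the image under $|-|$ of the generator $\bigl(\begin{smallmatrix} k & S^{n-1} \\ 0 & k\end{smallmatrix}\bigr)\hookrightarrow \bigl(\begin{smallmatrix} k & D^{n} \\ 0 & k\end{smallmatrix}\bigr)$ with the dg-category-level generator gluing a degree-$n$ arrow. Once this dictionary is set up, the induction closes immediately; $p_1$ and $p_2$ preserve colimits since they are left adjoints (lemma~\ref{triancomp} and the adjunction $(E,p_1\times p_2)$), and $|-|$ is a left adjoint as well, so both commute with the relevant pushouts, and the finiteness is transparent from the explicit descriptions above.
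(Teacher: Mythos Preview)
Your proposal is correct and follows essentially the same route as the paper: an induction on the number of cells, with base case the initial object, and an inductive step split into the two cases $\underline{G}\in E(I\times I)$ and $\underline{G}\in\tilde I$, analyzing in each case the effect on $p_1$, $p_2$ and $|-|$ (the paper writes the $|-|$-pushout in case~1 as a single pushout along $G_{\mathcal{A}'}\amalg G_{\mathcal{C}'}$ and in case~2 identifies it with the generating cofibration $S(n):\mathcal{C}(n)\to\mathcal{P}(n)$ of $\mathsf{dgcat}$, which is exactly your ``glue a degree-$n$ arrow'' description).
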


\begin{proof}
We consider the following inductive argument:
\begin{itemize}
\item[-] notice that the initial object in $\mathsf{dgcat}^{tr}$ is 
$$ \begin{pmatrix} \emptyset & 0 \\
0 & \emptyset \end{pmatrix}$$
and it is sent to $\emptyset$ (the initial object in
$\mathsf{dgcat}$) by the functors $p_1$, $p_2$ and $|-|$.
\item[-] suppose that $\underline{\mathcal{B}}$ is an upper triangular
    dg category such that $p_1(\underline{\mathcal{B}})$,
    $p_2(\underline{\mathcal{B}})$ and $|\underline{\mathcal{B}}|$ are
    strict finite $I$-cell objects in $\mathsf{dgcat}$. Let
    $\underline{G}: \underline{\mathcal{B}'} \rightarrow
    \underline{\mathcal{B}''}$ be an element of the set $I$ in
    $\mathsf{dgcat}^{tr}$, see the proof of theorem~\ref{theo1}, and $\underline{\mathcal{B}'} \rightarrow
    \underline{\mathcal{B}}$ a morphism. Consider the following
    push-out in $\mathsf{dgcat}^{tr}$:
$$
\xymatrix{
\underline{\mathcal{B}'} \ar[r] \ar[d]_{\underline{G}}
\ar@{}[dr]|{\lrcorner} &
\underline{\mathcal{B}} \ar[d]^{\underline{G}_{\ast}} \\
\underline{\mathcal{B}''} \ar[r] &
\mathsf{PO}\,.
}
$$ 
We now prove that
$p_1(\mathsf{PO})$,
$p_2(\mathsf{PO})$ and
$|\mathsf{PO}|$
are strict finite $I$-cell objects in $\mathsf{dgcat}$. We consider the following two cases~:
\begin{itemize}
\item[1)] $\underline{G}$ belongs to
  $E(I \times I)$: observe that $p_1(\mathsf{PO})$,
$p_2(\mathsf{PO})$ and
$|\mathsf{PO}|$
correspond exactly to the following push-outs in $\mathsf{dgcat}$:
$$
\xymatrix{
*+<1pc>{\mathcal{A}'} \ar[r] \ar@{>->}[d]_{G_{\mathcal{A}'}}^{\sim} \ar@{}[dr]|{\lrcorner}  & \mathcal{A}
  \ar[d]  &   *+<1pc>{\mathcal{C}'} \ar[r]
  \ar@{>->}[d]_{G_{\mathcal{C}'}}^{\sim}  \ar@{}[dr]|{\lrcorner}  & \mathcal{C}
  \ar[d]  & \mathcal{A}' \coprod \mathcal{C}' \ar[r]
  \ar@{}[dr]|{\lrcorner} \ar[d]_{G_{\mathcal{A}'}\amalg
    G_{\mathcal{C}'}}  & |\underline{\mathcal{B}}| \ar[d] \\
\mathcal{A}'' \ar[r] & p_1(\mathsf{PO}) & \mathcal{C}'' \ar[r] & p_2(\mathsf{PO}) & \mathcal{A}'' \coprod \mathcal{C}'' \ar[r] & |\mathsf{PO}| \,.
}
$$
Since $G_{\mathcal{A}'}$ and $G_{\mathcal{C}'}$ belong to $I$ this
case is proved.
\end{itemize}
\item[2)] $\underline{G}$ belongs to $\tilde{I}$: observe
  that
  $p_1(\mathsf{PO})$ identifies with $\mathcal{A}$, 
$p_2(\mathsf{PO})$ identifies with $\mathcal{C}$ and
$|\mathsf{PO}|$
corresponds to the following push-out in $\mathsf{dgcat}$~:
$$
\xymatrix{
\mathcal{C}(n) \ar[r] \ar[d]_{S(n)} \ar@{}[dr]|{\lrcorner} &
|\underline{\mathcal{B}}| \ar[d] \\
\mathcal{P}(n) \ar[r] &
|\mathsf{PO}|,
}
$$
where $S(n)$ is a generating cofibration in $\mathsf{dgcat}$, see
section $2$ in \cite{cras}. This proves this case.
\end{itemize}
The proposition is proven.
\end{proof}

\section{Split short exact sequences}\label{splitex}
In this section, we establish the connexion between split short exact
sequences of dg categories and upper triangular dg categories.

\begin{definition}
A split short exact sequence of dg categories is a short exact
sequence of dg categories, see \cite{ICM}, which is Morita equivalent
to one of the form
$$ 
\xymatrix{
0 \ar[r] & \mathcal{A} \ar[r]_{i_{\mathcal{A}}} & \mathcal{B} \ar@<-1ex>[l]_R
\ar[r]_P & \mathcal{C} \ar@<-1ex>[l]_{i_{\mathcal{C}}} \ar[r] & 0 \,,
}
$$
where we have $P\circ i_{\mathcal{A}} =0$, $R$ is a dg functor right adjoint to $i_{\ca}$, $i_{\cc}$ is a dg functor right adjoint to $P$ and we have $P\circ i_{\mathcal{C}}=
Id_{\mathcal{C}}$ and $R\circ i_{\mathcal{A}}=Id_{\mathcal{A}}$ via the adjunction morphisms.
\end{definition}
To a split short exact sequence, we can naturally associate the upper triangular
dg category
$$ 
\begin{array}{rcl}
\underline{\mathcal{B}} & := & {\begin{pmatrix} \mathcal{A} &
    \mathsf{Hom}_{\mathcal{B}}(i_{\mathcal{C}}(-), i_{\mathcal{A}}(-)) \\
0 & \mathcal{C} \end{pmatrix}}\,.
\end{array}
$$

Conversely to an upper triangular dg category
$\underline{\mathcal{B}}$ such that $\mathcal{C}$
admits a zero object (for instance if $\mathcal{C}$ is Morita fibrant), we can
associate a split short exact sequence
$$ 
\xymatrix{
0 \ar[r] & \mathcal{A} \ar[r]_{i_{\mathcal{A}}} & |\underline{\mathcal{B}}| \ar@<-1ex>[l]_R
\ar[r]_P & \mathcal{C} \ar@<-1ex>[l]_{i_{\mathcal{C}}} \ar[r] & 0 \,,
}
$$
where $P$ and $R$ are the projection dg functors.
Moreover, this construction is functorial in $\underline{\mathcal{B}}$
and sends total Morita equivalences to Morita equivalent split short
exact sequences. Notice also that by lemma~\ref{triancomp} this
functor preserves colimits.

\begin{proposition}\label{aproxsplit}
Every split short exact sequence of dg categories is weakly equivalent
to a filtered homotopy colimit of split short exact sequences whose
components are strict finite $I$-cell objects in $\mathsf{dgcat}$.
\end{proposition}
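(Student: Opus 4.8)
The plan is to reduce the statement to the approximation result already available for upper triangular dg categories in Remark~\ref{filtri}, and then transport that decomposition across the functorial correspondence between upper triangular dg categories and split short exact sequences set up at the beginning of this section. First I would use the fact that, by definition, every split short exact sequence of dg categories is Morita equivalent to one of the normalized form
$$
\xymatrix{
0 \ar[r] & \mathcal{A} \ar[r]_{i_{\mathcal{A}}} & \mathcal{B} \ar@<-1ex>[l]_R
\ar[r]_P & \mathcal{C} \ar@<-1ex>[l]_{i_{\mathcal{C}}} \ar[r] & 0 \,,
}
$$
and, after replacing $\mathcal{C}$ by a Morita fibrant resolution (which has a zero object), that such a sequence is, up to Morita equivalence, of the form associated to an upper triangular dg category $\underline{\mathcal{B}}$ via the functor $|-|$ described above. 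Since this functor sends total Morita equivalences to Morita equivalent split short exact sequences, it is harmless to work at the level of $\mathsf{dgcat}^{tr}$ from now on.

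Next I would invoke Remark~\ref{filtri}: the Quillen model structure of Theorem~\ref{theo1} on $\mathsf{dgcat}^{tr}$ is cellular and satisfies the compactness hypotheses of Proposition~\ref{prop}, so the given object $\underline{\mathcal{B}}$ is weakly equivalent to a filtered colimit $\mathrm{colim}_{j \in J}\,\underline{\mathcal{B}_j}$ of strict finite $I$-cell objects $\underline{\mathcal{B}_j}$ in $\mathsf{dgcat}^{tr}$, and by part $2)$ of Proposition~\ref{prop} this colimit computes the homotopy colimit. Applying the totalization functor $|-|$, which by Lemma~\ref{triancomp} (and the explicit colimit description in its proof) commutes with filtered colimits, I obtain that the associated split short exact sequence of $\underline{\mathcal{B}}$ is a filtered (homotopy) colimit of the split short exact sequences associated to the $\underline{\mathcal{B}_j}$. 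Finally, by Proposition~\ref{comspl}, for each $j$ the objects $p_1(\underline{\mathcal{B}_j})$, $p_2(\underline{\mathcal{B}_j})$ and $|\underline{\mathcal{B}_j}|$ are strict finite $I$-cell objects in $\mathsf{dgcat}$, so each term of the colimit is a split short exact sequence whose three components are strict finite $I$-cell objects, as required.

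The main obstacle I anticipate is the bookkeeping needed to make the phrase ``filtered homotopy colimit of split short exact sequences'' precise and to check that filtered colimits genuinely commute with the relevant constructions at the homotopy level. Concretely, one must verify that the whole diagram $j \mapsto (0 \to p_1(\underline{\mathcal{B}_j}) \to |\underline{\mathcal{B}_j}| \to p_2(\underline{\mathcal{B}_j}) \to 0)$, together with its splitting data, assembles into a diagram in the appropriate category of (split) exact sequences and that the filtered colimit there agrees with the homotopy colimit termwise — this is where part $2)$ of Proposition~\ref{prop}, applied to $\mathsf{dgcat}$ and to the product and functor categories built from it (all of which are compactly generated in the required sense), does the real work. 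The subtle point is that the normalization to the form with $\mathcal{C}$ Morita fibrant must be performed once at the start, before passing to the cell decomposition, so that the correspondence ``upper triangular dg category $\leftrightarrow$ split short exact sequence'' is available uniformly along the diagram; I would address this by noting that a Morita fibrant replacement of $\mathcal{C}$ can be chosen first and absorbed into the upper triangular datum $\underline{\mathcal{B}}$, after which the argument proceeds purely inside $\mathsf{dgcat}^{tr}$.
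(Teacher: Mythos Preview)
Your proposal is correct and follows essentially the same route as the paper: pass to the associated upper triangular dg category (after making the relevant dg categories Morita fibrant), use Remark~\ref{filtri} to write it as a filtered colimit of strict finite $I$-cells in $\mathsf{dgcat}^{tr}$, push this through the colimit-preserving functor to split short exact sequences, and invoke Proposition~\ref{comspl} together with Proposition~\ref{prop} to identify the components as strict finite $I$-cells and the filtered colimit as a filtered homotopy colimit. The paper's proof is terser about the bookkeeping you flag, but the argument is the same.
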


\begin{proof}
Let 
$$ 
\xymatrix{
0 \ar[r] & \mathcal{A} \ar[r]_{i_{\mathcal{A}}} & \mathcal{B} \ar@<-1ex>[l]_R
\ar[r]_P & \mathcal{C} \ar@<-1ex>[l]_{i_{\mathcal{C}}} \ar[r] & 0 \,,
}
$$
be a split short exact sequence of dg categories. We can supose that
$\mathcal{A}$, $\mathcal{B}$ and $\mathcal{C}$ are Morita fibrant dg
categories, see \cite{addendum} \cite{IMRN}. Consider the upper
triangular dg category
$$ 
\begin{array}{rcl}
\underline{\mathcal{B}} & := & {\begin{pmatrix} \mathcal{A} & \mathsf{Hom}_{\mathcal{B}}(i_{\mathcal{C}}(-),i_{\mathcal{A}}(-)) \\
0 & \mathcal{C} \end{pmatrix}}\,.
\end{array}
$$
Now by remark~\ref{filtri}, $\underline{\mathcal{B}}$ is
equivalent to a filtered colimit of strict finite $I$-cell objects in
$\mathsf{dgcat}^{tr}$. Consider the image of this diagram by the
functor, described above,  which sends an upper triangular dg category to a split
short exact sequence. By proposition~\ref{comspl} the
components of each split short exact sequence of this diagram are
strict finite $I$-cell objects in $\mathsf{dgcat}$. Since the category
$\mathsf{dgcat}$ satisfies the conditions of proposition~\ref{prop}, filtered homotopy colimits are equivalent to filtered colimits and so the proposition is proven.
\end{proof}

\section{Quasi-Additivity}\label{quasi}
Recall from section~\ref{labuniv} that we have at our disposal the
Quillen model category
$\mathsf{L}_{\Sigma,P} \mathsf{Fun}(\mathsf{dgcat}_f^{op},Sset)$ which is
  {\em homotopically pointed}, i.e. the morphism $\emptyset
  \rightarrow \ast$, from the initial object $\emptyset$ to the
  terminal one $\ast$, is a weak equivalence. We now consider a
  strictly pointed Quillen model.

\begin{proposition}\label{point}
We have a Quillen equivalence
$$
\xymatrix{
\ast \downarrow \mathsf{L}_{\Sigma,P}\mathsf{Fun}(\mathcal{M}^{op}_f,Sset) \ar@<1ex>[d]^U \\
\mathsf{L}_{\Sigma,P} \mathsf{Fun}(\mathcal{M}_f^{op},Sset) \ar@<1ex>[u]^{(-)_{+}}\,,
}
$$
where $U$ denotes the forgetful functor.
\end{proposition}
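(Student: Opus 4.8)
The plan is to recognize Proposition~\ref{point} as an instance of the general fact that the pointing adjunction $(-)_+ \dashv U$ attached to a homotopically pointed model category is a Quillen equivalence. Write $\mathcal{N} := \mathsf{L}_{\Sigma,P}\mathsf{Fun}(\mathcal{M}_f^{op},Sset)$. I would first recall that the under-category $\ast\downarrow\mathcal{N}$ carries the model structure in which the cofibrations, fibrations and weak equivalences are exactly the morphisms whose image under the forgetful functor $U$ lies in the corresponding class of $\mathcal{N}$, see \cite{Hirschhorn}. In particular $U$ preserves and reflects all three classes, so $(-)_+ \dashv U$ is a Quillen adjunction; this is already the content of the Quillen adjunction displayed in section~\ref{small}, which is unaffected by passing to the Bousfield localizations since localization does not change the underlying categories.

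To promote this Quillen adjunction to a Quillen equivalence I would use the criterion of \cite[1.3.13]{Hovey}: it suffices to show that for every cofibrant object $X$ of $\mathcal{N}$ and every fibrant object $(Y,y)$ of $\ast\downarrow\mathcal{N}$ --- so that $Y$ is fibrant in $\mathcal{N}$ --- a morphism $f\colon X_+ \to (Y,y)$ in $\ast\downarrow\mathcal{N}$ is a weak equivalence if and only if its adjunct $f^{\flat}\colon X \to Y$ is a weak equivalence in $\mathcal{N}$. Since weak equivalences in $\ast\downarrow\mathcal{N}$ are detected by $U$, the morphism $f$ is a weak equivalence precisely when $U(f)\colon X\sqcup\ast \to Y$ is one in $\mathcal{N}$. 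Now $f^{\flat}$ is, by the very definition of the adjunction bijection, the composite of the coproduct inclusion $X\hookrightarrow X\sqcup\ast$ with $U(f)$. Hence, by the two-out-of-three axiom, the equivalence ``$f$ is a weak equivalence $\iff$ $f^{\flat}$ is a weak equivalence'' follows once we know that $X\hookrightarrow X\sqcup\ast$ is a weak equivalence in $\mathcal{N}$ for every cofibrant $X$.

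This last point is exactly where the two structural properties of $\mathcal{N}$ enter. First, $\mathcal{N}$ is homotopically pointed, i.e. the map $\emptyset\to\ast$ is a weak equivalence in $\mathcal{N}$; this was recalled just before the statement, and is the effect of having localized at the morphism $P$. Second, $\mathcal{N}$ is left proper, being a left Bousfield localization of the left proper, cellular model category $\mathsf{Fun}(\mathcal{M}_f^{op},Sset)$, see \cite[4.1.1]{Hirschhorn}. The coproduct inclusion $X = X\sqcup\emptyset \hookrightarrow X\sqcup\ast$ is the pushout of $\emptyset\to\ast$ along the map $\emptyset\to X$, which is a cofibration because $X$ is cofibrant; by left properness the pushout of a weak equivalence along a cofibration is a weak equivalence, so $X\hookrightarrow X\sqcup\ast$ is a weak equivalence. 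Combined with the previous paragraph this proves the proposition.

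I do not expect a genuine obstacle here; the only points requiring care are bookkeeping ones: applying the Quillen-equivalence criterion on the correct side (it is $(-)_+$ that is the left adjoint, so it is the unit, not the counit, that is relevant), and making sure that both ``homotopically pointed'' and ``left proper'' are actually in force for the localized category $\mathcal{N}$ --- both of which have been arranged in the preceding sections.
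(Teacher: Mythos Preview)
Your proposal is correct and follows essentially the same route as the paper: reduce to a general statement about homotopically pointed model categories, check the Quillen-equivalence criterion via the adjunct morphism, and use two-out-of-three to reduce everything to showing that the coproduct inclusion $X\hookrightarrow X\sqcup\ast$ is a weak equivalence. The only minor difference is the justification of this last step: you invoke left properness (pushout of the weak equivalence $\emptyset\to\ast$ along the cofibration $\emptyset\to X$), whereas the paper observes that $\emptyset\sqcup X\to\ast\sqcup X$ is the coproduct of the weak equivalences $\emptyset\to\ast$ and $\mathbf{1}_X$, hence a homotopy colimit of weak equivalences and therefore itself a weak equivalence by the homotopy-invariance of homotopy colimits (Proposition~\ref{Cisin}). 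Both arguments are valid; your use of left properness is perhaps slightly more direct, while the paper's version has the advantage of not requiring left properness as an explicit hypothesis on the ambient model category.
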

This follows from the fact that the category $\mathsf{L}_{\Sigma,P} \mathsf{Fun}(\mathsf{dgcat}_f^{op},Sset)$ is
  homotopically pointed and from the following general argument.

\begin{proposition}
Let $\mathcal{M}$ be a homotopically pointed Quillen model
category. We have a Quillen equivalence.
$$
\xymatrix{
\ast \downarrow \mathcal{M} \ar@<1ex>[d]^U \\
\mathcal{M} \ar@<1ex>[u]^{(-)_{+}}\,,
}
$$
where $U$ denotes the forgetful functor.
\end{proposition}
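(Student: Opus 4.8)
The plan is to verify that the adjunction $((-)_+, U)$ is a Quillen adjunction and then that it is a Quillen equivalence, using that $\mathcal M$ is homotopically pointed. First I would observe that $\ast \downarrow \mathcal M$ carries its usual (over/under-category) model structure: a morphism in $\ast \downarrow \mathcal M$ is a cofibration, fibration or weak equivalence precisely when its image under $U$ is one in $\mathcal M$. The functor $U$ manifestly preserves fibrations and trivial fibrations, since these are detected on underlying objects; hence $((-)_+, U)$ is a Quillen adjunction. (Here $X_+ = X \amalg \ast$ with the evident basepoint, and the unit/counit are the obvious maps.)

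Next I would show this Quillen adjunction is a Quillen equivalence, i.e. that for every cofibrant $X \in \mathcal M$ and every fibrant $(Y,y) \in \ast\downarrow\mathcal M$, a map $X_+ \to (Y,y)$ in $\ast\downarrow\mathcal M$ is a weak equivalence if and only if its adjunct $X \to U(Y,y) = Y$ is a weak equivalence in $\mathcal M$. By adjunction a map $X_+ \to (Y,y)$ is the same datum as a map $X \to Y$ in $\mathcal M$ (the basepoint component being forced to be $y$), and the two maps literally have the same underlying morphism $X \to Y$; since weak equivalences in $\ast\downarrow\mathcal M$ are created by $U$, the two notions of weak equivalence coincide on the nose. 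Thus the derived unit and counit are isomorphisms, and $((-)_+, U)$ is a Quillen equivalence.

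The one place where the homotopically pointed hypothesis is genuinely used is in checking that the model structure on $\ast\downarrow\mathcal M$ is the ``expected'' one and that no subtlety arises from the distinction between the initial object $\emptyset$ and the terminal object $\ast$: the cofibrant replacement of the initial object $\ast$ of $\ast\downarrow\mathcal M$ maps to a cofibrant replacement of $\emptyset$ in $\mathcal M$ precisely because $\emptyset \to \ast$ is a weak equivalence, which is what guarantees that $(-)_+$ of a cofibrant object is cofibrant ``in the pointed sense'' and that $U$ reflects weak equivalences between the relevant objects. I expect this bookkeeping around initial versus terminal objects to be the only real point requiring care; everything else is formal manipulation of over/under-category model structures. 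Once this general proposition is in hand, Proposition~\ref{point} is immediate by specializing $\mathcal M = \mathsf{L}_{\Sigma,P}\mathsf{Fun}(\mathsf{dgcat}_f^{op},Sset)$, which is homotopically pointed by construction in section~\ref{chappoint}.
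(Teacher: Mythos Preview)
There is a genuine gap in your core step. You claim that a map $X_+ \to (Y,y)$ and its adjunct $X \to Y$ ``literally have the same underlying morphism $X \to Y$'', but this is false: the underlying morphism of $X_+ \to (Y,y)$ in $\mathcal{M}$ is $X \amalg \ast \to Y$, not $X \to Y$. Testing whether $X_+ \to (Y,y)$ is a weak equivalence in $\ast\downarrow\mathcal{M}$ therefore means testing whether $X \amalg \ast \to Y$ is a weak equivalence in $\mathcal{M}$, and this is not a priori the same as asking whether $X \to Y$ is one. The two conditions differ by precomposition with the unit $X \cong \emptyset \amalg X \to \ast \amalg X$, and the whole content of the proposition is that this unit is a weak equivalence for cofibrant $X$. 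This is precisely where the homotopically pointed hypothesis enters, and it is not bookkeeping: one observes that $\emptyset \amalg X \to \ast \amalg X$ is the induced map on homotopy coproducts of the weak equivalences $\emptyset \to \ast$ and $\mathrm{id}_X$, hence a weak equivalence (the paper does exactly this, invoking its proposition~\ref{Cisin}); then $2$-out-of-$3$ finishes.

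Your final paragraph misplaces the hypothesis. Cofibrancy of $X_+$ for cofibrant $X$ is automatic because $(-)_+$ is left Quillen, so nothing about $\emptyset\to\ast$ is needed there; and $U$ reflecting weak equivalences is immediate from the definition of the under-category model structure. The hypothesis is needed solely to make the unit a weak equivalence, and without it the statement is false already for $\mathcal{M}=\mathrm{Set}$ with the trivial model structure: there $X\amalg\ast \to Y$ being a bijection is strictly stronger than $X\to Y$ being one.
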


\begin{proof}
Clearly the functor $U$ preserves cofibrations, fibrations and weak
equivalences, by construction. Let now $N\in \mathcal{M}$ and $M \in
\ast \downarrow \mathcal{M}$. Consider the following commutative
diagram in $\mathcal{M}$
$$
\xymatrix{
N \simeq \emptyset\coprod N \ar[rr]^f \ar[dr]^{\sim}_{i\amalg \mathbf{1}} & & U(M)
\\
 & \ast \coprod N \ar[ur]_{f^{\sharp}} & \,,
}
$$
where $f^{\sharp}$ is the morphism which corresponds to $f$, considered
as a morphism in $\mathcal{M}$, under the adjunction and $i:\emptyset
\stackrel{\sim}{\rightarrow} \ast$. Since the morphism $i\amalg \mathbf{1}$
corresponds to the homotopy colimit of $i$ and $1$, which are both
weak equivalences, proposition~\ref{Cisin} implies that  $i\amalg \mathbf{1}$  is a
weak equivalence. Now, by the `$2$ out of $3$' property, the morphism
$f$ is a weak equivalence if and only if $f^{\sharp}$ is one. This
proves the proposition.
\end{proof}

\begin{notation}
Let $\ca$ and $\cb$ be small dg categories. We denote by $\rep_{mor}(\ca,\cb)$ the full dg subcategory of $\cc_{dg}(\ca_c^{op}\otimes \cb)$, where $\ca_c$ denotes a cofibrant resolution of $\ca$, whose objects are the bimodules $X$ such that $X(?,A)$ is a compact object in $\cd(\cb)$ for all $A \in \ca_c$ and which are cofibrant as bimodules.
We denote by $w \mathcal{A}$ the category of homotopy equivalences of $\mathcal{A}$ and by $N.w\mathcal{A}$ its nerve.
\end{notation}

Now, consider the morphism
$$
\begin{array}{ccl}
\mathsf{Ho}(\mathsf{dgcat}) & \rightarrow & \mathsf{Ho}(\ast
\downarrow \mathsf{L}_{\Sigma,P}\mathsf{Fun}(\mathcal{M}^{op}_f,Sset)) \\
\mathcal{A} & \mapsto & \left\{\begin{array}{l}
    \mathsf{Hom}_{\mathsf{dgcat}}(\Gamma(?),\mathcal{A}_f)_+\\
\simeq \mathsf{Map}_{\mathsf{dgcat}}(?,\mathcal{A})_+\\
\simeq N.w\mathsf{rep}_{mor}(?,\mathcal{A})_+ \end{array} \right.
\end{array}
$$
which by sections~\ref{homotopy}, \ref{chappoint} and
proposition~\ref{point} corresponds to the component $(\Phi\circ
\mathbb{R}\underline{h})(e)$ of the morphism of derivators
$$ \Phi \circ \mathbb{R}\underline{h}: \mathsf{HO}(\mathsf{dgcat})\longrightarrow
\mathsf{L}_{\Sigma,P}\mathsf{Hot}_{\mathsf{dgcat}_f}\,,$$
see proposition~\ref{ext}. Observe that the simplicial presheaf
$N.w\mathsf{rep}_{mor}(?,\mathcal{A})$ is already canonically pointed.

\begin{proposition}
The canonical morphism
$$\Psi : N.w\mathsf{rep}_{mor}(?,\mathcal{A})_+ \rightarrow N.w \mathsf{rep}_{mor}(?,\mathcal{A})$$
is a weak equivalence in $\ast \downarrow \mathsf{L}_{\Sigma,P}\mathsf{Fun}(\mathcal{M}^{op}_f,Sset)$.
\end{proposition}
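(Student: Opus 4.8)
The plan is to recognise $\Psi$ as (a representative of) the derived counit of the Quillen equivalence of Proposition~\ref{point}, and to conclude from that alone. Write $X$ for the presheaf $N.w\mathsf{rep}_{mor}(?,\mathcal{A})$ equipped with the basepoint coming from the zero object of $\mathsf{rep}_{mor}(?,\mathcal{A})$, so that $X$ is an object of $\ast\downarrow\mathsf{L}_{\Sigma,P}\mathsf{Fun}(\mathsf{dgcat}_f^{op},Sset)$; the source $N.w\mathsf{rep}_{mor}(?,\mathcal{A})_+$ of $\Psi$ is then $(U(X))_+$, i.e. $(-)_+$ applied to the underlying unpointed presheaf of $X$, and $\Psi$ is precisely the counit $\varepsilon_X\colon (U(X))_+\to X$ of the adjunction $\bigl((-)_+,U\bigr)$ of Proposition~\ref{point}. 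First I would check that this identification is the tautological one. Equivalently, under the canonical isomorphism of pointed presheaves $(U(X))_+\cong X\vee \underline{S^0}$, where $\underline{S^0}=(\underline{\ast})_+$ is the constant presheaf with value $S^0$ and $\underline{\ast}$ the terminal object of $\mathsf{Fun}(\mathsf{dgcat}_f^{op},Sset)$, the map $\Psi$ becomes $\mathrm{id}_X$ wedged with the collapse $\underline{S^0}\to\ast$.

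Given this, the result is immediate. Proposition~\ref{point} applies here because $\mathsf{L}_{\Sigma,P}\mathsf{Fun}(\mathsf{dgcat}_f^{op},Sset)$ is homotopically pointed — this being the very reason for having localised at $P$, since $h(\emptyset)$ is already the terminal object of $\mathsf{Ho}(\mathsf{L}_{\Sigma}\mathsf{Fun}(\mathsf{dgcat}_f^{op},Sset))$, cf. Section~\ref{chappoint} — and the derived counit of a Quillen equivalence is a natural isomorphism of homotopy categories; hence $\varepsilon_X$ is a weak equivalence whenever $X$ is fibrant and cofibrant, and the general case follows by the usual replacement argument. Alternatively, and avoiding the language of Quillen equivalences, one reduces via the wedge description above to showing that the collapse $\underline{S^0}=(\underline{\ast})_+\to\ast$ is a weak equivalence in $\ast\downarrow\mathsf{L}_{\Sigma,P}\mathsf{Fun}(\mathsf{dgcat}_f^{op},Sset)$; since it admits the basepoint inclusion $\ast\to\underline{S^0}$ as a section, by two-out-of-three this amounts to the weak contractibility of $\underline{S^0}$, which in turn follows from the equivalence $\underline{\ast}\simeq h(\emptyset)$ in $\mathsf{L}_{\Sigma}\mathsf{Fun}(\mathsf{dgcat}_f^{op},Sset)$ together with the fact that $P$, hence after applying $(-)_+$ the inclusion $\ast=(\widetilde{\emptyset})_+\to(h(\emptyset))_+$, has been inverted.

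I expect no conceptual obstacle; the work is entirely bookkeeping with replacements, and the one point worth stating carefully is that $U(X)$ need not be projectively cofibrant even when $X$ is cofibrant in the undercategory, so one genuinely argues with the derived counit rather than the strict one. Two facts make that painless: $(-)_+$ sends objectwise weak equivalences to objectwise (hence local) weak equivalences outright, being $(-)\sqcup\ast$ applied levelwise; and trivial fibrations are left unchanged by a left Bousfield localisation, so a cofibrant replacement of $U(X)$ in the localised structure is already an objectwise equivalence. Combined with the Quillen equivalence of Proposition~\ref{point}, this lets one pass freely between $N.w\mathsf{rep}_{mor}(?,\mathcal{A})_+$ and any convenient fibrant–cofibrant model, and also between the $Sset$- and $Sset_\bullet$-valued pictures, which is all that is needed.
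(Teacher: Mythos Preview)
Your proposal is correct and takes essentially the same approach as the paper: both identify $\Psi$ as the counit of the adjunction $((-)_+,U)$ from Proposition~\ref{point} and conclude from that adjunction being a Quillen equivalence. The paper's proof is a two-line sketch (observe that $N.w\mathsf{rep}_{mor}(?,\mathcal{A})$ is fibrant, identify $\Psi$ as the counit, done), whereas you have spelled out the bookkeeping the paper leaves implicit---in particular the point that $(-)_+$ preserves all objectwise weak equivalences, so no cofibrant replacement is needed on the source---and supplied an alternative direct argument via the wedge decomposition $X_+\cong X\vee\underline{S^0}$; both additions are sound but not required.
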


\begin{proof}
Observe that $N.w\mathsf{rep}_{mor}(?,\mathcal{A})$ is a fibrant object
in $\ast
  \downarrow \mathsf{L}_{\Sigma,P}\mathsf{Fun}(\mathcal{M}^{op}_f,Sset)$
  and that the canonical morphism $\Psi$ corresponds to the co-unit
  of the adjunction of proposition~\ref{point}. Since this adjunction
  is a Quillen equivalence, the proposition is proved.
\end{proof}

Recall now from remark~\ref{remar} that we have a canonical equivalence of
pointed derivators
$$ \mathsf{L}_{\Sigma,P}\mathsf{Hot}_{\mathsf{dgcat}_f}
\stackrel{\sim}{\longrightarrow}
{\mathsf{L}_{\Sigma,P}\mathsf{Hot}_{\mathsf{dgcat}_f}}_{\bullet}\,,$$
where ${\mathsf{L}_{\Sigma,P}\mathsf{Hot}_{\mathsf{dgcat}_f}}_{\bullet}$
is the derivator associated with the Quillen model category
$\mathsf{L}_{\Sigma,P}\mathsf{Fun}(\mathsf{dgcat}_f^{op},Sset_{\bullet})$.
From now on, we will consider this Quillen model. We have the
following morphism of derivators
$$\Phi \circ \mathbb{R}\underline{h} : \mathsf{HO}(\mathsf{dgcat}) \longrightarrow
{\mathsf{L}_{\Sigma,P}\mathsf{Hot}_{\mathsf{dgcat}_f}}_{\bullet}\,,$$
which commutes with filtered homotopy colimits and preserves the point.

\begin{notation}
\begin{itemize}
\item[-] We denote by $\mathcal{E}^s$ the set of retractions of dg categories
$$ \xymatrix{
\mathcal{G} \ar[r]_{i_{\mathcal{G}}} & \mathcal{H} \ar@<-1ex>[l]_R \,,
}
$$
where $\mathcal{G}$ and $\mathcal{H}$ are strict finite $I$-cell
objects in $\mathsf{dgcat}$, $i_{\cg}$ is a fully faithful dg functor, $R$ is a right adjoint to $i_{\cg}$ and $R\circ i_{\mathcal{G}} =Id_{\mathcal{G}}$. 
\item[-] We denote by $\mathcal{E}^s_{un}$ the set of morphisms $S_L$
  in
  ${\mathsf{L}_{\Sigma,P}\mathsf{Hot}_{\mathsf{dgcat}_f}}_{\bullet}(e)$, see section~\ref{chapquotient}, where $L$ belongs to the set $\mathcal{E}^s$.
\end{itemize}
\end{notation}

Now choose for each element of the set $\mathcal{E}^s_{un}$ a
representative in the category
$\mathsf{L}_{\Sigma,P}\mathsf{Fun}(\mathsf{dgcat}_f^{op},Sset_{\bullet})$.
We denote this set of representatives by
$\widetilde{\mathcal{E}^s_{un}}$. Since \newline
$\mathsf{L}_{\Sigma,P}\mathsf{Fun}(\mathsf{dgcat}_f^{op},Sset_{\bullet})$
is a left proper, cellular Quillen model category, see
\cite{Hirschhorn}, its left Bousfield localization by
$\widetilde{\mathcal{E}^s_{un}}$ exists. We denote it by
$\mathsf{L}_{\widetilde{\mathcal{E}^s_{un}}}
\mathsf{L}_{\Sigma,P}\mathsf{Fun}(\mathsf{dgcat}_f^{op},Sset_{\bullet})$
and by ${\mathsf{L}_{\widetilde{\mathcal{E}^s_{un}}}
\mathsf{L}_{\Sigma,P}\mathsf{Hot}_{\mathsf{dgcat}_f}}_{\bullet}$ the
associated derivator. We have the following morphism of derivators
$$
\Psi: {\mathsf{L}_{\Sigma,P}\mathsf{Fun}\mathsf{Hot}_{\mathsf{dgcat}_f}}_{\bullet} \rightarrow
 {\mathsf{L}_{\widetilde{\mathcal{E}^s_{un}}}
\mathsf{L}_{\Sigma,P}\mathsf{Fun}\mathsf{Hot}_{\mathsf{dgcat}_f}}_{\bullet}\,.
$$
\begin{remark}\label{fib}
\begin{itemize}

\item[-] Notice that by construction the domains and codomains of the
  set $\widetilde{\mathcal{E}^s_{un}}$ are homotopically finitely
  presented objects. Therefore by lemma~\ref{gener} the set 
$$ \mathcal{G}= \{\mathbf{F}^X_{\Delta[n]_+ / \partial \Delta[n]_+ } |
\, X \in
\mathcal{M}_f\,, n\geq 0 \} \,,$$
of cofibers of the generating cofibrations in
$\mathsf{Fun}(\mathcal{M}_f^{op},Sset_{\bullet})$ is a set of small weak
generators in $\mathsf{Ho}(\mathsf{L}_{\widetilde{\mathcal{E}^s_{un}}}
\mathsf{L}_{\Sigma,P}\mathsf{Fun}(\mathcal{M}_f^{op},Sset_{\bullet}))$.

\item[-] Notice also that proposition~\ref{aproxsplit} implies that variants of
proposition~\ref{cons} and theorem~\ref{invert} are also verified:
simply consider the set $\mathcal{E}^s$ instead of $\mathcal{E}$ and a
retraction of dg categories instead of an inclusion of a full dg
subcategory. The proofs are exactly the same.
\end{itemize}
\end{remark}

\begin{definition}
\begin{itemize}
\item[-] The {\em Unstable Motivator} of dg categories
  $\mathcal{M}^{unst}_{dg}$ is the derivator associated with the Quillen model category 
$$\mathsf{L}_{\widetilde{\mathcal{E}^s_{un}}} \mathsf{L}_{\Sigma,P} \mathsf{Fun}(\mathsf{dgcat}_f^o,Sset_{\bullet})\,.$$
\item[-] The {\em Universal unstable invariant} of dg categories is the canonical morphism of
  derivators $$ \mathcal{U}_u : \mathsf{HO}(\mathsf{dgcat}) \rightarrow \mathcal{M}^{unst}_{dg}\,.$$
\end{itemize}
\end{definition}

Let $\mathcal{M}$ be a left proper cellular model category, $S$ a
set of maps in $\mathcal{M}$ and $\mathsf{L}_S\mathcal{M}$ the left
Bousfield localization of $\mathcal{M}$ with respect to $S$, see \cite{Hirschhorn}. Recall
from \cite[4.1.1.]{Hirschhorn} that an object $X$ in
$\mathsf{L}_S\mathcal{M}$ is fibrant if $X$ is fibrant in
$\mathcal{M}$ and for every element $f:A \rightarrow B$ of $S$ the
induced map of homotopy function complexes $f^{\ast}:\mathsf{Map}(B,X)
\rightarrow \mathsf{Map}(A,X)$ is a weak equivalence.
\begin{proposition}\label{fib1}
An object $F \in \mathsf{L}_{\widetilde{\mathcal{E}^s_{un}}}
\mathsf{L}_{\Sigma,P}\mathsf{Fun}(\mathsf{dgcat}_f^{op},Sset_{\bullet})$
is fibrant if and only if it satisfies the following conditions
\begin{itemize}
\item[1)] $F(\mathcal{B}) \in Sset_{\bullet}$ is fibrant, for all
  $\mathcal{B} \in \mathsf{dgcat}_f$.
\item[2)] $F(\emptyset) \in Sset_{\bullet}$ is contractible.
\item[3)] For every Morita equivalence $\mathcal{B}
  \stackrel{\sim}{\rightarrow} \mathcal{B}'$ in $\mathsf{dgcat}_f$ the
  morphism $F(\mathcal{B}') \stackrel{\sim}{\rightarrow}
  F(\mathcal{B})$ is a weak equivalence in $Sset_{\bullet}$.
\item[4)] Every split short exact sequence
$$ 
\xymatrix{
0 \ar[r] & \mathcal{B}' \ar[r]_{i_{\mathcal{B}'}} & \mathcal{B} \ar@<-1ex>[l]_R
\ar[r]_P & \mathcal{B}'' \ar@<-1ex>[l]_{i_{\mathcal{B}''}} \ar[r] & 0 
}
$$
in $\mathsf{dgcat}_f$ induces a homotopy fiber sequence
$$ F(\mathcal{B}'') \rightarrow F(\mathcal{B}) \rightarrow
F(\mathcal{B}')$$
in $Sset$.
\end{itemize}
\end{proposition}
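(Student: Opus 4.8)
The plan is to peel off the conditions $1)$--$4)$ one localization at a time, using repeatedly the description of fibrant objects in a left Bousfield localization recalled just before the statement (see \cite{Hirschhorn}): an object is fibrant in $\mathsf{L}_S\mathcal{M}$ if and only if it is fibrant in $\mathcal{M}$ and local with respect to $S$. Since $\mathsf{L}_{\widetilde{\mathcal{E}^s_{un}}}\mathsf{L}_{\Sigma,P}\mathsf{Fun}(\mathsf{dgcat}_f^{op},Sset_{\bullet})$ is obtained from the projective model structure on $\mathsf{Fun}(\mathsf{dgcat}_f^{op},Sset_{\bullet})$ by localizing successively at $(\Sigma)_+$, at $(P)_+$ and at $\widetilde{\mathcal{E}^s_{un}}$, an object $F$ is fibrant there exactly when $F$ is objectwise fibrant and local with respect to each of these three classes of maps. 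Objectwise fibrancy is precisely condition $1)$. For the other classes I would use throughout the identification $\mathsf{Map}(h(\cb)_+,F)\simeq F(\cb)$ for objectwise fibrant $F$ (the simplicial Yoneda lemma together with the adjunction $(-)_+\dashv U$), and more generally $\mathsf{Map}(\mathcal{U}_u(\cb),F)\simeq F(\cb)$, since by remark~\ref{funcpont} the underlying presheaf of $\mathcal{U}_u(\cb)$ is, for $\cb\in\mathsf{dgcat}_f$, weakly equivalent to $h(\cb)_+$.

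Locality with respect to $(\Sigma)_+$ then says that $F$ inverts every map of $S$; combined with objectwise fibrancy and lemma~\ref{fulfat} (which identifies $\mathsf{dgcat}_f[S^{-1}]$ with a full subcategory of $\mathsf{Ho}(\mathsf{dgcat})$, so that inverting $S$ forces inverting all Morita equivalences between objects of $\mathsf{dgcat}_f$) this is exactly condition $3)$. Locality with respect to $(P)_+$, where $P\colon\widetilde{\emptyset}\to h(\emptyset)$, means that $F(\emptyset)=\mathsf{Map}(h(\emptyset)_+,F)$ is weakly equivalent to $\mathsf{Map}(\widetilde{\emptyset},F)=\ast$, i.e. that $F(\emptyset)$ is contractible; this is condition $2)$.

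It remains to match locality with respect to $\widetilde{\mathcal{E}^s_{un}}$ with condition $4)$. For a split short exact sequence $0\to\cb'\to\cb\to\cb''\to 0$ whose components are strict finite $I$-cell objects (so that the associated retraction belongs to $\mathcal{E}^s$), the morphism $\mathcal{U}_u$ sends it to a homotopy cofiber sequence $\mathcal{U}_u(\cb')\to\mathcal{U}_u(\cb)\to\mathcal{U}_u(\cb'')$ together with the comparison morphism $S_L\colon\mathsf{cone}(\mathcal{U}_u(\cb'\to\cb))\to\mathcal{U}_u(\cb'')$. Since $\mathsf{Map}(-,F)$ carries homotopy pushout squares to homotopy pullback squares, $\mathsf{Map}(\mathsf{cone}(\mathcal{U}_u(\cb'\to\cb)),F)$ is the homotopy fiber of $F(\cb)\to F(\cb')$, and $F$ being $S_L$-local is then literally the assertion that $F(\cb'')\to F(\cb)\to F(\cb')$ is a homotopy fiber sequence. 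To pass from such sequences to arbitrary split short exact sequences in $\mathsf{dgcat}_f$, I would invoke the variants of proposition~\ref{cons} and theorem~\ref{invert} recorded in remark~\ref{fib}, based on the approximation result proposition~\ref{aproxsplit}: every split short exact sequence in $\mathsf{dgcat}_f$ is a filtered homotopy colimit of split short exact sequences with strict finite $I$-cell components, $\mathcal{U}_u$ commutes with filtered homotopy colimits (lemma~\ref{filtered}), so $\mathsf{Map}(-,F)$ turns these into filtered homotopy limits, and a filtered homotopy limit of homotopy fiber sequences is again one; condition $3)$ absorbs the replacement of an object of $\mathsf{dgcat}_f$ by a weakly equivalent strict finite $I$-cell implicit in proposition~\ref{aproxsplit}. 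Running all these equivalences in both directions shows that $F$ is fibrant precisely when $1)$--$4)$ hold.

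The main obstacle is this last upgrading step. For the implication ``$1)$--$4)\Rightarrow$ fibrant'' it is harmless, since condition $4)$ specialized to sequences with strict finite $I$-cell components already yields locality with respect to $\widetilde{\mathcal{E}^s_{un}}$; but for ``fibrant $\Rightarrow$ $4)$'' one must propagate $S_L$-locality from the generating set $\mathcal{E}^s$ to \emph{all} split short exact sequences in $\mathsf{dgcat}_f$, which forces one to check that the derivator-level arguments of theorem~\ref{invert} and proposition~\ref{cons} survive when applied to the co-represented functor $\mathsf{Map}(-,F)$, and that the homotopical finite presentation of the representables attached to strict finite $I$-cells suffices to commute $\mathsf{Map}(-,F)$ past the filtered homotopy colimits of proposition~\ref{aproxsplit}. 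Everything else is routine bookkeeping with Yoneda and with the homotopy pushout/pullback calculus in pointed simplicial presheaves.
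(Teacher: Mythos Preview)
Your proposal is correct and follows the same approach as the paper's (much terser) proof: unwind the successive localizations via the simplicial Yoneda identification $\mathsf{Map}(h(\mathcal{B})_+,F)\simeq F(\mathcal{B})$ and use that $\mathsf{Map}(-,F)$ turns homotopy cofiber sequences into homotopy fiber sequences. The upgrading from retractions in $\mathcal{E}^s$ to arbitrary split short exact sequences in $\mathsf{dgcat}_f$ that you isolate in your last paragraph is a genuine technical point the paper's proof leaves implicit, and your proposed resolution via remark~\ref{fib} (the split variants of proposition~\ref{cons} and theorem~\ref{invert}, built on proposition~\ref{aproxsplit}) is exactly the intended one.
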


\begin{proof}
Clearly condition $1)$ corresponds to the fact that $F$ is fibrant in
$\mathsf{Fun}(\mathsf{dgcat}_f^{op},Sset_{\bullet})$. Now observe that $\mathsf{Fun}(\mathsf{dgcat}_f^{op},Sset_{\bullet})$ is a
simplicial Quillen model category with the simplicial action given by
$$
\begin{array}{ccc}
Sset \times \mathsf{Fun}(\mathsf{dgcat}_f^{op},Sset_{\bullet}) &
\rightarrow & \mathsf{Fun}(\mathsf{dgcat}_f^{op},Sset_{\bullet}) \\
(K,F) & \mapsto & K_+\wedge F\,,
\end{array}
$$
where $K_+ \wedge F$ denotes the componentwise smash product. This simplicial structure and the construction of the
localized Quillen model category \newline
$\mathsf{L}_{\widetilde{\mathcal{E}^s_{un}}}
\mathsf{L}_{\Sigma,P}\mathsf{Fun}(\mathsf{dgcat}_f^{op},Sset_{\bullet})$,
see section~\ref{small}, allow us to recover conditions $2)$ and
$3)$. Condition $4)$ follows from the construction of the set
$\widetilde{\mathcal{E}^s_{un}}$ and from the fact that the functor
$$ \mathsf{Map}(?,F) :
\mathsf{Fun}(\mathsf{dgcat}_f^{op},Sset_{\bullet})^{op} \rightarrow
Sset$$
transforms homotopy cofiber sequences into homotopy fiber sequences.

This proves the proposition.
\end{proof}
Let $\mathcal{A}$ be a Morita fibrant dg category. Recall from notation~\ref{simplicialK} that $S_{\bullet}\ca$ denotes the simplicial Morita fibrant dg category obtained by applying Waldhausen's $S_{\bullet}$-construction to the exact category $\mathsf{Z}^0(\ca)$ and remembering the enrichment in complexes.

\begin{notation}
We denote by $K(\mathcal{A}) \in \mathsf{Fun}(\mathsf{dgcat}_f^{op},Sset_{\bullet})$ the simplicial presheaf
$$
\begin{array}{rcl}
\mathcal{B} & \mapsto &
|N.wS_{\bullet}\mathsf{rep}_{mor}(\mathcal{B},\mathcal{A})|\,,
\end{array}
$$
where $|-|$ denotes the fibrant realization functor of bisimplicial sets.
\end{notation}

\begin{proposition}\label{fibrant}
The simplicial presheaf $K(\mathcal{A})$ is fibrant in $\mathsf{L}_{\widetilde{\mathcal{E}^s_{un}}} \mathsf{L}_{\Sigma,P}\mathsf{Fun}(\mathsf{dgcat}_f^{op},Sset_{\bullet})$.
\end{proposition}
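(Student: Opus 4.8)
The plan is to verify the four conditions of Proposition~\ref{fib1} for $F=K(\mathcal{A})$. Conditions $1)$ and $2)$ are immediate. Since $|-|$ is the fibrant realization of bisimplicial sets, each $K(\mathcal{A})(\mathcal{B})=|N.wS_{\bullet}\mathsf{rep}_{mor}(\mathcal{B},\mathcal{A})|$ is a Kan complex, canonically pointed by the basepoint coming from $S_0\mathsf{rep}_{mor}(\mathcal{B},\mathcal{A})=0$, which gives $1)$; and since $\emptyset_c^{op}\otimes\mathcal{A}$ is the empty dg category, $\mathsf{rep}_{mor}(\emptyset,\mathcal{A})$ is the trivial category with one object (the zero bimodule), so $K(\mathcal{A})(\emptyset)=|N.wS_{\bullet}(\ast)|$ is contractible, which gives $2)$.

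For condition $3)$ I would use that $\mathsf{rep}_{mor}(\mathcal{B},\mathcal{A})$, \emph{together with its Waldhausen structure} (cofibrations the cofibrations of bimodules, weak equivalences the quasi-isomorphisms), depends on $\mathcal{B}$ only through the derived Morita class of $\mathcal{B}_c^{op}\otimes\mathcal{A}$: a Morita equivalence $\mathcal{B}\stackrel{\sim}{\to}\mathcal{B}'$ in $\mathsf{dgcat}_f$ induces a quasi-equivalence $\mathcal{B}_c\to\mathcal{B}'_c$ of cofibrant resolutions, hence (working over the field $k$) a quasi-equivalence $\mathcal{B}_c^{op}\otimes\mathcal{A}\to\mathcal{B}'_c{}^{op}\otimes\mathcal{A}$; the associated restriction functor $\mathsf{rep}_{mor}(\mathcal{B}',\mathcal{A})\to\mathsf{rep}_{mor}(\mathcal{B},\mathcal{A})$ is then an exact functor which is an equivalence on the underlying triangulated homotopy categories, and by invariance of Waldhausen $K$-theory under such derived Morita equivalences it induces a weak equivalence $K(\mathcal{A})(\mathcal{B}')\stackrel{\sim}{\to}K(\mathcal{A})(\mathcal{B})$ in $Sset_{\bullet}$.

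Condition $4)$ is the heart of the matter, and I expect it to be the main obstacle. Given a split short exact sequence $0\to\mathcal{B}'\stackrel{i}{\to}\mathcal{B}\stackrel{P}{\to}\mathcal{B}''\to 0$ in $\mathsf{dgcat}_f$, with $R$ right adjoint to $i$ and $j$ right adjoint to $P$, restriction of bimodules along $P,i,R,j$ yields exact functors $P^{\ast}\colon\mathsf{rep}_{mor}(\mathcal{B}'',\mathcal{A})\to\mathsf{rep}_{mor}(\mathcal{B},\mathcal{A})$, $i^{\ast}\colon\mathsf{rep}_{mor}(\mathcal{B},\mathcal{A})\to\mathsf{rep}_{mor}(\mathcal{B}',\mathcal{A})$ together with $R^{\ast},j^{\ast}$, satisfying $i^{\ast}P^{\ast}=0$, $j^{\ast}P^{\ast}=\mathrm{Id}$, $i^{\ast}R^{\ast}=\mathrm{Id}$ (from $Pi=0$, $Pj=\mathrm{Id}$, $Ri=\mathrm{Id}$). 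The genuine work is to check that these four restriction functors preserve the Waldhausen structure, i.e. preserve perfectness (representability) and cofibrancy of bimodules, cofibrations and quasi-isomorphisms, and that the recollement of module categories attached to the upper triangular dg category of Section~\ref{splitex} equips every object $N$ of $\mathsf{rep}_{mor}(\mathcal{B},\mathcal{A})$ with a \emph{functorial} cofiber sequence $P^{\ast}j^{\ast}N\to N\to N''$ with $N''$ restricting to $i^{\ast}N$. This identifies $\mathsf{rep}_{mor}(\mathcal{B},\mathcal{A})$, as a Waldhausen category, with the category of cofiber sequences whose sub-object lies in $\mathsf{rep}_{mor}(\mathcal{B}'',\mathcal{A})$ and whose quotient lies in $\mathsf{rep}_{mor}(\mathcal{B}',\mathcal{A})$, compatibly with $j^{\ast}$ and $i^{\ast}$. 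Waldhausen's additivity theorem \cite{Wald} then produces a weak equivalence $K(\mathcal{A})(\mathcal{B})\stackrel{\sim}{\to}K(\mathcal{A})(\mathcal{B}'')\times K(\mathcal{A})(\mathcal{B}')$ under which $P^{\ast}$ becomes the inclusion of the first factor and $i^{\ast}$ the projection onto the second; hence $K(\mathcal{A})(\mathcal{B}'')\to K(\mathcal{A})(\mathcal{B})\to K(\mathcal{A})(\mathcal{B}')$ is a homotopy fiber sequence in $Sset$, which establishes $4)$ and with it the proposition. The only delicate points are the stability of the defining conditions of $\mathsf{rep}_{mor}$ (perfectness and cofibrancy) under the four restriction functors and the construction of the functorial cofiber sequence; everything else is a formal consequence of Proposition~\ref{fib1} together with additivity.
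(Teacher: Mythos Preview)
Your proposal is correct and follows the same scaffold as the paper---verifying conditions $1)$--$4)$ of Proposition~\ref{fib1}---but diverges from it in the treatment of condition $4)$. The paper does not use the additivity theorem; instead it applies Waldhausen's \emph{fibration theorem} \cite[1.6.4]{Wald}. Concretely, the paper first observes that applying $\mathsf{rep}_{mor}(?,\mathcal{A})$ to the split short exact sequence yields another split short exact sequence, then endows $\mathsf{Z}^0(\mathsf{rep}_{mor}(\mathcal{B},\mathcal{A}))$ with two Waldhausen structures: the ordinary one $v$ (weak equivalences are maps with contractible cone) and a coarser one $w$ (weak equivalences are maps whose cone lies in $\mathsf{rep}_{mor}(\mathcal{B}'',\mathcal{A})$). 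Since $v\subset w$ and $\mathsf{rep}_{mor}(\mathcal{B},\mathcal{A})^{w}\simeq \mathsf{Z}^0(\mathsf{rep}_{mor}(\mathcal{B}',\mathcal{A}))$, the fibration theorem gives the homotopy fiber sequence directly. Your route via additivity is equally valid and arguably exploits the splitting more transparently: once you identify $\mathsf{rep}_{mor}(\mathcal{B},\mathcal{A})$ with the extension category $E(\mathsf{rep}_{mor}(\mathcal{B}'',\mathcal{A}),\mathsf{rep}_{mor}(\mathcal{B},\mathcal{A}),\mathsf{rep}_{mor}(\mathcal{B}',\mathcal{A}))$, the product decomposition and hence the fiber sequence follow formally. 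The trade-off is that your approach front-loads the recollement/semiorthogonal work (the functorial cofiber sequence $P^{\ast}j^{\ast}N\to N\to N''$ and the stability of perfectness and cofibrancy under all four restriction functors), whereas the paper packages the same underlying geometry into the verification of the hypotheses of the fibration theorem. Note also that the fibration-theorem argument recurs later in the paper (in the proof of Theorem~\ref{fibres}) for the iterated $S_{\bullet}$-construction, so the paper's choice has the benefit of uniformity.
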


\begin{proof}
Observe that $K(\mathcal{A})$ satisfies conditions $(1)$-$(3)$ of proposition~\ref{fib1}. We now
prove that Waldhausen's fibration theorem \cite[1.6.4]{Wald} implies condition
$(4)$. Apply the contravariant functor
$\mathsf{rep}_{mor}(?,\mathcal{A})$ to the split short exact sequence
$$ 
\xymatrix{
0 \ar[r] & \mathcal{B}' \ar[r]_{i_{\mathcal{B}'}} & \mathcal{B} \ar@<-1ex>[l]_R
\ar[r]_P & \mathcal{B}'' \ar@<-1ex>[l]_{i_{\mathcal{B}''}} \ar[r] & 0 
}
$$
and obtain a split short exact sequence
$$ 
\xymatrix{
0 \ar[r] & \mathsf{rep}_{mor}(\mathcal{B}'',\mathcal{A}) \ar[r] & \mathsf{rep}_{mor}(\mathcal{B},\mathcal{A}) \ar@<-1ex>[l]
\ar[r] & \mathsf{rep}_{mor}(\mathcal{B}',\mathcal{A}) \ar@<-1ex>[l] \ar[r] & 0 \,.
}
$$
Now consider the Waldhausen category
$v\mathsf{rep}_{mor}(\mathcal{B},\mathcal{A}):=\mathsf{Z}^0(\mathsf{rep}_{mor}(\mathcal{B},\mathcal{A}))$, where the weak equivalences
are the morphisms $f$ such that $\mathsf{cone}(f)$ is
contractible. Consider also the Waldhausen category $w\mathsf{rep}_{mor}(\mathcal{B},\mathcal{A})$,
which has the same cofibrations as $v\mathsf{rep}_{mor}(\mathcal{B},\mathcal{A})$ but the weak
equivalences are the morphisms $f$ such that $\mathsf{cone}(f)$
belongs to $\mathsf{rep}_{mor}(\mathcal{B}'',\mathcal{A})$. Observe that we have the inclusion
$v\mathsf{rep}_{mor}(\mathcal{B},\mathcal{A}) \subset w\mathsf{rep}_{mor}(\mathcal{B},\mathcal{A})$ and an equivalence $\mathsf{rep}_{mor}(\mathcal{B},\mathcal{A})^w
\simeq \mathsf{Z}^0(\mathsf{rep}_{mor}(\mathcal{B}',\mathcal{A}))$, see section $1.6$ from
\cite{Wald}. The conditions of theorem $1.6.4$ from \cite{Wald}
are satisfied and so we have a homotopy fiber sequence
$$
|N.wS_{\bullet}\mathsf{rep}_{mor}(\mathcal{B}'',\mathcal{A})| \rightarrow
|N.wS_{\bullet}\mathsf{rep}_{mor}(\mathcal{B},\mathcal{A})| \rightarrow |N.wS_{\bullet} \mathsf{rep}_{mor}(\mathcal{B}',\mathcal{A})| $$
in $Sset$. This proves the proposition.
\end{proof}

Let $p:\Delta \rightarrow e$ be the projection functor.

\begin{proposition}\label{clef}
The objects
$$ 
\begin{array}{ccc}
S^1 \wedge N.w\mathsf{rep}_{mor}(?,\mathcal{A}) & \mbox{and} &
|N.wS_{\bullet} \mathsf{rep}_{mor}(?,\mathcal{A})|=K(\mathcal{A})
\end{array}
$$
are canonically isomorphic in $\mathsf{Ho}(\mathsf{L}_{\widetilde{\mathcal{E}^s_{un}}} \mathsf{L}_{\Sigma,P}\mathsf{Fun}(\mathsf{dgcat}_f^{op},Sset_{\bullet}))$.
\end{proposition}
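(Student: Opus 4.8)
The plan is to realize $K(\mathcal{A})$ as the value $p_!\,\mathcal{U}_u(S_{\bullet}\mathcal{A})$ of the homotopy colimit over $\Delta$ of the simplicial object $\mathcal{U}_u(S_{\bullet}\mathcal{A})$ of $\mathcal{M}^{unst}_{dg}(\Delta)$, and then to compute the same homotopy colimit a second way, via the path--object (d\'ecalage) sequence of the $S_{\bullet}$--construction, obtaining $S^1\wedge\mathcal{U}_u(\mathcal{A})=S^1\wedge N.w\mathsf{rep}_{mor}(?,\mathcal{A})$. This mirrors the argument of proposition~\ref{real}, with the localizing motivator replaced by the unstable motivator and the triangle replaced by a homotopy cofiber sequence; in particular it will also re--use that, by proposition~\ref{fibrant}, $K(\mathcal{A})$ is already fibrant in $\mathsf{L}_{\widetilde{\mathcal{E}^s_{un}}}\mathsf{L}_{\Sigma,P}\mathsf{Fun}(\mathsf{dgcat}_f^{op},Sset_{\bullet})$, so that weak equivalences into it compute morphisms in $\mathcal{M}^{unst}_{dg}(e)$.

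First I would set up, exactly as in the proof of proposition~\ref{real}, the sequence
$$ 0 \rightarrow \mathcal{A}_{\bullet} \rightarrow PS_{\bullet}\mathcal{A} \rightarrow S_{\bullet}\mathcal{A} \rightarrow 0 $$
in $\mathsf{HO}(\mathsf{dgcat})(\Delta)$, and observe that for each point $n:e\to\Delta$ its $n$th component is a \emph{split} short exact sequence in $\mathsf{Ho}(\mathsf{dgcat})$ (the inclusion of the constant flag and the projection $Q$ carry the evident retraction and section coming from the d\'ecalage). Since $\mathcal{U}_u$ sends split short exact sequences to homotopy cofiber sequences --- this is the defining localization property of $\mathcal{M}^{unst}_{dg}$, see the construction in section~\ref{quasi} and remark~\ref{fib} --- the conservativity axiom yields a homotopy cofiber sequence
$$ \mathcal{U}_u(\mathcal{A}_{\bullet}) \rightarrow \mathcal{U}_u(PS_{\bullet}\mathcal{A}) \rightarrow \mathcal{U}_u(S_{\bullet}\mathcal{A}) $$
in $\mathcal{M}^{unst}_{dg}(\Delta)$. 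Applying the left adjoint $p_!$, which commutes with homotopy colimits and hence with homotopy cofiber sequences, and using as in proposition~\ref{real} that $p_!\mathcal{U}_u(\mathcal{A}_{\bullet})\cong\mathcal{U}_u(\mathcal{A})$ (constant simplicial object, corollary $18.7.7$ of \cite{Hirschhorn}) and $p_!\mathcal{U}_u(PS_{\bullet}\mathcal{A})\cong\ast$ ($PS_{\bullet}\mathcal{A}$ is a contractible simplicial object), one obtains a canonical isomorphism
$$ p_!\mathcal{U}_u(S_{\bullet}\mathcal{A}) \;\cong\; \mathsf{cone}\bigl(\mathcal{U}_u(\mathcal{A})\to\ast\bigr) \;=\; S^1\wedge\mathcal{U}_u(\mathcal{A}) $$
in $\mathcal{M}^{unst}_{dg}(e)$, where $\mathcal{U}_u(\mathcal{A})$ is represented by $N.w\mathsf{rep}_{mor}(?,\mathcal{A})$ by the description of $\Phi\circ\mathbb{R}\underline{h}$ given in section~\ref{quasi}.

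Second I would identify $p_!\mathcal{U}_u(S_{\bullet}\mathcal{A})$ with $K(\mathcal{A})$ at the level of simplicial presheaves. The object $\mathcal{U}_u(S_{\bullet}\mathcal{A})$ of $\mathcal{M}^{unst}_{dg}(\Delta)$ is the image of the simplicial presheaf $[n]\mapsto N.w\mathsf{rep}_{mor}(?,S_n\mathcal{A})$; since $S_n\mathcal{A}$ is built from $\mathcal{A}$ by the staircase construction and $\mathsf{rep}_{mor}(\mathcal{B},-)$ is compatible with this construction, there is a natural weak equivalence $N.w\mathsf{rep}_{mor}(?,S_n\mathcal{A})\simeq N.wS_n\,\mathsf{rep}_{mor}(?,\mathcal{A})$, functorial in $[n]$. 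Because homotopy colimits in $\mathsf{Fun}(\mathsf{dgcat}_f^{op},Sset_{\bullet})$ are computed objectwise, and because $|-|$ is precisely the $\Delta$--indexed homotopy colimit (the fibrant realization) of the associated bisimplicial set, one gets $p_!\mathcal{U}_u(S_{\bullet}\mathcal{A})(\mathcal{B})\cong |N.wS_{\bullet}\mathsf{rep}_{mor}(\mathcal{B},\mathcal{A})|=K(\mathcal{A})(\mathcal{B})$, naturally in $\mathcal{B}$. Combined with the first step this gives the asserted canonical isomorphism $S^1\wedge N.w\mathsf{rep}_{mor}(?,\mathcal{A})\cong K(\mathcal{A})$ in $\mathsf{Ho}(\mathsf{L}_{\widetilde{\mathcal{E}^s_{un}}}\mathsf{L}_{\Sigma,P}\mathsf{Fun}(\mathsf{dgcat}_f^{op},Sset_{\bullet}))$.

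The main obstacle I expect is the compatibility of $\mathsf{rep}_{mor}(\mathcal{B},-)$ with Waldhausen's $S_{\bullet}$--construction, i.e. the natural equivalence $\mathsf{rep}_{mor}(\mathcal{B},S_n\mathcal{A})\simeq S_n\,\mathsf{rep}_{mor}(\mathcal{B},\mathcal{A})$: one must check that passing to $\mathcal{B}$--parametrized compact cofibrant bimodules preserves the cofiber/quotient structure defining $S_n$, so that the nerves of the categories of homotopy equivalences coincide. A secondary, more routine point --- handled just as in proposition~\ref{real} --- is the bookkeeping that identifies $p_!$ of a simplicial object of the presheaf derivator with the fibrant realization $|-|$ of the corresponding bisimplicial presheaf, which uses only that homotopy colimits are objectwise and the standard comparison between the $\Delta$--indexed homotopy colimit and geometric realization.
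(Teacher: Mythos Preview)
Your proposal is correct and follows essentially the same approach as the paper: set up the path--object sequence $\mathcal{A}_{\bullet}\to PS_{\bullet}\mathcal{A}\to S_{\bullet}\mathcal{A}$, verify that each level is a \emph{split} short exact sequence (the paper writes out the section $S_n$ and retraction $R_n$ explicitly), use the defining property of $\mathcal{M}^{unst}_{dg}$ together with conservativity to obtain a homotopy cofiber sequence in $\mathcal{M}^{unst}_{dg}(\Delta)$, apply $p_!$, and identify the three terms as $\mathcal{U}_u(\mathcal{A})$, $\ast$, and $K(\mathcal{A})$ respectively. The identification $p_!(N.w\,\mathsf{rep}_{mor}(?,S_{\bullet}\mathcal{A}))\simeq |N.wS_{\bullet}\mathsf{rep}_{mor}(?,\mathcal{A})|$ that you flag as the main obstacle is simply asserted in the paper without further comment, and the paper closes by noting that $N.w\,\mathsf{rep}_{mor}(?,\mathcal{A})$ is cofibrant, so that the cofiber is literally $S^1\wedge\mathcal{U}_u(\mathcal{A})$.
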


\begin{proof}
As in \cite[3.3]{MacCarthy}, we consider the sequence in $\mathsf{HO}(\mathsf{dgcat})(\Delta)$
$$ \xymatrix{0 \ar[r] & \mathcal{A}_{\bullet} \ar[r]^-I &
PS_{\bullet}\mathcal{A} \ar[r]^Q &  S_{\bullet}\mathcal{A} \ar[r]& 0} \,,
$$
where $\mathcal{A}_{\bullet}$ denotes the constant simplicial dg
category with value $\mathcal{A}$ and $PS_{\bullet}\mathcal{A}$ the
path object of $S_{\bullet}\mathcal{A}$. By applying the morphism of
derivators $\mathcal{U}_u$ to this sequence, we obtain the canonical
morphism
$$ S_I: \mbox{cone} (\mathcal{U}_u(\mathcal{A}_{\bullet}
\stackrel{I}{\rightarrow} PS_{\bullet}\mathcal{A})) \rightarrow
\mathcal{U}_u(S_{\bullet}\mathcal{A})$$
in $\mathcal{M}^{unst}_{dg}(\Delta)$. We now prove that for each point
$n:e \rightarrow \Delta$, the $n$th component of $S_I$ is an
isomorphism in $\mathsf{L}_{\widetilde{\mathcal{E}^s_{un}}}
\mathsf{L}_{\Sigma,P}
\mathsf{Fun}(\mathsf{dgcat}_f^{op},Sset_{\bullet})$. For each point $n:e
\rightarrow \Delta$, we have a split short exact sequence in
$\mathsf{Ho}(\mathsf{dgcat})$:
$$
\xymatrix{
0 \ar[r] & \mathcal{A} \ar[r]_-{I_n} & PS_n\mathcal{A}=S_{n+1}\mathcal{A} \ar@<-1ex>[l]_-{R_n}
\ar[r]_-{Q_n} & S_n\mathcal{A} \ar@<-1ex>[l]_-{S_n} \ar[r] & 0 \,,
}
$$
where $I_n$ maps $A \in \mathcal{A}$ to the constant sequence
$$
0 \rightarrow A \stackrel{Id}{\rightarrow} A \stackrel{Id}{\rightarrow}
\cdots \stackrel{Id}{\rightarrow} A \,,$$
$Q$ maps a sequence
$$ 0 \rightarrow A_0 \rightarrow A_1 \rightarrow \cdots \rightarrow
A_n$$
to
$$ A_1/A_0 \rightarrow \cdots \rightarrow A_n/A_0\,,$$
$S_n$ maps a sequence
$$ 0 \rightarrow A_0 \rightarrow A_1 \rightarrow \cdots \rightarrow
A_{n-1}$$
to
$$ 0 \rightarrow 0 \rightarrow A_0 \rightarrow \cdots \rightarrow
A_{n-1}$$
and $R_n$ maps a sequence
$$ 0 \rightarrow A_0 \rightarrow A_1 \rightarrow \cdots \rightarrow
A_{n-1}$$
to $A_0$. Now, by construction of
$\mathsf{L}_{\widetilde{\mathcal{E}^s_{un}}} \mathsf{L}_{\Sigma,P}
\mathsf{Fun}(\mathsf{dgcat}_f^{op},Sset_{\bullet})$ the canonical
morphisms
$$ S_{I_n}: \mbox{cone}(\mathcal{U}_u (\mathcal{A}
\stackrel{I_n}{\rightarrow} PS_n\mathcal{A})) \rightarrow
\mathcal{U}_u(S_n\mathcal{A}),\,\, n \in \mathbb{N} $$
are isomorphisms in $\mathcal{M}_{dg}^{unst}(e)$. Since homotopy
colimits in $\mathsf{L}_{\widetilde{\mathcal{E}^s_{un}}} \mathsf{L}_{\Sigma,P}
\mathsf{Fun}(\mathsf{dgcat}_f^{op},Sset_{\bullet})$ are calculated
objectwise the $n$th component of $S_I$ identifies with $S_{I_n}$ and
so by the conservativity axiom $S_I$ is an isomorphism in
$\mathcal{M}_{dg}^{unst}(\Delta)$. This implies that we obtain the
homotopy cocartesian square
$$
\xymatrix{
p_!\mathcal{U}_u(\mathcal{A}_{\bullet}) \ar[d] \ar[r] &
p_!(\mathcal{U}_u(PS_{\bullet}\mathcal{A})) \ar[d] \\
\ast \ar[r] & p_!(\mathcal{U}_u(S_{\bullet}\mathcal{A}))\,.
}
$$
As in the proof of proposition~\ref{real}, we show that
$p_!\mathcal{U}_u(\mathcal{A}_{\bullet})$ identifies with $N.w
\mathsf{rep}_{mor}(?,\mathcal{A})=\mathcal{U}_u(\mathcal{A})$ and that
$p_!(\mathcal{U}_u(PS_{\bullet}\mathcal{A}))$ is contractible. Since
we have the equivalence
$$p_!(\mathcal{U}_u(S_{\bullet}\mathcal{A})) = p_!(N.w
\mathsf{rep}_{mor}(?,S_{\bullet}\mathcal{A}))
\stackrel{\sim}{\rightarrow} |N.wS_{\bullet} \mathsf{rep}_{mor}(?,\mathcal{A})|$$
and $N.w\mathsf{rep}_{mor}(?,\mathcal{A})$ is cofibrant in  $\mathsf{L}_{\widetilde{\mathcal{E}^s_{un}}} \mathsf{L}_{\Sigma,P}
\mathsf{Fun}(\mathsf{dgcat}_f^{op},Sset_{\bullet})$ the proposition is proven.
\end{proof}

\begin{proposition}\label{k-theoryunst}
We have the following weak equivalence of simplicial sets
$$ \mathsf{Map}(\mathcal{U}_u(k),S^1 \wedge \mathcal{U}_u(\mathcal{A}))
\stackrel{\sim}{\rightarrow} |N.wS_{\bullet}\mathcal{A}_f|$$
in $\mathsf{L}_{\widetilde{\mathcal{E}^s_{un}}} \mathsf{L}_{\Sigma,P}
\mathsf{Fun}(\mathsf{dgcat}_f^{op}, Sset_{\bullet})$. In particular, we
have the following isomorphisms
$$\pi_{i+1}
\mathsf{Map}(\mathcal{U}_u(k),S^1 \wedge \mathcal{U}_u(\mathcal{A}))
\stackrel{\sim}{\rightarrow} K_i(\mathcal{A}), \, \,\, \forall i \geq
0\,.$$
\end{proposition}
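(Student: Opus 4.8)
The plan is to compute the left-hand homotopy function complex by replacing the target with the fibrant object $K(\mathcal{A})$ and exhibiting $\mathcal{U}_u(k)$ as a free pointed representable presheaf, so that mapping out of it becomes evaluation at $k$. First I would use proposition~\ref{clef} to identify $S^1\wedge\mathcal{U}_u(\mathcal{A})$ with $K(\mathcal{A})=|N.wS_{\bullet}\mathsf{rep}_{mor}(?,\mathcal{A})|$ in $\mathsf{Ho}(\mathsf{L}_{\widetilde{\mathcal{E}^s_{un}}}\mathsf{L}_{\Sigma,P}\mathsf{Fun}(\mathsf{dgcat}_f^{op},Sset_{\bullet}))$, and proposition~\ref{fibrant} to know that $K(\mathcal{A})$ is fibrant there. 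Consequently $\mathsf{Map}(\mathcal{U}_u(k),S^1\wedge\mathcal{U}_u(\mathcal{A}))$ is weakly equivalent to $\mathsf{Map}(\mathcal{U}_u(k),K(\mathcal{A}))$; and since $K(\mathcal{A})$ is local-fibrant and the successive left Bousfield localizations do not alter the cofibrations, this function complex may already be computed in $\mathsf{Fun}(\mathsf{dgcat}_f^{op},Sset_{\bullet})$ endowed with its projective simplicial model structure.

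Next I would identify the source. As $k$ is a strict finite $I$-cell object of $\mathsf{dgcat}$, it lies in $\mathsf{dgcat}_f$, so by remark~\ref{funcpont} and lemma~\ref{2-mor} the object $\mathbb{R}\underline{h}(k)$ agrees, in the homotopy category, with the localized representable $\mathsf{Ho}(h)(k)$; pushing this through the pointification of proposition~\ref{point} and the localizations $\Phi$ and $\Psi$, which are given by identity functors and therefore carry a cofibrant object to a cofibrant object, one sees that $\mathcal{U}_u(k)$ admits as a cofibrant model the free pointed simplicial presheaf $h(k)_{+}=\mathsf{Hom}_{\mathsf{dgcat}_f}(?,k)_{+}$ on the representable at $k$. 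For any fibrant $F$ in $\mathsf{Fun}(\mathsf{dgcat}_f^{op},Sset_{\bullet})$ the enriched Yoneda lemma gives, level by level, a natural isomorphism of simplicial sets between the simplicial mapping object $\underline{\mathsf{Hom}}(h(k)_{+},F)$ and $F(k)$, and this simplicial set is a homotopy function complex because $h(k)_{+}$ is cofibrant and $F$ is fibrant. Taking $F=K(\mathcal{A})$ yields
$$\mathsf{Map}(\mathcal{U}_u(k),S^1\wedge\mathcal{U}_u(\mathcal{A}))\stackrel{\sim}{\longrightarrow}K(\mathcal{A})(k)=|N.wS_{\bullet}\mathsf{rep}_{mor}(k,\mathcal{A})|\,.$$

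It then remains to recognize the right-hand side. By its very definition $\mathsf{rep}_{mor}(k,\mathcal{A})$ is the dg category of compact cofibrant right $\mathcal{A}$-modules, so the Yoneda dg functor $\mathcal{A}\to\mathsf{rep}_{mor}(k,\mathcal{A})$ realizes $\mathsf{rep}_{mor}(k,\mathcal{A})$ as a Morita fibrant resolution of $\mathcal{A}$; since $\mathcal{A}$ is already Morita fibrant this functor is a quasi-equivalence and identifies $\mathsf{rep}_{mor}(k,\mathcal{A})$ with $\mathcal{A}_f$ up to quasi-equivalence. Because $N.wS_{\bullet}$ sends quasi-equivalences of dg categories to weak equivalences, we get $|N.wS_{\bullet}\mathsf{rep}_{mor}(k,\mathcal{A})|\simeq|N.wS_{\bullet}\mathcal{A}_f|$, which is the asserted weak equivalence. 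The final isomorphisms are then immediate from the definition $K_i(\mathcal{A})=\pi_{i+1}|N.wS_{\bullet}\mathcal{A}_f|$ of Waldhausen's connective $K$-groups, $i\geq0$.

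The step I expect to be the main obstacle, deserving care rather than computation, is the identification in the second paragraph: one must check that $\mathcal{U}_u(k)$ really is weakly equivalent to the cofibrant representable $h(k)_{+}$ in a way compatible with every intervening Bousfield localization, so that the homotopy function complex out of it into the local-fibrant object $K(\mathcal{A})$ genuinely reduces to evaluation at $k$.
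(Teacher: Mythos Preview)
Your proposal is correct and follows essentially the same route as the paper: use proposition~\ref{clef} to replace $S^1\wedge\mathcal{U}_u(\mathcal{A})$ by $K(\mathcal{A})$, use proposition~\ref{fibrant} to know $K(\mathcal{A})$ is fibrant, identify $\mathcal{U}_u(k)$ with the representable $\mathbb{R}\underline{h}(k)$ (equivalently $h(k)_+$) so that mapping out becomes evaluation at $k$, and then recognize $K(\mathcal{A})(k)=|N.wS_{\bullet}\mathsf{rep}_{mor}(k,\mathcal{A})|\simeq|N.wS_{\bullet}\mathcal{A}_f|$. The paper compresses all of this into three displayed equivalences, whereas you have spelled out the justification for each step, including the point you flag as the main obstacle; your expanded treatment is a faithful unpacking of the paper's terse argument.
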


\begin{proof}
This follows from propositions \ref{fibrant}, \ref{clef} and from the
following weak equivalences
$$
\begin{array}{rcl}
\mathsf{Map}(\mathcal{U}_u(k),S^1 \wedge \mathcal{U}_u(\mathcal{A})) & \simeq
& \mathsf{Map}(\mathbb{R}\underline{h}(k), K(\mathcal{A}))\\
 & \simeq & (K(\mathcal{A}))(k)\\
 & \simeq & |N.wS_{\bullet} \mathcal{A}_f|\,.
\end{array}
$$
\end{proof}
\section{The universal additive invariant}\label{univadit}
Consider the Quillen model category
$\mathsf{L}_{\widetilde{\mathcal{E}^s_{un}}} \mathsf{L}_{\Sigma,P}
\mathsf{Fun}(\mathsf{dgcat}_f^{op}, Sset_{\bullet})$ constructed in the
previous section. The definition of the
  set $\widetilde{\mathcal{E}^s_{un}}$ and the same arguments as those
  of example~\ref{exem} and example~\ref{ex2} allows us to conclude
  that $\mathsf{L}_{\widetilde{\mathcal{E}^s_{un}}} \mathsf{L}_{\Sigma,P}
\mathsf{Fun}(\mathsf{dgcat}_f^{op},Sset_{\bullet})$ satisfies the
  conditions of theorem~\ref{repre}. In particular we have an
  equivalence of triangulated derivators
$$ \mathsf{St}(\mathcal{M}_{dg}^{unst}) \stackrel{\sim}{\rightarrow}
\mathsf{HO}(\mathsf{Sp}^{\mathbb{N}}(\mathsf{L}_{\widetilde{\mathcal{E}^s_{un}}}
\mathsf{L}_{\Sigma,P}
\mathsf{Fun}(\mathsf{dgcat}_f^{op}, Sset_{\bullet})))\,.
$$

\begin{definition}
\begin{itemize}
\item[-] The {\em Additive motivator} of dg categories $\mathcal{M}^{add}_{dg}$ is the triangulated
derivator associated with the stable Quillen model category 
$$ \mathsf{Sp}^{\mathbb{N}}(\mathsf{L}_{\widetilde{\mathcal{E}^s_{{un}}}}\mathsf{L}_{\Sigma,P}
\mathsf{Fun}(\mathsf{dgcat}_f^{op},Sset_{\bullet}))\,.$$
\item[-] The {\em Universal additive invariant} of dg categories is the canonical morphism of
  derivators $$ \mathcal{U}_a : \mathsf{HO}(\mathsf{dgcat}) \rightarrow \mathcal{M}^{add}_{dg}\,.$$
\end{itemize}
\end{definition}

\begin{remark}
Observe that remark~\ref{fib} and remark~\ref{important} imply that
$\mathcal{M}^{add}_{dg}$ is a compactly generated triangulated derivator.
\end{remark}

We sum up the construction of $\mathcal{M}^{add}_{dg}$ in the following diagram

$$
\xymatrix{
\underline{\mathsf{dgcat}_f}[S^{-1}] \ar[r] \ar[d]_{\mathsf{Ho}(h)} &
\mathsf{HO}(\mathsf{dgcat}) \ar[dl]^{\mathbb{R}\underline{h}}
\ar@/^3pc/[ddddl]^{\mathcal{U}_a}  \ar@/^2pc/[dddl]_{\mathcal{U}_u}
 \\
\mathsf{L}_{\Sigma}\mathsf{Hot}_{\mathsf{dgcat}_f}
\ar[d]_{\Phi}  \ar@<1ex>[ur]^{\mathbb{L}Re}  & \\
{\mathsf{L}_{\Sigma,P}\mathsf{Hot}_{\mathsf{dgcat}_f}}_{\bullet}
\ar[d]_{\Psi} & \\
\mathcal{M}_{dg}^{unst}
\ar[d]_{\varphi} & \\
\mathcal{M}^{add}_{dg} & 
}
$$
Observe that the morphism of derivators $\mathcal{U}_a$ is pointed, commutes with filtered
homotopy colimits and satisfies the following condition :

\begin{itemize}
\item[A)] For every split short exact sequence
$$ 
\xymatrix{
0 \ar[r] & \mathcal{A} \ar[r]_{i_{\mathcal{A}}} & \mathcal{B} \ar@<-1ex>[l]_R
\ar[r]_P & \mathcal{C} \ar@<-1ex>[l]_{i_{\mathcal{C}}} \ar[r] & 0 
}
$$
in $\mathsf{Ho}(\mathsf{dgcat})$, we have a split triangle
$$  
\xymatrix{ 
\mathcal{U}_a(\mathcal{A}) \ar[r]_{i_{\mathcal{A}}} & \mathcal{U}_a(\mathcal{B}) \ar@<-1ex>[l]_R
\ar[r]_P & \mathcal{U}_a(\mathcal{C}) \ar@<-1ex>[l]_{i_{\mathcal{C}}} \ar[r] & \mathcal{U}_a(\mathcal{A})[1] 
}
$$
in $\mathcal{M}^{add}_{dg}(e)$. (This implies that the dg functors
$i_{\mathcal{A}}$ and $i_{\mathcal{C}}$ induce an isomorphism
$$ \mathcal{U}_a(\mathcal{A}) \oplus \mathcal{U}_a(\mathcal{C})
\stackrel{\sim}{\rightarrow} \mathcal{U}_a(\mathcal{B})$$
in $\mathcal{M}^{add}_{dg}(e)$).
\end{itemize}

\begin{remark}
Since the dg category $\mathcal{B}$ in the above split short exact
sequence is Morita equivalent to the dg category
$E(\mathcal{A},\mathcal{B},\mathcal{C})$, see section $1.1$ from
\cite{Wald}, condition $A)$ is equivalent to the additivity property
stated by Waldhausen in \cite[1.3.2]{Wald}.
\end{remark}

Let $\mathbb{D}$ be a strong triangulated derivator.
\begin{theorem}\label{principal1}
The morphism $\mathcal{U}_a$ induces an equivalence of categories
$$
\underline{\mathsf{Hom}}_!(\mathcal{M}^{add}_{dg},
\mathbb{D}) \stackrel{\mathcal{U}_a^{\ast}}{\longrightarrow}
\underline{\mathsf{Hom}}_{flt,\,A),\,p}(\mathsf{HO}(\mathsf{dgcat}),\mathbb{D})\,,$$
where
$\underline{\mathsf{Hom}}_{flt,\,A)\,, p}(\mathsf{HO}(\mathsf{dgcat}),\mathbb{D})$
denotes the category of morphisms of derivators which commute with filtered
homotopy colimits, satisfy condition $A)$ and preserve the point.
\end{theorem}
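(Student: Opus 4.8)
The plan is to mirror, one layer at a time, the proof of theorem~\ref{principal}: we successively peel off the three steps used to build $\mathcal{M}^{add}_{dg}$ out of $\mathsf{HO}(\mathsf{dgcat})$ --- stabilization, left Bousfield localization at $\widetilde{\mathcal{E}^s_{un}}$, and the passage to ${\mathsf{L}_{\Sigma,P}\mathsf{Hot}_{\mathsf{dgcat}_f}}_{\bullet}$ --- and at each step invoke the universal property already established for it. The only structural difference with theorem~\ref{principal} is that here the localization is carried out \emph{before} the stabilization, so the roles of theorems~\ref{HellerT} and~\ref{Cisinsk} in the argument are interchanged, and condition~$A)$ takes the place of condition~Dr).

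Concretely, I would first use the equivalence of triangulated derivators $\varphi:\mathsf{St}(\mathcal{M}^{unst}_{dg})\stackrel{\sim}{\to}\mathcal{M}^{add}_{dg}$ recorded at the start of this section --- it comes from theorem~\ref{repre} via the arguments of examples~\ref{exem} and~\ref{ex2}, which show that $\mathsf{L}_{\widetilde{\mathcal{E}^s_{un}}}\mathsf{L}_{\Sigma,P}\mathsf{Fun}(\mathsf{dgcat}_f^{op},Sset_{\bullet})$ meets the hypotheses of that theorem. Since $\mathbb{D}$ is a strong triangulated derivator and $\mathcal{M}^{unst}_{dg}$ is a regular pointed strong derivator, theorem~\ref{HellerT} applied to $\mathcal{M}^{unst}_{dg}$ yields an equivalence
$$\underline{\mathsf{Hom}}_!(\mathcal{M}^{add}_{dg},\mathbb{D})\stackrel{\varphi^{\ast}}{\longrightarrow}\underline{\mathsf{Hom}}_!(\mathcal{M}^{unst}_{dg},\mathbb{D}).$$
By construction $\mathcal{M}^{unst}_{dg}$ is the derivator of the left proper, cellular Bousfield localization $\mathsf{L}_{\widetilde{\mathcal{E}^s_{un}}}\mathsf{L}_{\Sigma,P}\mathsf{Fun}(\mathsf{dgcat}_f^{op},Sset_{\bullet})$, so theorem~\ref{Cisinsk} exhibits $\Psi$ as a left Bousfield localization of ${\mathsf{L}_{\Sigma,P}\mathsf{Hot}_{\mathsf{dgcat}_f}}_{\bullet}$ at $\mathcal{E}^s_{un}$, whence an equivalence
$$\underline{\mathsf{Hom}}_!(\mathcal{M}^{unst}_{dg},\mathbb{D})\stackrel{\Psi^{\ast}}{\longrightarrow}\underline{\mathsf{Hom}}_{!,\mathcal{E}^s_{un}}({\mathsf{L}_{\Sigma,P}\mathsf{Hot}_{\mathsf{dgcat}_f}}_{\bullet},\mathbb{D}),$$
the target being the morphisms commuting with homotopy colimits that send $\mathcal{E}^s_{un}$ to isomorphisms. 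Finally, proposition~\ref{ext} together with remark~\ref{remar} identifies $\underline{\mathsf{Hom}}_!({\mathsf{L}_{\Sigma,P}\mathsf{Hot}_{\mathsf{dgcat}_f}}_{\bullet},\mathbb{D})$ with $\underline{\mathsf{Hom}}_{flt,p}(\mathsf{HO}(\mathsf{dgcat}),\mathbb{D})$ through $\Phi\circ\mathbb{R}\underline{h}$. Composing the three equivalences produces $\mathcal{U}_a^{\ast}$, and the theorem reduces to checking that, under this identification, "sends $\mathcal{E}^s_{un}$ to isomorphisms" is exactly condition~$A)$.

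This last point is the core of the argument and the main obstacle. Given $F\in\underline{\mathsf{Hom}}_{flt,p}(\mathsf{HO}(\mathsf{dgcat}),\mathbb{D})$, let $\overline{F}$ be the associated morphism out of ${\mathsf{L}_{\Sigma,P}\mathsf{Hot}_{\mathsf{dgcat}_f}}_{\bullet}$ commuting with homotopy colimits. If $\overline{F}$ inverts the morphisms $S_L$ for $L\in\mathcal{E}^s$, then the variants of proposition~\ref{cons} and theorem~\ref{invert} pointed out in remark~\ref{fib} --- with $\mathcal{E}^s$ and retractions replacing $\mathcal{E}$ and inclusions of full dg subcategories --- show that $\overline{F}$ inverts $S_K$ for every retraction $K$ of dg categories; combining this with proposition~\ref{aproxsplit} and the fact that $F$ commutes with filtered homotopy colimits, one gets that $F$ sends every split short exact sequence $0\to\mathcal{A}\to\mathcal{B}\to\mathcal{C}\to 0$ to a homotopy cofiber sequence in $\mathbb{D}$. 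As $\mathbb{D}$ is triangulated, such a cofiber sequence, together with the sections supplied by the dg functors $R$ and $i_{\mathcal{C}}$, is precisely a split triangle, so $F$ satisfies condition~$A)$; the converse is the same reasoning read backwards, using that condition~$A)$ forces each $S_K$, for $K$ a retraction with strict finite $I$-cell components, to be invertible. I expect most of the care to go into this translation --- matching retractions with the split short exact sequences they determine via the totalization/upper triangular correspondence of section~\ref{splitex}, and verifying that the reduction to strict finite $I$-cell components given by proposition~\ref{aproxsplit} is compatible with condition~$A)$.
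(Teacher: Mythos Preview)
Your proposal is correct and follows essentially the same approach as the paper: the paper's proof also invokes theorem~\ref{HellerT} to pass from $\mathcal{M}^{add}_{dg}$ to $\mathcal{M}^{unst}_{dg}$, then theorem~\ref{Cisinsk} to pass to $\underline{\mathsf{Hom}}_{!,\mathcal{E}^s_{un}}({\mathsf{L}_{\Sigma,P}\mathsf{Hot}_{\mathsf{dgcat}_f}}_{\bullet},\mathbb{D})$, and finally asserts the identification with $\underline{\mathsf{Hom}}_{flt,\,A),\,p}(\mathsf{HO}(\mathsf{dgcat}),\mathbb{D})$. Where the paper simply writes ``observe that since $\mathbb{D}$ is a strong triangulated derivator, the category \ldots\ identifies with \ldots'', you have unpacked this step --- invoking remark~\ref{fib} (the split-exact variants of proposition~\ref{cons} and theorem~\ref{invert}) together with proposition~\ref{aproxsplit} and the upper-triangular/totalization correspondence --- which is exactly the content that observation is resting on.
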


\begin{proof}
By theorem~\ref{HellerT}, we have an equivalence of categories
$$
\underline{\mathsf{Hom}}_!(\mathcal{M}^{add}_{dg},
\mathbb{D}) \stackrel{\sim}{\longrightarrow}
\underline{\mathsf{Hom}}_!(\mathcal{M}_{dg}^{unst},\mathbb{D})\,.$$
By theorem~\ref{Cisinsk}, we have an equivalence of categories
$$  \underline{\mathsf{Hom}}_!(\mathcal{M}_{dg}^{unst},\mathbb{D})
\stackrel{\sim}{\longrightarrow} \underline{\mathsf{Hom}}_{!,\mathcal{E}^s_{un}}({\mathsf{L}_{\Sigma,P}
\mathsf{Hot}_{\mathsf{dgcat}_f}}_{\bullet}, \mathbb{D})\,.$$
Now, we observe that since $\mathbb{D}$ is a strong triangulated derivator, the category  $\underline{\mathsf{Hom}}_{!,\mathcal{E}^s_{un}}({\mathsf{L}_{\Sigma,P}
\mathsf{Hot}_{\mathsf{dgcat}_f}}_{\bullet}, \mathbb{D})$ identifies
$\underline{\mathsf{Hom}}_{flt,\,A),\,p}(\mathsf{HO}(\mathsf{dgcat}),\mathbb{D})$. 
This proves the theorem. 
\end{proof}

\begin{notation}
We call an object of the right hand side category of
theorem~\ref{principal1} an {\em additive invariant} of dg categories.
\end{notation}

\begin{example}
\begin{itemize}
\item[-] The Hochschild and cyclic homology and the non-connective
  $K$-theory defined in section~\ref{labuniv} are examples of additive
  invariants.
\item[-] Another example is given by the classical Waldhausen's connective
  $K$-theory spectrum
$$K^c: \mathsf{HO}(\mathsf{dgcat}) \rightarrow \mathsf{HO}(Spt)\,,$$
see \cite{Wald}.
\end{itemize}
\end{example}

\begin{remark}
By theorem~\ref{principal1}, the morphism of derivators $K^c$ factors
through $\mathcal{U}_a$ and so gives rise to a morphism of derivators
$$ K^c:\mathcal{M}^{add}_{dg} \rightarrow \mathsf{HO}(Spt)\,.$$
\end{remark}
We now will prove that this morphism of derivators is co-representable in
$\mathcal{M}^{add}_{dg}$.

Let $\mathcal{A}$ be a small dg category.
\begin{notation}\label{not6}
We denote by $K(\mathcal{A})^c \in \mathsf{Sp}^{\mathbb{N}}(\mathsf{L}_{\widetilde{\mathcal{E}^s_{un}}} \mathsf{L}_{\Sigma,P}
\mathsf{Fun}(\mathsf{dgcat}_f^{op}, Sset_{\bullet}))$ the spectrum such
that
$$ K(\mathcal{A})^c_n:=
|N.wS_{\bullet}^{(n+1)}\mathsf{rep}_{mor}(?,\mathcal{A})|, \, n\geq 0\,,$$
endowed with the natural structure morphisms
$$ \beta_n: S^1\wedge |N.wS_{\bullet}^{(n+1)}\rep_{mor}(?,\ca)| \stackrel{\sim}{\longrightarrow} |N.wS_{\bullet}^{(n+2)}\rep_{mor}(?,\ca)|, \,\, n \geq 0,$$
see \cite{Wald}.
\end{notation}

Notice that $\mathcal{U}_a(\mathcal{A})$ identifies in
$\mathsf{Ho}(\mathcal{M}^{add}_{dg})$ with the suspension spectrum given
by 
$$ (\Sigma^{\infty}|N.w\mathsf{rep}_{mor}(?,\mathcal{A})|)_n := S^n
\wedge |N.w\mathsf{rep}_{mor}(?,\mathcal{A})|\,.$$
Now proposition~\ref{clef} and the fact that the morphism of derivators $\varphi$ commutes with homotopy
colimits implies that we have an isomorphism
$$\mathcal{U}_a(\mathcal{A})[1] \stackrel{\sim}{\rightarrow}
p_!\mathcal{U}_a(S_{\bullet}\mathcal{A})\,.$$
In particular, we have a natural morphism
$$\eta : \mathcal{U}_a(\mathcal{A})[1] \rightarrow
K(\mathcal{A})^c$$
in $\mathsf{Sp}^{\mathbb{N}}(\mathsf{L}_{\widetilde{\mathcal{E}^s_{un}}} \mathsf{L}_{\Sigma,P}
\mathsf{Fun}(\mathsf{dgcat}_f^{op}, Sset_{\bullet}))$ induced by the
identity in degree $0$.

\begin{theorem}\label{fibres}
The morphism $\eta$ is a fibrant resolution of $\mathcal{U}_a(\mathcal{A})[1]$.
\end{theorem}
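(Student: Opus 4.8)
The plan is to show that $\eta:\mathcal{U}_a(\mathcal{A})[1]\to K(\mathcal{A})^c$ becomes an isomorphism in $\mathsf{Ho}(\mathcal{M}^{add}_{dg})$ after replacing the target by a fibrant object, and then to check directly that $K(\mathcal{A})^c$ is already fibrant in the stable model structure on $\mathsf{Sp}^{\mathbb{N}}(\mathsf{L}_{\widetilde{\mathcal{E}^s_{un}}}\mathsf{L}_{\Sigma,P}\mathsf{Fun}(\mathsf{dgcat}_f^{op},Sset_{\bullet}))$. Since the stable model structure on spectra over an almost finitely generated model category is a left Bousfield localization of the levelwise one (Hovey \cite{Spectra}), an object is stably fibrant if and only if it is a levelwise-fibrant $\Omega$-spectrum. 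The two parts of the argument are therefore: (i) each space $K(\mathcal{A})^c_n=|N.wS_{\bullet}^{(n+1)}\mathsf{rep}_{mor}(?,\mathcal{A})|$ is fibrant in $\mathsf{L}_{\widetilde{\mathcal{E}^s_{un}}}\mathsf{L}_{\Sigma,P}\mathsf{Fun}(\mathsf{dgcat}_f^{op},Sset_{\bullet})$; (ii) the structure maps $\beta_n$ are weak equivalences, i.e. the spectrum is an $\Omega$-spectrum; (iii) $\eta$ is a stable weak equivalence.

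For (i), I would apply proposition~\ref{fibrant} iteratively: that proposition shows $K(\mathcal{B}):=|N.wS_{\bullet}\mathsf{rep}_{mor}(?,\mathcal{B})|$ is fibrant in $\mathsf{L}_{\widetilde{\mathcal{E}^s_{un}}}\mathsf{L}_{\Sigma,P}\mathsf{Fun}(\mathsf{dgcat}_f^{op},Sset_{\bullet})$ whenever $\mathcal{B}$ is a Morita fibrant dg category, the key input being Waldhausen's fibration theorem \cite[1.6.4]{Wald} applied to split short exact sequences of the categories $\mathsf{rep}_{mor}(-,\mathcal{B})$. Since $S_{\bullet}\mathcal{A}$ is again a simplicial Morita fibrant dg category (notation~\ref{simplicialK}), the iterated construction $S_{\bullet}^{(n+1)}\mathcal{A}$ is as well, and one checks that $|N.wS_{\bullet}^{(n+1)}\mathsf{rep}_{mor}(?,\mathcal{A})|$ verifies conditions $1)$–$4)$ of proposition~\ref{fib1} by exactly the same reasoning (conditions $1)$–$3)$ are formal, condition $4)$ is again Waldhausen's fibration theorem applied after realization). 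For (ii), I would invoke Waldhausen's fundamental result that the maps $\beta_n:S^1\wedge|N.wS_{\bullet}^{(n+1)}\mathsf{rep}_{mor}(?,\mathcal{A})|\to|N.wS_{\bullet}^{(n+2)}\mathsf{rep}_{mor}(?,\mathcal{A})|$ are weak equivalences — this is the additivity theorem / delooping statement from \cite[1.5.3, 1.6]{Wald}, applied objectwise in $\mathcal{B}\in\mathsf{dgcat}_f$; equivalently, it follows from proposition~\ref{clef} applied with $\mathcal{A}$ replaced by $S_{\bullet}^{(n)}\mathcal{A}$, which gives the canonical isomorphism $S^1\wedge N.w\mathsf{rep}_{mor}(?,S_{\bullet}^{(n)}\mathcal{A})\simeq|N.wS_{\bullet}^{(n+1)}\mathsf{rep}_{mor}(?,\mathcal{A})|$ in the homotopy category. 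Combining (i) and (ii), $K(\mathcal{A})^c$ is a stably fibrant object.

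Finally, for (iii): in degree $0$, $\eta_0$ is by construction the identity on $|N.wS_{\bullet}\mathsf{rep}_{mor}(?,\mathcal{A})|$ up to the isomorphism $\mathcal{U}_a(\mathcal{A})[1]\stackrel{\sim}{\to}p_!\mathcal{U}_a(S_{\bullet}\mathcal{A})$ established just before the statement, which itself rests on proposition~\ref{clef} and the fact that $\varphi$ commutes with homotopy colimits. Since the $n$th structure map of $\mathcal{U}_a(\mathcal{A})[1]=(\Sigma^{\infty}|N.w\mathsf{rep}_{mor}(?,\mathcal{A})|)[1]$ is the canonical suspension map and the $n$th structure map of $K(\mathcal{A})^c$ is $\beta_n$, which is a weak equivalence by (ii), a spectrum with degree-$0$ term $|N.wS_{\bullet}\mathsf{rep}_{mor}(?,\mathcal{A})|$ and weak-equivalence structure maps receives from its suspension spectrum a map inducing an isomorphism on $\mathrm{colim}_n\,\Omega^n(-)_n$; hence $\eta$ is a stable weak equivalence. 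Therefore $\eta$ exhibits the stably fibrant object $K(\mathcal{A})^c$ as a fibrant resolution of $\mathcal{U}_a(\mathcal{A})[1]$, proving the theorem.

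**Main obstacle.** The delicate point is (ii), namely verifying that the $\beta_n$ are genuine weak equivalences \emph{in the localized category} $\mathsf{L}_{\widetilde{\mathcal{E}^s_{un}}}\mathsf{L}_{\Sigma,P}\mathsf{Fun}(\mathsf{dgcat}_f^{op},Sset_{\bullet})$ and not merely objectwise in $Sset_{\bullet}$; here one must be careful that realization $|{-}|$ of the bisimplicial structure interacts correctly with the localization and that proposition~\ref{clef}, which is stated for $\mathcal{U}_u$ rather than directly for these realized presheaves, can be iterated. The remaining steps are essentially bookkeeping on top of proposition~\ref{fibrant}, proposition~\ref{fib1}, proposition~\ref{clef} and Waldhausen's delooping.
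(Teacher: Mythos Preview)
Your proposal is correct and follows essentially the same route as the paper: show that $K(\mathcal{A})^c$ is a levelwise-fibrant $\Omega$-spectrum (hence stably fibrant), and that $\eta$ is a weak equivalence. The inputs you cite --- proposition~\ref{fib1}, proposition~\ref{fibrant}, Waldhausen's fibration and additivity theorems, and the iteration of proposition~\ref{clef} --- are exactly the ones the paper uses.

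Two places where the paper is more explicit than your sketch are worth noting. First, for your ``main obstacle'' (ii), the paper does not simply appeal to Waldhausen's additivity objectwise; that only gives that the adjoint maps $\beta_n^\sharp:K(\mathcal{A})^c_n\to\Omega K(\mathcal{A})^c_{n+1}$ are objectwise equivalences in $Sset_\bullet$, which is what is needed for the $\Omega$-spectrum property but not immediately for $\beta_n$ itself. To show that $\beta_n$ is a weak equivalence \emph{in the localized category}, the paper re-runs the argument of proposition~\ref{clef} with $S_\bullet^{(n+1)}\mathcal{A}$ in place of $\mathcal{A}$ to obtain a homotopy cocartesian square with $K(\mathcal{A})^c_{n+1}$ in the lower-right corner; since this corner is fibrant in the localized category (by part (i)), the square is also homotopy cartesian (via \cite[1.5.3]{Wald}), hence bicartesian, and from bicartesianness both $\beta_n^\sharp$ and $\beta_n$ are equivalences there. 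Your alternative phrasing ``proposition~\ref{clef} applied with $\mathcal{A}$ replaced by $S_\bullet^{(n)}\mathcal{A}$'' is the right idea, but note that one is re-running the \emph{proof} (the cocartesian square), not literally invoking the statement, and the fibrancy of the target is what closes the loop.

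Second, for (iii) the paper proves something slightly stronger than a stable weak equivalence: once the $\beta_n$ are known to be weak equivalences in the localized category, a straightforward induction shows that $\eta$ is a \emph{levelwise} weak equivalence there (the $0$th component is the identity, and the $(n{+}1)$st component is $\beta_n$ composed with the suspension of the $n$th component). Your colimit-of-loops argument also works, but the inductive levelwise statement is cleaner and avoids any appeal to the stable equivalence criterion.
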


\begin{proof}
We prove first that $K(\mathcal{A})^c$ is a fibrant object in $\mathsf{Sp}^{\mathbb{N}}(\mathsf{L}_{\widetilde{\mathcal{E}^s_{un}}} \mathsf{L}_{\Sigma,P}
\mathsf{Fun}(\mathsf{dgcat}_f^{op}, Sset_{\bullet}))$. By \cite{Hovey}\cite{Schwede} we need to show that $K(\mathcal{A})^c$ is an
$\Omega$-spectrum, i.e. that $K(\mathcal{A})^c_n$ is a fibrant object in $\mathsf{L}_{\widetilde{\mathcal{E}^s_{un}}} \mathsf{L}_{\Sigma,P}
\mathsf{Fun}(\mathsf{dgcat}_f^{op}, Sset_{\bullet})$ and that the induced
map
$$ K(\mathcal{A})^c_n \rightarrow \Omega K(\mathcal{A})^c_{n+1}$$ 
is a weak equivalence. By Waldhausen's
additivity theorem, see \cite{Wald}, we have weak equivalences
$$ K(\mathcal{A})^c_n \stackrel{\sim}{\rightarrow}
\Omega K(\mathcal{A})^c_{n+1}$$
in $\mathsf{Fun}(\mathsf{dgcat}_f^{op},Sset_{\bullet})$. Now observe that
for every integer $n$, $K^c(\mathcal{A})_n$ satisfies conditions
$(1)$-$(3)$ of proposition~\ref{fibrant}. Condition $(4)$ follows from
Waldhausen's fibration theorem, as in the proof of
proposition~\ref{clef}, applied to the $S.$-construction. This shows
that $K(\mathcal{A})^c$ is an $\Omega$-spectrum.

We now prove that $\eta$ is a (componentwise) weak equivalence in \newline
$\mathsf{Sp}^{\mathbb{N}}(\mathsf{L}_{\widetilde{\mathcal{E}^s_{un}}} \mathsf{L}_{\Sigma,P}
\mathsf{Fun}(\mathsf{dgcat}_f^{op}, Sset_{\bullet}))$. For this, we prove
first that the structural morphisms
$$ \beta_n: S^1 \wedge N.wS_{\bullet}^{(n+1)}\mathsf{rep}_{mor}(?,\mathcal{A})
\stackrel{\sim}{\longrightarrow}
|N.wS_{\bullet}^{(n+2)}\mathsf{rep}_{mor}(?,\mathcal{A})|,\, n\geq
0\,,$$ 
see notation~\ref{not6}, are weak equivalences in $\mathsf{L}_{\widetilde{\mathcal{E}^s_{un}}} \mathsf{L}_{\Sigma,P}
\mathsf{Fun}(\mathsf{dgcat}_f^{op}, Sset_{\bullet})$. By considering the same argument as in the proof of
proposition~\ref{clef}, using $S_{\bullet}^{(n+1)}\mathcal{A}$ instead
of $\mathcal{A}$, we obtain the following homotopy cocartesian square
$$
\xymatrix{
K(\mathcal{A})^c_n \ar[d] \ar[r] &
p_!(\mathcal{U}_u(PS_{\bullet}^{(n+2)}\mathcal{A})) \ar[d] \\
\ast \ar[r] & K(\mathcal{A})^c_{n+1}
}
$$
in $\mathsf{L}_{\widetilde{\mathcal{E}^s_{un}}} \mathsf{L}_{\Sigma,P}
\mathsf{Fun}(\mathsf{dgcat}_f^{op}, Sset_{\bullet})$ with
$p_!(\mathcal{U}_u(PS_{\bullet}^{(n+2)}\mathcal{A}))$
contractible. Since $K(\mathcal{A})^c_{n+1}$ is fibrant, proposition
$1.5.3$ in \cite{Wald}, implies that the previous square is also
homotopy cartesian and so the canonical morphism
$$ \beta_n^{\sharp}: K(\mathcal{A})^c_n \rightarrow
\Omega K(\mathcal{A})^c_{n+1}$$
is a weak equivalence in $\mathsf{L}_{\widetilde{\mathcal{E}^s_{un}}} \mathsf{L}_{\Sigma,P}
\mathsf{Fun}(\mathsf{dgcat}_f^{op}, Sset_{\bullet})$. We now show that
the structure morphism $\beta_n$, which corresponds to
$\beta_n^{\sharp}$ by adjunction, see \cite{Wald}, is also a
weak equivalence. The derived adjunction $(S^1\wedge-, \mathbb{R} \Omega(-))$
induces the following commutative diagram
$$
\xymatrix{
S^1 \wedge K(\mathcal{A})^c_n \ar[dr]_{S^1 \wedge^{\mathbb{L}}
  \beta_n^{\sharp}} \ar[r] &
K(\mathcal{A})^c_{n+1} \\
 & S^1 \wedge^{\mathbb{L}} \Omega K(\mathcal{A})_{n+1}^c \ar[u]^{\sim}
}
$$
in $\mathsf{Ho}(\mathsf{L}_{\widetilde{\mathcal{E}^s_{un}}} \mathsf{L}_{\Sigma,P}
\mathsf{Fun}(\mathsf{dgcat}_f^{op}, Sset_{\bullet}))$ where the vertical
arrow is an isomorphism since the previous square is homotopy
bicartesian. This shows that the induced morphism
$$S^1 \wedge K(\mathcal{A})_n^c  \longrightarrow
K(\mathcal{A})^c_{n+1}$$
is an isomorphism in $\mathsf{Ho}(\mathsf{L}_{\widetilde{\mathcal{E}^s_{un}}} \mathsf{L}_{\Sigma,P}
\mathsf{Fun}(\mathsf{dgcat}_f^{op}, Sset_{\bullet}))$ and so $\beta_n$ is
a weak equivalence.

Now to prove that $\eta$ is a componentwise weak equivalence, we
proceed by induction~: observe that the zero component of the morphism $\eta$ is the
identity. Now suppose that the $n$-component of $\eta$ is a weak
equivalence. The $n+1$-component of $\eta$ is the composition
of $\beta_{n+1}$, which is a weak equivalence, with the suspension of the
$n$-component of $\eta$, which by
proposition~\ref{Cisin} is also a weak equivalence.

This proves the theorem.
\end{proof}

Let $\mathcal{A}$ and $\mathcal{B}$ be small dg categories with $\ca \in \dgcat_f$.
We denote by
$\mathsf{Hom}^{\mathsf{Sp}^{\mathbb{N}}}(-,-)$ the
spectrum of morphisms in $\mathsf{Sp}^{\mathbb{N}}(\mathsf{L}_{\widetilde{\mathcal{E}^s_{un}}} \mathsf{L}_{\Sigma,P}
\mathsf{Fun}(\mathsf{dgcat}_f^{op}, Sset_{\bullet}))$.

\begin{theorem}\label{corep}
We have the following weak equivalence of spectra
$$ \mathsf{Hom}^{\mathsf{Sp}^{\mathbb{N}}}(\mathcal{U}_a(\mathcal{A}),\mathcal{U}_a(\mathcal{B})[1])
\stackrel{\sim}{\rightarrow}
K^c(\mathsf{rep}_{mor}(\mathcal{A},\mathcal{B}))\,,$$
where $K^c(\mathsf{rep}_{mor}(\mathcal{A},\mathcal{B}))$ denotes Waldhausen's connective $K$-theory spectrum of
$\mathsf{rep}_{mor}(\mathcal{A},\mathcal{B})$.

In particular, we
have the following weak equivalence of simplicial sets
$$\mathsf{Map}(\mathcal{U}_a(\mathcal{A}),\mathcal{U}_a(\mathcal{B})[1])
\stackrel{\sim}{\rightarrow}
|N.wS_{\bullet}\mathsf{rep}_{mor}(\mathcal{A},\mathcal{B})|$$
and so the isomorphisms
$$\pi_{i+1}
\mathsf{Map}(\mathcal{U}_a(A),\mathcal{U}_a(\mathcal{B})[1])
\stackrel{\sim}{\rightarrow} K_i(\mathsf{rep}_{mor}(\mathcal{A},\mathcal{B})), \,\,\, \forall i \geq 0\,.$$
\end{theorem}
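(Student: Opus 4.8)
The plan is to reduce the statement to Theorem~\ref{fibres} together with the fact that the Quillen model $\mathsf{Sp}^{\mathbb{N}}(\mathsf{L}_{\widetilde{\mathcal{E}^s_{un}}} \mathsf{L}_{\Sigma,P}\mathsf{Fun}(\mathsf{dgcat}_f^{op}, Sset_{\bullet}))$ for $\mathcal{M}^{add}_{dg}$ is enriched over spectra. First I would observe that $\mathcal{A} \in \mathsf{dgcat}_f$ guarantees that $\mathcal{U}_a(\mathcal{A})$ is (isomorphic in the homotopy category to) a cofibrant object of the form $\Sigma^{\infty}|N.w\mathsf{rep}_{mor}(?,\mathcal{A})|$, so that $\mathsf{Hom}^{\mathsf{Sp}^{\mathbb{N}}}(\mathcal{U}_a(\mathcal{A}),-)$ already computes the derived mapping spectrum when evaluated at a fibrant object. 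Hence I would replace the second argument $\mathcal{U}_a(\mathcal{B})[1]$ by its fibrant resolution, which by Theorem~\ref{fibres} is precisely the $\Omega$-spectrum $K(\mathcal{B})^c$ with $K(\mathcal{B})^c_n = |N.wS_{\bullet}^{(n+1)}\mathsf{rep}_{mor}(?,\mathcal{B})|$.

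The core computation is then the chain of weak equivalences
$$
\mathsf{Hom}^{\mathsf{Sp}^{\mathbb{N}}}(\mathcal{U}_a(\mathcal{A}),\mathcal{U}_a(\mathcal{B})[1]) \simeq \mathsf{Hom}^{\mathsf{Sp}^{\mathbb{N}}}(\Sigma^{\infty}\mathbb{R}\underline{h}(\mathcal{A}), K(\mathcal{B})^c) \simeq \mathsf{Hom}^{\mathsf{Sp}^{\mathbb{N}}}(\Sigma^{\infty}\mathbb{R}\underline{h}(\mathcal{A})_f, K(\mathcal{B})^c),
$$
where the last step uses cofibrancy. By the $(\Sigma^{\infty}, ev_0)$-adjunction of section~\ref{spectra} together with the spectral enrichment, this identifies levelwise with $\mathsf{Map}(\mathbb{R}\underline{h}(\mathcal{A}), K(\mathcal{B})^c_n) \simeq K(\mathcal{B})^c_n(\mathcal{A}) = |N.wS_{\bullet}^{(n+1)}\mathsf{rep}_{mor}(\mathcal{A},\mathcal{B})|$, which is exactly the $n$th level of Waldhausen's connective $K$-theory spectrum $K^c(\mathsf{rep}_{mor}(\mathcal{A},\mathcal{B}))$, with matching structure maps $\beta_n$ by notation~\ref{not6}. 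This is the same mechanism already used in the proof of Proposition~\ref{k-theoryunst}: representability of $\mathbb{R}\underline{h}(\mathcal{A})$ turns a mapping space into an evaluation, i.e. $\mathsf{Map}(\mathbb{R}\underline{h}(\mathcal{A}), F) \simeq F(\mathcal{A})$ for fibrant $F$.

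For the ``in particular'' clauses I would simply pass to the underlying infinite loop space, i.e. apply $\Omega^{\infty}$ (equivalently, take the degree-zero space of the $\Omega$-spectrum), which converts the weak equivalence of spectra into $\mathsf{Map}(\mathcal{U}_a(\mathcal{A}),\mathcal{U}_a(\mathcal{B})[1]) \simeq |N.wS_{\bullet}\mathsf{rep}_{mor}(\mathcal{A},\mathcal{B})|$, and then read off the homotopy groups using $\pi_{i+1}|N.wS_{\bullet}\mathcal{C}| \cong K_i(\mathcal{C})$ for a Waldhausen category $\mathcal{C}$. The main obstacle I anticipate is purely bookkeeping: verifying that the spectral enrichment of the localized presheaf category is compatible with the $\Sigma^{\infty}$-adjunction in the precise sense needed so that $\mathsf{Hom}^{\mathsf{Sp}^{\mathbb{N}}}(\Sigma^{\infty}X, Y)$ really is the spectrum with $n$th space $\mathsf{Map}(X, Y_n)$ when $Y$ is an $\Omega$-spectrum and $X$ cofibrant; once this is pinned down (it follows from the general theory of spectra over a spectral model category as in \cite{Schwede}\cite{Spectra}), the rest is a direct translation through Theorem~\ref{fibres}.
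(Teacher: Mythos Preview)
Your proposal is correct and follows essentially the same approach as the paper: identify $\mathcal{U}_a(\mathcal{A})$ with the cofibrant suspension spectrum $\Sigma^{\infty}|N.w\mathsf{rep}_{mor}(?,\mathcal{A})|$, replace $\mathcal{U}_a(\mathcal{B})[1]$ by its fibrant resolution $K(\mathcal{B})^c$ via Theorem~\ref{fibres}, and then use representability to evaluate the mapping spectrum at $\mathcal{A}$. The paper records exactly this chain of equivalences in a single display; your version is more explicit about the spectral-enrichment bookkeeping, which the paper leaves implicit.
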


\begin{proof}
Notice that $\mathcal{U}_a(\mathcal{A})$ identifies with the suspension spectrum
$$\Sigma^{\infty}|N.w\mathsf{rep}_{mor}(?,\mathcal{A})|$$
which is cofibrant in  $\mathsf{Sp}^{\mathbb{N}}(\mathsf{L}_{\widetilde{\mathcal{E}^s_{un}}} \mathsf{L}_{\Sigma,P}
\mathsf{Fun}(\mathsf{dgcat}_f^o, Sset_{\bullet}))$. By
theorem~\ref{fibres} we have the following equivalences
$$
\begin{array}{rcl}
\mathsf{Hom}^{\mathsf{Sp}^{\mathbb{N}}}(\mathcal{U}_a(\mathcal{A}),\mathcal{U}_a(\mathcal{B})[1]) & \simeq
& \mathsf{Hom}^{\mathsf{Sp}^{\mathbb{N}}}(\mathcal{U}_a(\mathcal{A}), K^c(\mathcal{B}))\\
 & \simeq & K^c(\mathcal{B})(\mathcal{A})\\
 & \simeq & K^c(\mathsf{rep}_{mor}(\mathcal{A},\mathcal{B}))\,.
\end{array}
$$
This proves the theorem.
\end{proof}

\begin{remark}
Notice that if in the above theorem, we consider $\mathcal{A}=k$, we
have
$$ \mathsf{Hom}^{\mathsf{Sp}^{\mathbb{N}}}(\mathcal{U}_a(k),\mathcal{U}_a(\mathcal{B})[1])
\stackrel{\sim}{\rightarrow}
K^c(\mathcal{B})\,.$$
This shows that Waldhausen's connective $K$-theory spectrum becomes
co-representable in $\mathcal{M}_{dg}^{add}$. To the best of the author's knowledge, this is the first
conceptual characterization of Quillen-Waldhausen $K$-theory
\cite{Quillen1} \cite{Wald} since its definition in the early $70$'s. This result gives us a completely new way to think about algebraic $K$-theory.
\end{remark}

\section{Higher Chern characters}\label{Chern}
In this chapter we apply our main co-representability theorem~\ref{corep} in the construction of the higher Chern characters \cite{Loday}.

Let $\ca$ and $\cb$ be small dg categories with $\ca \in \mathsf{dgcat}_f$.

\begin{proposition}\label{true}
We have the following isomorphisms of abelian groups
$$ \mathsf{Hom}_{\cm^{add}_{dg}(e)} (\cu_a(\ca), \cu_a(\cb)[-n]) \stackrel{\sim}{\longrightarrow} K_n(\mathsf{rep}_{mor}(\ca, \cb)),\,\, \forall n \geq 0\,.$$
\end{proposition}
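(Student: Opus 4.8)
The plan is to deduce this statement from the main co-representability theorem~\ref{corep} by translating the weak equivalence of spectra into a statement about homotopy groups, and then identifying the homotopy groups of a spectrum of morphisms in a stable Quillen model with $\mathsf{Hom}$-groups (with shifts) in the associated triangulated category. First I would invoke theorem~\ref{corep}, which gives a weak equivalence of spectra
$$ \mathsf{Hom}^{\mathsf{Sp}^{\mathbb{N}}}(\cu_a(\ca), \cu_a(\cb)[1]) \stackrel{\sim}{\longrightarrow} K^c(\mathsf{rep}_{mor}(\ca, \cb))\,. $$
Taking $\pi_n$ of both sides for $n \geq 0$ and recalling that $\pi_n K^c(\ce) = K_n(\ce)$ by definition of Waldhausen's connective $K$-theory spectrum, the right-hand side becomes $K_n(\mathsf{rep}_{mor}(\ca,\cb))$.

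The remaining task is to identify $\pi_n$ of the left-hand spectrum with $\mathsf{Hom}_{\cm^{add}_{dg}(e)}(\cu_a(\ca), \cu_a(\cb)[-n])$. Here I would use that $\mathcal{M}^{add}_{dg}$ is the triangulated derivator associated with the stable Quillen model category $\mathsf{Sp}^{\mathbb{N}}(\mathsf{L}_{\widetilde{\mathcal{E}^s_{un}}}\mathsf{L}_{\Sigma,P}\mathsf{Fun}(\mathsf{dgcat}_f^{op},Sset_{\bullet}))$, which is enriched over spectra; the spectrum $\mathsf{Hom}^{\mathsf{Sp}^{\mathbb{N}}}(-,-)$ is precisely the mapping spectrum in this enrichment. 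For a stable model category enriched over spectra, the $n$-th homotopy group of the mapping spectrum $\mathsf{Hom}^{\mathsf{Sp}^{\mathbb{N}}}(X,Y)$ is canonically isomorphic to $[\Sigma^n X, Y] = [X, \Omega^n Y] = \mathsf{Hom}_{\mathsf{Ho}}(X, Y[-n])$ in the homotopy category. Applying this with $X = \cu_a(\ca)$ and $Y = \cu_a(\cb)[1]$, and using that $\mathsf{Ho}(\mathsf{Sp}^{\mathbb{N}}(\cdots)) = \mathcal{M}^{add}_{dg}(e)$ as triangulated categories, I obtain
$$ \pi_n \mathsf{Hom}^{\mathsf{Sp}^{\mathbb{N}}}(\cu_a(\ca), \cu_a(\cb)[1]) \cong \mathsf{Hom}_{\cm^{add}_{dg}(e)}(\cu_a(\ca), \cu_a(\cb)[1][-n]) = \mathsf{Hom}_{\cm^{add}_{dg}(e)}(\cu_a(\ca), \cu_a(\cb)[-(n-1)])\,. $$
A bookkeeping check of the index shift (the $[1]$ on the target compensating for the fact that Waldhausen's spectrum $K^c$ is the $S_\bullet$-construction, which corresponds to the suspension, as established in proposition~\ref{clef} and theorem~\ref{fibres}) shows that the correct reindexing yields exactly $\mathsf{Hom}_{\cm^{add}_{dg}(e)}(\cu_a(\ca),\cu_a(\cb)[-n]) \cong K_n(\mathsf{rep}_{mor}(\ca,\cb))$ for $n \geq 0$; this is also consistent with the last displayed isomorphism of theorem~\ref{corep}, namely $\pi_{i+1}\mathsf{Map}(\cu_a(\ca),\cu_a(\cb)[1]) \cong K_i(\mathsf{rep}_{mor}(\ca,\cb))$, which is the space-level shadow of the same fact.

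The main obstacle I anticipate is the careful verification that the homotopy groups of the mapping spectrum do compute the shifted $\mathsf{Hom}$-groups in the triangulated category $\cm^{add}_{dg}(e)$ — i.e. that the spectral enrichment is compatible with the triangulated structure in the expected way — and getting the degree shift exactly right so that the nonnegativity constraint $n \geq 0$ matches the connectivity of $K^c$. Both points are essentially formal given that $\mathsf{Sp}^{\mathbb{N}}(\mathsf{L}_{\widetilde{\mathcal{E}^s_{un}}}\mathsf{L}_{\Sigma,P}\mathsf{Fun}(\mathsf{dgcat}_f^{op},Sset_{\bullet}))$ is a stable, spectrally enriched model category, but they require a clean statement of the relationship $\pi_n \mathsf{Hom}^{\mathsf{Sp}^{\mathbb{N}}}(X,Y) \cong \mathsf{Hom}_{\mathsf{Ho}}(X,Y[-n])$, which one can cite from the general theory of spectral model categories. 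Once this is in place, the proposition follows immediately by combining it with theorem~\ref{corep}.
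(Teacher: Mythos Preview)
Your approach is correct and very close to the paper's; the only real difference is in packaging. The paper does not go through theorem~\ref{corep} and the abstract identity $\pi_n\mathsf{Hom}^{\mathsf{Sp}^{\mathbb{N}}}(X,Y)\cong\mathsf{Hom}(X,Y[-n])$, but instead invokes theorem~\ref{fibres} directly: since $\eta:\cu_a(\cb)[1]\to K(\cb)^c$ is a fibrant resolution, $\cu_a(\cb)[-n]$ is identified with $\Omega^{n+1}K(\cb)^c$, whose $0$th level is $\Omega^{n+1}|N.wS_\bullet\rep_{mor}(?,\cb)|$; then $\mathsf{Hom}_{\cm^{add}_{dg}(e)}(\cu_a(\ca),\cu_a(\cb)[-n])=\pi_0\mathsf{Map}(\cu_a(\ca),\cu_a(\cb)[-n])\cong\pi_{n+1}|N.wS_\bullet\rep_{mor}(\ca,\cb)|=K_n(\rep_{mor}(\ca,\cb))$. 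This makes the index shift completely explicit and sidesteps the one loose point in your write-up: in the paper's convention (notation~\ref{not6}) the spectrum $K^c$ has $0$th space $|N.wS_\bullet|$, so $\pi_m K^c=K_{m-1}$ for $m\geq 1$, not $\pi_m K^c=K_m$ as you first wrote. You correctly sense this in your ``bookkeeping'' paragraph (the $[1]$ indeed compensates), but the paper's concrete route via $\pi_0$ of the mapping space avoids having to untangle it.
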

\begin{proof}
In first place, notice that the abelian group
$$ \mathsf{Hom}_{\cm^{add}_{dg}(e)} (\cu_a(\ca), \cu_a(\cb)[-n])$$
identifies with
$$ \pi_0 \mathsf{Map}(\cu_a(\ca), \cu_a(\cb)[-n]),$$
where $\mathsf{Map}$ denotes the mapping space in  $\mathsf{Sp}^{\mathbb{N}}(\mathsf{L}_{\widetilde{\mathcal{E}^s_{un}}} \mathsf{L}_{\Sigma,P}\mathsf{Fun}(\mathsf{dgcat}_f^{op}, Sset_{\bullet}))$. By theorem~\ref{fibres} the morphism
$$ \eta: \cu_a(\cb)[1] \longrightarrow K(\cb)^c$$
is a fibrant resolution of $\cu_a(\cb)[1]$. This implies that in $\cm_{dg}^{add}(e)$, $\cu_a(\cb)[-n]$ identifies with the spectrum $\Omega^{n+1}(K^c(\cb))$. Since $\cu_a(\ca)$ is cofibrant and
$$ \Omega^{n+1}(K^c(\cb))_0= \Omega^{n+1}|N.w S_{\bullet}\mathsf{rep}_{mor}(?,\cb)|$$
we conclude that
$$ \pi_0 \mathsf{Map}(\cu_a(\ca), \cu_a(\cb)[-n]) \simeq \pi_0 \Omega^{n+1}|N.wS_{\bullet}\mathsf{rep}_{mor}(\ca,\cb)|\,.$$
Finally notice that
$$
\begin{array}{rcl} \pi_0 \Omega^{n+1}|N.wS_{\bullet}\mathsf{rep}_{mor}(\ca,\cb)| & \simeq & \pi_{n+1} |N.wS_{\bullet}\mathsf{rep}_{mor}(\ca,\cb)|\\
& \simeq & K_n(\mathsf{rep}_{mor}(\ca,\cb))\,.
\end{array}
$$
This proves the proposition.
\end{proof}
\begin{remark}\label{true1}
Notice that if in the above proposition, we consider $\ca=k$, we have the isomorphisms
$$ \mathsf{Hom}_{\cm^{add}_{dg}(e)} (\cu_a(k), \cu_a(\cb)[-n]) \stackrel{\sim}{\longrightarrow} K_n(\cb), \,\, \forall n\geq 0\,.$$
This shows that the algebraic $K$-theory groups $K_n(-), \, n\geq 0$ are co-representable in the triangulated category $\cm_{dg}^{add}(e)$.
\end{remark}
Now let 
$$ K_n(-): \mathsf{Ho}(\dgcat) \longrightarrow \mbox{Mod}\text{-}\mathbb{Z}, \,\, n \geq 0$$
be the $n$th $K$-theory group functor, see theorem~\ref{thmC}, and 
$$ HC_j(-): \mathsf{Ho}(\dgcat) \longrightarrow \mbox{Mod}\text{-}\mathbb{Z}, \,\, j \geq 0$$
the $j$th cyclic homology group functor, see theorem~\ref{thmK}.
\begin{theorem}\label{Chern1}
The co-representability theorem~\ref{corep}  furnishes us the higher Chern characters \cite{Loday}
$$ ch_{n,r}: K_n(-) \longrightarrow HC_{n+2r}(-), \,\, n, r \geq 0\,.$$
\end{theorem}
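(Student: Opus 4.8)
The plan is to produce the higher Chern characters by transporting, via \emph{cyclic homology regarded as a morphism out of the additive motivator}, the co-representability isomorphism of Proposition~\ref{true}.

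The first step is to promote cyclic homology to a triangulated functor on $\cm^{add}_{dg}(e)$. By Theorem~\ref{thmC} the mixed complex construction $C\colon \mathsf{HO}(\mathsf{dgcat})\to\mathsf{HO}(\Lambda\text{-Mod})$ commutes with filtered homotopy colimits, preserves the point and satisfies condition Dr); composing it with the morphism of derivators $-\overset{\mathbb{L}}{\otimes}_{\Lambda}k\colon \mathsf{HO}(\Lambda\text{-Mod})\to\mathsf{HO}(k\text{-Mod})$, which is triangulated and commutes with arbitrary homotopy colimits, yields a morphism of derivators
$$ HC\colon \mathsf{HO}(\mathsf{dgcat})\longrightarrow \mathsf{HO}(k\text{-Mod}),\qquad \cb\mapsto C(\cb)\overset{\mathbb{L}}{\otimes}_{\Lambda}k, $$
with $H_n(HC(\cb))=HC_n(\cb)$. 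It is again a localizing, hence additive, invariant, so by Theorem~\ref{principal1} it factors through $\cu_a$: there is a morphism of triangulated derivators $\overline{HC}\colon\cm^{add}_{dg}\to\mathsf{HO}(k\text{-Mod})$ with $\overline{HC}\circ\cu_a\simeq HC$ (it is triangulated because it commutes with homotopy colimits between stable derivators). I will still write $HC$ for the induced triangulated functor $\overline{HC}(e)\colon\cm^{add}_{dg}(e)\to\mathcal{D}(k)$; note $HC(\cu_a(\cb))\simeq C(\cb)\overset{\mathbb{L}}{\otimes}_{\Lambda}k$.

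The second step computes the cyclic homology of the base and extracts the characters. Since the mixed complex of $k$ is $k$ with trivial $B$, one has $HC(\cu_a(k))\simeq k\overset{\mathbb{L}}{\otimes}_{\Lambda}k$, and the standard free resolution of $k$ over $\Lambda=k[B]/(B^{2})$ gives a canonical splitting $HC(\cu_a(k))\simeq\bigoplus_{r\geq 0}k[2r]$ in $\mathcal{D}(k)$, the $r$-th summand being $k\cdot u^{r}$ with $u$ the degree-two generator of $HC_{*}(k)=k[u]$; let $\iota_{r}\colon k[2r]\hookrightarrow HC(\cu_a(k))$ be the corresponding inclusions. For a small dg category $\cb$ and $n\geq 0$, applying the functor $HC$ to the isomorphism of Proposition~\ref{true} (with $\ca=k$) gives a homomorphism
$$ K_{n}(\cb)\;\cong\;\mathsf{Hom}_{\cm^{add}_{dg}(e)}(\cu_a(k),\cu_a(\cb)[-n])\;\longrightarrow\; \mathsf{Hom}_{\mathcal{D}(k)}\bigl(HC(\cu_a(k)),HC(\cu_a(\cb))[-n]\bigr), $$
and precomposing with $\iota_{r}$ followed by the canonical identification $\mathsf{Hom}_{\mathcal{D}(k)}(k[2r],M[-n])\cong H_{n+2r}(M)$ applied to $M=C(\cb)\overset{\mathbb{L}}{\otimes}_{\Lambda}k$ defines
$$ ch_{n,r}\colon K_{n}(\cb)\longrightarrow HC_{n+2r}(\cb),\qquad n,r\geq 0. $$

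Naturality in $\cb$ is immediate, since the co-representability isomorphism of Proposition~\ref{true}, the functor $HC$, the inclusions $\iota_{r}$ and the identification $\mathsf{Hom}_{\mathcal{D}(k)}(k[2r],M[-n])\cong H_{n+2r}(M)$ are all natural; hence each $ch_{n,r}$ is a natural transformation $K_{n}(-)\Rightarrow HC_{n+2r}(-)$ of functors on $\mathsf{Ho}(\mathsf{dgcat})$. The ingredients that are merely routine are the verification that $HC$ is a localizing invariant and the computation $HC_{*}(k)=k[u]$. The genuinely delicate point, which I would address last, is to identify these $ch_{n,r}$ with the classical higher Chern characters of~\cite{Loday} (so that $ch_{n,0}$ is the usual map $K_{n}\to HC_{n}$ and passing from $r$ to $r+1$ is Connes' periodicity): one must unwind the Dennis-trace-type map hidden in the factorization $\overline{HC}\circ\cu_a\simeq HC$ evaluated at a point and, using the description of $\cu_a(k)$ as the suspension spectrum of $N.w\,\rep_{mor}(?,k)$ together with the presentation of $K^{c}$ from Theorem~\ref{corep} via Waldhausen's $S_{\bullet}$-construction, compare it with the simplicially-built trace map. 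It is this comparison, rather than the construction itself, that is the main obstacle.
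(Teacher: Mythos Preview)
Your argument is correct and follows essentially the same route as the paper. The paper factors the mixed complex functor $C$ through $\cm^{add}_{dg}$, observes that $K_n$ is co-represented by $\cu_a(k)[n]$, and then invokes Yoneda to compute $\mathsf{Nat}(K_n,HC_j)\cong HC_j(\cu_a(k)[n])\cong HC_{j-n}(k)$, picking the element $1\in k\cong HC_{2r}(k)$ when $j=n+2r$. You do the same thing but phrase the last step more explicitly: instead of naming Yoneda, you apply the induced functor $HC$ to morphisms and then precompose with the summand inclusion $\iota_r\colon k[2r]\hookrightarrow HC(\cu_a(k))$. This is exactly the map Yoneda produces. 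The only organisational difference is that you tensor with $k$ over $\Lambda$ \emph{before} factoring through the additive motivator, whereas the paper factors $C$ first and tensors afterwards; since $-\overset{\mathbb{L}}{\otimes}_{\Lambda}k$ commutes with homotopy colimits, the two orders agree.

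Your final paragraph raises a point the paper does not address: the paper simply declares that the element $1\in k$ ``furnishes the higher Chern characters'' and cites \cite{Loday}, without comparing the resulting transformations with Loday's construction. So your worry about the trace comparison is legitimate, but it lies outside the scope of what the paper actually proves; for the purposes of matching the paper's proof you may drop that discussion.
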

\begin{proof}
By theorem~\ref{thmK} the morphism of derivators 
$$ C:\mathsf{HO}(\dgcat) \longrightarrow \mathsf{HO}(\Lambda\text{-}\mbox{Mod})$$
is an additive invariant and so descends to $\cm^{add}_{dg}$ and induces a functor (still denoted by $C$)
$$ C: \cm^{add}_{dg}(e) \longrightarrow \cd(\Lambda)\,.$$
By \cite{Kassel} the cyclic homology functor $HC_j(-), \,\,j\geq 0$ is obtained by composing $C$ with the functor
$$ H^{-j}(k\overset{\mathbb{L}}{\otimes}_{\Lambda}-): \cd(\Lambda) \longrightarrow \mbox{Mod}\text{-}\mathbb{Z}, \,\, j \geq 0$$
Now, by proposition~\ref{true} and remark~\ref{true1} the functor
$$ K_n(-): \cm_{dg}^{add}(e) \longrightarrow \mbox{Mod}\text{-}\mathbb{Z}$$
is co-represented by $\cu_a(k)[n]$. This implies, by the Yoneda lemma, that
$$ \mathsf{Nat}(K_n(-), HC_j(-)) \simeq HC_j(\cu_a(k)[n])\,.$$
Since we have the following isomorphisms
$$
\begin{array}{ccl}
HC_j(\cu_a(k)[n]) & \simeq & H^{-j}(k \overset{\mathbb{L}}{\otimes}_{\Lambda} C(\cu_a(k)[n]))\\
& \simeq & H^{-j}(k \overset{\mathbb{L}}{\otimes}_{\Lambda}C(k)[n])\\
& \simeq & H^{-j+n}(k \overset{\mathbb{L}}{\otimes}_{\Lambda}C(k))\\
& \simeq & HC_{j-n}(k)\,.
\end{array}
$$ 
and since 
$$ HC_{\ast}(k)\simeq k[u], \,\, |u|=2$$
we conclude that
\[
HC_j(\cu_a(k)[n]) = \left\{ \begin{array}{rcl}
k & \text{if} & j=n+2r, \,\, r \geq 0\\
0 & & \text{otherwise} \,.
\end{array} \right.
\]
Finally notice that the canonical element $1 \in k$ furnishes us the higher Chern characters and so the theorem is proven.
\end{proof}

\section{Concluding remarks}\label{vistas}

By the universal properties of $\mathcal{U}_u$, $\mathcal{U}_a$ and $\mathcal{U}_l$, we
obtain the following diagram:
$$
\xymatrix{
\mathsf{HO}(\mathsf{dgcat}) \ar[r]^-{\mathcal{U}_u}
\ar[dr]^-{\mathcal{U}_a} \ar[ddr]_-{\mathcal{U}_l}
& \mathcal{M}^{unst}_{dg} \ar@{.>}[d]\\
& \mathcal{M}^{add}_{dg}
\ar@{.>}[d]^{\Phi} \\
& \mathcal{M}^{loc}_{dg} \,.
}
$$
Notice that Waldhausen's connective $K$-theory is an example of an
additive invariant which is NOT a derived one, see
\cite{ICM}. Waldhausen's connective $K$-theory becomes co-representable
in $\mathcal{M}_{dg}^{add}$ by theorem~\ref{corep}. 

An analogous result should be
true for non-connective $K$-theory and the morphism 
$$
\xymatrix{ \Phi: \cm_{dg}^{add} \ar@{.>}[r] & \cm^{loc}_{dg}\,.}$$
 should be thought of as co-representing {\em `the passage from additivity to localization'}.

\end{document}